\newcommand{\e}{\epsilon}
\newcommand{\cO}{\mathcal{O}}
\newcommand{\ch}{\mathtt{c}_{\tth}}
\newcommand{\chk}{\mathtt{c}_{\tth, \kappa}}
\newcommand{\tth}{\mathtt{h}}
\newcommand{\ttf}{\mathtt{f}_\e}
\newcommand{\pa}{\partial}
\newcommand{\de}{\textrm{d}}
\newcommand{\tf}{\mathtt{f}}
\newcommand{\cA}{{\mathcal A}}
\newcommand{\cJ}{{\mathcal J}}
\newcommand{\N}{{\mathbb{N}}}
\newcommand{\R}{{\mathbb{R}}}
\newcommand{\Z}{{\mathbb{Z}}}
\newcommand{\T}{{\mathbb{T}}}
\newcommand{\vet}[2]{\begin{bmatrix}#1 \\ #2 \end{bmatrix}}
\newcommand{\cB}{\mathcal{B}}
\newcommand{\fR}{\mathfrak{R}}
\newcommand{\im}{\textup{i}}
\newcommand{\und}[1]{\underline{#1}}
\newcommand{\cL}{\mathcal{L}}
\newcommand{\vertiii}[1]{{\left\vert\kern-0.25ex\left\vert\kern-0.25ex\left\vert #1 
    \right\vert\kern-0.25ex\right\vert\kern-0.25ex\right\vert}}
\newcommand{\opnorm}[1]{{\vert\kern-0.25ex\vert\kern-0.25ex\vert #1 
    \vert\kern-0.25ex\vert\kern-0.25ex\vert}}
\theoremstyle{plain}
\newtheorem{lemma}{Lemma}
\newtheorem{theorem}[lemma]{Theorem}
\newtheorem{remark}[lemma]{Remark}
\newtheorem{definition}[lemma]{Definition}
\theoremstyle{definition} 
\numberwithin{equation}{section}
\title{Full Benjamin-Feir instability \\ of  capillary-gravity Stokes waves in finite depth}
\begin{document}

\author{Ting-Yang Hsiao\footnote{International School for Advanced Studies (SISSA), Via Bonomea 265, 34136, Trieste, Italy. \newline
	\textit{Emails:} \texttt{thsiao@sissa.it}, \texttt{amaspero@sissa.it}},  Alberto Maspero$^*$ }









\maketitle

\begin{abstract}
We study the two-dimensional gravity–capillary water waves equations for a fluid of finite depth $\mathtt{h}>0$ under the combined effects of gravity and surface tension $\kappa \geq 0$. 
We analyze the linear stability and instability of 
 small-amplitude, 
$2\pi$-periodic Stokes wave solutions,  under the effect of longitudinal long-wave perturbations.
The corresponding linearized operator has  periodic coefficients and a defective zero eigenvalue of multiplicity four.
Using Bloch–Floquet theory, we investigate the associated family of periodic eigenvalue problems. For all surface tension values $\kappa \geq 0$ and depths $\mathtt{h} > 0$, we establish the complete splitting of the four eigenvalues near zero when both the wave amplitude and the Floquet parameter are small. Specifically, we rigorously prove that in the regions of unstable depth and capillarity identified formally by Djordjevic–Redekopp and Ablowitz–Segur in the 1970's, the spectrum of the linearized operator near the origin depicts a ``figure 8'' pattern.
\end{abstract}

\tableofcontents

\section{Introduction}\label{sec1}
 Stokes waves, first described in the groundbreaking work of Stokes in 1847 \cite{stokes}, represent one of the earliest known global-in-time solutions for dispersive PDEs. They are spatially periodic water waves that travel rigidly in one direction, remaining uniform in the transverse direction, and  persist indefinitely despite dispersive effects.
 The extensive literature on this topic is surveyed in \cite{ebb}. 

A long-standing challenge   in fluid dynamics is determining the stability of Stokes wave  under longitudinal long-wave  perturbations. This question was  answered in 1967 when Benjamin and Feir \cite{BF} experimentally demonstrated that small-amplitude Stokes waves are unstable under long-wave perturbations, and  proposed a heuristic justification for this phenomenon. 
Similar explanations were independently advanced also by Lighthill \cite{Li} and Zakharov \cite{Zak1}. 
This instability is nowadays  known as Benjamin-Feir or modulational instability, and it is supported by an enormous amount of physical observations and numerical simulations.

The first rigorous mathematical proof of the Benjamin-Feir instability 
was obtained thirty years ago    by 
Bridges-Mielke \cite{BrM} and more recently by
Nguyen-Strauss  \cite{NS}. These works prove, in case of  
 2d pure gravity water waves in finite respectively deep waters, 
that the linearized water waves operator at a Stokes wave of small amplitude  has unstable spectrum near the origin.
In the last few years,  thanks to the introduction of new methods and techniques,   Berti-Maspero-Ventura were able to obtain the full description of the $L^2(\R)$-spectrum of the linearized operator  near the origin  in deep waters \cite{BMV1}, finite depth \cite{BMV3}, and at the critical Whitham-Benjamin  depth $\tth_{\text{WB}} \approx 1.3627827\ldots$ \cite{BMV_ed}, proving the ``figure 8'' conjecture.

This paper examines the linear stability and instability of small-amplitude Stokes waves in the context of 2d gravity-capillary water waves.
Theoretical work conducted in the 1970s by Djordjevic and Redekopp \cite{DjRe}, as well as Ablowitz and Segur \cite{AbSe}, pinpointed zones of capillarity and depth --illustrated in gray in Figure \ref{fig:DR}-- for which small amplitude  Stokes waves are  linearly unstable. 
 Recently, Hur and Yang \cite{HY2} and Sun and Wahlen \cite{SW} have provided rigorous proofs, confirming that in the Djordjevic-Redekopp  unstable zones, the linearized operator has  unstable spectrum near the origin.

The goal of this paper is to provide the full description of the spectrum near zero of the linearized gravity-capillary water waves operator  at a small amplitude Stokes wave. 
In particular, we  prove that whenever the capillarity and depth fall within the unstable zones identified by Djordjevic and Redekopp, the spectrum of the linearized operator near the origin has the  ``figure 8" shape. 
Let us now present precisely our results.

\paragraph{Benjamin-Feir instability for gravity-capillary water waves.}
We consider the two-dimensional water wave equations governing a fluid of finite depth $\tth>0$ under the influence of gravity $g$ (that we set equal to 1) and capillary forces with surface tension $\kappa \geq 0$. 
We examine the linear stability/instability of a $2\pi$-periodic Stokes wave solution with a small amplitude $0 < \e \ll 1$ and   speed  $c_\epsilon = \chk + \mathcal{O}(\epsilon^2)$, where $\chk = \sqrt{(1+\kappa)\tanh(\tth)}$.
The linearized  water waves equations at the Stokes wave
are, in the inertial reference frame moving with speed $c_\e$, a linear time independent system of the form
$ h_t = \mathcal{L}_{\e}  h  $ where $ \mathcal{L}_{\e} := \mathcal{L}_{\e}({\kappa, \mathtt h}) $
is a linear operator  with $ 2 \pi $-periodic coefficients,  
see \eqref{mathcal L e} (such operator is actually  obtained 
conjugating the linearized water waves equations at the Stokes wave in the Zakharov  formulation 
via the  ``good unknown of Alinhac" \eqref{alinhac} and the 
Levi-Civita \eqref{LC} invertible transformations).
The operator 
$ \mathcal{L}_{\e} $ possesses the  eigenvalue $ 0 $, 
which is defective, with multiplicity four,  
 due to  symmetries of the water waves equations. 
The problem is to prove that  the linear system 
 $ h_t = \mathcal{L}_{\e}  h  $  
has  solutions  of the form $h(t,x) = \text{Re}\left(e^{\lambda t} e^{\im \mu x} v(x)\right)$
where $v(x)$ is a  $2\pi$-periodic function, $\mu$ in $ \R$ is the  Floquet exponent
and $\lambda$ has positive real part, thus $h(t,x)$ grows exponentially in time.
By Bloch-Floquet theory, such $\lambda$ is an  eigenvalue 
  of the operator  $ \mathcal{L}_{\mu,\e} 
:= e^{-\im \mu x } \,\mathcal{L}_{\e} \, e^{\im \mu x } $
acting on $2\pi$-periodic functions. 
\smallskip

 The main result of this paper proves, for any   value of the surface tension $\kappa \geq 0$ and depth $ \mathtt h>0 $, 
  the full splitting of the four 
 eigenvalues  close to 
zero of the operator $ \mathcal{L}_{\mu,\e}  := \mathcal{L}_{\mu,\e} (\kappa,\mathtt h ) $ 
when  $ \e $ and $ \mu $ are small enough, see Theorem \ref{Complete BF thm}.
We first present Theorem \ref{thm:main} focusing  on the figure $``8" $ formed by 
the Benjamin-Feir unstable eigenvalues. 
Such figure $``8" $ forms whenever $\kappa$ and $\tth$ belong to certain open sets, whose union we denote by  $\mathcal{U}$, c.f. \eqref{def:cU},   that were  identified before by  Djordjevic-Redekopp \cite{DjRe} and Ablowitz-Segur \cite{AbSe} via a formal modulational analysis. We picture them in gray color in  Figure \ref{fig:DR}.

 To describe such zones analytically, we need to introduce several functions of $\kappa$ and $\tth$.
The first one is 
\begin{equation}\label{def:eWB}
    \begin{aligned}
        \mathsf{e}_{\mathrm{WB}}:=\mathsf{e}_{\mathrm{WB}}(\kappa,\tth):=&\frac{(21-25\ch^4+6\ch^8)(1+\kappa)^2+(-12+6\ch^4+3\ch^8)(1+\kappa)+9\ch^4}{8\ch^3(1+\kappa)^{\frac{1}{2}}\,\left(\ch^4(1+\kappa) -3\kappa\right)}\\
        &-\frac{(1+\kappa)^{\frac{1}{2}}\,\left( (3\tth \ch^8 - 6\ch^6 - 6\tth\ch^4 + 6\ch^2 + 3\tth)
(1+\kappa)+4\ch^6\right)}{4\ch^3\,   \textup{D}_{\tth,\kappa}}
    \end{aligned}
\end{equation}
where
$\ch:= \sqrt{\tanh(\tth)}$ is the speed of the linear Stokes wave with zero capillarity, and
\begin{equation}\label{def:D}
 \textup{D}_{\tth,\kappa}:=\tth-\frac{1}{4}\mathsf{e}_{12}^2 \ , \quad 
    \mathsf{e}_{12}:=  \mathsf{e}_{12}(\kappa, \tth) :=
    2(1+\kappa)^{\frac12} \ch \left[ \frac{\ch^2 + (1 - \ch^4)\tth}{2 \ch^2} + \frac{\kappa}{1+\kappa}\right]. 
\end{equation}
{The function $\mathsf{e}_{12}$ is positive for all $(\kappa,\tth)\in \R_{\geq 0} \times \R_{>0}$, $\R_{\geq 0} :=[0,\infty)$, $\R_{>0}:=(0,\infty)$.
However, the function  $\textup{D}_{\tth,\kappa}$ vanishes} on an analytic curve, that we denote by
\begin{align} \label{def:fD}
\mathfrak{D} := \{\, (\kappa,\tth) \in \R_{\geq 0} \times \R_{>0} :\ \  \textup{D}_{\tth,\kappa} \   =  0 \,\}, \quad \textup{D}_{\tth,\kappa} \mbox{ in } \eqref{def:D} \ . 
\end{align}
Actually, in Appendix \ref{sec:D.sign} we show that $\textup{D}_{\tth,\kappa}<0$ under the Bond number condition $\kappa > \tth^2/3$. 
Also the denominator $\ch^4(1+\kappa) -3\kappa$ in the first addend of 
$ \mathsf{e}_{\mathrm{WB}}$ might vanish.  To take this into account, we introduce the set 
\begin{equation}\label{def:fR}
    \fR := \bigcup_{n \in \N, \   n \geq 2} \fR_n := \left\lbrace (\kappa, \tth) \in \R_{\geq 0} \times \R_{>0} \colon \ \ 
\kappa = \frac{\tanh({n \tth}) - n \tanh({\tth})}{n \left(\tanh(\tth) - n \tanh(n\tth) \right)}
    \right\rbrace \ . 
\end{equation}
A direct computation, using  
$\tanh(2\tth) =\frac{2 \ch^2}{1+\ch^4}$, gives 
$  \kappa - \frac{\tanh({2 \tth}) - 2 \tanh({\tth})}{2 \left(\tanh(\tth) - 2 \tanh(2\tth) \right)} = \frac{(1+\kappa) \ch^4 - 3\kappa}{\ch^4-3} $, therefore
 \begin{equation} \label{denneq0}
     (1+\kappa) \ch^4 - 3\kappa \neq 0 \quad \mbox{ provided } (\kappa, \tth) \not \in \fR_2 \ . 
 \end{equation}
We shall also require that $(\kappa, \tth) \not \in \fR_n$ whenever $n \geq 3$. 
These conditions are necessary to avoid  resonances in the construction of $2\pi$-periodic Stokes waves, see the comments after Theorem \ref{Thm: Stokes expansion}.
In Figure \ref{fig:DR}, the curve $\fR_2$ is plotted as  line $3$ (red color), and we also plot in dotted lines some of the curves $\fR_n$, $n \geq 3$, that accumulate to $\kappa =0$ at every fixed  $\tth >0$ as $n \to \infty$. 

Finally we  introduce the function
\begin{equation}\label{def:e22}
\mathsf{e}_{22}:=\mathsf{e}_{22}(\kappa,\tth) :={\frac{(-3\ch^8\tth^2+6\,\ch^6\tth+2\,\ch^4\tth^2-3\ch^4-6\ch^2\tth+\tth^2)(1+\kappa)^2+(4\ch^2\tth-4\ch^6\tth)(1+\kappa)+4\ch^4}{\ch^3{\left(1+\kappa \right)}^{\frac{3}{2}}}}.
\end{equation}
We define the  {\em unstable}  region
\begin{equation}\label{def:cU}
    \mathcal{U}:= \left\{ (\kappa, \tth) \in (\R_{\geq 0} \times \R_{>0}) \setminus (\fR  \cup \mathfrak{D}) \colon \quad \mathsf{e}_{22}(\kappa,\tth)\mathsf{e}_{\mathrm{WB}}(\kappa,\tth)> 0 \right\}  \ , \quad \mathfrak{D} \mbox{ in } \eqref{def:fD}, \quad \fR \mbox{ in } \eqref{def:fR},
\end{equation}
and the {\em stable} one
\begin{equation}
    \mathcal{S}:= \left\{ (\kappa, \tth) \in \R_{\geq 0} \times \R_{>0} \setminus (\fR  \cup \mathfrak{D}) \colon \quad \mathsf{e}_{22}(\kappa,\tth)\mathsf{e}_{\mathrm{WB}}(\kappa,\tth)< 0 \right\} \ . 
\end{equation}
Our main result says that whenever $(\kappa, \tth) \in  \mathcal{U}$, the spectrum of the linearized water waves operator $\cL_{\mu, \e}$ near the origin depicts a complete ``figure 8''.
On the contrary, if $(\kappa, \tth) \in  \mathcal{S}$, the spectrum near the origin is purely imaginary.
In Figure \ref{fig:DR} we plot the unstable region  $\mathcal{U}$ is gray color, and the stable region $\mathcal{S}$ in white color. 
Such figure is the same obtained previously by 
Djordjevic-Redekopp \cite{DjRe} and Ablowitz-Segur \cite{AbSe}. We shall comment more about the relation with these papers after Theorem \ref{thm:main}.
\begin{figure}[t]
  \centering
  \includegraphics[scale=0.9]{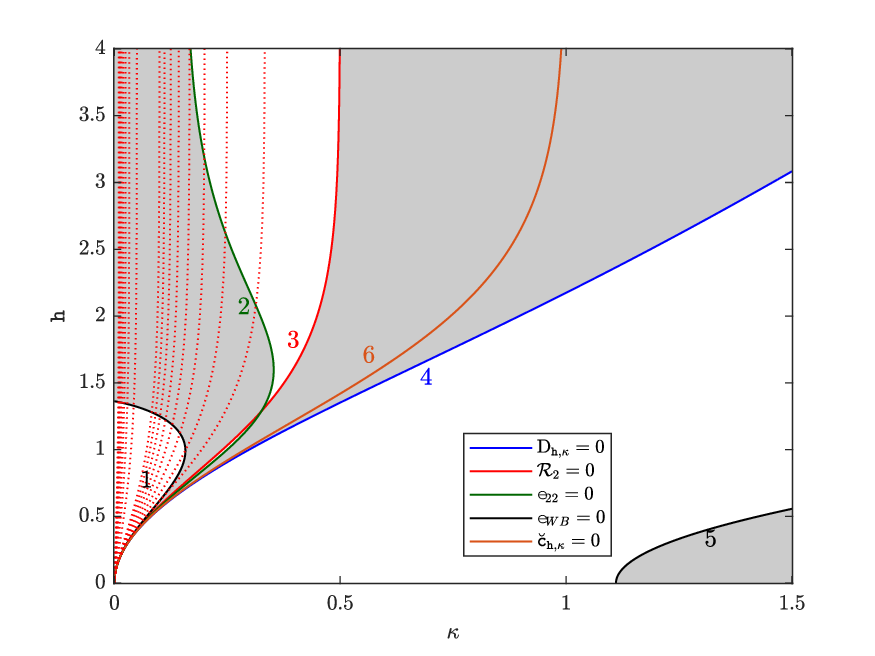} 
  \caption{Stability diagram for capillary–gravity Stokes wave. The capillarity $\kappa$ is on the horizontal axis and the depth $\tth$ on the vertical axis. 
  The light gray region is the  unstable region of parameters $\mathcal{U}$ in \eqref{def:cU}, while the white region is the stable region $\mathcal{S}$.
  Lines 1 and 5 (black color) are the zero set of the function $\mathsf{e}_{\mathrm{WB}}$ in \eqref{def:eWB}.
  Line 2 (green color) is the zero set of the function $\mathsf{e}_{22}$ in \eqref{def:e22}. Line 3 (red color) is the curve $(1+\kappa) \ch^4 - 3\kappa  = 0$, i.e. the curve defining the set $\fR_2$, c.f. \eqref{def:fR}. 
  Line 4 (blue line) is the zero set of the function $\textup{D}_{\tth,\kappa}$ in \eqref{def:D}. Line 6 (orange line) is the zero set of the function $\breve{\mathtt c}_{\tth,\kappa}$. Lines 3 and 4 are the singular lines of the function $\mathsf{e}_{\mathrm{WB}}$ on which the denominator vanishes. We also mark the curves $\mathfrak{R}_3,\mathfrak{R}_4,\ldots,\mathfrak{R}_{10}$ and $\mathfrak{R}_{10},\mathfrak{R}_{20},\ldots,\mathfrak{R}_{100}$, which are plotted as dotted lines and arranged in order from right to left.}
  \label{fig:DR}
\end{figure}

\smallskip
Along  the paper we denote by $r(\e^{m_1} \mu^{n_1}, \ldots, \e^{m_p} \mu^{n_p})$
a real analytic function fulfilling  for some $C >0$ and $\e, \mu$ sufficiently small, the estimate  $| r(\e^{m_1} \mu^{n_1}, \ldots, \e^{m_p} \mu^{n_p}) | \leq 
C \sum_{j=1}^p
 |\e|^{m_j} |\mu|^{n_j}
$, where  the constant $C:=C(\kappa,\tth)$ is uniform for $(\kappa,\tth)$ in any compact set of $\R_{\geq 0} \times \R_{>0}$.

\begin{theorem}[Benjamin--Feir unstable eigenvalues] \label{thm:main} 
Let  $(\kappa,\tth)\in \mathcal{U} $, c.f. \eqref{def:cU}.
There exist $\epsilon_1, \mu_0 > 0$ and an analytic function $\und{\mu} : [0, \epsilon_1) \to [0, \mu_0)$,
of the form
\begin{align} \label{und mu}
\und{\mu}(\epsilon) = \mathsf{e}_{\tth,\kappa} \epsilon (1 + r(\epsilon)), 
\qquad 
\mathsf{e}_{\tth,\kappa} := \sqrt{\frac{8 \mathsf{e}_{\mathrm{WB}}(\kappa,\tth)}{\mathsf{e}_{22}(\kappa,\tth)}},
\end{align}
such that, for any $\epsilon \in [0, \epsilon_1)$, the operator $\mathcal{L}_{\mu, \epsilon}$ has two eigenvalues $\lambda^{\pm}_1(\mu, \epsilon)$ of the form
\begin{equation}\label{eigelemu}
\begin{cases}
\im \frac{1}{2} \breve{\mathtt{c}}_{\tth,\kappa}\mu + \im r_{2}(\mu \epsilon^{2}, \mu^{2}\epsilon, \mu^{3}) 
\pm \tfrac{1}{8} \mu (1+r(\epsilon,\mu)) \, \sqrt{\Delta_{\mathrm{BF}}(\tth,\kappa;\mu,\epsilon)}, 
& \forall \mu \in [0, \mu(\epsilon)), \\[1em]
\im \frac{1}{2} \breve{\mathtt{c}}_{\tth,\kappa}\mu + \im r_{2}(\mu \epsilon^{2}, \mu^{2}\epsilon, \mu^{3}), 
& \mu = \mu(\epsilon), \\[1em]
\im \frac{1}{2} \breve{\mathtt{c}}_{\tth,\kappa}\mu + \im r_{2}(\mu \epsilon^{2}, \mu^{2}\epsilon, \mu^{3}) 
\pm \im \tfrac{1}{8} \mu (1+r(\epsilon,\mu))\, \sqrt{\left|\Delta_{\mathrm{BF}}(\tth,\kappa;\mu,\epsilon)\right|}, 
& \forall \mu \in (\mu(\epsilon), \mu_0)
\end{cases}
\end{equation}
where $\breve{\mathtt{c}}_{\tth,\kappa} := 2 \chk - \mathsf{e}_{12}(\kappa,\tth)$ and 
$\Delta_{\mathrm{BF}}(\kappa,\tth; \mu, \epsilon)$ is the \emph{Benjamin--Feir discriminant function}
\begin{align}\label{BFDF}
    \Delta_{\mathrm{BF}}(\kappa,\tth;\mu,\epsilon):=8\epsilon^2 \mathsf{e}_{22}(\kappa,\tth)\mathsf{e}_{\mathrm{WB}}(\kappa,\tth)+r_1(\e^3,\mu\e^2)-\mathsf{e}^2_{22}(\kappa,\tth)\mu^2(1+r_1''(\e,\mu))
\end{align}
where $\mathsf{e}_{\mathrm{WB}}(\kappa,\tth)$ in \eqref{def:eWB} and $\mathsf{e}_{22}(\kappa,\tth)$ in \eqref{def:e22}.
Note that, since  $(\kappa,\tth)\in \mathcal{U} $, for any $0 < \epsilon < \epsilon_1$ (depending on $(\kappa, \tth)$), 
the function $\Delta_{\mathrm{BF}}(\kappa,\tth; \mu, \epsilon)$ is positive, respectively $<0$, 
provided $0 < \mu < \mu(\epsilon)$, respectively $\mu > \mu(\epsilon)$.
\end{theorem}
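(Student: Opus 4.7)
The plan is to follow the Kato-based spectral reduction developed by Berti--Maspero--Ventura \cite{BMV1,BMV3,BMV_ed} for pure gravity waves, extending it to the capillary--gravity setting. Theorem \ref{thm:main} will be deduced from the more detailed Theorem \ref{Complete BF thm}. The first step is to reduce the infinite-dimensional eigenvalue problem for $\mathcal{L}_{\mu,\epsilon}$ near $\lambda=0$ to a finite-dimensional one. For $|\mu|,|\epsilon|$ small enough, the four eigenvalues bifurcating from the defective zero eigenvalue of $\mathcal{L}_{0,0}$ remain inside a small disk around $0$; by Kato's theory, the associated Riesz projector $P_{\mu,\epsilon}$ is analytic in $(\mu,\epsilon)$, and a transformation operator $U_{\mu,\epsilon}$ conjugates the restriction $\mathcal{L}_{\mu,\epsilon}|_{\mathrm{Ran}(P_{\mu,\epsilon})}$ to a $4\times 4$ matrix $\mathbf{L}_{\mu,\epsilon}$ written in a fixed basis of the generalized kernel of $\mathcal{L}_{0,0}$, with entries analytic in $(\mu,\epsilon)$.

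The second step is to exploit the Hamiltonian and reversible structure inherited from the Zakharov formulation via the Alinhac \eqref{alinhac} and Levi-Civita \eqref{LC} conjugations. These symmetries force $\mathbf{L}_{\mu,\epsilon}$ to be Hamiltonian with respect to the standard symplectic form $\mathbf{J}$ and reversible, which rigidly constrains its entries. I would then compute each entry up to the needed order using the small-amplitude Stokes expansion from Theorem \ref{Thm: Stokes expansion} and the explicit expansion of the coefficients of $\mathcal{L}_\epsilon$. The excluded loci enter naturally at this stage: resonances in $\fR$ must be avoided to construct the Stokes wave, while $\textup{D}_{\tth,\kappa}\neq 0$ is needed for the invertibility of a Schur complement used in the next step.

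The third step is a symplectic, reversibility-preserving block-diagonalization of $\mathbf{L}_{\mu,\epsilon}$ into two decoupled $2\times 2$ blocks, obtained by solving an analytic Sylvester-type equation whose solvability rests on $\textup{D}_{\tth,\kappa}\neq 0$. One block has purely imaginary eigenvalues; the other is the Benjamin--Feir block, whose trace produces the common drift $\im \tfrac12 \breve{\mathtt{c}}_{\tth,\kappa}\mu + \im r_2(\mu\epsilon^2,\mu^2\epsilon,\mu^3)$, and whose trace-free part has eigenvalues $\pm \tfrac18 \mu(1+r(\epsilon,\mu))\sqrt{\Delta_{\mathrm{BF}}(\kappa,\tth;\mu,\epsilon)}$ obtained by reading off the discriminant in \eqref{BFDF}. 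In the unstable region $\mathcal{U}$, where $\mathsf{e}_{22}\mathsf{e}_{\mathrm{WB}}>0$, the analytic implicit function theorem produces a unique analytic zero $\und{\mu}(\epsilon)$ with leading order \eqref{und mu}, and the three cases of \eqref{eigelemu} correspond to the sign of $\Delta_{\mathrm{BF}}$ on either side of this curve.

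The main obstacle is the explicit computation of the modulational coefficient $\mathsf{e}_{\mathrm{WB}}$. While $\mathsf{e}_{22}$ and $\mathsf{e}_{12}$ appear already at leading order in single entries of $\mathbf{L}_{\mu,\epsilon}$, the coefficient $\mathsf{e}_{\mathrm{WB}}$ emerges only after block-diagonalization, as a delicate cancellation between direct $\epsilon^2$ contributions and second-order Schur-complement corrections whose denominator is precisely $\textup{D}_{\tth,\kappa}$; this is the origin of the two fractions in \eqref{def:eWB}. Carrying this out for capillary--gravity waves requires pushing the Stokes expansion to order $\epsilon^3$ and the linearized operator to order $\epsilon^2$ in every entry, while also tracking new $\kappa$-dependent contributions that mix orders and produce the second-harmonic resonance encoded by $\fR_2$ in \eqref{denneq0}. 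This computation is the heaviest technical content underlying the theorem.
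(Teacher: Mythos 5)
Your proposal is correct and follows essentially the same strategy as the paper: Kato similarity reduction to a $4\times 4$ Hamiltonian and reversible matrix, explicit computation of entries from the Stokes and basis expansions, a symplectic block-decoupling solved via a Sylvester equation whose invertibility is governed by $\textup{D}_{\tth,\kappa}\neq 0$, and an implicit-function-theorem construction of $\underline{\mu}(\epsilon)$ from the resulting Benjamin--Feir discriminant. You correctly isolate the key structural point that $\mathsf{e}_{\mathrm{WB}}$ emerges only after this non-perturbative decoupling step as $\mathsf{e}_{11}$ plus a Schur-complement correction with denominator $\textup{D}_{\tth,\kappa}$, which is exactly what the paper's Lemma~\ref{Step of block-decoupling} and formula~\eqref{eWB} deliver (the paper also interposes a singular symplectic rescaling before that step, but this is a technical device you left implicit).
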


Let us make some comments:

\begin{enumerate}
    \item {\sc Benjamin-Feir eigenvalues.}
Whenever $(\kappa, \tth) \in \mathcal{U}$, 
 for values of the Floquet parameter $ 0<\mu <  \underline \mu (\e) $, the eigenvalues 
$\lambda^\pm_1 (\mu, \epsilon) $ have opposite non-zero real part, according to \eqref{eigelemu}.
The ``figure 8'' forms in the upper or lower semiplane provided the function
$\breve{\mathtt{c}}_{\tth,\kappa}$ does not vanish. 
We plot in Figure \ref{fig:DR} the zero set of $\breve{\mathtt{c}}_{\tth,\kappa}$, see  line 6.
In particular, in the zone above line 6, $\breve{\mathtt{c}}_{\tth,\kappa}>0$. In this case, the imaginary part \eqref{eigelemu} is positive as $\mu>0$ and the two eigenvalues 
$\lambda^\pm_1 (\mu,\epsilon) $ stays in the upper semiplane $ \text{Im} (\lambda) > 0 $. 
As $ \mu $ tends to $  \underline  \mu (\e)$, the two eigenvalues $\lambda^\pm_1 (\mu,\epsilon) $ 
 collide on the imaginary axis {\it far} from $ 0 $,
along which they  keep moving  for $ \mu > \underline  \mu (\e) $.
In particular for $ \mu \in [0, \underline \mu(\e)]$  
we obtain the upper part of  
the figure  ``8'', see Figure \ref{figure-eigth},  which is 
well approximated by the curves 
\begin{equation}\label{appdr}
\mu \mapsto \Big( \pm\frac{\mu}{8}  \sqrt{8\epsilon^2 \mathsf{e}_{22}(\kappa,\tth)\mathsf{e}_{\mathrm{WB}}(\kappa,\tth)-\mathsf{e}^2_{22}(\kappa,\tth)\mu^2},  \ 
\tfrac12 \breve{\mathtt c}_{\tth,\kappa} \mu \Big) \,  \ , 
\end{equation}
confirming the numerical results by   Deconinck-Trichtchenko \cite{DT}.
For $ \mu < 0 $ the operator $ {\mathcal L}_{\mu,\e} $
possesses the symmetric eigenvalues 
$ \overline{\lambda_1^{\pm} (-\mu,\e)} $ in the semiplane $ \text{Im} (\lambda) < 0 $, which give rise to the lower portion of the ``figure 8''. 

Viceversa, in the region  below curve 6 where the function $\breve{\mathtt{c}}_{\tth,\kappa}<0$,   the imaginary part \eqref{appdr} is negative as $\mu>0$ and the ``figure 8'' forms in the lower semiplane as 
$ \mu \in [0, \underline \mu(\e)]$  (of course the upper ``figure 8'' is formed when $\mu <0$ by  symmetry).

\begin{figure}[h!] 
  \centering
   \includegraphics[scale=0.5]{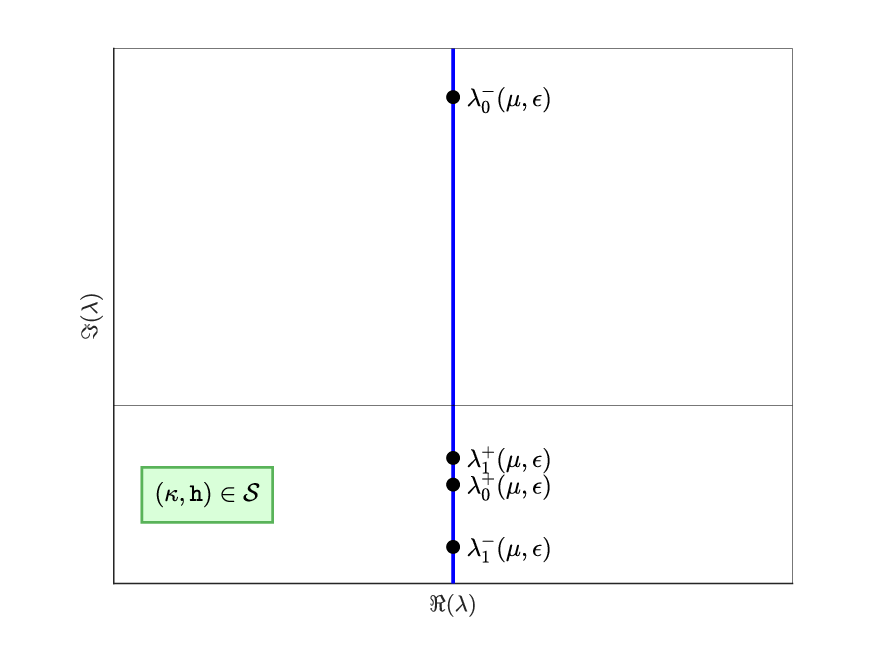}
  \includegraphics[scale=0.5]{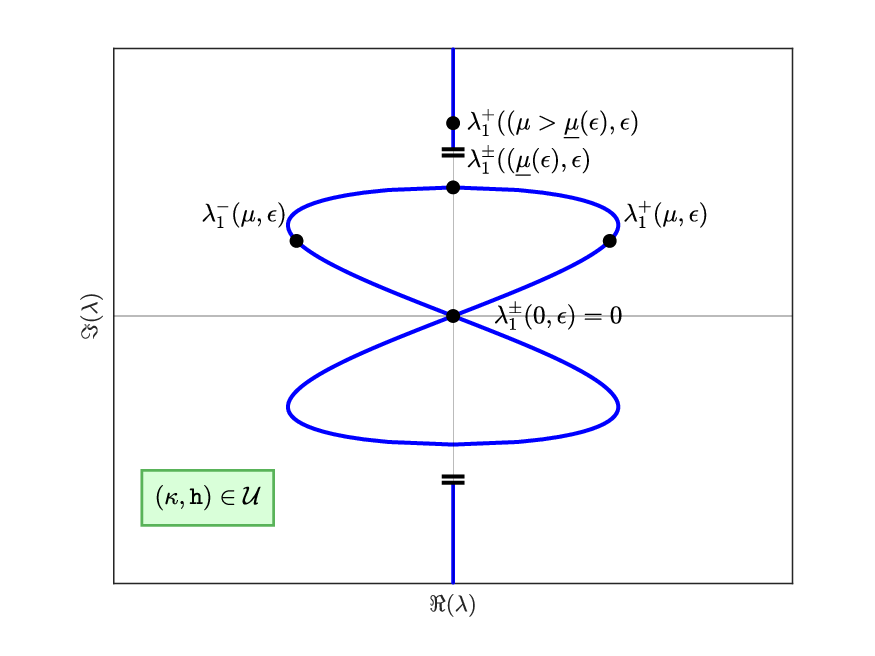}
  \caption{
    The left panel illustrates the eigenvalues $\lambda_{1}^{\pm}(\mu,\epsilon)$ and $\lambda_{0}^{\pm}(\mu,\epsilon)$ 
    in the regime $(\kappa,\tth)\in\mathcal{S}$, where they remain purely imaginary.
    In contrast, the right panel corresponds to the regime $(\kappa,\tth)\in\mathcal{U}$,
    showing the complex-plane trajectories of $\lambda_{1}^{\pm}(\mu,\epsilon)$ for fixed $|\epsilon| \ll 1$ as $\mu$ varies.
    The ``figure-eight'' shape depends on the depth $\tth$ and capillarity $\kappa$. As $\kappa = 0$, the spectrum agrees with~\cite{BMV3} and approaches the deep-water limit described in~\cite{BMV1} as $\tth \to +\infty$.}

  \label{figure-eigth}
\end{figure}

\item {\sc Stabilizing effect of the capillarity.}
Whatever the value of the depth $\tth>0$, it is possible to select the capillarity $\kappa$ so that  $ \Delta_{\mathrm{BF}}(\kappa, \tth) <0$ for $\e, \mu$ sufficiently small,  so that the linearized operator $\cL_{\mu,\e}$ has near the origin only purely imaginary eigenvalues. 
Indeed curve 4  in Figure \ref{fig:DR}, i.e. $\textup{D}_{\tth,\kappa} =0$,  is asymptotic to the line $\tth = \frac94 \kappa - \frac34$ as $\kappa \to \infty$,   see   Figure \ref{fig:asymptotic lines} orange line.
Curve 5, a subset of 
$\mathsf{e}_{\mathrm{WB}}(\kappa,\tth) = 0$, is asymptotic to the line $\tth = \frac94 \kappa - \frac{35}{4}$, see  Figure \ref{fig:asymptotic lines}  yellow line. Hence the stable region $\mathcal{S}$ contains asymptotically a  strip of fixed size  between curves 4 and 5, and values of $(\kappa, \tth)$ inside such strip give rise to Stokes wave linearly stable  near the origin.
\begin{figure}[h!] 
  \centering
  \includegraphics[scale=0.7]{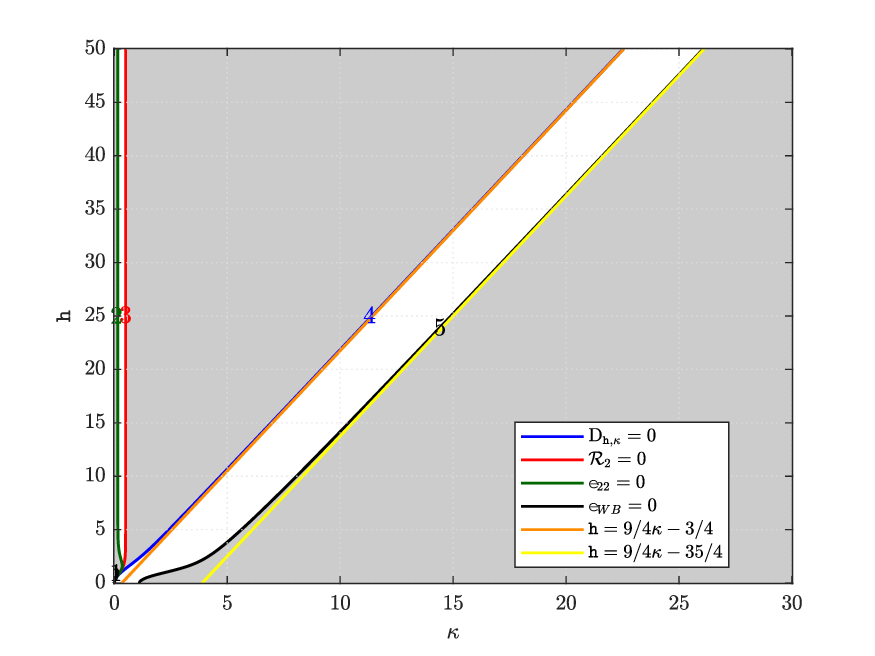} 
  \caption{As $\kappa \to \infty$, Curve~4 (given by $\mathrm{D}_{\tth,\kappa}=0$) approaches the straight line $\tth=\tfrac{9}{4}\kappa-\tfrac{3}{4}$ (orange).
Likewise, Curve~5—contained in the zero set $\mathsf{e}_{\mathrm{WB}}(\kappa,\tth)=0$—approaches the line $\tth=\tfrac{9}{4}\kappa-\tfrac{35}{4}$ (yellow).}

  \label{fig:asymptotic lines}
\end{figure}

    \item {\sc Relation with Djordjevic-Redekopp \cite{DjRe} and Ablowitz-Segur \cite{AbSe}.}
   Both Djordjevic-Redekopp and Ablowitz-Segur formally prove the linear instability of Stokes waves under long-wave perturbations via a modulational approximation with a Schr\"odinger equation. 
To make  a precise comparison with our result let us describe the  relation between our  functions  $\mathsf{e}_{\mathrm{WB}}(\kappa,\tth)$,  
$\mathsf{e}_{22}(\kappa,\tth)$ and the  coefficients obtained in \cite{DjRe,AbSe}.
In particular, denoting by $ \nu(\kappa, \tth)\equiv \nu$ and 
$\lambda (\kappa, \tth)\equiv \lambda$ the functions in   \cite[formula (2.17)]{DjRe}, we have 
\begin{equation}\label{id.DjRe}
\mathsf{e}_{\mathrm{WB}}(\kappa,\tth) =  \frac{1+\kappa}{2\ch^2}\nu(\kappa, \tth)   \ , \qquad \mathsf{e}_{22}(\kappa, \tth) = - 8 \lambda (\kappa, \tth) \ . 
\end{equation}
Note that  we put $g=1$ and the wavenumber $k =1$ in the formulas for $\nu(\kappa, \tth)$ and $\lambda (\kappa, \tth)$, in agreement with the choice in our paper. 
The identities \eqref{id.DjRe} are easily verified using Mathematica.
   In particular our  instability criterium in \eqref{def:cU}, namely 
   $\mathsf{e}_{\mathrm{WB}}(\kappa,\tth) \mathsf{e}_{22}(\kappa, \tth) > 0$, is equivalent to 
   $
   \nu(\kappa, \tth)\lambda (\kappa, \tth) < 0$ 
   which is the one described in \cite[Section 3]{DjRe}.

   In addition, our function $\mathsf{e}_{12}(\kappa, \tth)$ in \eqref{def:D} equals $2c_g$ in \cite[(2.7)]{DjRe}. 
The two singular curves described in \cite[Section 3]{DjRe}, namely $\kappa = \frac{\sigma^2}{3-\sigma^2}$, $\sigma = \ch^2$,  and $c_g^2 = \tth$, are respectively the singular  curves $\ch^4(1+\kappa) -3\kappa =0$ and 
   $
\textup{D}_{\tth,\kappa} = 0 $ where the denominator of $\mathsf{e}_{\mathrm{WB}}$  vanishes. 

    \item {\sc Relation with \cite{BMV3}.}
    Our result reduces to \cite{BMV3} at zero capillarity $\kappa = 0$. In particular the function 
 $\mathsf{e}_{\mathrm{WB}}(0,\tth)$ at zero-capillarity reduces to the function $\mathsf{e}_{\mathrm{WB}}$ of \cite[(1.1)]{BMV3}, hence it vanishes at the critical depth $\tth_{\mathrm{WB}} = 1.3627827\ldots$.

    \item {\sc Complete spectrum near $0$.}
    In Theorem \ref{thm:main} we have described just the two unstable eigenvalues of $\cL_{\mu,\e}$ close to zero for $ (\kappa, \tth) \in \mathcal{U}$. 
 There are also two larger 
purely imaginary eigenvalues of order $ \cO(\mu) $, see Theorem \ref{Complete BF thm}.  

\end{enumerate}

Before closing the introduction, we recall that  \cite{HY2} and \cite{SW}  prove that, 
provided the depth and capillarity are chosen in certain regions, 
a first isola of unstable spectrum is  present away from the origin.
Moreover, in \cite{SW} it is proved that the spectrum of the linearized operator is definitely purely imaginary outside a sufficiently large ball.
It would be interesting to combine such results with the analysis of isolae developed in \cite{BCMV} to give a complete description of the whole spectrum of the linearized gravity-capillary water waves operator.

Finally, we recall some related results: modulational instability under transversal perturbations has been studied in
\cite{CNS, CNS2,  JRSY,HTW};
high-frequencies instability of two-dimensional Stokes waves in the  pure gravity water waves was analyzed in \cite{HY, BMV4, BCMV}, together with numerical investigations in \cite{Mc1, Mc2, CD, BDS, CDT},  nonlinear modulational instability in \cite{ChenSu};
and various shallow-water or approximate water-wave models were also examined in \cite{HP, HJ, BHJ, BJ, GH, HK, BBHZ, BHR, MR}.

\section{The full Benjamin–Feir spectrum of gravity-capillary waves}
In this section we present the complete spectral Theorem \ref{Complete BF thm}.
First let us introduce the gravity-capillary water waves equations and the Stokes wave solutions.      \\
{\bf The water waves equations.} 
We consider the Euler equations of hydrodynamics for a 2-dimensional perfect and incompressible fluid  under the action of gravity and capillary forces at the free surface.
The fluid fills the region 
    \begin{align*}
       \mathcal{D}_\eta:=\{(x,y)\in \mathbb{R}\times \mathbb{R}: -\tth \leq y <\eta(t,x)\}\ ,
    \end{align*}
with finite depth. The irrotational velocity
field is the gradient of a harmonic scalar potential  $\Phi = \Phi(t,x,y)$ determined by  its trace $\psi(t,x):= \Phi(t, x,\eta(t,x))$ at the free surface $y=\eta(t,x)$. 
Actually $\Phi$ is
the unique solution of the elliptic equation $\Delta \Phi  = 0$ in $\mathcal{D}_\eta$  with Dirichlet datum
$\Phi(t, x,\eta(t, x))=\psi(t, x)$ and $\Phi_y(t,x,y) = 0$ at $y = - \tth$. 

 Imposing that the fluid particles
at the free surface remain on it along the evolution (kinematic boundary condition) and that the pressure of
the fluid plus the capillary forces at the free surface is equal to the constant atmospheric pressure (dynamic boundary condition), the time evolution of the fluid is determined by the non-local quasi-linear equations \cite{Zak68, CS}
\begin{equation} \label{Craig-Sulem formula}
\left\{\begin{aligned}
    \eta_t&=G(\eta)\psi\\
    \psi_t&=-g\eta-\frac{\psi_x^2}{2}+\frac{1}{2(1+\eta^2_x)}\left(G(\eta)\psi+\eta_x\psi_x\right)^2+\kappa\left(\frac{\eta_x}{(1+\eta_x^2)^{1/2}}\right)_x,
\end{aligned}\right.
\end{equation}
where $g>0$ is the gravity constant and $G(\eta):=G(\eta,\tth)$ denotes the Dirichlet-Neumann operator $[G(\eta)\psi](x):=\Phi_y(x,\eta(x))-\Phi_x(x,\eta(x))\eta_x(x)$. 
 In the sequel,
with no loss of generality, we set the gravity constant $g=1$.

System \eqref{Craig-Sulem formula} is Hamiltonian and can be written as
\begin{equation} \label{pa_t eta psi}
    \begin{aligned}
        \partial_t \begin{bmatrix}
\eta\\
\psi 
\end{bmatrix}=
\mathcal{J}\begin{bmatrix}
\nabla_\eta \mathcal{H}\\
\nabla_\psi \mathcal{H} 
\end{bmatrix}
    \end{aligned},~~\mathcal{J}:=
    \begin{bmatrix}
0& \mathrm{Id}\\
-\mathrm{Id} &0
\end{bmatrix},
\end{equation}
where $\nabla$ denote the $L^2$-gradient, and the Hamiltonian 
\begin{align} \label{mathcal H}
\mathcal{H}(\eta,\psi):=\int_{\mathbb{T}} \left(\frac{1}{2}\psi G(\eta)\psi+\frac{1}{2}\eta^2+\kappa(\sqrt{1+\eta^2_x}-1)\right) dx    
\end{align}
is the sum of the kinetic and potential energy of the fluid. 
In addition,  the water waves system \eqref{Craig-Sulem formula} is reversible with respect to the involution
\begin{equation} \label{rho involution}
    \begin{aligned}
        \rho \begin{bmatrix}
            \eta(x)\\
            \psi(x)
        \end{bmatrix}:=
        \begin{bmatrix}
            \eta(-x)\\
            -\psi(-x)
        \end{bmatrix},~~\mathrm{i.e.}~\mathcal{H}\circ \rho=\mathcal{H},
    \end{aligned}
\end{equation}
and it is space invariant.

 \paragraph{Stokes waves.} In a moving reference frame with constant speed $c$ (and with normalized gravity $g=1)$, the water waves system \eqref{Craig-Sulem formula} becomes
 \begin{equation} \label{in the reference frame}
\left\{\begin{aligned}
    \eta_t&=c\eta_x+G(\eta)\psi\\
    \psi_t&=c\psi_x-\eta-\frac{\psi_x^2}{2}+\frac{1}{2(1+\eta^2_x)}\left(G(\eta)\psi+\eta_x\psi_x\right)^2+\kappa\left(\frac{\eta_x}{(1+\eta_x^2)^{1/2}}\right)_x.
\end{aligned}\right.
\end{equation}
We consider small amplitude {\em Stokes waves solutions}, namely stationary solutions of \eqref{in the reference frame} which we further require to be  $2\pi$-periodic in space.
 The bifurcation of small-amplitude Stokes waves from the trivial solution was first studied for pure gravity water waves by Stokes \cite{stokes}, Levi-Civita \cite{LC}, Nekrasov \cite{Nek}, Struik \cite{Struik}. 
 In our setting, the existence and analyticity of  a bifurcating branch of traveling waves follows from the Crandall–Rabinowitz theorem. We denote $\mathbb{T}:=\mathbb{R}\setminus 2\pi\mathbb{Z}$ and $B(r):=\{x\in\mathbb{R}: |x|<r\}$ the open ball of radius $r$ centered at zero.
 \begin{theorem} [Stokes waves] \label{Thm: Stokes expansion} 
 Let $(\kappa, \tth) \in (\R_{\geq 0} \times \R_{>0}) \setminus \fR$, with  $\fR$  in \eqref{def:fR}. There 
 exist $\e_*:=\e_*(\kappa, \tth) >0$ and a unique family  of real analytic 
 solutions $(\eta_\e(x), \psi_\e(x), c_\e)$, parameterized by the amplitude $|\e| \leq \e_*$, of 
\begin{equation}\label{travelingWWstokes}
c \, \eta_x+G(\eta)\psi = 0 \, , \quad 
c \, \psi_x -  \eta - \dfrac{\psi_x^2}{2} + 
\dfrac{1}{2(1+\eta_x^2)} \big( G(\eta) \psi + \eta_x \psi_x \big)^2  +
\kappa\left(\frac{\eta_x}{(1+\eta_x^2)^{1/2}}\right)_x
= 0 \, , 
\end{equation}
  such that
 $ \eta_\e (x), \psi_\e (x) $ are $2\pi$-periodic;  $\eta_\e (x) $ is even
and $\psi_\e (x) $ is odd, of the form 
 \begin{equation}\label{exp:Sto}
 \begin{aligned}
  & \eta_\e (x) = \e \cos(x)+\e^2\left(\eta_2^{[0]}+\eta_2^{[2]}\cos(2x)\right) +\cO(\e^3)  , \\ 
  & \psi_\e (x)  =  \e \ch^{-1}(1+\kappa)^{\frac{1}{2}} \sin(x)+\e^2 \psi_2^{[2]}\sin(2x)+\cO(\e^3),  \\
  & c_\e = \chk  +\e^2 c_2+\cO(\e^3)\quad \text{where} \quad \chk := (1+\kappa)^{\frac{1}{2}}\ch= \sqrt{(1+\kappa)\tanh(\tth)} \, , 
   \end{aligned}
  \end{equation}
and 
  \begin{align}\label{expcoef}
 &\eta_{2}^{[0]} := \frac{1+\kappa}{4}\left(\ch^2-\ch^{-2}\right), \qquad 
\eta_{2}^{[2]} := -\frac{\left(\ch^4-3\right)\,\left(1+\kappa\right)}{4\,\ch^2\,\left( \ch^4(1+\kappa) -3\,\kappa\right) } , \qquad \\ \label{expcoef1}
&\psi_{2}^{[2]} :=  \frac{(1+\kappa)^{\frac{1}{2}}\,\left(9\,\kappa +3-6\,\ch^4\,\kappa +(1+\kappa)\ch^8\right)}{8\,\ch^3\,\left( \ch^4(1+\kappa) -3\,\kappa\right)}, \qquad 
\\  \label{expc2}
&c_2 := \frac{9\,\ch^4+
(6\,\ch^4+3\,\ch^8-12)(1+\kappa)+
(15-13\,\ch^4)\,{\left(1+\kappa\right)}^2 +
(-2\,\ch^{12} +10\,\ch^8-14\,\ch^4+ 6)\,{\left(1+\kappa\right)}^3}{16\,\ch^3\,(1+\kappa)^{\frac{1}{2}}\,\left(\ch^4(1+\kappa) -3\,\kappa\right)}.
\end{align}
More precisely for any  $ \sigma \geq  0 $ and $ {s > \frac72} $, there exists $ \e_*>0 $ such that
the map $\e \mapsto (\eta_\e, \psi_\e, c_\e)$ is analytic from $B(\e_*) \to H^{\sigma,s}_{\mathtt{ev}} (\T)\times H^{\sigma,s}_{\mathtt{odd}}(\T)\times \R$, where 
$ H^{\sigma,s}_{\mathtt{ev}}(\T) $, respectively $ H^{\sigma,s}_{\mathtt{odd}}(\T) $, denote the  space of even, respectively odd, 
 real valued $ 2 \pi $-periodic analytic functions
$ u(x) = \sum_{k \in \mathbb{Z}} u_k e^{\im k x} $
such that $ \| u \|_{\sigma,s}^2 := \sum_{k \in \mathbb{Z}} |u_k|^2 \langle k \rangle^{2s} 
e^{2 \sigma |k|} < + \infty$. 
\end{theorem}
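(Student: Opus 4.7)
The plan is to apply the Crandall--Rabinowitz bifurcation theorem to the traveling wave system \eqref{travelingWWstokes}, viewed as a real-analytic equation $F(c,\eta,\psi)=0$ on parity-restricted spaces of analytic functions, and then to derive the coefficients \eqref{expcoef}--\eqref{expc2} by matching Fourier modes order by order in $\e$. Let $F$ denote the left-hand side of \eqref{travelingWWstokes}. Using the reversibility \eqref{rho involution} and the fact that $G(\eta)$ preserves the even/odd splitting when $\eta$ is even, $F$ maps $\R\times H^{\sigma,s}_{\mathtt{ev}}(\T)\times H^{\sigma,s}_{\mathtt{odd}}(\T)$ into $H^{\sigma,s-1}_{\mathtt{odd}}(\T)\times H^{\sigma,s-2}_{\mathtt{ev}}(\T)$, provided $s>7/2$ so that both $G(\eta)$ and the capillary term $\kappa(\eta_x/\sqrt{1+\eta_x^2})_x$ are analytic with at most two derivatives lost. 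The parity restriction kills translation invariance, which is what allows the kernel of the linearization to be genuinely simple.

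Linearizing $F$ at $(\eta,\psi)=(0,0)$ gives
\begin{equation*}
L_c := D_{(\eta,\psi)}F(c,0,0) = \begin{pmatrix} c\,\pa_x & |D|\tanh(\tth|D|) \\ -1+\kappa\,\pa_x^2 & c\,\pa_x \end{pmatrix}.
\end{equation*}
On the $n$-th Fourier pair $(\cos nx,\sin nx)$ this reduces to a $2\times 2$ matrix with determinant $-c^2n^2+n(1+\kappa n^2)\tanh(n\tth)$, which at $c=\chk$ vanishes for $n=1$ and, for $n\geq 2$, exactly when $(\kappa,\tth)\in\fR_n$ -- indeed this is precisely the way $\fR_n$ is defined in \eqref{def:fR}. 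Hence the assumption $(\kappa,\tth)\notin\fR$ is exactly the nonresonance condition ensuring that $\ker L_{\chk}$ is one-dimensional, spanned by $v_0:=(\cos x,\ch^{-1}(1+\kappa)^{1/2}\sin x)$, and that $L_{\chk}$ is Fredholm of index zero thanks to uniform lower bounds on the determinants for $n\geq 2$. The transversality condition $\pa_c L_c\big|_{c=\chk}v_0\notin\mathrm{Range}(L_{\chk})$ reduces on the $n=1$ block to $\pa_c(c^2)|_{c=\chk}=2\chk\neq 0$. An analytic version of Crandall--Rabinowitz then yields the unique real-analytic branch $\e\mapsto(\eta_\e,\psi_\e,c_\e)$ with $(\eta_\e,\psi_\e)=\e v_0+\cO(\e^2)$, in the required spaces.

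To extract the explicit coefficients \eqref{expcoef}--\eqref{expc2}, substitute the Fourier ansatz \eqref{exp:Sto} into \eqref{travelingWWstokes} together with the Taylor expansion $G(\eta)=G_0+G_1(\eta)+G_2(\eta)+\cdots$ of the Dirichlet--Neumann operator about $\eta=0$, and then match powers of $\e$. At order $\e^2$, projection onto the zeroth and second Fourier modes gives three linear equations for $\eta_2^{[0]},\eta_2^{[2]},\psi_2^{[2]}$ whose determinant is proportional to $\ch^4(1+\kappa)-3\kappa$ -- precisely the quantity required to be nonzero by \eqref{denneq0}, i.e.\ $(\kappa,\tth)\notin\fR_2$ -- producing the formulas \eqref{expcoef} and \eqref{expcoef1}; reversibility forces the $\e^1$ correction of the speed to vanish. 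At order $\e^3$, projecting the equation onto $\ker L_{\chk}$ (the first Fourier mode) produces a scalar solvability condition from which $c_2$ is read off as in \eqref{expc2}. The main obstacle is precisely this $\e^3$ computation: one must expand the Dirichlet--Neumann operator up to cubic order, combine its contribution with the quadratic Bernoulli and capillary nonlinearities, and carefully track the $\kappa$-dependent cancellations that produce the polynomial structure of $c_2$ in \eqref{expc2}; once this bookkeeping is in place, the remaining ingredients are standard technology for small-amplitude bifurcation of water waves.
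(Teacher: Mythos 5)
Your proposal follows essentially the same route as the paper: Crandall--Rabinowitz on parity-restricted spaces for existence and analyticity (using $(\kappa,\tth)\notin\fR$ as the nonresonance condition guaranteeing a one-dimensional kernel, exactly as in the paper's Lemma \ref{lem:B0}), followed by an order-by-order Fourier expansion, with the $\e^2$ step reducing to a linear system whose determinant is proportional to $\ch^4(1+\kappa)-3\kappa$ and the $\e^3$ solvability condition determining $c_2$. The only cosmetic difference is that the paper obtains $c_1=0$ from the $O(\e^2)$ solvability (orthogonality) condition rather than from an a priori symmetry argument, and it first rewrites \eqref{travelingWWstokes} via $G(\eta)\psi=-c\eta_x$ to slightly simplify the expansion; both are computational conveniences, not a different approach.
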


The expansions in 
\eqref{exp:Sto}--\eqref{expc2} are proved in Appendix \ref{sec:App2}. 
The condition $(\kappa, \tth) \not \in \fR$ is used in Lemma \ref{lem:B0} to ensure that the kernel of the linearized operator at the flat surface is one-dimensional. 
Indeed $(\kappa, \tth) \not \in \fR$ 
is equivalent to ask that  the function
\begin{equation}\label{speedn}
n\mapsto \frac{\sqrt{(1+\kappa n^2) n \tanh(n\tth)}}{n} \ , 
\end{equation}
which is the quotient between the dispersion relation and the wave number, is injective on $\N$, which is enough for constructing Stokes waves with the fixed spatial period of $2\pi$.
The stronger celebrated Bond number condition
        \begin{equation}\label{Bond}
\frac{\kappa}{\tth^2} > \frac13
\end{equation}
guarantees that the function \eqref{speedn} is strictly monotone increasing on $(0, \infty)$. 
Remark that when $(\kappa, \tth) \in \fR$,  higher-order resonances happen. Nevertheless, traveling waves -- going under the name of Wilton ripples-- can be constructed  \cite{Wilton,reeder}.

\paragraph{Linearization.}
We linearize system
\eqref{in the reference frame} at the Stokes waves $(\eta_\epsilon(x),\psi_\epsilon(x))$ given in Theorem \ref{Thm: Stokes expansion} and evaluate $c$ at $c_\epsilon$. 
Using the shape derivative formula \cite{LD, LD book} 
$
\mathrm{d}_\eta G(\eta)[\hat \eta][\psi] = - G(\eta)(B\hat \eta) - \pa_x( V \hat \eta), 
$
where
the functions $(V(x),B(x))$ are the horizontal and vertical components of the velocity field $(\Phi_x,\Phi_y)$ at the free surface and are given by
\begin{align} \label{espV}
   V:= V(x)&:=-B(\eta_\epsilon)_x+(\psi_\epsilon)_x,\\ \label{espB}
  B:=  B(x)&:=\frac{G(\eta_\epsilon)\psi_\epsilon+(\psi_\epsilon)_x(\eta_\epsilon)_x}{1+(\eta_\epsilon)^2_x}=\frac{(\psi_\epsilon)_x-c_\epsilon}{1+(\eta_\epsilon)^2_x} (\eta_\epsilon)_x,
\end{align}
one obtains the real, autonomous linearized  system
\small
\begin{equation} \label{first linear eq}
    \begin{aligned}
        \begin{bmatrix}
            \hat{\eta}_t\\
            \hat{\psi}_t
        \end{bmatrix}=
        \left[\begin{array}{c|c} 
	 -G(\eta_\epsilon)B-\partial_x\circ(V-c_\epsilon) & G(\eta_\epsilon)  \\ 
	\hline 
	-1+B(V-c_\epsilon)\partial_x-B\partial_x\circ(V-c_\epsilon)-BG(\eta_\epsilon)\circ B+\kappa \partial_x \circ l \circ\partial_x & -(V-c_\epsilon)\partial_x+B G(\eta_\epsilon)
\end{array}\right] 
        \begin{bmatrix}
            \hat{\eta}\\
            \hat{\psi}
        \end{bmatrix},
    \end{aligned}
\end{equation}
\normalsize
where
\begin{align} \label{esp l}
   l:= l(x):=\frac{1}{(1+(\eta_\e)_x^2)^{3/2}}.
\end{align}
The map  $\e \to (V, B, l)$ is analytic as a map $B(\e_0) \to H^{\sigma, s-1}_{\mathtt{ev}}(\T) \times H^{\sigma, s-1}_{\mathtt{odd}}(\T) \times H^{\sigma, s-1}_{\mathtt{ev}}(\T) $. 
The real system \eqref{first linear eq} is Hamiltonian, i.e. of the form
$\cJ \cA$ with $\cA$ symmetric, $\cA = \cA^\top$, where the transpose is taken with respect to the real scalar product of $L^2(\T, \R) \times L^2(\T, \R)$.
Moreover the linear operator in \eqref{first linear eq} is reversible, namely it anti-commutes with the involution $\rho$ in \eqref{rho involution}. \\
Next, we conjugate \eqref{first linear eq} by using the time-independent ``good unknown of Alinhac'' linear transformation 
\begin{align}\label{alinhac}
    \begin{bmatrix}
        \hat{\eta}\\
        \hat{\psi}
    \end{bmatrix}:= Z
    \begin{bmatrix}
        u\\
        v
    \end{bmatrix},~~Z=\begin{bmatrix}
        1&0\\
        B&1
    \end{bmatrix},
    ~~Z^{-1}=\begin{bmatrix}
        1&0\\
        -B&1
    \end{bmatrix},
\end{align}
yielding the linear system 
\begin{align} \label{second linear eq}
    \begin{bmatrix}
        u_t\\
        v_t
    \end{bmatrix}=  \widetilde{\cL}_\e 
    \begin{bmatrix}
        u\\
        v
    \end{bmatrix} \ , \qquad 
    \widetilde{\cL}_\e := \begin{bmatrix}
        -\partial_x\circ (V-c_\epsilon)& G(\eta_\epsilon)\\
        -1-(V-c_\epsilon)B_x+\kappa \partial_x\circ l\circ \partial_x ~~& -(V-c_\epsilon)\partial_x
    \end{bmatrix} \ , 
\end{align}
which  is Hamiltonian and reversible since the transformation $Z$ is symplectic, $\mathrm{i.e.}$ $Z^T\mathcal{J} Z=\mathcal{J}$, and satisfies $Z\circ \rho=\rho\circ Z$.

Next, 
we perform a conformal change of variables to flatten 
the water surface. 
By \cite[Appendix A]{BBHM}, 
 there exists a diffeomorphism of $\mathbb{T}$,
 $ x\mapsto x+\mathfrak{p}(x)$, with a small $2\pi$-periodic  function $\mathfrak{p}(x)$, 
 and a small constant $\ttf $, such that, by defining the associated composition operator $ (\mathfrak{P}u)(x) := u(x+\mathfrak{p}(x))$, the Dirichlet-Neumann operator writes as \cite[Lemma A.5]{BBHM}
\begin{equation}\label{Gneta}
 G(\eta_\e) = \pa_x \circ \mathfrak{P}^{-1} \circ {\mathfrak H} \circ
 \tanh\big((\tth+\ttf)|D| \big)
 \circ \mathfrak{P} \, , 
\end{equation}
where $ {\mathfrak H} $ is the Hilbert transform, i.e. the  Fourier multiplier operator
$$
 \mathfrak{H}(e^{\im j x}):= - \im\, \textup{sign}(j) e^{\im j x} \, , 
 \quad  \forall j \in \Z \setminus \{0\} \, , 
 \quad \mathfrak{H}(1) := 0 \, . 
$$
The function $\mathfrak p(x)$ and the constant $\ttf $ are  determined as a fixed point  of 
(see  \cite[formula (A.15)]{BBHM})
\begin{equation} \label{def:ttf}
\mathfrak{p}  =  \frac{\mathfrak{H}}{\tanh \big((\tth + \ttf)|D| \big)}[\eta_\e ( x + \mathfrak{p}(x))] \, , 
 \qquad 
 \ttf:= \frac{1}{2\pi} \int_\T \eta_\e (x +  \mathfrak{p}(x)) \de x \, . 
  \end{equation}
  As proved in \cite{BMV3}, the map $\e \to (\mathfrak{p}, \ttf)$ is analytic as a map $B(\e_0) \to H^s_{\mathtt{odd}}(\T) \times \R$.
In addition, in  Appendix \ref{sec:App2} we prove the expansion
\begin{equation}
 \label{expfe}
 \begin{aligned}
&  \mathfrak p(x)  = \e \ch^{-2} \sin(x) +\e^2 \frac{\left(\ch^4+1\right)\,\left(\ch^4\,\kappa -3\,\kappa +\ch^4+3\right)}{8\,\ch^4\,\left(\ch^4\,(1+\kappa) -3\,\kappa\right)} \sin(2x)+\cO(\e^3) \, ,\\  
&   \ttf =
   \e^2 \frac{\ch^{4}(1+\kappa)-3-\kappa}{4\,\ch^2} +
   \cO(\e^3) \, .
   \end{aligned}
 \end{equation}
Under the symplectic and reversibility-preserving change of variables 
\begin{align}\label{LC}
    h=\mathcal{P}\begin{bmatrix}
        u\\
        v
\end{bmatrix},~~\mathcal{P}=\begin{bmatrix}
    (1+\mathfrak{p}_x)\mathfrak{P} & 0\\
    0 & \mathfrak{P}
\end{bmatrix} \ , 
\end{align}
one  transforms the system \eqref{second linear eq} into the linear system $h_t=\mathcal{L}_\epsilon h$ where $\mathcal{L}_\epsilon$ is the Hamiltonian and reversible real operator
\begin{equation} \label{mathcal L e}
\begin{aligned}
    \mathcal{L}_\epsilon:= \mathcal{L}_{\epsilon}(\kappa, \tth) :=\mathcal{P} \widetilde{\cL}_\e \mathcal{P}^{-1} = &\begin{bmatrix}
\partial_x\circ(\chk+p_\epsilon(x)) &  |D| \tanh((\tth + \ttf)|D|)\\
        -(1+a_\epsilon(x))+\kappa \Sigma_\e & ~~(\chk+p_\epsilon(x))\partial_x
    \end{bmatrix}\\
    =&\begin{bmatrix}
        0& \mathrm{Id}\\
        -\mathrm{Id}& 0
    \end{bmatrix}\begin{bmatrix}
 (1+a_\epsilon(x))-\kappa \Sigma_\e& ~~-(\chk+p_\epsilon(x))\partial_x\\
 \partial_x\circ(\chk+p_\epsilon(x))& ~~|D| \tanh((\tth + \ttf)|D|)
    \end{bmatrix},
\end{aligned}    
\end{equation}
where the functions $p_\e(x)$ and $a_\e(x)$ are given by
\begin{equation}\label{def:pa}
\chk+p_\e(x) :=  \displaystyle{\frac{ c_\e-V(x+\mathfrak{p}(x))}{ 1+\mathfrak{p}_x(x)}} \, , \quad 1+a_\e(x):=   \displaystyle{\frac{1+ (V(x + \mathfrak{p}(x)) - c_\e)
 B_x(x + \mathfrak{p}(x))  }{1+\mathfrak{p}_x(x)}} \,, 
\end{equation}
and the operator $\Sigma_\e$ is given by
\begin{equation} \label{def:Sigma g}
\begin{aligned}
  &   \Sigma_\e: =\frac{1}{1+\mathfrak{p}_x}\partial_x \circ g_\e(x)\circ \partial_x \circ \frac{1}{1+\mathfrak{p}_x},\\
  &  g_\e(x): = \frac{l(x+\mathfrak{p}(x))}{1+\mathfrak{p}_x}=\frac{1}{(1+((\eta_\epsilon)_x^2(x+\mathfrak{p}(x)))^{3/2}}\frac{1}{1+\mathfrak{p}_x}.
\end{aligned}    
\end{equation}
By the analyticity result of the map $\e \mapsto (V, B, l)$ given above,  the map
$\e \to (p_\e, a_\e, g_\e)$ is analytic as a map $B(\e_0) \to H^s_{\mathtt{ev}}(\T)\times H^s_{\mathtt{ev}}(\T) \times H^s_{\mathtt{ev}}(\T)$.
In Appendix \ref{sec:App2} we provide their Taylor expansions, that we collect in the following lemma:
\begin{lemma}\label{lem:pa.exp}
The analytic functions $p_\e (x) $ and $a_\e (x) $  in \eqref{def:pa} 
are even in $ x $, and
\begin{equation}\label{SN1}
p_\e (x)  
= \e p_1 (x) + \e^2 p_2 (x)  + \cO(\e^3) \, , \qquad   
a_\e (x)  
= \e a_1(x) +\e^2 a_2 (x) + \cO(\e^3) \, , 
\end{equation}
where
\begin{align}\label{pino1fd}
     p_1(x) & =   p_1^{[1]} \cos(x)\, , \qquad \quad p_1^{[1]} := -2(1+\kappa)^{\frac{1}{2}}\ch^{-1},\\
 \label{pino2fd}
    p_2(x) & =p_2^{[0]}+p_2^{[2]}\cos(2x),\\
   \label{pino2fd 1}
    & p_2^{[0]} := 
    \frac{9\,\ch^4+
(60-18\ch^4+3\ch^8)(1+\kappa) +
(-57+35\ch^4-8\ch^8){\left(1+\kappa \right)}^2
+(6-14\ch^4+10\ch^8-2\ch^{12}){\left(1+\kappa\right)}^3
}{   16(1+\kappa)^{\frac{1}{2}} \ch^3\left(\ch^4(1+\kappa) -3\kappa \right)},\\
\label{pino2fd 2}
   & p_2^{[2]} :=-\frac{(1+\kappa)^{\frac{1}{2}}\,\left(2\kappa (3 -\,\ch^4) +\ch^4+3\right)}{2\,\ch^3\,\left(\ch^4\,(1+\kappa) -3\,\kappa \right)},
\end{align}
and 
 \begin{align} 
a_1(x) \label{aino1fd} &= a_1^{[1]}\cos(x)\, , \qquad \qquad 
a_1^{[1]}:= -(\ch^{-2}+\ch^2\,\left(1+\kappa\right))\, , \\
a_2(x)& \label{aino2fd} = a_2^{[0]}+a_2^{[2]}\cos(2x)\, ,\quad  \, a_2^{[0]}:=\frac{3(1+\kappa)\ch^4+1}{2\ch^4}, \\
\label{a[2]2}
& a_2^{[2]} := \frac{(10\ch^8-30\ch^4)(1+\kappa)^2+(-\ch^8+22\ch^4-3)(1+\kappa)-6\ch^4}{4\,\ch^4\,\left(\ch^4\,(1+\kappa) -3\,\kappa \right)}.
\end{align}  
The function $g_\e(x)$ in \eqref{def:Sigma g} is even in $x$ and expands as
\begin{equation} \label{SN1 g}
    g_\e(x) = 1+ \e g_1(x)  + \e^2 g_2(x) + \cO(\e^3),
\end{equation}
where
\begin{align}\label{exp:g1}
    g_1(x) &= g_1^{[1]}\cos(x),\, \qquad \qquad g_1^{[1]}:=-\ch^{-2},\\
    \label{exp:g2}
    g_2(x) &=g_2^{[0]}+g_2^{[2]}\cos(2x),\, \qquad g_2^{[0]}:=\frac{2-3\ch^4}{4\ch^4}, \ \ g_2^{[2]}:=-\frac{3\,\kappa +5\,\ch^4\,\kappa -2\,\ch^8\,\kappa +2\,\ch^4-2\,\ch^8+3}{4\,\ch^4\,\left(\ch^4\,(1+\kappa) -3\,\kappa \right)}.
\end{align}
Finally the self-adjoint operator $\Sigma_\e$ in \eqref{def:Sigma g} expands as
\begin{equation} \label{Sigma epsilon}
    \Sigma_\e = \pa_{xx} + \e \Sigma_1 + \e^2 \Sigma_2 + \cO(\e^3), 
\end{equation}
where
\begin{align}\label{Sigma 1}
    \Sigma_j:=& d_j(x)\,\pa_{xx}+e_j(x)\pa_{x} + h_j(x), \quad j=1,2 
\end{align}
and
\begin{align}
\label{d1x}
    d_1(x)&=d_1^{[1]} \cos(x)\,, \quad d_1^{[1]} := -3\ch^{-2}\, ,  \\
    \label{d2x}
     e_1(x)& = e_1^{[1]} \sin(x) \,, \quad e_1^{[1]} := 3\ch^{-2}, \\ \label{h1 h11}
     h_1(x)& = h_1^{[1]}\cos(x) \,, \quad h_1^{[1]} := \ch^{-2},\\
    d_2(x)&= d_2^{[0]} + d_2^{[2]} \cos(2x)  \,, \quad 
    d_2^{[0]}:= \frac{3(4-\ch^4)}{4 \ch^4} \,, 
    \ \ 
    d_2^{[2]}:= -\frac{9\,\left(-\kappa \,\ch^4+3\,\kappa +1\right)}{4\,\ch^4 \left(\ch^4\,(1+\kappa) -3\,\kappa \right)}\,,
    \\
     e_2(x)& =  e_2^{[2]} \sin(2x)  \,,  \ \ 
     e_2^{[2]} := \frac{-9\,\kappa \,\ch^4+27\,\kappa +9}{2\,\ch^4\,\left(\ch^4\,(1+\kappa) -3\,\kappa \right)}\,,
     \\
     h_2(x)& = h_2^{[0]} + h_2^{[2]} \cos(2x) \,, \ \ 
     h_2^{[0]}:= -\frac{1}{2}\ch^{-4} \,, 
    \ \ \label{h2 h02}
    h_2^{[2]}:=\frac{15\,\kappa -11\,\ch^4\,\kappa +2\,\ch^8\,\kappa +\ch^4+2\,\ch^8+6}{2\,\ch^4\,\left(\ch^4\,(1+\kappa) -3\,\kappa \right)}.
\end{align}
\end{lemma}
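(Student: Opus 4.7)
The plan is a systematic Taylor expansion, feeding the Stokes expansions \eqref{exp:Sto}--\eqref{expc2} and the conformal expansion \eqref{expfe} into the defining formulas \eqref{def:pa}, \eqref{def:Sigma g} and collecting by powers of $\e$. The parity claims come first and for free from the symmetries available in the excerpt. By Theorem \ref{Thm: Stokes expansion}, $\eta_\e$ is even and $\psi_\e$ is odd, and by \eqref{expfe} $\mathfrak{p}$ is odd, so $\mathfrak{p}_x$ is even. Hence $(\eta_\e)_x$ is odd, $(\psi_\e)_x$ is even, $G(\eta_\e)\psi_\e$ is odd, and from \eqref{espV}--\eqref{espB} both $V$ and $B$ are even. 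Composition with the odd diffeomorphism $x\mapsto x+\mathfrak{p}(x)$ preserves parity, so $V(x+\mathfrak{p}(x))$, $B_x(x+\mathfrak{p}(x))$, and $l(x+\mathfrak{p}(x))$ are even, and dividing by $1+\mathfrak{p}_x$ preserves evenness. Therefore $p_\e$, $a_\e$, $g_\e$ are all even.

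Next I would compute order by order. At order $\e$ the algebra is short: $V = (\psi_\e)_x + \cO(\e^2) = -\e\,\ch^{-1}(1+\kappa)^{\frac12}\cos(x)(-1)\cdot(-1) + \cO(\e^2)$ and $B = -\chk(\eta_\e)_x + \cO(\e^2)$; substituting into $\chk + p_\e = (c_\e - V(x+\mathfrak{p}))/(1+\mathfrak{p}_x)$ and expanding $1/(1+\mathfrak{p}_x) = 1 - \mathfrak{p}_x + \mathfrak{p}_x^2 - \cdots$, then using $p_1(x)=\mathfrak{p}_x\chk - V_1/\e$ immediately gives $p_1^{[1]} = -2(1+\kappa)^{\frac12}\ch^{-1}$. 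Likewise $a_1(x)$ is obtained from $1+a_\e = (1+(V-c_\e)B_x(x+\mathfrak{p}))/(1+\mathfrak{p}_x)$, and $g_1(x)$ from $g_\e = l(x+\mathfrak{p})/(1+\mathfrak{p}_x)$ together with $l(x) = 1 - \tfrac{3}{2}(\eta_\e)_x^2 + \cO(\e^4)$.

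At order $\e^2$ the computation becomes bulky but follows the same recipe. One needs (a) the second-order Taylor expansion of the Dirichlet--Neumann operator $G(\eta) = |D|\tanh(\tth|D|) + G_1(\eta) + G_2(\eta) + \cdots$ evaluated on the Stokes profile to get $G(\eta_\e)\psi_\e$ up to $\cO(\e^3)$; (b) the $\e^2$ term of $\mathfrak{p}$ and the first-nontrivial term $\ttf=\cO(\e^2)$ which enters $G(\eta_\e)$ through \eqref{Gneta}; (c) the cross terms from the chain rule when composing $V, B$ with the shift $x\mapsto x+\mathfrak{p}(x)$, namely $V(x+\mathfrak{p}) = V(x) + V_x(x)\mathfrak{p}(x) + \cO(\e^3)$ and similarly for $B_x$. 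Substituting and expanding $1/(1+\mathfrak{p}_x) = 1 - \e\mathfrak{p}_x^{(1)} + \e^2(\mathfrak{p}_x^{(1)})^2 - \e^2\mathfrak{p}_x^{(2)} + \cdots$, one collects all trigonometric products $\cos x \cos x$, $\sin x \sin x$, etc.\ via product-to-sum formulas to obtain the $[0]$ and $[2]$ Fourier coefficients $p_2^{[0]}, p_2^{[2]}$, $a_2^{[0]}, a_2^{[2]}$, $g_2^{[0]}, g_2^{[2]}$. Finally, to expand $\Sigma_\e$, one applies the Leibniz rule to $\frac{1}{1+\mathfrak{p}_x}\pa_x\circ g_\e \circ \pa_x\circ \frac{1}{1+\mathfrak{p}_x}$; the $\partial_{xx}$ coefficient is $g_\e/(1+\mathfrak{p}_x)^2$, giving the $d_j$; the $\pa_x$ coefficient is $\pa_x[g_\e/(1+\mathfrak{p}_x)^2] - 2(g_\e/(1+\mathfrak{p}_x)^2)(\mathfrak{p}_{xx}/(1+\mathfrak{p}_x))$ type combinations, giving the $e_j$; and the multiplication operator coefficient $h_j$ comes from $\pa_x(g_\e/(1+\mathfrak{p}_x))\cdot \pa_x(1/(1+\mathfrak{p}_x))$ and its companions.

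The main obstacle is not conceptual but purely algebraic bookkeeping at order $\e^2$: the coefficients involve simultaneously $\eta_2^{[0]}, \eta_2^{[2]}, \psi_2^{[2]}, c_2$, the second-order $\mathfrak{p}$ and $\ttf$, quadratic self-interactions of the order-$\e$ profiles, and the quadratic part of the Dirichlet--Neumann operator. The final closed forms \eqref{pino2fd 1}--\eqref{h2 h02} contain the characteristic denominator $\ch^4(1+\kappa) - 3\kappa$, which reflects the second harmonic resonance excluded by $\fR_2$; this denominator emerges already in $\eta_2^{[2]}, \psi_2^{[2]}, c_2$ (see \eqref{expcoef}--\eqref{expc2}) and propagates into every order-$\e^2$ coefficient through the combinations above. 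To manage the size, I would keep all Fourier modes symbolic, reduce trigonometric products via sum-to-product formulas, express everything in terms of $\ch^2 = \tanh(\tth)$ and $\kappa$, and cross-verify with a symbolic algebra system; at $\kappa = 0$ the output must reduce to the analogous coefficients in \cite{BMV3}, which serves as a built-in consistency check.
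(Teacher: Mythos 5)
Your approach is essentially identical to the paper's: Taylor-expand $p_\e, a_\e, g_\e$ from the definitions \eqref{def:pa}, \eqref{def:Sigma g} by substituting the Stokes expansions \eqref{exp:Sto}--\eqref{expcoef} and the conformal expansions \eqref{expfe} of $\mathfrak{p}$, $\ttf$, using the chain rule for $V(x+\mathfrak{p}(x))$, $B_x(x+\mathfrak{p}(x))$, $l(x+\mathfrak{p}(x))$ and the geometric series for $1/(1+\mathfrak{p}_x)$, then collecting Fourier modes order by order. Your parity argument and the observation that the denominator $\ch^4(1+\kappa)-3\kappa$ propagates from $\eta_2^{[2]},\psi_2^{[2]},c_2$ are exactly what drive the appendix computation.

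One detail is off in your description of the $\Sigma_\e$ expansion. Writing $q := 1/(1+\mathfrak{p}_x)$, a single application of Leibniz to $\Sigma_\e u = q\,\partial_x\!\left(g_\e\,\partial_x(qu)\right)$ gives the coefficients exactly as
\[
\Sigma_\e = \bigl(g_\e q^2\bigr)\,\partial_{xx}
\;+\; \partial_x\!\bigl(g_\e q^2\bigr)\,\partial_x
\;+\; q\bigl(g_\e'\,q' + g_\e\,q''\bigr),
\]
so the $\partial_x$-coefficient is \emph{exactly} $\partial_x(g_\e q^2)$ (your extra subtraction $-2(g_\e q^2)(\mathfrak{p}_{xx}/(1+\mathfrak{p}_x))$ would double-count, since that term is already contained in $\partial_x(g_\e q^2)$), and the multiplication coefficient is $q(g_\e'q'+g_\e q'')$, whose leading $\cO(\e)$ contribution comes from the $g_\e q q''\approx -\e\mathfrak{p}_1'''$ piece and not from $\partial_x(g_\e q)\cdot\partial_x q$, which is $\cO(\e^2)$. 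Matching these corrected coefficients to \eqref{expfe}, \eqref{exp:g1}--\eqref{exp:g2} is what produces \eqref{d1x}--\eqref{h2 h02}, as in the paper's equations (A.15)--(A.16) of Appendix~\ref{sec:App2}. The rest of your outline is sound, and your suggestion to cross-check against \cite{BMV3} at $\kappa=0$ is indeed the right consistency test.
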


\paragraph{Bloch-Floquet expansions.} 
Since the operator $\mathcal{L}_\e$ in \eqref{mathcal L e} has $2\pi$-periodic coefficients, Bloch-Floquet theory  \cite{RS78} implies that $\lambda\in\mathbb{C}$ belongs to the $L^2(\mathbb{R})-$spectrum of $\mathcal{L}_\e$ if and only if there exists a nontrivial Floquet–Bloch mode $\tilde{h}(x)=e^{\im \mu x} v(x)$, where $v$ is $2\pi-$periodic and $\mu\in[-\frac{1}{2},\frac{1}{2})$, such that $\lambda \tilde{h}=\mathcal{L}_\e \tilde{h}$, or equivalently, $\lambda v=e^{-\im \mu x}\mathcal{L}_\e e^{\im \mu x}v$. Therefore, we obtain
\begin{align*}
    \sigma_{L^2(\mathbb{R})}(\mathcal{L}_\e)=\bigcup_{\mu\in[-\frac{1}{2},\frac{1}{2})} \sigma_{L^2(\mathbb{T})}(\mathcal{L}_{\mu,\e})~~\mbox{where}~~~\mathcal{L}_{\mu,\e}:=e^{-\im\mu x}\mathcal{L}_\e e^{\im\mu x}.
\end{align*}
In particular, if $\lambda$ is an eigenvalue of $\mathcal{L}_{\mu,\e}$ on $L^2(\mathbb{T},\mathbb{C}^2)$ with eigenvector $v(x)$, then $h(t,x)=e^{\lambda t}e^{\im \mu x}v(x)$ solves $h_t=\mathcal{L}_\e h$. We remark that: \\
$(i)$ if $A=Op(a)$ is a pseudo-differential operator with symbol $a(x,\xi)$, which is $2\pi$-periodic in $x$, then $A_\mu:=e^{-\im\mu x}A e^{\im\mu x}=Op(a(x,\xi+\mu))$. \\
$(ii)$ If $A$ is a real operator then $\overline{A_\mu}=A_{-\mu}$. As a consequence the spectrum $\sigma(A_{-\mu})=\overline{\sigma(A_\mu)}$ and we can study $\sigma(A_\mu)$ just for $\mu>0$.\\
$(iii)$  $\sigma(A_\mu)$ is a $1$-periodic set with respect to $\mu$, so one can restrict to $\mu\in[0,\frac{1}{2})$.

By the previous remarks the Floquet operator associated with the real operator $\mathcal{L}_\e$ in \eqref{mathcal L e} is the complex Hamiltonian and reversible operator
\begin{equation} \label{mathcal L mu e}
\begin{aligned}
    \mathcal{L}_{\mu,\e}:=&\begin{bmatrix}
(\partial_x+\im\mu)\circ(\chk+p_\epsilon(x)) &  |D+\mu| \tanh((\tth + \ttf)|D+\mu|)\\
        -(1+a_\epsilon(x))+\kappa \Sigma_{\mu,\e} & ~~(\chk+p_\epsilon(x))(\partial_x+\im\mu)
    \end{bmatrix}\\
    =&\underbrace{\begin{bmatrix}
        0& \mathrm{Id}\\
        -\mathrm{Id}& 0
    \end{bmatrix}}_{=:\mathcal{J}}\underbrace{\begin{bmatrix}
 (1+a_\epsilon(x))-\kappa \Sigma_{\mu,\e}& ~~-(\chk+p_\epsilon(x))(\partial_x+\im\mu)\\
 (\partial_x+\im\mu)\circ(\chk+p_\epsilon(x))& ~~|D+\mu| \tanh((\tth + \ttf)|D+\mu|)
    \end{bmatrix}}_{=:\mathfrak{B}_{\mu,\e}},
\end{aligned}    
\end{equation}
where $\Sigma_{\mu,\e}:=e^{-\im\mu x}\Sigma_\e e^{\im\mu x}$ is given by the selfadjoint operator
\begin{align}\label{Sigma}
    \Sigma_{\mu,\e}=\frac{1}{1+\mathfrak{p}_x}(\partial_x + \im \mu) \circ g_\e(x)\circ (\partial_x+\im\mu) \circ \frac{1}{1+\mathfrak{p}_x} \ . 
\end{align}

We regard $\mathcal{L}_{\mu,\e}$ as an operator with domain $Y:=H^2(\mathbb{T},\mathbb{C})\times H^1(\mathbb{T},\mathbb{C})$ and range $X:=L^2(\mathbb{T},\mathbb{C})\times L^2(\mathbb{T},\mathbb{C})$, equipped with the complex scalar product
\begin{align} \label{complex product}
    (f,g):=\frac{1}{2\pi}\int_0^{2\pi} \left(f_1\overline{g_1}+f_2\overline{g_2} \right)\,\mathrm{d}x,~~\forall~f=\vet{f_1}{f_2},~~g=\vet{g_1}{g_2}\in L^2(\mathbb{T},\mathbb{C}^2).
\end{align}
We also denote $\|f\|^2=(f,f)$.

The complex operator $\mathcal{L}_{\mu,\e}$ in \eqref{mathcal L mu e} is complex Hamiltonian and reversible. Recall that if $\mathcal{L} : Y \to X$ is a complex linear operator, we say that it is 
\begin{itemize}
    \item \textbf{Complex Hamiltonian:} if there exists a self-adjoint operator, namely $\mathcal{B}=\mathcal{B}^*$, where $\mathcal{B}^*$ (with domain $Y$) is the adjoint with respect to the complex scalar product \eqref{complex product} such that $\mathcal{L} = \mathcal{J} \mathcal{B}$.

    \item \textbf{Reversible:} if 
    \begin{align} \label{reversible}
    \mathcal{L} \circ \overline{\rho} = -\overline{\rho} \circ \mathcal{L}, \quad \text{where} \quad 
    \overline{\rho} \begin{bmatrix} \eta(x) \\ \psi(x) \end{bmatrix} := 
    \begin{bmatrix} \overline{\eta}(-x) \\ -\overline{\psi}(-x) \end{bmatrix} \ . 
    \end{align}
\end{itemize}

The property \eqref{reversible} for $\mathcal{L}_{\mu,\e}$ follows because $\mathcal{L}_\e$ is a real operator which is reversible with respect to the involution $\rho$ in \eqref{rho involution}. Equivalently, since $\mathcal{J}\circ \overline{\rho}=-\overline{\rho}\circ \mathcal{J}$, the self-adjoint operator $\mathfrak{B}_{\mu,\e}$ is reversibility-preserving, $\mathrm{i.e.}$
\begin{align}\label{B rho=rho B}
    \mathfrak{B}_{\mu,\e}\circ\overline{\rho}=\overline{\rho}\circ \mathfrak{B}_{\mu,\e}.
\end{align}
In addition $(\mu,\e)\rightarrow \mathcal{L}_{\mu,\e}\in\mathcal{L}( Y,X)$ is analytic, since the functions $\e \mapsto a_\e, ~p_\e$ and $g_\e$ defined in \eqref{SN1}, \eqref{SN1 g} are analytic and $\mathcal{L}_{\mu,\e}$ is analytic with respect to $\mu$, since, for any $\mu\in[-\frac{1}{2},\frac{1}{2})$,
\begin{align} \label{DtanhD}
    |D+\mu|\tanh\Big((\tth+\ttf)|D+\mu|\Big)=(D+\mu)\tanh\Big((\tth+\ttf)(D+\mu)\Big).
\end{align}
Recall also the identity \cite[Section 5.1]{NS}
\begin{align} \label{D+mu}
    |D+\mu|=|D|+\mu(\mathrm{sgn}(D)+\Pi_0),~~{\forall \mu\in[0,\frac{1}{2})},
\end{align}
where $\mathrm{sgn}(D)$ is the Fourier multiplier operator, acting on $2\pi$-periodic functions, with symbol
\begin{equation} \label{sgn D}
\mathrm{sgn}(k):=1 \ \ \forall k >0 \ , \ \
\mathrm{sgn}(0):=0  , \ \ \ 
\mathrm{sgn}(k):= -1 \ \ \forall k < 0 \  , 
\end{equation}
and $\Pi_0$ is the projector operator on the zero mode, $\Pi_0 f(x):=\frac{1}{2\pi}\int_\mathbb{T} f(x) dx$.

\smallskip 

Our goal is to prove the existence of eigenvalues of $\mathcal{L}_{\mu,\e}$ in \eqref{mathcal L mu e} with non zero real part. We remark that the Hamiltonian structure of $\mathcal{L}_{\mu,\e}$ implies that eigenvalues with non zero real part may arise only from multiple eigenvalues of $\mathcal{L}_{\mu,0}$ (``Krein criterion''), because if $\lambda$ is an eigenvalue of $\mathcal{L}_{\mu,\e}$ then also $-\overline{\lambda}$ is, and the total algebraic multiplicity of the eigenvalues is conserved under small perturbation. We now describe the spectrum of $\mathcal{L}_{\mu,0}$.

\paragraph{The spectrum of $\mathcal{L}_{\mu,0}$.} The spectrum of the Fourier multiplier matrix operator 
\begin{equation} \label{mathcal L mu 0}
\begin{aligned}
\mathcal{L}_{\mu,0}:=&\begin{bmatrix}
\chk(\partial_x+\im\mu)&  |D+\mu| \tanh(\tth|D+\mu|)\\
        -1+\kappa (\pa_x + \im \mu)^2 & ~~\chk(\partial_x+\im\mu)
    \end{bmatrix}
\end{aligned}    
\end{equation}
consists of the purely imaginary eigenvalues $\{\lambda^{\pm}_k(\mu),~k\in\mathbb{Z}\}$, where
\begin{align} \label{eigenvalues of Lmu0}
    \lambda^{\pm}_k(\mu):=\im\left(\chk(\pm k+\mu)\mp \sqrt{\left(1+\kappa(k\pm\mu)^2\right)\, |k\pm \mu|\tanh(\tth| k\pm \mu|)}\right) \ . 
\end{align}
For $(\kappa, \tth) \not\in \mathfrak{R}$ (cf. \eqref{def:fR}), at $\mu=0$ the real operator $\mathcal{L}_{0,0}$ possesses the eigenvalue $0$ with algebraic multiplicity $4$, 
\begin{align*}
    \lambda^+_0(0)=\lambda^-_0(0)=\lambda^+_1(0)=\lambda^-_1(0)=0,
\end{align*}
and geometric multiplicity $3$. A real basis of the Kernel of $\mathcal{L}_{0,0}$ is 
\begin{align} \label{eigenfunc of mathcall L00}
    f^+_1:=\vet{(\ch/\sqrt{1+\kappa})^{1/2}\cos(x)}{(\ch/\sqrt{1+\kappa})^{-1/2}\sin(x)}, ~~f^-_1:=\vet{-(\ch/\sqrt{1+\kappa})^{1/2}\sin(x)}{(\ch/\sqrt{1+\kappa})^{-1/2}\cos(x)}, ~~f^-_0:=\vet{0}{1},
\end{align}
together with the generalized eigenvector
\begin{align} \label{eigenfunc f+0}
    f^+_0:=\vet{1}{0}, ~~\mathcal{L}_{0,0} f^+_0=-f^-_0.
\end{align}
Furthermore $0$ is an isolated eigenvalue for $\mathcal{L}_{0,0}$, namely the spectrum $\sigma(\mathcal{L}_{0,0})$ decomposes in two separated parts,
\begin{align} \label{sigma decomposition}
    \sigma(\mathcal{L}_{0,0})=\sigma'(\mathcal{L}_{0,0})\cup\sigma''(\mathcal{L}_{0,0}), ~~\mbox{where}~~\sigma'(\mathcal{L}_{0,0}):=\{0\},
\end{align}
and $\sigma''(\mathcal{L}_{0,0}):=\{\lambda^{\sigma}_k(0),~k\neq 0,1, ~\sigma=\pm\}$.

In addition, following the proof  in \cite[Lemma 4.1]{NS}, the operator $\mathcal{L}_{0,\e}$ possesses, for any sufficiently small $\e\neq 0$, the eigenvalue $0$ with a four dimensional generalized Kernel, spanned by $\e$-dependent vectors $U_1~,\Tilde{U}_2,~U_3,~U_4$ satisfying, for some real constant $\alpha_\e$, $\beta_\e$, 
\begin{equation} \label{237}
    \begin{aligned}
        &\mathcal{L}_{0,\e} U_1=0, ~\mathcal{L}_{0,\e}\tilde{U}_2=0,~\mathcal{L}_{0,\e} U_3=\alpha_\e \tilde{U}_2,\\
        &\mathcal{L}_{0,\e} U_4=-U_1-\beta_\e \tilde{U}_2,~~U_1=\vet{0}{1}.
    \end{aligned}
\end{equation}
By Kato's perturbation theory for any $\mu,\e\neq 0$ sufficiently small, the perturbed spectrum $\sigma(\mathcal{L}_{\mu,\e})$ admits a disjoint decomposition as 
\begin{align} \label{disjoint decomposition of spectrum}
    \sigma(\mathcal{L}_{\mu,\e})=\sigma'(\mathcal{L}_{\mu,\e})\cup \sigma''(\mathcal{L}_{\mu,\e}),
\end{align}
where $\sigma'(\mathcal{L}_{\mu,\e})$ consists of $4$ eigenvalues close to $0$. We denote by $\mathcal{V}_{\mu,\e}$ the spectral subspace associated with $\sigma'(\mathcal{L}_{\mu,\e})$, which has dimension $4$ and it is invariant by $\mathcal{L}_{\mu,\e}$. Our goal is to prove that, for $\e$ small, for values of the Floquet exponent $\mu$ in an interval of order $\e$, the $4\times 4$ matrix which represents the operator $\mathcal{L}_{\mu,\e}:\mathcal{V}_{\mu,\e}\rightarrow \mathcal{V}_{\mu,\e}$ possesses a pair of eigenvalues close to zero with opposite non zero real parts. 

Before stating our main result, let us introduce a notation that we shall use throughout the paper.

\begin{itemize}
\item
\textbf{Notation:} we denote by $\mathcal{O}(\mu^{m_1} \epsilon^{n_1}, \dots, \mu^{m_p} \epsilon^{n_p})$, $m_j, n_j \in \mathbb{N}$ (for us $\mathbb{N} := \{1,2, \dots \}$), analytic functions of $(\mu, \epsilon)$ with values in a Banach space $X$ which satisfy, for some $C > 0$ uniform for $\tth$ in any compact set of $(0,+\infty)$, the bound
\[
\|\mathcal{O}(\mu^{m_j} \epsilon^{n_j})\|_X \leq C \sum_{j=1}^{p} |\mu|^{m_j} |\epsilon|^{n_j}
\]
for small values of $(\mu, \epsilon)$. Similarly we denote $r_k(\mu^{m_1} \epsilon^{n_1}, \dots, \mu^{m_p} \epsilon^{n_p})$ scalar functions $\mathcal{O}(\mu^{m_1} \epsilon^{n_1}, \dots, \mu^{m_p} \epsilon^{n_p})$ which are also \textit{real} analytic.
\end{itemize}

Our main spectral result is the following one:
\begin{theorem}[Complete Benjamin-Feir spectrum] \label{Complete BF thm}
Let $(\kappa, \tth) \in (\R_{\geq 0} \times \R_{>0}) \setminus (\fR \cup \mathfrak{D})$, where $\fR$ and $\mathfrak{D}$ are defined in \eqref{def:fR} and \eqref{def:fD} respectively. There exist $\epsilon_{0}, \mu_{0} > 0$, such that, for any $0 < \mu < \mu_{0}$ and $0 \leq \epsilon < \epsilon_{0}$, the operator $\mathcal{L}_{\mu,\epsilon} : \mathcal{V}_{\mu,\epsilon} \to \mathcal{V}_{\mu,\epsilon}$ can be represented by a $4 \times 4$ matrix of the form
\begin{align} \label{U S diag}
\begin{pmatrix}
U & 0 \\
0 & S
\end{pmatrix},
\end{align}
where $U$ and $S$ are $2 \times 2$ matrices, with identical diagonal entries each, of the form
\begin{equation} \label{U S}
\begin{aligned}
U &= 
\begin{pmatrix}
\im \left( (\chk - \tfrac{1}{2} \mathsf{e}_{12}) \mu + r_{2}(\mu \epsilon^{2}, \mu^{2}\epsilon, \mu^{3}) \right) 
& -\mathsf{e}_{22}\tfrac{\mu}{8}(1 + r_{5}(\epsilon,\mu)) \\
- \mu \epsilon^2 \mathsf{e}_{\mathrm{WB}} + r_{1}'(\mu \epsilon^{3}, \mu^{2}\epsilon^2) + \mathsf{e}_{22}\tfrac{\mu^{3}}{8}(1 + r_{1}''(\epsilon,\mu)) 
& \im \left( (\chk - \tfrac{1}{2} \mathsf{e}_{12}) \mu + r_{2}(\mu \epsilon^{2}, \mu^{2}\epsilon, \mu^{3}) \right)
\end{pmatrix},\\
S &= 
\begin{pmatrix}
\im \chk \mu + \im r_{9}(\mu \epsilon^{2}, \mu^{2}\epsilon) & \tanh(\tth\mu) + r_{10}(\mu \epsilon) \\[1ex]
- \mu(1+\kappa\mu^2) + r_{8}(\mu \epsilon^{2}, \mu^{3} ) & \im \chk \mu + \im r_{9}(\mu \epsilon^{2}, \mu^{2}\epsilon)
\end{pmatrix},
\end{aligned}
\end{equation}
where $\mathsf{e}_{\mathrm{WB}}, \mathsf{e}_{12}, \mathsf{e}_{22}$ are defined in \eqref{def:eWB},
\eqref{def:D}, \eqref{def:e22}.
The eigenvalues of $U$ have the form
\begin{align} \label{lambda 1}
\lambda_{1}^{\pm}(\mu,\epsilon) 
= \im \frac{1}{2} \breve{\mathtt{c}}_{\tth,\kappa}\mu + \im r_{2}(\mu \epsilon^{2}, \mu^{2}\epsilon, \mu^{3}) 
\pm \tfrac{1}{8} \mu \sqrt{1+r_{5}(\epsilon,\mu)} \, \sqrt{\Delta_{\mathrm{BF}}(\kappa, \tth;\mu,\epsilon)},
\end{align}
where $\breve{\mathtt{c}}_{\tth,\kappa} := 2 \chk - \mathsf{e}_{12}(\kappa,\tth)$ and $\Delta_{\mathrm{BF}}(\kappa,\tth;\mu,\epsilon)$ is the Benjamin--Feir discriminant function in \eqref{BFDF}. 
The eigenvalues  in \eqref{lambda 1} have non-zero real part if and only if $\Delta_{\mathrm{BF}}(\kappa,\tth;\mu,\epsilon)>0$.

The eigenvalues of the matrix $S$ are a pair of purely imaginary eigenvalues of the form
\begin{align} \label{lambda zero}
\lambda_{0}^{\pm}(\mu,\epsilon) = \im \chk \mu (1 + r_{9}(\epsilon^{2}, \mu \epsilon)) \mp \im \sqrt{\mu \tanh(\tth \mu)} (1 +\kappa\mu^2+ r(\epsilon)).
\end{align}
For $\epsilon = 0$ the eigenvalues $\lambda_{1}^{\pm}(\mu,0), \lambda_{0}^{\pm}(\mu,0)$ coincide with those in \eqref{eigenvalues of Lmu0}.
\end{theorem}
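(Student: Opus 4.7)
The strategy I would follow closely parallels the approach developed by Berti--Maspero--Ventura for pure gravity waves (\cite{BMV1,BMV3,BMV_ed}) and adapts it to the capillary-gravity setting. The starting point is that $(\mu,\e)\mapsto \mathcal{L}_{\mu,\e}$ is analytic in a neighbourhood of $(0,0)$, that $\mathcal{L}_{0,0}$ has an isolated defective eigenvalue $0$ of algebraic multiplicity $4$ with the explicit basis $\{f_0^\pm, f_1^\pm\}$ in \eqref{eigenfunc of mathcall L00}--\eqref{eigenfunc f+0}, and that $\mathcal{L}_{\mu,\e}$ is complex Hamiltonian and reversible with structures that are preserved under small perturbation. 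So my plan is: (i) construct an analytic basis of $\mathcal{V}_{\mu,\e}$; (ii) exploit symmetries to force the $4\times 4$ representing matrix into the block-diagonal form \eqref{U S diag}; (iii) expand the entries up to the required order using Lemma~\ref{lem:pa.exp}; (iv) diagonalise the blocks to read off \eqref{lambda 1}--\eqref{lambda zero}.

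For step (i), I would define the Kato transformation operator $\mathcal{U}_{\mu,\e}$ associated with the Riesz projector $P_{\mu,\e}$ onto $\mathcal{V}_{\mu,\e}$ (which is an isomorphism $\mathcal{V}_{0,0}\to \mathcal{V}_{\mu,\e}$, analytic in $(\mu,\e)$), and set $F_k^\sigma(\mu,\e):=\mathcal{U}_{\mu,\e}f_k^\sigma$, $k=0,1,\ \sigma=\pm$. The standard identity $\mathcal{L}_{\mu,\e}\mathcal{U}_{\mu,\e}=\mathcal{U}_{\mu,\e}\mathcal{L}_{\mu,\e}P_{0,0}$ lets me compute the matrix of $\mathcal{L}_{\mu,\e}|_{\mathcal{V}_{\mu,\e}}$ through power-series expansions in $(\mu,\e)$ of $P_{\mu,\e}$, which in turn only require the resolvent of $\mathcal{L}_{0,0}$ restricted to the complementary spectral subspace and the perturbations computed from Lemma~\ref{lem:pa.exp} and the formula \eqref{D+mu}, \eqref{DtanhD}.

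For step (ii), I would first pass from the Kato basis to a \emph{symplectic-reversible} basis: since $\mathfrak{B}_{\mu,\e}$ is self-adjoint and reversibility-preserving, its restriction to $\mathcal{V}_{\mu,\e}$ carries a non-degenerate symplectic form $(\cdot,\mathcal{J}\cdot)$ inherited from the ambient one, and a Gram--Schmidt-type procedure produces a basis $\{\mathfrak{f}_1^+,\mathfrak{f}_1^-,\mathfrak{f}_0^+,\mathfrak{f}_0^-\}$ of $\mathcal{V}_{\mu,\e}$ that is simultaneously symplectic and reversible. In such a basis, the $4\times 4$ matrix of $\mathcal{L}_{\mu,\e}=\mathcal{J}\mathfrak{B}_{\mu,\e}$ equals $\mathbb{J}_4 \mathbb{B}$ with $\mathbb{B}=\mathbb{B}^*$ real-symmetric (by reversibility) and block-diagonal into the ``$k=1$''/``$k=0$'' subspaces; the off-diagonal block vanishes because reversibility forces the off-diagonal $2\times 2$ entries of $\mathbb{B}$ to be purely imaginary and skew, and the symplectic form plus self-adjointness force them to $0$ at the orders we retain (as in \cite{BMV3}). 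This yields the block-diagonal form \eqref{U S diag} with the diagonal entries of $U$ and $S$ coinciding due to reversibility.

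For step (iii), the delicate step, I would compute each entry of $U$ and $S$ to the needed order by expanding $\mathcal{L}_{\mu,\e}$ as $\mathcal{L}_{0,0}+\mu\mathcal{L}_{1,0}+\e\mathcal{L}_{0,1}+\tfrac12\mu^2\mathcal{L}_{2,0}+\mu\e\mathcal{L}_{1,1}+\e^2\mathcal{L}_{0,2}+\dots$, where each $\mathcal{L}_{m,n}$ is explicit from \eqref{mathcal L mu e} and Lemma~\ref{lem:pa.exp}. The diagonal entries give the imaginary shift $\im(\chk-\tfrac12\mathsf{e}_{12})\mu$ in $U$ (so $\mathsf{e}_{12}$ appears as the order-$\mu$ correction of the group-velocity type coefficient), while the $(2,1)$-entry of $U$ is the Whitham--Benjamin coefficient $\mathsf{e}_{\mathrm{WB}}$ at order $\mu\e^2$, obtained as a combination of second-order corrections of both Kato basis vectors and the second-order Stokes expansion of $(p_\e,a_\e,g_\e,\mathfrak{p},\ttf)$ from \eqref{SN1}--\eqref{h2 h02}. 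The denominators $\ch^4(1+\kappa)-3\kappa$ and $\textup{D}_{\tth,\kappa}$ come respectively from inverting $\mathcal{L}_{0,0}$ on the modes with Fourier index $\pm 2$ (whence the assumption $(\kappa,\tth)\notin\fR$) and from inverting it on the zero mode after accounting for the depth perturbation $\ttf$ (whence $(\kappa,\tth)\notin\mathfrak{D}$). The $S$ block instead is obtained essentially by symbolic expansion: the entries are determined by the action of the Fourier multiplier $|D+\mu|\tanh((\tth+\ttf)|D+\mu|)-(1+\kappa(D+\mu)^2)$ on the constant functions.

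Finally step (iv): the eigenvalues of a $2\times 2$ matrix $\begin{pmatrix}a&b\\c&a\end{pmatrix}$ are $a\pm\sqrt{bc}$. Multiplying the off-diagonal entries of $U$ produces $-\tfrac18\mu^2 \mathsf{e}_{22}(-\mu\e^2\mathsf{e}_{\mathrm{WB}}+\cdots+\tfrac18\mu^3\mathsf{e}_{22}+\cdots)=\tfrac{\mu^2}{64}\Delta_{\mathrm{BF}}(\kappa,\tth;\mu,\e)$ up to an analytic factor $(1+r(\e,\mu))$, which gives \eqref{lambda 1} and confirms that instability is governed by the sign of $\Delta_{\mathrm{BF}}$, exactly as in Theorem~\ref{thm:main}. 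The matrix $S$ is treated analogously and gives \eqref{lambda zero}. The main obstacle throughout is the bookkeeping of the second-order perturbative corrections and the verification that the numerous cancellations reassemble into the compact expressions \eqref{def:eWB} and \eqref{def:e22}; this is where Mathematica-assisted symbolic computation, combined with the identities \eqref{id.DjRe} that tie the result to the Djordjevic--Redekopp--Ablowitz--Segur formulas, becomes essential.
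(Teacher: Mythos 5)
Your overall framework (Kato similarity transformations to build an analytic symplectic--reversible basis of $\mathcal{V}_{\mu,\e}$, then expanding the $4\times 4$ Hamiltonian-reversible representing matrix) is the same as the paper's, and steps (i), (iii) and (iv) are broadly in the right spirit. However, there is a genuine gap at step (ii), and it is precisely the step the paper flags as the main novelty in the gravity--capillary case.

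You claim that in a symplectic and reversible basis ``the off-diagonal block vanishes because reversibility forces the off-diagonal $2\times 2$ entries of $\mathbb{B}$ to be purely imaginary and skew, and the symplectic form plus self-adjointness force them to $0$.'' This is false. Reversibility only forces the entries of $\mathsf{B}_{\mu,\e}$ to be alternately real and purely imaginary (formula \eqref{B are alternatively real or imaginary}); it does not annihilate the off-diagonal $2\times 2$ block $F$. In fact Lemma~\ref{B decomposition} shows $F$ has a nonvanishing entry $\mathsf{f}_{11}\e + O(\e^3,\mu\e^2,\mu^2\e)$, which is order $\e$ --- \emph{not} small compared to the $(1,1)$-entry of $E$, which is $\mathsf{e}_{11}\e^2 + O(\mu^2)$. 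Because of this, the eigenvalues of $\mathsf{J}_4\mathsf{B}_{\mu,\e}$ are not a small perturbation of those of $\mathrm{diag}(\mathsf{J}_2 E,\mathsf{J}_2 G)$: as the paper notes explicitly just before Section~3, one needs a non-perturbative step of block-decoupling (Lemma~\ref{Step of block-decoupling}), solving a Sylvester equation $D_1X - XD_0 = -\mathsf{J}_2 F^{(1)}$. It is only \emph{after} this conjugation that the $(1,1)$-entry of the top-left block acquires its true leading coefficient $\mathsf{e}_{\mathrm{WB}}=\mathsf{e}_{11}+\tilde{\mathsf{e}}_{11}$ rather than $\mathsf{e}_{11}$, and it is only this corrected coefficient that reproduces the Djordjevic--Redekopp criterion. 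Moreover, the condition $(\kappa,\tth)\notin\mathfrak{D}$ enters through the invertibility of the Sylvester operator (its determinant is $\mu^4\textup{D}_{\tth,\kappa}^2(1+r(\e,\mu^2))$, cf.\ \eqref{det A}), not, as you suggest, through inverting $\mathcal{L}_{0,0}$ on the zero mode with the perturbed depth $\ttf$.

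In short: if you followed your proposal as written you would obtain the matrix of Lemma~\ref{B decomposition} with top-left block $E$ and nonzero off-diagonal block $F$, not the block-diagonal form \eqref{U S diag}, and the $(2,1)$-entry of $U$ would come out as $-\mu\e^2\mathsf{e}_{11}+\cdots$ rather than $-\mu\e^2\mathsf{e}_{\mathrm{WB}}+\cdots$, giving the wrong instability threshold. You need to replace the symmetry argument with the explicit solution of the Sylvester equation (Section~\ref{sec:BD}), followed by the implicit-function-theorem completion of the block-decoupling (Lemma~\ref{lemma S2}).
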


\begin{remark}
At $\epsilon = 0$, the eigenvalues in \eqref{lambda 1} have the Taylor expansion
\[
\lambda_{1}^{\pm}(\mu,0) 
= \im \left( \chk - \tfrac{1}{2} \mathsf{e}_{12}(\kappa, \tth) \right) \mu 
\pm \im \tfrac{1}{8}|\mathsf{e}_{22}(\kappa, \tth)| \mu^{2} + \mathcal{O}(\mu^{3}),
\]
which coincides with the one of $\lambda_{1}^{\pm}(\mu)$ in \eqref{eigenvalues of Lmu0}, in view of the coefficients $\mathsf{e}_{12}(\kappa,\tth)$ and $\mathsf{e}_{22}(\kappa,\tth)$ defined in \eqref{def:D} and \eqref{def:e22} respectively.
\end{remark}

{We conclude this section by describing our approach in detail.}

\paragraph{Ideas and scheme of proof.} 
The proof of the result follows the procedure of  \cite{BMV1,BMV3}, and relies on  Kato's theory of similarity transformations and block decoupling. 
Via  Kato’s similarity transformations we   analytically continue the unperturbed symplectic basis 
of the generalized kernel of the linearized operator at the flat surface into a symplectic basis 
\(\{f_k^{\sigma}(\mu,\epsilon)\}\) of the perturbed spectral subspace $\mathcal{V}_{\mu,\e}$, depending analytically on \((\mu,\epsilon)\). 
The expansion of the basis \(\{f_k^{\sigma}(\mu,\epsilon)\}\) in \((\mu,\epsilon)\) is provided in 
Lemma~\ref{expansion of the basis F}. 
This allows us to represent the action of the operator \(\mathscr{L}_{\mu,\epsilon}\) 
on the basis \(\{f_k^{\sigma}(\mu,\epsilon)\}\) as 
a  \(4\times4\) Hamiltonian and reversible 
matrix $\mathsf{L}_{\mu,\epsilon}$ (Lemma~\ref{matrix representation mathsf L}) having the form  (Lemma \ref{B decomposition}) 
\begin{equation}\label{eq:L mu epsilon structure}
\mathsf{L}_{\mu,\epsilon}
= \mathsf{J}_4
\begin{pmatrix}
E & F \\[2pt]
F^{*} & G
\end{pmatrix}
=
\begin{pmatrix}
\mathsf{J}_2 E & \mathsf{J}_2 F \\[2pt]
\mathsf{J}_2 F^{*} & \mathsf{J}_2 G
\end{pmatrix},
\qquad
\mathsf{J}_4 =
\begin{pmatrix}
\mathsf{J}_2 & 0 \\[2pt]
0 & \mathsf{J}_2
\end{pmatrix},
\quad
\mathsf{J}_2 =
\begin{pmatrix}
0 & 1 \\[2pt]
-1 & 0
\end{pmatrix}
\end{equation}
where the $2\times2$ matrices $E = E^*,\, F,\, G = G^*$ admit the expansions
\eqref{E}--\eqref{G}.
In the gravity-capillary setting, this computation is substantially more delicate than in the pure gravity case of \cite{BMV3}, 
since the coefficients of the matrix depend not only on the depth parameter $\tth$ but also on the capillarity $\kappa$, and 
one must therefore track with precision the combined dependence of all matrix entries on $(\kappa,\tth)$.
In particular, the matrix $E$ reads
\begin{equation} \label{E first}
\begin{aligned} 
E &= 
\begin{pmatrix}
\mathsf{e}_{11} \epsilon^2 (1 + \hat{r}_1(\epsilon,\mu\epsilon)) - \mathsf{e}_{22} \frac{\mu^2}{8} (1 + \check{r}_1(\epsilon,\mu)) &
\im\left(\frac{1}{2} \mathsf{e}_{12} \mu + r_2(\mu \epsilon^2, \mu^2 \epsilon, \mu^3)\right) \\
- \im\left(\frac{1}{2} \mathsf{e}_{12} \mu + r_2(\mu \epsilon^2, \mu^2 \epsilon, \mu^3)\right) &
- \mathsf{e}_{22} \frac{\mu^2}{8}(1 + r_5(\epsilon, \mu))
\end{pmatrix},
\end{aligned}
\end{equation}
where the coefficients $\mathsf{e}_{11}(\kappa,\tth)$ and $\mathsf{e}_{22}(\kappa,\tth)$ are defined in \eqref{e11 f11} and \eqref{def:e22}.
In Lemma~\ref{e11 le 0} we verify that $\mathsf{e}_{11}$ never vanishes for $(\kappa,\tth)\notin \fR$, 
while the locus where $\mathsf{e}_{22}=0$ can be identified in Figure~\ref{fig:DR}. In fact, by analyzing the eigenvalues of the submatrix $\mathsf{J}_2 E$, 
we find that $\mathsf{J}_2 E$ possesses a pair of real nonzero eigenvalues 
if and only if $\mathsf{e}_{22}(\kappa,\tth)>0$, provided 
$0<\mu<\overline{\mu}(\epsilon)\sim\epsilon$. 
This is not the correct instability criteria in  \eqref{def:cU}, that instead requires the sign of the function  $\mathsf{e}_{\mathrm{WB}}$ in \eqref{def:eWB}.
This indicates that, in the gravity-capillary case, the correct eigenvalues of 
$\mathsf{L}_{\mu,\epsilon}$ are not a small perturbation of those of $
\begin{pmatrix}
\mathsf{J}_2 E & 0 \\[2pt]
0 & \mathsf{J}_2 G
\end{pmatrix}.
$
Indeed, such correct eigenvalues  will emerge only after one non-perturbative step of block diagonalization, that we perform in Section \ref{sec:BD}. 
Such step produces a correction to the (1,1) entry of the matrix $E$, modifying the coefficient
$\mathsf{e}_{11}$ into the coefficient $\mathsf{e}_{\mathrm{WB}}$, as proved in 
Lemma \ref{Step of block-decoupling}, see in particular formula \eqref{eWB}.
After this non-perturbative step, we exploit the implicit function theory to construct a transformation that conjugates $\mathsf{L}_{\mu,\e}$ into the block-diagonal matrix 
\eqref{U S diag} (Lemma \ref{lemma S2}).

\section{Perturbative Approach to the Separated Eigenvalues}

In this section, we analyze the splitting of the eigenvalues of $\mathcal{L}_{\mu, \epsilon}$ close to 0 for small values of $\mu$ and $\epsilon$, using Kato’s similarity transformation theory \cite[I-§4-6, II-§4]{Kato1966} and \cite{BMV1, BMV3, BMV_ed}. To this end, it is convenient to rewrite the operator $\mathcal{L}_{\mu, \epsilon}$ in \eqref{mathcal L mu e} as
\begin{equation} \label{mathcal L= ichmu+mathscr L}
\mathcal{L}_{\mu, \epsilon} = \im \chk \mu + \mathscr{L}_{\mu, \epsilon}, \quad \mu > 0,
\end{equation}
where, using also \eqref{D+mu}, $\mathscr{L}_{\mu, \epsilon}$ is the Hamiltonian operator
\begin{equation} \label{mathscr L mu e}
\mathscr{L}_{\mu, \epsilon} = \mathcal{J} \mathcal{B}_{\mu, \epsilon},
\end{equation}
with $\mathcal{B}_{\mu, \epsilon}$ the self-adjoint operator
\begin{equation} \label{mathcal B mu e}
\mathcal{B}_{\mu, \epsilon} := \begin{bmatrix} 1 + a_{\epsilon}(x)-\kappa\Sigma_{\mu,\e} & -(\chk + p_{\epsilon}(x)) \partial_x - \im \mu p_{\epsilon}(x) \\ \partial_x \circ(\chk + p_{\epsilon}(x)) + \im \mu p_{\epsilon}(x) & |D + \mu| \tanh ((\tth + \tf_\epsilon)|D + \mu|) \end{bmatrix} \ ,
\quad \Sigma_{\mu,\e} \mbox{ in } \eqref{Sigma} \ . 
\end{equation}
In addition  $\mathscr{L}_{\mu, \epsilon}$   is also complex-reversible, namely it satisfies, by \eqref{reversible},
\begin{equation} \label{mathscr rho=-rho mathscr}
\mathscr{L}_{\mu, \epsilon} \circ \bar{\rho} = - \bar{\rho} \circ \mathscr{L}_{\mu, \epsilon},
\end{equation}
whereas $\mathcal{B}_{\mu, \epsilon}$ is reversibility-preserving, i.e. fulfills \eqref{B rho=rho B}. Note also that $\mathcal{B}_{0, \epsilon}$ is a real operator.

The scalar operator $\im \chk \mu \equiv \im \chk \mu \,\text{Id}$ just translates the spectrum of $\mathcal{L}_{\mu, \epsilon}$ along the imaginary axis of the quantity $\im \chk \mu$, that is, in view of \eqref{mathcal L= ichmu+mathscr L},
\begin{equation}
\sigma(\mathcal{L}_{\mu, \epsilon}) = \im \chk \mu + \sigma(\mathscr{L}_{\mu, \epsilon}).
\end{equation}
Thus in the sequel we focus on studying the spectrum of $\mathscr{L}_{\mu, \epsilon}$.

Note also that $\mathscr{L}_{0, \epsilon} = \mathcal{L}_{0, \epsilon}$ for any $\epsilon \geq 0$. In particular $\mathscr{L}_{0,0}$ has zero as an isolated eigenvalue with algebraic multiplicity 4, geometric multiplicity 3 and generalized kernel spanned by the vectors $\{f_1^{+}, f_1^{-}, f_0^{+}, f_0^{-}\}$ in \eqref{eigenfunc of mathcall L00}, \eqref{eigenfunc f+0}; furthermore, its spectrum is separated as in \eqref{sigma decomposition}. For any $\epsilon \neq 0$ small, $\mathscr{L}_{0, \epsilon}$ has zero as an isolated eigenvalue with geometric multiplicity 2, and two generalized eigenvectors satisfying \eqref{237}.

We remark that, in view of \eqref{D+mu}, the operator $\mathscr{L}_{\mu, \epsilon}$ is analytic with respect to $\mu$. The operator $\mathscr{L}_{\mu, \epsilon}: Y \subset X \to X$ has domain $Y := H^2(\mathbb{T},\mathbb{C})\times H^1(\mathbb{T},\mathbb{C})$ and range $X := L^2(\mathbb{T},\mathbb{C})\times L^2(\mathbb{T},\mathbb{C})$.

\begin{lemma} \label{kato thm}
Let $\Gamma$ be a closed, counterclockwise-oriented curve around $0$ in the complex plane separating $\sigma' (\mathscr{L}_{0,0}) = \{0\}$ and the other part of the spectrum $\sigma'' (\mathscr{L}_{0,0})$ in \eqref{sigma decomposition}. There exist $\mu_0,\,\epsilon_0,  > 0$ such that for any $(\mu, \epsilon) \in B(\mu_0) \times B(\epsilon_0)$ the following statements hold:

\begin{enumerate}
    \item \textit{The curve $\Gamma$ belongs to the resolvent set of the operator $\mathscr{L}_{\mu,\epsilon} : Y \subset X \to X$ defined in \eqref{mathscr L mu e}.}
    \item \textit{The operators}
    \begin{equation} \label{Projection P mu e}
        P_{\mu,\epsilon} := - \frac{1}{2\pi \im} \oint_\Gamma (\mathscr{L}_{\mu,\epsilon} - \lambda)^{-1} \ d\lambda : X \to Y
    \end{equation}
    \textit{are well-defined projectors commuting with $\mathscr{L}_{\mu,\epsilon}$, i.e., $P_{\mu,\epsilon}^2 = P_{\mu,\epsilon}$ and $P_{\mu,\epsilon} \mathscr{L}_{\mu,\epsilon} = \mathscr{L}_{\mu,\epsilon} P_{\mu,\epsilon}$. The map $(\mu, \epsilon) \mapsto P_{\mu,\epsilon}$ is analytic from $B(\mu_0) \times B(\epsilon_0)$ to $\mathcal{L}(X,Y)$.}
    \item \textit{The domain $Y$ of the operator $\mathscr{L}_{\mu,\epsilon}$ decomposes as the direct sum}
    \begin{equation} \label{Y=V+ker P}
        Y = \mathcal{V}_{\mu,\epsilon} \oplus \ker(P_{\mu,\epsilon}), \quad \mathcal{V}_{\mu,\epsilon} := \operatorname{Rg}(P_{\mu,\epsilon}) = \ker(\operatorname{Id} - P_{\mu,\epsilon}),
    \end{equation}
    \textit{of closed invariant subspaces, namely $\mathscr{L}_{\mu,\epsilon} : \mathcal{V}_{\mu,\epsilon} \to \mathcal{V}_{\mu,\epsilon}$, $\mathscr{L}_{\mu,\epsilon} : \ker(P_{\mu,\epsilon}) \to \ker(P_{\mu,\epsilon})$. Moreover}
    \begin{equation} \label{spectrum separated by Gamma}
    \begin{aligned}
        \sigma(\mathscr{L}_{\mu,\epsilon}) \cap \{z \in \mathbb{C} \text{ inside } \Gamma\} &= \sigma(\mathscr{L}_{\mu,\epsilon} |_{\mathcal{V}_{\mu,\epsilon}}) = \sigma'(\mathscr{L}_{\mu,\epsilon}), \\
        \sigma(\mathscr{L}_{\mu,\epsilon}) \cap \{z \in \mathbb{C} \text{ outside } \Gamma\} &= \sigma(\mathscr{L}_{\mu,\epsilon} |_{\ker(P_{\mu,\epsilon})}) = \sigma''(\mathscr{L}_{\mu,\epsilon}).
    \end{aligned}
    \end{equation}
    \item \textit{The projectors $P_{\mu,\epsilon}$ are similar to each other; the transformation operators}
    \begin{equation} \label{U transformation operators}
        U_{\mu,\epsilon} := (\operatorname{Id} - (P_{\mu,\epsilon} - P_{0,0})^2)^{-1/2} \big[P_{\mu,\epsilon} P_{0,0} + (\operatorname{Id} - P_{\mu,\epsilon})(\operatorname{Id} - P_{0,0}) \big]
    \end{equation}
    \textit{are bounded and invertible in $Y$ and in $X$, with inverse}
    \begin{equation} \label{U inverse}
        U_{\mu,\epsilon}^{-1} = \big[P_{0,0} P_{\mu,\epsilon} + (\operatorname{Id} - P_{0,0})(\operatorname{Id} - P_{\mu,\epsilon})\big](\operatorname{Id} - (P_{\mu,\epsilon} - P_{0,0})^2)^{-1/2},
    \end{equation}
    \textit{and $U_{\mu,\epsilon} P_{0,0} U_{\mu,\epsilon}^{-1} = P_{\mu,\epsilon}$ as well as $U_{\mu,\epsilon}^{-1} P_{\mu,\epsilon} U_{\mu,\epsilon} = P_{0,0}$. The map $(\mu, \epsilon) \mapsto U_{\mu,\epsilon}$ is analytic from $B(\mu_0) \times B(\epsilon_0)$ to $\mathcal{L}(Y)$.}
    \item \textit{The subspaces $\mathcal{V}_{\mu,\epsilon} = \operatorname{Rg}(P_{\mu,\epsilon})$ are isomorphic to each other: $\mathcal{V}_{\mu,\epsilon} = U_{\mu,\epsilon} \mathcal{V}_{0,0}$. In particular $\dim \mathcal{V}_{\mu,\epsilon} = \dim \mathcal{V}_{0,0} = 4$, for any $(\mu, \epsilon) \in B(\mu_0) \times B(\epsilon_0)$.}
\end{enumerate}
\end{lemma}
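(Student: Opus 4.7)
The plan is to execute the standard Kato similarity machinery for an isolated part of the spectrum, adapted to the family $(\mu,\epsilon)\mapsto \mathscr{L}_{\mu,\epsilon}$. The starting point is that $\mathscr{L}_{0,0}=\mathcal{L}_{0,0}$ is the Fourier multiplier operator whose spectrum is listed in \eqref{eigenvalues of Lmu0}, and that under the assumption $(\kappa,\tth)\notin\fR$ the origin is an isolated point of $\sigma(\mathscr{L}_{0,0})$ of algebraic multiplicity four, as recorded in \eqref{sigma decomposition}. Hence one may fix a small counterclockwise circle $\Gamma$ around $0$, lying in the resolvent set of $\mathscr{L}_{0,0}$, so that all the remaining eigenvalues $\lambda_k^\pm(0)$ with $k\neq 0,1$ sit outside $\Gamma$.

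First I would verify that $(\mu,\epsilon)\mapsto \mathscr{L}_{\mu,\epsilon}\in\mathcal{L}(Y,X)$ is jointly real-analytic on $B(\mu_0)\times B(\epsilon_0)$. Analyticity in $\epsilon$ follows from the analytic dependence of $a_\epsilon$, $p_\epsilon$, $g_\epsilon$ and $\ttf$ given in Lemma~\ref{lem:pa.exp} and the formulas defining $\mathcal{B}_{\mu,\epsilon}$ in \eqref{mathcal B mu e}. Analyticity in $\mu$, which a priori looks suspicious because of the modulus $|D+\mu|$, is recovered on $[0,\tfrac12)$ by the identity \eqref{DtanhD}, which replaces $|D+\mu|\tanh((\tth+\ttf)|D+\mu|)$ by $(D+\mu)\tanh((\tth+\ttf)(D+\mu))$; the remaining $\mu$-dependence enters polynomially through $\partial_x+\im\mu$ and through $\Sigma_{\mu,\epsilon}$ in \eqref{Sigma}. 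This joint analyticity is the essential input for everything else.

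Items (1) and (2) then reduce to a Neumann-series argument: since $\Gamma$ is compact and contained in the resolvent set of $\mathscr{L}_{0,0}$, the resolvent $\lambda\mapsto(\mathscr{L}_{0,0}-\lambda)^{-1}$ is uniformly bounded on $\Gamma$, and the factorization
\[
\mathscr{L}_{\mu,\epsilon}-\lambda
=(\mathscr{L}_{0,0}-\lambda)\bigl(\mathrm{Id}+(\mathscr{L}_{0,0}-\lambda)^{-1}(\mathscr{L}_{\mu,\epsilon}-\mathscr{L}_{0,0})\bigr)
\]
shows that $\mathscr{L}_{\mu,\epsilon}-\lambda$ is invertible for every $\lambda\in\Gamma$, provided $\mu_0,\epsilon_0$ are small enough, with resolvent analytic in $(\mu,\epsilon)$ and continuous in $\lambda$. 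The Riesz projector \eqref{Projection P mu e} is therefore well-defined, depends analytically on $(\mu,\epsilon)$ as a map to $\mathcal{L}(X,Y)$, and inherits $P_{\mu,\epsilon}^2=P_{\mu,\epsilon}$ together with $P_{\mu,\epsilon}\mathscr{L}_{\mu,\epsilon}=\mathscr{L}_{\mu,\epsilon}P_{\mu,\epsilon}$ by the usual contour-deformation and resolvent-identity arguments. Item (3) is then the standard spectral-separation statement for Riesz projectors associated with an isolated part of the spectrum.

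For items (4) and (5) I would invoke Kato's construction directly. Since $P_{\mu,\epsilon}\to P_{0,0}$ in $\mathcal{L}(X)$ as $(\mu,\epsilon)\to(0,0)$, one can shrink $\mu_0,\epsilon_0$ so that $\|P_{\mu,\epsilon}-P_{0,0}\|<1$; then the square-root factor $(\mathrm{Id}-(P_{\mu,\epsilon}-P_{0,0})^2)^{-1/2}$ is defined through its binomial series and depends analytically on $(\mu,\epsilon)$. A short algebraic verification based on $P_{\mu,\epsilon}^2=P_{\mu,\epsilon}$, $P_{0,0}^2=P_{0,0}$, and the commutation of the square-root factor with $P_{\mu,\epsilon}P_{0,0}+(\mathrm{Id}-P_{\mu,\epsilon})(\mathrm{Id}-P_{0,0})$ yields the inverse formula \eqref{U inverse} and the intertwining $U_{\mu,\epsilon}P_{0,0}U_{\mu,\epsilon}^{-1}=P_{\mu,\epsilon}$; item (5) then follows from $\mathcal{V}_{\mu,\epsilon}=U_{\mu,\epsilon}\mathcal{V}_{0,0}$ and $\dim\mathcal{V}_{0,0}=4$. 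The one place that demands genuine care, rather than routine bookkeeping, is the joint analyticity of $\mathscr{L}_{\mu,\epsilon}$ on $B(\mu_0)\times B(\epsilon_0)$ together with the $\Gamma$-uniform Neumann expansion; once these are secured, the remainder is the classical Riesz/Kato calculus.
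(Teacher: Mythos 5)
Your proposal is correct and follows exactly the approach the paper takes implicitly: the paper omits the proof, stating that it is ``similar to the one of \cite[Lemma 3.1]{BMV1},'' and that reference proves the analogous statement precisely via the joint analyticity of the operator family, a Neumann-series argument uniform over the compact contour $\Gamma$, the Riesz projector calculus, and Kato's transformation operators from \cite[II-\S4]{Kato1966}. Your observation that \eqref{DtanhD} is the crucial point securing analyticity in $\mu$ (which is otherwise obstructed by the non-smoothness of $|D+\mu|$) matches what the paper itself flags immediately before the lemma, so the proposal lines up with the paper's intended argument in both structure and emphasis.
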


The proof of Lemma \ref{kato thm} is similar to the one of \cite[Lemma 3.1]{BMV1} and we skip it.  Recalling \eqref{mathscr L mu e}-\eqref{mathscr rho=-rho mathscr}, the Hamiltonian and reversible nature of the operator $\mathscr{L}_{\mu,\e}$ imply additional algebraic properties for spectral projectors $P_{\mu,\e}$ and the transformation operators $U_{\mu,\e}$ as follows.  

\begin{lemma} \label{properties of U and P}
For any $(\mu, \epsilon) \in B(\mu_0) \times B(\epsilon_0)$, the following holds true:

\begin{itemize}
    \item[(i)] The projectors $P_{\mu,\epsilon}$ defined in \eqref{Projection P mu e} are skew-Hamiltonian, namely $\mathcal{J} P_{\mu,\epsilon} = P_{\mu,\epsilon}^* \mathcal{J}$, and reversibility preserving, i.e. $\bar{\rho} P_{\mu,\epsilon} = P_{\mu,\epsilon} \bar{\rho}$.
    
    \item[(ii)] The transformation operators $U_{\mu,\epsilon}$ in \eqref{U transformation operators} are symplectic, namely $U_{\mu,\epsilon}^* \mathcal{J} U_{\mu,\epsilon} = \mathcal{J}$, and reversibility preserving.
    
    \item[(iii)] $P_{0,\epsilon}$ and $U_{0,\epsilon}$ are real operators, i.e. $\bar{P}_{0,\epsilon} = P_{0,\epsilon}$ and $\bar{U}_{0,\epsilon} = U_{0,\epsilon}$.
\end{itemize}
    
\end{lemma}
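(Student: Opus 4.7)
The plan is to push the algebraic structure (Hamiltonian, reversibility, reality) of $\mathscr{L}_{\mu,\epsilon}$ through the Riesz-type integral representation \eqref{Projection P mu e} of $P_{\mu,\epsilon}$, and then to propagate those properties to $U_{\mu,\epsilon}$ via the explicit Kato formula \eqref{U transformation operators}, using functional calculus to handle the square root. Throughout, I would choose $\Gamma$ to be a small circle centred at $0$, so that $\Gamma=\bar{\Gamma}=-\Gamma$; this symmetry is what allows every change of variable below to close up on the same contour.

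For (i), the starting observation is that $\mathcal{J}^*=-\mathcal{J}$ together with $\mathcal{B}_{\mu,\epsilon}^{*}=\mathcal{B}_{\mu,\epsilon}$ gives $\mathscr{L}_{\mu,\epsilon}^{*}=-\mathcal{B}_{\mu,\epsilon}\mathcal{J}$, hence
\[
\mathcal{J}\,\mathscr{L}_{\mu,\epsilon}^{*}\mathcal{J}^{-1}=-\mathscr{L}_{\mu,\epsilon}.
\]
Taking adjoints inside the Dunford integral and using this identity together with the change of variable $\lambda\mapsto -\bar\lambda$ (which maps $\Gamma$ to itself with the correct orientation), one obtains $\mathcal{J}\,P_{\mu,\epsilon}^{*}\mathcal{J}^{-1}=P_{\mu,\epsilon}$, i.e.\ $\mathcal{J}P_{\mu,\epsilon}=P_{\mu,\epsilon}^{*}\mathcal{J}$. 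For the reversibility-preserving property, first note that \eqref{mathscr rho=-rho mathscr} and the antilinearity of $\bar\rho$ give
\[
(\mathscr{L}_{\mu,\epsilon}-\lambda)\bar\rho=-\bar\rho(\mathscr{L}_{\mu,\epsilon}+\bar\lambda),\qquad\text{so}\qquad \bar\rho\,(\mathscr{L}_{\mu,\epsilon}-\lambda)^{-1}=-(\mathscr{L}_{\mu,\epsilon}+\bar\lambda)^{-1}\bar\rho.
\]
Substituting in \eqref{Projection P mu e}, conjugating $2\pi\im$ (again from antilinearity) and changing variable $w=-\bar\lambda$ gives $\bar\rho\,P_{\mu,\epsilon}=P_{\mu,\epsilon}\bar\rho$.

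For (ii), set $Q:=P_{\mu,\epsilon}-P_{0,0}$. From part (i) applied to both $P_{\mu,\epsilon}$ and $P_{0,0}$, we have $Q^{*}=\mathcal{J}Q\mathcal{J}^{-1}$ and $\bar\rho Q=Q\bar\rho$. Consequently $(\mathrm{Id}-Q^{2})^{*}=\mathcal{J}(\mathrm{Id}-Q^{2})\mathcal{J}^{-1}$ and $\bar\rho$ commutes with $\mathrm{Id}-Q^{2}$; by expanding the analytic function $(1-z)^{-1/2}$ as a convergent power series at $z=0$ (valid since $\|Q\|$ is as small as we want), the same properties pass to $(\mathrm{Id}-Q^{2})^{-1/2}$. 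Together with the easily verified identities $\mathcal{J}[P_{\mu,\epsilon}P_{0,0}+(\mathrm{Id}-P_{\mu,\epsilon})(\mathrm{Id}-P_{0,0})]^{*}\mathcal{J}^{-1}=P_{0,0}P_{\mu,\epsilon}+(\mathrm{Id}-P_{0,0})(\mathrm{Id}-P_{\mu,\epsilon})$ and $\bar\rho[P_{\mu,\epsilon}P_{0,0}+\cdots]=[P_{\mu,\epsilon}P_{0,0}+\cdots]\bar\rho$, a short computation from \eqref{U transformation operators}, \eqref{U inverse} yields $U_{\mu,\epsilon}^{*}\mathcal{J}U_{\mu,\epsilon}=\mathcal{J}$ and $\bar\rho U_{\mu,\epsilon}=U_{\mu,\epsilon}\bar\rho$.

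Finally, for (iii), the operator $\mathcal{B}_{0,\epsilon}$ in \eqref{mathcal B mu e} has real coefficients (all the $\mu$-dependent imaginary parts disappear at $\mu=0$), so $\mathscr{L}_{0,\epsilon}=\mathcal{J}\mathcal{B}_{0,\epsilon}$ is real and its resolvent satisfies $\overline{(\mathscr{L}_{0,\epsilon}-\lambda)^{-1}}=(\mathscr{L}_{0,\epsilon}-\bar\lambda)^{-1}$. Conjugating \eqref{Projection P mu e} and changing variable $w=\bar\lambda$, with $\Gamma$ symmetric about the real axis, yields $\overline{P_{0,\epsilon}}=P_{0,\epsilon}$, and then $\overline{U_{0,\epsilon}}=U_{0,\epsilon}$ follows term-by-term from \eqref{U transformation operators}. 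The only real subtlety in the whole argument is the careful bookkeeping of orientations and of the $\bar{\lambda}$ factors arising from the antilinearity of $\bar\rho$ and complex conjugation; once the symmetric contour $\Gamma$ is fixed, everything reduces to algebraic manipulations as in \cite[Lemma 3.1]{BMV1}, which we follow.
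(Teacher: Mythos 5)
Your argument is correct and is essentially the standard one: the paper itself gives no proof here but simply refers to \cite[Lemma 3.2]{BMV1}, and your argument reproduces exactly the mechanism used there (push the Hamiltonian/reversible/real structure of $\mathscr{L}_{\mu,\epsilon}$ through the Riesz integral via the substitutions $\lambda\mapsto -\bar\lambda$ and $\lambda\mapsto\bar\lambda$ on a contour symmetric about both axes, then propagate to $U_{\mu,\epsilon}$ via the Kato formula and the power series of $(1-z)^{-1/2}$). One small imprecision worth flagging: the map $\lambda\mapsto-\bar\lambda$ on a circle centred at $0$ reverses the orientation, so a compensating sign appears when recollecting the contour integral; you absorb this correctly in the final bookkeeping, but the parenthetical ``(which maps $\Gamma$ to itself with the correct orientation)'' should really read that it maps $\Gamma$ to itself with reversed orientation, the extra sign cancelling against the one coming from $\mathcal{J}\mathscr{L}^{*}\mathcal{J}^{-1}=-\mathscr{L}$ (respectively from $(\mathscr{L}-\lambda)\bar\rho=-\bar\rho(\mathscr{L}+\bar\lambda)$).
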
 

See \cite[Lemma 3.2]{BMV1} for details. By the previous lemma, the linear involution $\bar{\rho}$ commutes with the spectral projectors $P_{\mu,\epsilon}$ and then $\bar{\rho}$ leaves invariant the subspace $\mathcal{V}_{\mu,\epsilon} = \mathrm{Rg}(P_{\mu,\epsilon})$.

\paragraph{Symplectic and reversible basis of $\mathcal{V}_{\mu,\epsilon}$.} It is convenient to represent the Hamiltonian and reversible operator $\mathscr{L}_{\mu,\epsilon}: \mathcal{V}_{\mu,\epsilon} \to \mathcal{V}_{\mu,\epsilon}$ in a basis which is symplectic and reversible, according to the following definition:

\begin{definition}[Symplectic and reversible basis]  \label{Symplectic and reversible basis}
    A basis $\mathsf{F} := \{ \mathsf{f}_1^+, \mathsf{f}_1^-, \mathsf{f}_0^+, \mathsf{f}_0^- \}$ of $\mathcal{V}_{\mu,\epsilon}$ is \textit{symplectic} if, for any $k, k' = 0,1$,
\begin{equation} \label{basis is symplectic}
    \begin{aligned}
    (\mathcal{J} \mathsf{f}_k^\mp, \mathsf{f}_k^\pm) = \pm 1, ~~(\mathcal{J} \mathsf{f}_k^\sigma, \mathsf{f}_k^\sigma) = 0, \quad \forall \,\sigma = \pm; \\
    \text{if } k \neq k', \text{ then } (\mathcal{J} \mathsf{f}_k^\sigma, \mathsf{f}_{k'}^{\sigma'}) = 0, \quad \forall\, \sigma, \sigma' = \pm.
\end{aligned}
\end{equation}   

This is \textit{reversible} if
\begin{equation} \label{basis is reversible}
    \begin{aligned}
    \bar{\rho} \mathsf{f}_1^+ &= \mathsf{f}_1^+,  ~\bar{\rho} \mathsf{f}_1^- = -\mathsf{f}_1^-,~\bar{\rho} \mathsf{f}_0^+ = \mathsf{f}_0^+,  ~\bar{\rho} \mathsf{f}_0^- = -\mathsf{f}_0^-, \\
    &\text{i.e. } \bar{\rho} \mathsf{f}_k^\sigma = \sigma \mathsf{f}_k^\sigma, \quad \forall\, \sigma = \pm, k = 0,1.
\end{aligned}
\end{equation}
\end{definition} 

We use the following notation along the paper: we denote by $\mathrm{even}(x)$ a real $2\pi$-periodic function which is even in $x$, and by $\mathrm{odd}(x)$ a real $2\pi$-periodic function which is odd in $x$.

\begin{remark}[Parity structure of the reversible basis \eqref{basis is reversible}]
The elements of a reversible basis \( \mathsf{F}=\{\mathsf{f}^+_1,\mathsf{f}^-_1,\mathsf{f}^+_0,\mathsf{f}^-_0\} \) enjoys specific parity properties. Specifically,
\begin{align} \label{Parity structure}
\mathsf{f}_k^+(x) = 
\begin{bmatrix}
\mathit{even}(x) + \im\,\mathit{odd}(x) \\
\mathit{odd}(x) + \im\,\mathit{even}(x)
\end{bmatrix}, \quad
\mathsf{f}_k^-(x) = 
\begin{bmatrix}
\mathit{odd}(x) + \im\,\mathit{even}(x) \\
\mathit{even}(x) + \im\,\mathit{odd}(x)
\end{bmatrix}.
\end{align}
This structure follows from the reversibility of the problem, specifically from the involution \( \overline{\rho} \) defined in equation \eqref{reversible}, which implies that the real and imaginary parts of each component satisfy definite parity conditions.
\end{remark}

\begin{remark}[Symplectic expansion using the basis in \eqref{basis is symplectic}]
We can express any vector \( \mathsf{f} \in \mathcal{V}_{\mu,\epsilon} \) as a linear combination of the symplectic basis:
\begin{align} \label{remark f expansion}
    \mathsf{f} = \alpha_1^+ \mathsf{f}_1^+ + \alpha_1^- \mathsf{f}_1^- + \alpha_0^+ \mathsf{f}_0^+ + \alpha_0^- \mathsf{f}_0^-,
\end{align}
for suitable coefficients \( \alpha_k^\sigma \in \mathbb{C} \). These coefficients are computed by applying the symplectic form \( \mathcal{J} \), taking \( L^2 \)-scalar products with the basis elements, and using the symplecticity \eqref{basis is symplectic}. Therefore, we may rewrite \eqref{remark f expansion} as
\begin{align} \label{expasion of f by using symplectic basis}
    \mathsf{f}=-(\mathcal{J}\mathsf{f},\mathsf{f}^-_1)\mathsf{f}^+_1+(\mathcal{J}\mathsf{f},\mathsf{f}^+_1)\mathsf{f}^-_1-(\mathcal{J}\mathsf{f},\mathsf{f}^-_0)\mathsf{f}^+_0+(\mathcal{J}\mathsf{f},\mathsf{f}^+_0)\mathsf{f}^-_0.
\end{align}
\end{remark}

We now compute the matrix representation of $\mathscr{L}_{\mu,\e}$ in a symplectic and reversible basis of $\mathcal{V}_{\mu,\e}$.

\begin{lemma} \label{matrix representation mathsf L}
The $4 \times 4$ matrix that represents the Hamiltonian and reversible operator $\mathscr{L}_{\mu,\epsilon} = \mathcal{J} \mathcal{B}_{\mu,\epsilon} : \mathcal{V}_{\mu,\epsilon} \to \mathcal{V}_{\mu,\epsilon}$ with respect to a symplectic and reversible basis $\mathsf{F} = \{ \mathsf{f}_1^{+}, \mathsf{f}_1^{-}, \mathsf{f}_0^{+}, \mathsf{f}_0^{-} \}$ of $\mathcal{V}_{\mu,\epsilon}$ is
\begin{align} \label{J4Bmuepsilon}
  \mathsf{L}_{\mu,\epsilon}:=  \mathsf{J}_4 \mathsf{B}_{\mu,\epsilon}, \quad \mathsf{J}_4 := 
\left(
\begin{array}{c|c}
\mathsf{J}_2 & 0 \\
\hline
0 & \mathsf{J}_2
\end{array}
\right), \quad
    \mathsf{J}_2 := 
    \begin{pmatrix}
        0 & 1 \\
        -1 & 0
    \end{pmatrix}, \quad \text{where} \quad \mathsf{B}_{\mu,\epsilon} = \mathsf{B}_{\mu,\epsilon}^{*}.
\end{align}
The self-adjoint matrix
\begin{align} \label{Bmuepsilon matrix representation}
    \mathsf{B}_{\mu,\epsilon} = \left( 
    \begin{array}{cccc}
        (\mathcal{B}_{\mu,\epsilon} \mathsf{f}_1^{+}, \mathsf{f}_1^{+}) & (\mathcal{B}_{\mu,\epsilon} \mathsf{f}_1^{-}, \mathsf{f}_1^{+}) & (\mathcal{B}_{\mu,\epsilon} \mathsf{f}_0^{+}, \mathsf{f}_1^{+}) & (\mathcal{B}_{\mu,\epsilon} \mathsf{f}_0^{-}, \mathsf{f}_1^{+}) \\
        (\mathcal{B}_{\mu,\epsilon} \mathsf{f}_1^{+}, \mathsf{f}_1^{-}) & (\mathcal{B}_{\mu,\epsilon} \mathsf{f}_1^{-}, \mathsf{f}_1^{-}) & (\mathcal{B}_{\mu,\epsilon} \mathsf{f}_0^{+}, \mathsf{f}_1^{-}) & (\mathcal{B}_{\mu,\epsilon} \mathsf{f}_0^{-}, \mathsf{f}_1^{-}) \\
        (\mathcal{B}_{\mu,\epsilon} \mathsf{f}_1^{+}, \mathsf{f}_0^{+}) & (\mathcal{B}_{\mu,\epsilon} \mathsf{f}_1^{-}, \mathsf{f}_0^{+}) & (\mathcal{B}_{\mu,\epsilon} \mathsf{f}_0^{+}, \mathsf{f}_0^{+}) & (\mathcal{B}_{\mu,\epsilon} \mathsf{f}_0^{-}, \mathsf{f}_0^{+}) \\
        (\mathcal{B}_{\mu,\epsilon} \mathsf{f}_1^{+}, \mathsf{f}_0^{-}) & (\mathcal{B}_{\mu,\epsilon} \mathsf{f}_1^{-}, \mathsf{f}_0^{-}) & (\mathcal{B}_{\mu,\epsilon} \mathsf{f}_0^{+}, \mathsf{f}_0^{-}) & (\mathcal{B}_{\mu,\epsilon} \mathsf{f}_0^{-}, \mathsf{f}_0^{-})
    \end{array}
    \right).
\end{align}
The entries of the matrix $\mathsf{B}_{\mu,\epsilon}$ are alternatively real or purely imaginary: for any $\sigma = \pm, k = 0, 1$,
\begin{align} \label{B are alternatively real or imaginary}
    (\mathcal{B}_{\mu,\epsilon} \mathsf{f}_k^{\sigma}, \mathsf{f}_{k'}^{\sigma}) \text{ is real}, \quad
    (\mathcal{B}_{\mu,\epsilon} \mathsf{f}_k^{\sigma}, \mathsf{f}_{k'}^{-\sigma}) \text{ is purely imaginary}.
\end{align}  
\end{lemma}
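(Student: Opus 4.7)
The plan is to compute the matrix representation of $\mathscr{L}_{\mu,\epsilon}$ in the basis $\mathsf{F}$ by exploiting the symplectic expansion formula \eqref{expasion of f by using symplectic basis}, and then read off the algebraic structure $\mathsf{J}_4 \mathsf{B}_{\mu,\epsilon}$ together with the reality/imaginarity of the entries directly from the Hamiltonian and reversibility-preserving properties of $\mathcal{B}_{\mu,\epsilon}$. The argument is purely algebraic; nothing quantitative about the basis $\mathsf{F}$ or the operator $\mathscr{L}_{\mu,\epsilon}$ is needed beyond what is stated in Lemma \ref{properties of U and P} and Definition \ref{Symplectic and reversible basis}.

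First, I would apply \eqref{expasion of f by using symplectic basis} to each vector $\mathsf{f} = \mathscr{L}_{\mu,\epsilon} \mathsf{f}_j^{\sigma_j}$, obtaining coefficients of the form $\pm(\mathcal{J}\mathscr{L}_{\mu,\epsilon}\mathsf{f}_j^{\sigma_j}, \mathsf{f}_i^{\sigma_i})$. Since $\mathscr{L}_{\mu,\epsilon} = \mathcal{J}\mathcal{B}_{\mu,\epsilon}$ and $\mathcal{J}^2 = -\mathrm{Id}$, we have $\mathcal{J}\mathscr{L}_{\mu,\epsilon} = -\mathcal{B}_{\mu,\epsilon}$, so each such coefficient becomes $\mp(\mathcal{B}_{\mu,\epsilon}\mathsf{f}_j^{\sigma_j}, \mathsf{f}_i^{\sigma_i})$. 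Collecting the signs induced by \eqref{expasion of f by using symplectic basis} in the ordering $(\mathsf{f}_1^+,\mathsf{f}_1^-,\mathsf{f}_0^+,\mathsf{f}_0^-)$ shows that the resulting $4\times 4$ matrix coincides precisely with the product $\mathsf{J}_4 \mathsf{B}_{\mu,\epsilon}$ with $\mathsf{B}_{\mu,\epsilon}$ as defined in \eqref{Bmuepsilon matrix representation}; the block-diagonal form of $\mathsf{J}_4$ just accounts for the swap and sign change of pairs of rows.

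Next, I would verify self-adjointness $\mathsf{B}_{\mu,\epsilon}^* = \mathsf{B}_{\mu,\epsilon}$. By \eqref{mathcal B mu e} the operator $\mathcal{B}_{\mu,\epsilon}$ is self-adjoint with respect to the scalar product \eqref{complex product}, so
\begin{equation*}
(\mathcal{B}_{\mu,\epsilon}\mathsf{f}_j^{\sigma_j}, \mathsf{f}_i^{\sigma_i}) = (\mathsf{f}_j^{\sigma_j}, \mathcal{B}_{\mu,\epsilon}\mathsf{f}_i^{\sigma_i}) = \overline{(\mathcal{B}_{\mu,\epsilon}\mathsf{f}_i^{\sigma_i}, \mathsf{f}_j^{\sigma_j})},
\end{equation*}
which is exactly $(\mathsf{B}_{\mu,\epsilon})_{ij} = \overline{(\mathsf{B}_{\mu,\epsilon})_{ji}}$.

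Finally, to establish \eqref{B are alternatively real or imaginary}, I would use the reversibility-preserving property $\mathcal{B}_{\mu,\epsilon}\circ\bar{\rho} = \bar{\rho}\circ\mathcal{B}_{\mu,\epsilon}$ together with $\bar{\rho}\mathsf{f}_k^\sigma = \sigma\mathsf{f}_k^\sigma$. A short direct computation using the definition of $\bar{\rho}$ in \eqref{reversible} and the change of variables $x\mapsto -x$ shows the general identity
\begin{equation*}
(\bar{\rho}f,\bar{\rho}g) = \overline{(f,g)}, \qquad \forall f,g\in L^2(\mathbb{T},\mathbb{C}^2).
\end{equation*}
Combining these three facts:
\begin{equation*}
(\mathcal{B}_{\mu,\epsilon}\mathsf{f}_j^{\sigma_j}, \mathsf{f}_i^{\sigma_i}) = \sigma_j(\bar{\rho}\mathcal{B}_{\mu,\epsilon}\mathsf{f}_j^{\sigma_j}, \mathsf{f}_i^{\sigma_i}) = \sigma_i\sigma_j(\bar{\rho}\mathcal{B}_{\mu,\epsilon}\mathsf{f}_j^{\sigma_j}, \bar{\rho}\mathsf{f}_i^{\sigma_i}) = \sigma_i\sigma_j\,\overline{(\mathcal{B}_{\mu,\epsilon}\mathsf{f}_j^{\sigma_j}, \mathsf{f}_i^{\sigma_i})}.
\end{equation*}
When $\sigma_i = \sigma_j$ the entry equals its own conjugate, hence is real; when $\sigma_i \neq \sigma_j$ the entry equals minus its conjugate, hence is purely imaginary. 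This yields \eqref{B are alternatively real or imaginary} and completes the proof.

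There is no real obstacle here — the lemma is entirely structural and follows by bookkeeping once one identifies the three ingredients (symplectic expansion, self-adjointness of $\mathcal{B}_{\mu,\epsilon}$, reversibility-preserving identity for the complex scalar product). The only point requiring mild care is checking the sign pattern when converting $(\mathcal{J}\mathscr{L}_{\mu,\epsilon}\mathsf{f}_j^{\sigma_j}, \mathsf{f}_i^{\sigma_i})$ into the block form $\mathsf{J}_4\mathsf{B}_{\mu,\epsilon}$, which is the natural reason for the appearance of the symplectic matrix $\mathsf{J}_4$ on the left rather than in the pairing.
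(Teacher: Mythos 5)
Your proof is correct and follows essentially the same route as the paper: apply the symplectic expansion formula \eqref{expasion of f by using symplectic basis} to $\mathscr{L}_{\mu,\epsilon}\mathsf{f}_k^{\sigma}$ and use $\mathcal{J}\mathscr{L}_{\mu,\epsilon}=-\mathcal{B}_{\mu,\epsilon}$ to obtain the $\mathsf{J}_4\mathsf{B}_{\mu,\epsilon}$ form, then deduce self-adjointness of $\mathsf{B}_{\mu,\epsilon}$ from that of $\mathcal{B}_{\mu,\epsilon}$, and finally combine the identity $(\bar\rho f,\bar\rho g)=\overline{(f,g)}$ with reversibility-preservation of $\mathcal{B}_{\mu,\epsilon}$ and the reversible basis relations to get the alternating real/imaginary pattern. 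The only cosmetic difference is the order in which you insert the $\bar\rho$'s, which is immaterial.
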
 
\begin{proof}
    Recalling \eqref{mathscr L mu e} and \eqref{expasion of f by using symplectic basis}, for $\sigma=\pm$, $k=0,1$, we obtain
    \begin{equation*} 
        \begin{aligned}
            \mathscr{L}_{\mu,\epsilon}\mathsf{f}^{\sigma}_k&=-(\mathcal{J}\mathscr{L}_{\mu,\epsilon}\mathsf{f}^{\sigma}_k,\mathsf{f}^-_1)\mathsf{f}^+_1+(\mathcal{J}\mathscr{L}_{\mu,\epsilon}\mathsf{f}^{\sigma}_k,\mathsf{f}^+_1)\mathsf{f}^-_1-(\mathcal{J}\mathscr{L}_{\mu,\epsilon}\mathsf{f}^{\sigma}_k,\mathsf{f}^-_0)\mathsf{f}^+_0+(\mathcal{J}\mathscr{L}_{\mu,\epsilon}\mathsf{f}^{\sigma}_k,\mathsf{f}^+_0)\mathsf{f}^-_0\\
            &=(\mathcal{B}_{\mu,\epsilon}\mathsf{f}^{\sigma}_k,\mathsf{f}^-_1)\mathsf{f}^+_1-(\mathcal{B}_{\mu,\epsilon}\mathsf{f}^{\sigma}_k,\mathsf{f}^+_1)\mathsf{f}^-_1+(\mathcal{B}_{\mu,\epsilon}\mathsf{f}^{\sigma}_k,\mathsf{f}^-_0)\mathsf{f}^+_0-(\mathcal{B}_{\mu,\epsilon}\mathsf{f}^{\sigma}_k,\mathsf{f}^+_0)\mathsf{f}^-_0.
        \end{aligned}
    \end{equation*}
This verifies that the matrix representation of $\mathscr{L}_{\mu,\epsilon}$ with respect to $\mathsf{F}$ is $\mathsf{J}_4\mathsf{B}_{\mu,\epsilon}$. Also, the martix $\mathsf{B}_{\mu,\e}$ is self-adjoing because $\mathcal{B}_{\mu,\e}$ is self-adjoint. Next, recall from \eqref{complex product} and \eqref{reversible} that the inner product satisfies
\begin{align} \label{(f,g)=(rho f, rho g)}
    (\mathsf{f},\mathsf{g})=\overline{(\overline{\rho}\mathsf{f},\overline{\rho}\mathsf{g})}.
\end{align}
Then, since $\mathcal{B}_{\mu,\e}$ is both self-adjoint and reversibility-preserving \eqref{B rho=rho B} and \eqref{basis is reversible}, we compute:
\begin{align*}
    (\mathcal{B}_{\mu,\epsilon} \mathsf{f}_k^{\sigma}, \mathsf{f}_{k'}^{\sigma'})=\overline{(\overline{\rho}\mathcal{B}_{\mu,\e} \mathsf{f}^{\sigma}_k,\overline{\rho} \mathsf{f}^{\sigma'}_{k'})}=\overline{(\mathcal{B}_{\mu,\e}\overline{\rho} \mathsf{f}^{\sigma}_k,\overline{\rho} \mathsf{f}^{\sigma'}_{k'})}=\sigma\sigma'\overline{(\mathcal{B}_{\mu,\e}\mathsf{f}^{\sigma}_k,\mathsf{f}^{\sigma'}_{k'})},
\end{align*}
which proves \eqref{B are alternatively real or imaginary}.
\end{proof}

We conclude this section recalling some notation. A $2n \times 2n$, $n = 1, 2$, matrix of the form $\mathsf{L} = \mathsf{J}_{2n} \mathsf{B}$ is \textit{Hamiltonian} if $\mathsf{B}$ is a self-adjoint matrix, i.e. $\mathsf{B} = \mathsf{B}^*$. It is \textit{reversible} if $\mathsf{B}$ is reversibility-preserving, i.e.
\[
\rho_{2n} \circ \mathsf{B} = \mathsf{B} \circ \rho_{2n},
\]
where
\[
\rho_4 := \begin{pmatrix} \rho_2 & 0 \\ 0 & \rho_2 \end{pmatrix}, \quad
\rho_2 := \begin{pmatrix} \mathfrak{c} & 0 \\ 0 & -\mathfrak{c} \end{pmatrix}, 
\]
and $\mathfrak{c} : z \mapsto \bar{z}$ is the conjugation of the complex plane.
Equivalently, $\rho_{2n} \circ \mathsf{L} = - \mathsf{L} \circ \rho_{2n}.$

The transformations preserving the Hamiltonian structure are called \textit{symplectic}, and satisfy
\begin{align} \label{YstarJ4Y=J4}
Y^* \mathsf{J}_4 Y = \mathsf{J}_4.
\end{align}
If $Y$ is symplectic then $Y^*$ and $Y^{-1}$ are symplectic as well. A Hamiltonian matrix $\mathsf{L} = \mathsf{J}_4 \mathsf{B}$, with $\mathsf{B} = \mathsf{B}^*$, is conjugated through a symplectic matrix $Y$ in a new Hamiltonian matrix
\begin{align} \label{L1=J4B}
    \mathsf{L}_1=Y^{-1}\mathsf{L}Y=Y^{-1}\mathsf{J}_4(Y^-)^*Y^*\mathsf{B}Y=\mathsf{J}_4 \mathsf{B}_1\,\,\textit{where}\,\,\mathsf{B}_1:=Y^*\mathsf{B}Y=\mathsf{B}^*_1.
\end{align}
A $4\times 4$ matrix $\mathsf{B} = (\mathsf{B}_{ij})_{i,j=1,\ldots,4}$ is reversibility-preserving if and only if its entries are alternatively real and purely imaginary, namely $\mathsf{B}_{ij}$ is real when $i + j$ is even and purely imaginary otherwise, as in \eqref{B are alternatively real or imaginary}. A $4\times 4$ complex matrix $\mathsf{L} = (\mathsf{L}_{ij})_{i,j=1,\ldots,4}$ is reversible if and only if $\mathsf{L}_{ij}$ is purely imaginary when $i + j$ is even and real otherwise.

We finally mention that the flow of a Hamiltonian reversibility-preserving matrix is symplectic and reversibility-preserving (see \cite[Lemma 3.8]{BMV1}).

\section{Matrix Representation of $\mathscr{L}_{\mu,\e}$ on $\mathcal{V}_{\mu,\e}$} Using the transformation operator $U_{\mu,\e}$ in \eqref{U transformation operators}, we construct the basis of $\mathcal{V}_{\mu,\e}$
\begin{equation} \label{F basis set and f}
    \begin{aligned}
        \mathcal{F}&:=\{f^+_1(\mu,\e),f^-_1(\mu,\e),f^+_0(\mu,\e),f^-_0(\mu,\e)\}, \qquad 
        f^\sigma_k(\mu,\e):=U_{\mu,\e} f^{\sigma}_k,~~\sigma=\pm,~k=0,1,
    \end{aligned}
\end{equation}
where 
\begin{align} \label{eigenfunc of mathcall L00 2}
    f^+_1=\vet{(\ch/\sqrt{1+\kappa})^{1/2}\cos(x)}{(\ch/\sqrt{1+\kappa})^{-1/2}\sin(x)}, ~~f^-_1=\vet{-(\ch/\sqrt{1+\kappa})^{1/2}\sin(x)}{(\ch/\sqrt{1+\kappa})^{-1/2}\cos(x)}, ~~f^+_0=\vet{1}{0},~~f^-_0=\vet{0}{1},
\end{align}
form a basis of $\mathcal{V}_{0,0}=\mathrm{Rg}(P_{0,0})$, cf. \eqref{eigenfunc of mathcall L00}-\eqref{eigenfunc f+0}. Note that the real valued vectors $\{f^\pm_1,f^\pm_0\}$ form a symplectic and reversible basis for $\mathcal{V}_{0,0}$, according to Definition \ref{Symplectic and reversible basis}. Then, by Lemma \ref{kato thm} and Lemma \ref{properties of U and P} we deduce that:
\begin{lemma} \label{F is symplectic and reversible}
    The basis $\mathcal{F}$ of $\mathcal{V}_{\mu,\e}$ defined in \eqref{F basis set and f}, is symplectic and reversible, i.e. satisfies \eqref{basis is symplectic} and \eqref{basis is reversible}. Each map $(\mu,\e) \mapsto f^\sigma_k(\mu,\e)$ is analytic as a map $B(\mu_0)\times B(\mu_0)\rightarrow H^2(\mathbb{T})\times H^1(\mathbb{T})$.
\end{lemma}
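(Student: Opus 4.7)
The plan is to reduce the claim to an unperturbed verification plus the algebraic structural properties of the transformation operator $U_{\mu,\e}$ already collected in Lemma~\ref{kato thm} and Lemma~\ref{properties of U and P}. Analyticity will be immediate from Lemma~\ref{kato thm}(4), since $f^\sigma_k(\mu,\e)=U_{\mu,\e}f^\sigma_k$ is the image of a fixed vector in $H^2(\T)\times H^1(\T)$ under an operator that depends analytically on $(\mu,\e)$ with values in $\mathcal{L}(Y)$.

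For the symplectic identities \eqref{basis is symplectic}, I would exploit the symplecticity $U_{\mu,\e}^*\mathcal{J}\,U_{\mu,\e}=\mathcal{J}$ of Lemma~\ref{properties of U and P}(ii). This gives, for all $\sigma,\sigma'=\pm$ and $k,k'\in\{0,1\}$,
\begin{equation*}
(\mathcal{J}f^\sigma_k(\mu,\e),f^{\sigma'}_{k'}(\mu,\e))
= (\mathcal{J}U_{\mu,\e}f^\sigma_k,U_{\mu,\e}f^{\sigma'}_{k'})
= (U_{\mu,\e}^*\mathcal{J}U_{\mu,\e}f^\sigma_k,f^{\sigma'}_{k'})
= (\mathcal{J}f^\sigma_k,f^{\sigma'}_{k'}),
\end{equation*}
so the problem reduces to checking the symplectic relations for the unperturbed vectors \eqref{eigenfunc of mathcall L00 2}. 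That reduces to straightforward integrals of products of $\sin(x)$, $\cos(x)$ and constants over $\T$: the diagonal pairings $(\mathcal{J}f^-_1,f^+_1)$ and $(\mathcal{J}f^-_0,f^+_0)$ equal $1$ (the factors $(\ch/\sqrt{1+\kappa})^{\pm 1/2}$ cancel out and $\cos^2+\sin^2=1$, respectively $\mathcal{J}f^-_0=f^+_0$), the pairings $(\mathcal{J}f^\sigma_k,f^\sigma_k)$ vanish by $L^2$-orthogonality of $\sin$ and $\cos$ against themselves in the appropriate slot, and the cross-pairings between the index $k=1$ and $k=0$ blocks vanish because they are integrals of $\sin(x)$ or $\cos(x)$ against a constant.

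For the reversibility relations \eqref{basis is reversible}, I would use that $U_{\mu,\e}$ is reversibility-preserving (Lemma~\ref{properties of U and P}(ii)), i.e.\ $\bar{\rho}\,U_{\mu,\e}=U_{\mu,\e}\bar{\rho}$, to conclude
\begin{equation*}
\bar{\rho}\,f^\sigma_k(\mu,\e)=\bar{\rho}\,U_{\mu,\e}f^\sigma_k=U_{\mu,\e}\bar{\rho}\,f^\sigma_k,
\end{equation*}
reducing the problem to checking $\bar{\rho}f^\sigma_k=\sigma f^\sigma_k$ for the unperturbed vectors. Since these are real valued, $\bar{\rho}$ in \eqref{reversible} only flips $x\mapsto -x$ in the first component and $x\mapsto -x$ with an extra minus sign in the second; using that $\cos$ is even and $\sin$ is odd, one reads off directly that $f^+_1,f^+_0$ are fixed by $\bar{\rho}$ while $f^-_1,f^-_0$ are sent to their negatives.

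There is no substantive obstacle: the content of the lemma is essentially that every structural property built into $U_{\mu,\e}$ by Lemma~\ref{properties of U and P} transports the corresponding property of the (manifestly symplectic and reversible) basis $\{f^\pm_1,f^\pm_0\}$ of $\mathcal{V}_{0,0}$ to its image in $\mathcal{V}_{\mu,\e}$. The only mild point to keep in mind is that the basis elements \eqref{eigenfunc of mathcall L00 2} are normalized precisely so that the constants $(\ch/\sqrt{1+\kappa})^{\pm 1/2}$ in the two components multiply to $1$ in each nontrivial symplectic pairing; this is what selects the normalization used throughout the paper, and makes the verification at $(\mu,\e)=(0,0)$ trivial.
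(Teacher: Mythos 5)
Your proposal is correct and follows the same route as the paper: use the symplecticity and reversibility--preservation of $U_{\mu,\e}$ from Lemma~\ref{properties of U and P}(ii) to transfer the identities from the unperturbed basis, and get analyticity from Lemma~\ref{kato thm}. The only difference is that you spell out the verification at $(\mu,\e)=(0,0)$, which the paper merely records in the remark preceding the lemma.
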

\begin{proof}
By Lemma \ref{properties of U and P}-(ii), the operators $U_{\mu,\epsilon}$ are symplectic and reversibility preserving, i.e. 
$U_{\mu,\epsilon}^* \mathcal{J} U_{\mu,\epsilon} = \mathcal{J}$ and $U_{\mu,\epsilon}\circ\overline{\rho}=\overline{\rho}\circ U_{\mu,\epsilon}$. Recall \eqref{F basis set and f}, i.e. $f_k^\sigma(\mu,\epsilon):=U_{\mu,\epsilon} f_k^\sigma, \sigma=\pm, k=0,1.$
Then for any $k,k',\sigma,\sigma'$,
\begin{align*}
(\mathcal{J} f_k^\sigma(\mu,\epsilon), f_{k'}^{\sigma'}(\mu,\epsilon))
= (\mathcal{J} f_k^\sigma, f_{k'}^{\sigma'}),
\end{align*}
so the symplectic relations \eqref{basis is symplectic} are preserved. Moreover,
\begin{align*}
\overline{\rho} f_k^\sigma(\mu,\epsilon)
= U_{\mu,\epsilon}\circ \overline{\rho} f_k^\sigma
= \sigma f_k^\sigma(\mu,\epsilon),
\end{align*}
which shows that the reversibility conditions \eqref{basis is reversible} hold as well. 
Finally, the analyticity of $f_k^\sigma(\mu,\epsilon)$ follows from the analyticity of 
$U_{\mu,\epsilon}$ (Lemma \ref{kato thm}). 
\end{proof}

We then expand the vectors $f^\sigma_k(\mu,\e)$ in $(\mu,\e)$. We denote by $\mathrm{even}_0(x)$ a real, even, $2\pi$-periodic function with zero space average. In the sequel $\cO(\mu^m\e^n)\vet{\mathrm{even}(x)}{\mathrm{odd}(x)}$ denotes an analytic map in $(\mu,\e)$ with values in $Y=H^2(\mathbb{T},\mathbb{C})\times H^1(\mathbb{T},\mathbb{C})$, whose first component is $\mathrm{even}(x)$ and the second one $\mathrm{odd}(x)$; we have a similar meaning for $\cO(\mu^m\e^n)\vet{\mathrm{odd}(x)}{\mathrm{even}(x)}$, etc $\ldots$.

\begin{lemma} [Expansion of the basis $\mathcal{F}$] \label{expansion of the basis F} For small values of $(\mu,\e)$ the basis $\mathcal{F}$ in \eqref{F basis set and f} has the expansion

\begin{equation} \label{43 f+1}
\begin{aligned} 
f_1^+(\mu,\epsilon) &= 
\vet{(\ch/\sqrt{1+\kappa})^{1/2}\cos(x)}{(\ch/\sqrt{1+\kappa})^{-1/2}\sin(x)}
+ \im \frac{\mu}{4} \gamma_{\tth,\kappa}
\vet{(\ch/\sqrt{1+\kappa})^{1/2}\sin(x)}{(\ch/\sqrt{1+\kappa})^{-1/2}\cos(x)}
+ \epsilon
\begin{bmatrix}
\alpha_{\tth,\kappa} \cos(2x) \\
\beta_{\tth,\kappa} \sin(2x)
\end{bmatrix} \\
& \quad + \mathcal{O}(\mu^2)
\begin{bmatrix}
\mathit{even}_0(x) + \im \mathit{odd}(x) \\
\mathit{odd}(x) + \im \mathit{even}_0(x)
\end{bmatrix}
+ \mathcal{O}(\epsilon^2)
\begin{bmatrix}
\mathit{even}_0(x) \\
\mathit{odd}(x)
\end{bmatrix} + \im \mu \epsilon
\begin{bmatrix}
\mathit{odd}(x) \\
\mathit{even}(x)
\end{bmatrix}
+ \mathcal{O}(\mu^2 \epsilon, \mu \epsilon^2),
\end{aligned}
\end{equation}

\begin{equation} \label{44 f-1}
\begin{aligned}
f_1^-(\mu,\epsilon) &= 
\vet{-(\ch/\sqrt{1+\kappa})^{1/2}\sin(x)}{(\ch/\sqrt{1+\kappa})^{-1/2}\cos(x)}
+ \im \frac{\mu}{4}\gamma_{\tth,\kappa}
\vet{(\ch/\sqrt{1+\kappa})^{1/2}\cos(x)}{-(\ch/\sqrt{1+\kappa})^{-1/2}\sin(x)}
+ \epsilon
\begin{bmatrix}
- \alpha_{\tth,\kappa} \sin(2x) \\
\beta_{\tth,\kappa} \cos(2x)
\end{bmatrix} \\
& \quad + \mathcal{O}(\mu^2)
\begin{bmatrix}
\mathit{odd}(x) + \im \mathit{even}_0(x) \\
\mathit{even}_0(x) + \im \mathit{odd}(x)
\end{bmatrix}
+ \mathcal{O}(\epsilon^2)
\begin{bmatrix}
\mathit{odd}(x) \\
\mathit{even}(x)
\end{bmatrix}   + \im \mu \epsilon
\begin{bmatrix}
\mathit{even}(x) \\
\mathit{odd}(x)
\end{bmatrix}
+ \mathcal{O}(\mu^2 \epsilon, \mu \epsilon^2),
\end{aligned}
\end{equation}

\begin{equation} \label{45 f+0}
\begin{aligned}
f_0^+(\mu,\epsilon) &=
\begin{bmatrix}
1 \\
0
\end{bmatrix}
+ \epsilon \delta_{\tth,\kappa}
\vet{(\ch/\sqrt{1+\kappa})^{1/2}\cos(x)}{-(\ch/\sqrt{1+\kappa})^{-1/2}\sin(x)}
+ \mathcal{O}(\epsilon^2)
\begin{bmatrix}
\mathit{even}_0(x) \\
\mathit{odd}(x)
\end{bmatrix}  \\
& \quad + \im \mu \epsilon
\begin{bmatrix}
\mathit{odd}(x) \\
\mathit{even}(x)
\end{bmatrix}
+ \mathcal{O}(\mu^2 \epsilon, \mu \epsilon^2),
\end{aligned}
\end{equation}

\begin{equation} \label{46 f-0}
\begin{aligned}
f_0^-(\mu,\epsilon) &=
\begin{bmatrix}
0 \\
1
\end{bmatrix}
+ \im \mu \epsilon
\begin{bmatrix}
\mathit{even}_0(x) \\
\mathit{odd}(x)
\end{bmatrix}
+ \mathcal{O}(\mu^2 \epsilon, \mu \epsilon^2),
\end{aligned}
\end{equation}
where the remainders $\mathcal{O}()$ are vectors in $Y=H^2(\mathbb{T},\mathbb{C})\times H^1(\mathbb{T},\mathbb{C})$ and
\begin{equation} \label{47 coefficients}
\begin{aligned}
\alpha_{\tth,\kappa} &= \frac{\left(\ch^4 (\kappa+1)-3 \kappa+3\right)}{2 \ch^{\frac{3}{2}} \sqrt[4]{\kappa+1} \left(\ch^4 (\kappa+1)-3 \kappa\right)}, \quad
\beta_{\tth,\kappa} = \frac{\left(3-\ch^4\right) \left(\ch^4+1\right) (\kappa+1)^{\frac{5}{4}}}{4 \ch^{5/2} \left(\ch^4 (\kappa+1)-3 \kappa\right)}, \\
\gamma_{\tth,\kappa} &= 1+\tth(\ch^{-2}-\ch^2)-2\kappa(1+\kappa)^{-1}, \quad
\delta_{\tth,\kappa} = \frac{1}{4}(1+\kappa)^{\frac{1}{4}}\ch^{-\frac{1}{2}}(\ch^2+3\ch^{-2}).
\end{aligned}
\end{equation}
For $\mu = 0$, the basis $\{ f_j^\pm(0,\epsilon),\, \epsilon = 0 \}$ is real and
\begin{equation} \label{48 f-0=01}
\begin{aligned}
f_1^+(0,\epsilon) &= 
\begin{bmatrix}
\mathit{even}_0(x) \\
\mathit{odd}(x)
\end{bmatrix}, \quad
f_1^-(0,\epsilon) = 
\begin{bmatrix}
\mathit{odd}(x) \\
\mathit{even}(x)
\end{bmatrix},  \\
f_0^+(0,\epsilon) &=\vet{1}{0}+
\begin{bmatrix}
\mathit{even}_0(x) \\
\mathit{odd}(x)
\end{bmatrix}, \quad
f_0^-(0,\epsilon) = 
\begin{bmatrix}
0 \\
1
\end{bmatrix}.
\end{aligned}
\end{equation}
\end{lemma}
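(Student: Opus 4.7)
The plan is to implement the standard Kato perturbation machinery: expand the spectral projector $P_{\mu,\epsilon}$ defined in \eqref{Projection P mu e} as a bivariate Taylor series in $(\mu,\epsilon)$, use this to expand the transformation operator $U_{\mu,\epsilon}$ from \eqref{U transformation operators}, and then apply the result to the unperturbed symplectic basis $\{f_k^\sigma\}$ in \eqref{eigenfunc of mathcall L00 2}. From Lemma \ref{lem:pa.exp} together with \eqref{mathcal B mu e}, the self‑adjoint operator $\mathcal{B}_{\mu,\epsilon}$ admits a jointly analytic expansion $\mathcal{B}_{\mu,\epsilon}=\mathcal{B}_{0,0}+\mu\mathcal{B}^{(1,0)}+\epsilon\mathcal{B}^{(0,1)}+\mu^2\mathcal{B}^{(2,0)}+\mu\epsilon\mathcal{B}^{(1,1)}+\epsilon^2\mathcal{B}^{(0,2)}+\mathcal{O}_3(\mu,\epsilon)$, whose coefficients are explicit differential/Fourier multiplier operators built from the Taylor coefficients of $a_\epsilon,p_\epsilon,g_\epsilon,\mathfrak{p}$ and of $|D+\mu|\tanh((\tth+\ttf)|D+\mu|)$. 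Multiplying by $\mathcal{J}$ gives the corresponding expansion of $\mathscr{L}_{\mu,\epsilon}$.

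Inserting this into the Neumann series for $(\mathscr{L}_{\mu,\epsilon}-\lambda)^{-1}$ around the unperturbed resolvent and integrating over $\Gamma$ yields, to the orders needed here,
\begin{equation*}
P_{\mu,\epsilon}-P_{0,0}=-\frac{1}{2\pi\im}\oint_\Gamma (\mathscr{L}_{0,0}-\lambda)^{-1}\bigl(\mu\mathscr{L}^{(1,0)}+\epsilon\mathscr{L}^{(0,1)}\bigr)(\mathscr{L}_{0,0}-\lambda)^{-1}d\lambda+\mathcal{O}(\mu^2,\mu\epsilon,\epsilon^2).
\end{equation*}
Because $P_{0,0}f_k^\sigma=f_k^\sigma$, the formula \eqref{U transformation operators} reduces on the unperturbed basis to $U_{\mu,\epsilon}f_k^\sigma=(\mathrm{Id}-(P_{\mu,\epsilon}-P_{0,0})^2)^{-1/2}P_{\mu,\epsilon}f_k^\sigma$, whose $(\mu,\epsilon)$-Taylor coefficients at orders $\mu$, $\epsilon$, $\mu\epsilon$ are obtained by applying the above projector expansion to $f_k^\sigma$. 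Each of these vectors is identified by expanding in the $L^2(\T)$-Fourier basis, using the invertibility of $\mathscr{L}_{0,0}-\lambda$ on each Fourier mode $k\neq 0,\pm 1$ and the residue at the four-dimensional kernel for $k=0,\pm 1$; the unperturbed diagonalization of $\mathscr{L}_{0,0}$ is explicit through \eqref{mathcal L mu 0}--\eqref{eigenvalues of Lmu0}.

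The four numerical coefficients are then read off as follows. For $\gamma_{\tth,\kappa}$ I would compute $\partial_\mu|_{(0,0)}P_{\mu,0}f_1^+$, whose contribution to the mode $(\cos x,-\sin x)^\top$ produces the purely imaginary $\im\frac{\mu}{4}\gamma_{\tth,\kappa}$ term in \eqref{43 f+1}; the value of $\gamma_{\tth,\kappa}$ follows from evaluating $\mathscr{L}^{(1,0)}$ on the first harmonic and using the dispersion relation and the expression of $\ttf$ from \eqref{expfe}. For $\alpha_{\tth,\kappa},\beta_{\tth,\kappa}$ I would compute $\partial_\epsilon|_{(0,0)}P_{0,\epsilon}f_1^+$: since $\mathscr{L}^{(0,1)}$ involves $p_1,a_1,g_1$ (first harmonics), only the mode $k=\pm 2$ responds, and solving the $2\times 2$ linear system resulting from inverting $\mathscr{L}_{0,0}$ at $k=2$ produces the stated expressions, the denominator $\ch^4(1+\kappa)-3\kappa$ reflecting the nonresonance condition $(\kappa,\tth)\notin\fR_2$. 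The coefficient $\delta_{\tth,\kappa}$ is obtained analogously from $\partial_\epsilon|_{(0,0)}P_{0,\epsilon}f_0^+$, isolating the first-harmonic response of the constant vector $(1,0)^\top$ under $\mathscr{L}^{(0,1)}$. Finally, the parity structure of the remainders \eqref{43 f+1}--\eqref{48 f-0=01} (and the absence of the mean-value component, indicated by $\mathrm{even}_0$) is automatic: by Lemma \ref{F is symplectic and reversible} the basis is reversible, hence of the form \eqref{Parity structure}, and the zero-mean property of the first component of $f_1^\pm$ follows from the fact that $\mathcal{V}_{\mu,\epsilon}$ is preserved by the symplectic projector and that the $f_0^\pm$ carry all the zero‑mean content of the first coordinate. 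At $\mu=0$ the operator $\mathcal{B}_{0,\epsilon}$ is real, so $P_{0,\epsilon}$ and $U_{0,\epsilon}$ are real (Lemma \ref{properties of U and P}(iii)), giving \eqref{48 f-0=01}.

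The main obstacle is purely computational: systematically tracking the combined $(\kappa,\tth)$ dependence through $\partial_\mu$ and $\partial_\epsilon$ of $\mathcal{B}_{\mu,\epsilon}$---which now contains the capillary piece $-\kappa\Sigma_{\mu,\epsilon}$ in \eqref{Sigma}, and the $\ttf$-dependent symbol $|D+\mu|\tanh((\tth+\ttf)|D+\mu|)$---and identifying the correct trigonometric modes. This is substantially heavier than in the pure gravity case \cite{BMV3} because every Fourier entry of $\mathscr{L}^{(1,0)},\mathscr{L}^{(0,1)}$ is a rational expression in $\ch$ and $\kappa$, and the mode $k=\pm 2$ inversion brings in the denominator $\ch^4(1+\kappa)-3\kappa$ that is nonvanishing precisely under the hypothesis $(\kappa,\tth)\notin\fR$. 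I would perform these algebraic reductions symbolically (as in \eqref{id.DjRe}) and then verify the final expressions \eqref{47 coefficients} with Mathematica.
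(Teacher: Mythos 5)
Your plan is, at the structural level, the same as the paper's: expand the spectral projector via the contour-integral formula, push the expansion through $U_{\mu,\epsilon}$ using that $P_{0,0}f_k^\sigma=f_k^\sigma$, and then identify the Fourier modes $0,\pm1,\pm2$ via residues. The computations of $\alpha_{\tth,\kappa},\beta_{\tth,\kappa},\delta_{\tth,\kappa}$ from the $k=\pm2$ (and $k=\pm1$) inversion and of $\gamma_{\tth,\kappa}$ from the first-harmonic response of $\dot{\mathscr{L}}_{0,0}$ are exactly what the paper does in Appendix B.

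There is, however, a genuine gap at the $\mu\epsilon$ order. You write that the Taylor coefficients of $U_{\mu,\epsilon}f_k^\sigma$ ``at orders $\mu$, $\epsilon$, $\mu\epsilon$ are obtained by applying the above projector expansion,'' but the projector expansion you wrote down has remainder $\mathcal{O}(\mu^2,\mu\epsilon,\epsilon^2)$: it contains no information about the $\mu\epsilon$ coefficient. To access that order one must compute the mixed second derivative $\dot P'_{0,0}$ (the paper's \eqref{A5a}--\eqref{A5c}, which involve $\dot{\mathscr{L}}_{0,0}$, $\mathscr{L}'_{0,0}$ and the second-order symbol $\dot{\mathscr{L}}'_{0,0}$) and keep the $\mu\epsilon$ cross term of the prefactor $(\operatorname{Id}-(P_{\mu,\epsilon}-P_{0,0})^2)^{-1/2}$; this is precisely the content of \eqref{A2}, $\dot U'_{0,0}P_{0,0}=(\dot P'_{0,0}-\tfrac12 P_{0,0}\dot P'_{0,0})P_{0,0}$. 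Relatedly, your claim that the structure of the $\mu\epsilon$ remainder ``is automatic by reversibility'' is too weak: reversibility only forces the parity pattern \eqref{Parity structure}, which is compatible with a $\mu\epsilon$ coefficient having a nonzero real part. The lemma asserts more, namely that the $\mu\epsilon$ coefficient is \emph{purely imaginary} (the factor $\im\mu\epsilon$ in \eqref{43 f+1}--\eqref{46 f-0}). This needs the extra input that $\dot{\mathscr{L}}_{0,0}$ and $\dot{\mathscr{L}}'_{0,0}$ are purely imaginary while $\mathscr{L}'_{0,0}$ is real (equivalently, $\overline{\mathcal{L}_{\mu,\epsilon}}=\mathcal{L}_{-\mu,\epsilon}$, so Taylor coefficients odd in $\mu$ are purely imaginary when applied to real vectors). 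Similarly, the zero-average of the $\mu\epsilon$ correction of $f_0^\pm$ is obtained in the paper by an explicit residue argument, not from the symplectic pairing, which you only invoke at $\mu=0$. Finally, two small slips: $\ttf=\mathcal O(\epsilon^2)$ so it cannot enter the computation of $\gamma_{\tth,\kappa}$ (which is a pure $\partial_\mu$ derivative at $\epsilon=0$); and $\mathscr{L}'_{0,0}f_1^+$ excites mode $0$ as well as mode $\pm2$ — the mode-$0$ contribution just happens to have vanishing residue.
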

\begin{proof}
    The long computations are given in Appendix \ref{secA1}.
\end{proof}

We now state the main result of this section.

\begin{lemma} \label{B decomposition}
    The matrix that represents the Hamiltonian and reversible operator $\mathscr{L}_{\mu,\epsilon} : \mathcal{V}_{\mu,\epsilon} \to \mathcal{V}_{\mu,\epsilon}$ in the symplectic and reversible basis $\mathcal{F}$ of $\mathcal{V}_{\mu,\epsilon}$ defined in \eqref{F basis set and f}, is a Hamiltonian matrix $\mathsf{L}_{\mu,\epsilon} = \mathsf{J}_4 \mathsf{B}_{\mu,\epsilon}$, where $\mathsf{B}_{\mu,\epsilon}$ is a self-adjoint and reversibility preserving (i.e., satisfying \eqref{B are alternatively real or imaginary}) $4 \times 4$ matrix of the form
\begin{align} \label{B mu epsilon}
    \mathsf{B}_{\mu,\epsilon} = 
\begin{pmatrix}
E & F \\
F^* & G
\end{pmatrix}, \quad E = E^*, \quad G = G^*,
\end{align}
where $E, F, G$ are the $2 \times 2$ matrices
\begin{equation} \label{E}
\begin{aligned} 
E &:= 
\begin{pmatrix}
\mathsf{e}_{11} \epsilon^2 (1 + \hat{r}_1(\epsilon,\mu\epsilon)) - \mathsf{e}_{22} \frac{\mu^2}{8} (1 + \check{r}_1(\epsilon,\mu)) &
\im\left(\frac{1}{2} \mathsf{e}_{12} \mu + r_2(\mu \epsilon^2, \mu^2 \epsilon, \mu^3)\right) \\
- \im\left(\frac{1}{2} \mathsf{e}_{12} \mu + r_2(\mu \epsilon^2, \mu^2 \epsilon, \mu^3)\right) &
- \mathsf{e}_{22} \frac{\mu^2}{8}(1 + r_5(\epsilon, \mu))
\end{pmatrix}, 
\end{aligned}
\end{equation}

\begin{equation} \label{F}
\begin{aligned} 
F &:= 
\begin{pmatrix}
\mathsf{f}_{11} \epsilon + r_3(\epsilon^3, \mu \epsilon^2, \mu^2 \epsilon) &
\im \,\mu \epsilon \ch^{-\frac{1}{2}}(1+\kappa)^{\frac{1}{4}} + \im\, r_4(\mu \epsilon^2, \mu^2 \epsilon) \\
\im\, r_6(\mu \epsilon) &
r_7(\mu^2 \epsilon)
\end{pmatrix}, 
\end{aligned}
\end{equation}

\begin{equation} \label{G}
\begin{aligned} 
G &:= 
\begin{pmatrix}
1 +\kappa\mu^2+ r_8(\epsilon^2, \mu^2 \epsilon) &
- \im r_9(\mu \epsilon^2, \mu^2 \epsilon) \\
\im r_9(\mu \epsilon^2, \mu^2 \epsilon) &
\mu \tanh(h \mu) + r_{10}(\mu^2 \epsilon)
\end{pmatrix}, 
\end{aligned}
\end{equation}
where $\mathsf{e}_{22}$ is defined in \eqref{def:e22}, $\mathsf{e}_{\mathrm{WB}}$ is defined in \eqref{def:D}, and
\begin{equation} \label{e11 f11}
    \begin{aligned} 
&\mathsf{e}_{11}:=\mathsf{e}_{11}(\kappa,\tth) := \frac{(21-25\ch^4+6\ch^8)(1+\kappa)^2+(-12+6\ch^4+3\ch^8)(1+\kappa)+9\ch^4}{8\ch^3(1+\kappa)^{\frac{1}{2}}\,\left(\ch^4(1+\kappa) -3\kappa\right)},\\
&\mathsf{f}_{11}:=\mathsf{f}_{11}(\kappa,\tth) := -\frac{1}{2}\left(\ch^{\frac{5}{2}}-\ch^{-\frac{3}{2}}\right)\left(1+\kappa\right)^{\frac{3}{4}}. 
\end{aligned}
\end{equation}
\end{lemma}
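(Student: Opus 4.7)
The plan is to compute every entry of the $4\times 4$ matrix $\mathsf{B}_{\mu,\epsilon}$ in \eqref{Bmuepsilon matrix representation} as the complex pairing $(\mathcal{B}_{\mu,\epsilon}f_k^\sigma(\mu,\epsilon),f_{k'}^{\sigma'}(\mu,\epsilon))$, and then to read off the block decomposition \eqref{B mu epsilon} by relabelling; the Hamiltonian identity $\mathsf{L}_{\mu,\epsilon}=\mathsf{J}_4\mathsf{B}_{\mu,\epsilon}$ and the self-adjointness $\mathsf{B}_{\mu,\epsilon}=\mathsf{B}_{\mu,\epsilon}^*$ are supplied by Lemma \ref{matrix representation mathsf L}. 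The three inputs for the computation are the $(\mu,\epsilon)$-expansion of the self-adjoint operator $\mathcal{B}_{\mu,\epsilon}$ in \eqref{mathcal B mu e}, obtained by combining Lemma \ref{lem:pa.exp} (for $a_\epsilon$, $p_\epsilon$, $g_\epsilon$, $\Sigma_\epsilon$) with the expansion \eqref{expfe} of $\mathfrak{p}$ and $\ttf$ entering the dispersion $|D+\mu|\tanh((\tth+\ttf)|D+\mu|)$ and the explicit $\mu$-analytic structure \eqref{D+mu}--\eqref{DtanhD}; the basis expansion of Lemma \ref{expansion of the basis F}; and the real/imaginary alternation \eqref{B are alternatively real or imaginary}.

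First I would exploit the structural features that make many matrix entries smaller than a naive count would suggest. Since $\mathscr{L}_{0,\epsilon}$ is real and carries a four-dimensional generalized kernel satisfying \eqref{237}, together with the reality of the basis \eqref{48 f-0=01} and the parity structure \eqref{Parity structure}, one immediately obtains constraints on all entries at $\mu=0$: most entries involving the directions $f_0^\pm$ vanish to low order, which is what pins down the block $G$ and the leading structure of $F$. For $\mu\neq 0$, one Taylor-expands $|D+\mu|\tanh((\tth+\ttf)|D+\mu|)$ and $\Sigma_{\mu,\epsilon}$ in $\mu$; the parity/reality rules then rule out all pairings between an even real part and an odd imaginary one (and conversely), so most low-order $(\mu,\epsilon)$-monomials cancel automatically, leaving only the terms displayed in \eqref{E}--\eqref{G}.

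The main obstacle is the explicit computation of the four leading coefficients. The coefficient $\mathsf{e}_{22}$ arises from $(\mathcal{B}_{\mu,0}f_1^-(\mu,0),f_1^-(\mu,0))$ expanded to second order in $\mu$, and involves the Taylor expansion at $k=\pm1$ of the dispersion $|D+\mu|\tanh(\tth|D+\mu|)$ and of the capillarity $\kappa(\partial_x+\im\mu)^2$, together with the $\gamma_{\tth,\kappa}$-correction in \eqref{44 f-1}. The coefficient $\mathsf{e}_{12}$ is essentially the group velocity of the linear dispersion at $k=1$, extracted from $\partial_\mu \mathcal{L}_{\mu,0}|_{\mu=0}$ paired with the leading basis vectors. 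The truly nontrivial coefficient is $\mathsf{e}_{11}$: one must assemble every $\epsilon^2$ contribution coming from $a_2$, $p_2$, $g_2$ in Lemma \ref{lem:pa.exp}, from the order-$\epsilon$ correction of $f_1^+(0,\epsilon)$ in \eqref{43 f+1}, and from the $\epsilon^2$-term of $\ttf$ in \eqref{expfe}; a careful trigonometric Fourier bookkeeping of the resulting products reproduces the explicit formula \eqref{e11 f11}. The coefficient $\mathsf{f}_{11}$ is obtained analogously from the pairing $(\mathcal{B}_{0,\epsilon}f_1^+(0,\epsilon),f_0^+(0,\epsilon))$ at first order in $\epsilon$.

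Finally, to justify the specific remainder classes $r_j$, $\hat r_1$, $\check r_1$ I would invoke the real-analyticity of $(\mu,\epsilon)\mapsto(\mathcal{B}_{\mu,\epsilon},f_k^\sigma(\mu,\epsilon))$, uniform on compact sets of $(\kappa,\tth)$, together with the reversibility alternation \eqref{B are alternatively real or imaginary}, which alone promotes many remainders from, say, $O(\mu\epsilon)$ to $O(\mu\epsilon^2)+O(\mu^2\epsilon)$ by excluding the forbidden monomials. The overall computation parallels the corresponding step in the pure-gravity case \cite{BMV3}, with the extra effort concentrated on carrying the $\kappa$-dependence through every coefficient and on handling the new capillary operator $\kappa\Sigma_{\mu,\epsilon}$.
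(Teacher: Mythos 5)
Your overall plan — expand $\mathcal{B}_{\mu,\epsilon}$ and the Kato basis in $(\mu,\epsilon)$, compute the $L^2$-pairings, and use reversibility/parity to reduce bookkeeping — is the right framework, but the step where you claim that reality, parity, and the generalized-kernel structure \eqref{237} "immediately" pin down the blocks hides the two central structural vanishings that the lemma actually depends on, and these are not consequences of parity alone.

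The first is $E_{22}(0,\epsilon)\equiv 0$, i.e.\ the exact vanishing, to all orders in $\epsilon$, of the pairing $(\mathcal{B}_\epsilon f_1^-(0,\epsilon),f_1^-(0,\epsilon))$. Parity/reality only tells you this quantity is real; it does not make it zero, and a naive Taylor count would give $E_{22}=r(\epsilon^2)+\mathcal{O}(\mu^2)$, which would add a $\mu$-independent piece to the $(2,2)$ entry of $E$ and destroy the Benjamin--Feir discriminant structure used later. The paper obtains $E_{22}(0,\epsilon)\equiv 0$ by a genuine dichotomy: because $\mathscr{L}_{0,\epsilon}^2=0$ on $\mathcal{V}_{0,\epsilon}$ (the nilpotency from \eqref{237}), the matrix $\mathsf{L}_\epsilon(0)$ in \eqref{Le0} must square to zero, which forces \emph{either} $E_{22}(0,\epsilon)\equiv 0$ \emph{or} $E_{11}(0,\epsilon)\equiv 0\equiv F_{11}(0,\epsilon)$; one then computes $\mathsf{e}_{11}$ explicitly and proves it is nonzero (Lemma \ref{e11 le 0}) to rule out the second branch. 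Your proposal cites \eqref{237} but draws from it only conclusions about the $f_0^\pm$ directions, which is not where this argument bites, and you never invoke the nondegeneracy $\mathsf{e}_{11}\neq 0$ that closes the dichotomy.

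The second is $F(\mu,0)=0$ together with the precise form of $G(\mu,0)$ (Lemma \ref{Fmu0 eq 0 Gmu0}). These rest on the facts that $f_0^\pm(\mu,0)\equiv f_0^\pm$ exactly (because the span of $f_0^\pm$ is an invariant two-dimensional subspace for $\mathscr{L}_{\mu,0}$ whose two eigenvalues stay inside $\Gamma$) and that both components of $f_1^\pm(\mu,0)$ have zero average, which follow from symplecticity of the Kato basis, not from parity. Without these, one would only get $F=\mathcal{O}(\mu,\epsilon)$ and $G_{12}=\mathcal{O}(\mu)$, strictly weaker than the claimed $F=\mathcal{O}(\mu\epsilon)+\mathcal{O}(\epsilon)$ with the stated finer structure and $G_{12}=r_9(\mu\epsilon^2,\mu^2\epsilon)$. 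You should incorporate both arguments explicitly; once they are in place, the remainder of your proposal (the decomposition of $\mathcal{B}_{\mu,\epsilon}$ into $\mathcal{B}_\epsilon$, the $\mu$-corrections to the dispersion, the $\im\mu p_\epsilon$ term, and the capillary piece, and the Fourier bookkeeping of $\mathsf{e}_{11},\mathsf{e}_{12},\mathsf{e}_{22},\mathsf{f}_{11}$) matches the paper's organization in Lemmas \ref{Expansion of B epsilon}--\ref{Expansion of B s}.
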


Recalling  \ref{denneq0}, we notice that $(\kappa,\tth)\notin \fR_2$ implies that the denominator in $\mathsf{e}_{11}$ is not zero. We then verify that $\mathsf{e}_{11}$ is nonzero for all $(\kappa,\tth)\notin \fR$ before proceeding to the proof of Lemma \ref{B decomposition}.

\begin{lemma} \label{e11 le 0}
    For any $\kappa>0$ and $\tth>0$ the numerator in $\mathsf{e}_{11}$ in \eqref{e11 f11} is positive and, hence, $\mathsf{e}_{11}$ is not zero for $(\kappa,\tth)\notin \fR$. In particular, under the Bond number condition (cf.  \eqref{Bond}) $\mathsf{e}_{11}$ is negative. 
\end{lemma}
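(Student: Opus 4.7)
The strategy is to treat the numerator
\[
N(\kappa,\tth):=(21-25\ch^4+6\ch^8)(1+\kappa)^2+(-12+6\ch^4+3\ch^8)(1+\kappa)+9\ch^4
\]
as a quadratic polynomial in the variable $\tau:=1+\kappa\in[1,\infty)$ whose coefficients depend on $s:=\ch^4=\tanh^2(\tth)\in(0,1)$. Writing $N=A(s)\tau^{2}+B(s)\tau+C(s)$ with $A(s)=6s^2-25s+21$, $B(s)=3s^2+6s-12$ and $C(s)=9s$, the plan is to prove $N>0$ on $[1,\infty)$ by a short convexity argument, and then separately to pin down the sign of the denominator in \eqref{e11 f11} under the Bond condition. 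The main (mild) subtlety is that the vertex of the parabola $\tau\mapsto N$ depends on $s$; rather than locating it explicitly, I will only need the data of $N$ and $\partial_\tau N$ at $\tau=1$.

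First I would observe that $A(s)>0$ for $s\in(0,1)$: the roots of $A$ are $s=3$ and $s=7/6$ (discriminant $625-504=121$), both $>1$, and $A(0)=21>0$. Next, $N|_{\tau=1}=A(s)+B(s)+C(s)=9s^2-10s+9$ has discriminant $100-324<0$ and is therefore uniformly positive in $s$. Finally, $\partial_\tau N|_{\tau=1}=2A(s)+B(s)=15s^2-44s+30$ has discriminant $136$ and roots $(44\pm\sqrt{136})/30$, both strictly greater than $1$, so it too is positive on $(0,1)$ (at $s=1$ it equals $1$). Since $A(s)>0$, the parabola $\tau\mapsto N$ is convex, with $N(1)>0$ and $\partial_\tau N(1)>0$; hence its vertex $\tau^{\star}=-B(s)/(2A(s))$ lies in $\tau<1$, and $N$ is strictly increasing and strictly positive on $[1,\infty)$. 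Combining this with \eqref{denneq0} and the hypothesis $(\kappa,\tth)\notin\fR\supset\fR_{2}$, which prevents the denominator $8\ch^{3}(1+\kappa)^{1/2}\bigl(\ch^{4}(1+\kappa)-3\kappa\bigr)$ of $\mathsf{e}_{11}$ from vanishing, we conclude $\mathsf{e}_{11}\neq 0$ throughout $(\R_{\geq 0}\times \R_{>0})\setminus\fR$.

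For the Bond number claim $\mathsf{e}_{11}<0$, the sign of $\mathsf{e}_{11}$ is now controlled entirely by $\ch^{4}(1+\kappa)-3\kappa$, which is negative iff $\kappa>\ch^{4}/(3-\ch^{4})$ (note $\ch^4<1<3$). Under $\kappa>\tth^{2}/3$ it therefore suffices to verify the scalar inequality $\tth^{2}/3\geq\ch^{4}/(3-\ch^{4})$, equivalently the classical Padé-type bound $\tanh^{2}(\tth)\leq 3\tth^{2}/(3+\tth^{2})$. This can be established by the same kind of monotonicity argument used in Appendix \ref{sec:D.sign}: setting $h(x):=3x^{2}\cosh^{2}(x)-(3+x^{2})\sinh^{2}(x)$, one has $h(0)=0$ and $h'(x)=6x+4x\sinh^{2}(x)+(2x^{2}-3)\sinh(2x)\geq 0$ for $x\geq 0$ (the only sign concern is the range $x^{2}<3/2$, where a direct Taylor/series comparison shows that the positive terms dominate the negative one). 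Hence $(3+\tth^{2})\tanh^{2}(\tth)\leq 3\tth^{2}$, and combined with the positivity of the numerator $N$ established above this gives $\mathsf{e}_{11}<0$ throughout the Bond region.
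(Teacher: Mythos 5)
Your argument for the positivity of the numerator is, after unpacking, the same as the paper's: both of you are verifying that the three coefficients of the quadratic in $\kappa$ — equivalently the value, first derivative and leading coefficient of $\tau\mapsto N$ at $\tau=1$ — are positive for $s=\ch^4\in(0,1)$. You do this by discriminant and root-location checks and phrase the conclusion as a convexity statement; the paper completes squares in each coefficient. These are cosmetic differences in how the same three one-variable sign checks are organized, not a genuinely different route.

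The Bond-number half is where your proposal has a genuine gap. You correctly reduce $\ch^4(1+\kappa)-3\kappa<0$ under $\kappa>\tth^2/3$ to the scalar inequality $\tanh^2(\tth)\leq 3\tth^2/(3+\tth^2)$ — the same one the paper invokes as ``easily checked'' — and you set up $h(x):=3x^2\cosh^2 x-(3+x^2)\sinh^2 x$ with $h(0)=0$ and $h'(x)=6x+4x\sinh^2 x+(2x^2-3)\sinh(2x)$. But on $0<x<\sqrt{3/2}$ the last term is negative, and you merely assert that ``the positive terms dominate''; that assertion is precisely what needs proof and, as written, the argument does not establish $h'\geq0$. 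The gap closes cleanly in the same spirit as the sign check in Appendix~\ref{sec:D.sign}: regroup $h'(x)=4x\sinh^2 x+2x^2\sinh(2x)-3(\sinh(2x)-2x)$ and expand each piece in powers of $(2x)$ to obtain
\[
h'(x)=\sum_{k\geq 1}\Big(\frac{1}{(2k)!}+\frac{1}{2(2k-1)!}-\frac{3}{(2k+1)!}\Big)(2x)^{2k+1}.
\]
Multiplying the $k$-th bracket by $(2k+1)!$ gives $(2k+1)+k(2k+1)-3=2k^2+3k-2=(2k-1)(k+2)>0$ for every $k\geq 1$, so every coefficient is strictly positive and $h'>0$ on $(0,\infty)$. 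That is the missing step.
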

\begin{proof}
    A straightforward calculation reveals that
    \begin{equation*}
    \begin{aligned}
        &21(1+\kappa)^2-12(1+\kappa) -25\ch^4{\left(1+\kappa\right)}^2+6\ch^8\,{\left(1+\kappa\right)}^2+9\ch^4+6\ch^4\left(1+\kappa\right)+3\ch^8\left(1+\kappa\right)\\
       =&\left(6\left(\ch^4-\frac{25}{12}\right)^2-\frac{121}{24}\right)\kappa^2+\left(15\left(\ch^4-\frac{22}{15}\right)^2-\frac{34}{15}\right)\kappa+9(1-\ch^4)^2+8\ch^4>0,
    \end{aligned}
    \end{equation*}
    since $0<\ch^4<1$ and, hence,
    \begin{align*}
        2< 6\left(\ch^4-\frac{25}{12}\right)^2-\frac{121}{24}< 21, \quad 1< 15\left(\ch^4-\frac{22}{15}\right)^2-\frac{34}{15}< 30.
    \end{align*}
    Under the Bond number condition, we observe that the denominator of $\mathsf{e}_{11}$ is negative, since
    $$
    3\kappa - \ch^4(1+\kappa) = \ch^4 \left[\tth^2 \left(\frac{3}{\ch^4} -1 \right) \frac{\kappa}{\tth^2} -1 \right] > \ch^4 \left[\tth^2 \left(\frac{1}{\ch^4} - \frac13 \right)  -1 \right]
    $$
    and one easily checks that the function $\tth^2 \left(\frac{1}{\ch^4} - \frac13 \right) > 1$ for any $\tth > 0$.
\end{proof}
We decompose $\mathcal{B}_{\mu,\epsilon}$ in \eqref{mathcal B mu e} as
\[
\mathcal{B}_{\mu,\epsilon} = \mathcal{B}_\epsilon +\mathcal{B}^{\flat} + \mathcal{B}^\sharp+\mathcal{B}^s,
\]
where $\mathcal{B}_\epsilon$, $\mathcal{B}^{\flat}$, $\mathcal{B}^\sharp$, $\mathcal{B}^s$ are the self-adjoint and reversibility preserving operators:
\begin{align} \label{B epsilon}
\mathcal{B}_\epsilon := \mathcal{B}_{0,\epsilon} := 
\begin{bmatrix}
1 + a_\epsilon(x)-\kappa \Sigma_{0,\epsilon} & -(\chk + p_\epsilon(x))\partial_x \\
\partial_x \circ (\chk + p_\epsilon(x)) & |D| \tanh((\tth + \ttf)|D|)
\end{bmatrix}, 
\end{align}

\begin{align} \label{B b}
\mathcal{B}^{\flat} := 
\begin{bmatrix}
0 & 0 \\
0 & |D + \mu| \tanh((\tth + \ttf)|D + \mu|) - |D| \tanh((\tth + \ttf)|D|)
\end{bmatrix}, 
\end{align}

\begin{align} \label{B sharp}
\mathcal{B}^\sharp :=  
\begin{bmatrix}
0 & -\im\, \mu p_\epsilon \\
\im\, \mu p_\epsilon & 0
\end{bmatrix}, 
\qquad 
\mathcal{B}^s :=  
\begin{bmatrix}
-\kappa \left(\mu\dot{\Sigma}_{0,\epsilon}+\frac{1}{2}\mu^2\ddot{\Sigma}_{0,\epsilon}\right) & 0 \\
0 & 0
\end{bmatrix}, 
\end{align}
where $\ttf$ in \eqref{def:ttf}, $a_\e(x)$ and $p_\e(x)$ in \eqref{SN1} and 
\begin{align*}
    \dot{\Sigma}_{0,\epsilon}:=\frac{\im}{1+\mathfrak{p}_x}\left(g_\e(x)\circ\pa_x+\pa_x\circ g_\e(x)\right)\frac{1}{1+\mathfrak{p}_x}, \quad \ddot{\Sigma}_{0,\epsilon}:=-\frac{2g_{\e}(x)}{(1+\mathfrak{p}_x)^2}.
\end{align*}
In view of \eqref{DtanhD}, the operator $\mathcal{B}^{\flat}$ is analytic in $\mu$.

\begin{lemma} [Expansion of $\mathsf{B}_\epsilon$] \label{Expansion of B epsilon}
The self-adjoint and reversibility preserving matrix 
$\mathsf{B}_\epsilon := \mathsf{B}_\epsilon(\mu)$ associated, as in \eqref{Bmuepsilon matrix representation}, 
with the self-adjoint and reversibility preserving operator 
$\mathcal{B}_\epsilon$ defined in \eqref{B epsilon}, with respect to the basis 
$\mathcal{F}$ of $\mathcal{V}_{\mu,\epsilon}$ in \eqref{F basis set and f}, expands as
\begin{equation} \label{Be90}
\begin{aligned}
\mathsf{B}_\epsilon =
\left(
\begin{NiceArray}{cc|ccc}[code-for-first-col=\scriptstyle]
\mathsf{e}_{11} \epsilon^2 + \zeta_{\tth,\kappa} \mu^2 + r_1(\epsilon^3, \mu \epsilon^3)
  & \im\, r_2(\mu \epsilon^2)
  & \mathsf{f}_{11} \epsilon + r_3(\epsilon^3, \mu \epsilon^2)
  & \im\, r_4(\mu \epsilon^3)
  &  \\
- \im\, r_2(\mu \epsilon^2)
  & \zeta_{\tth,\kappa} \mu^2
  & \im\, r_6(\mu \epsilon)
  & 0
  &  \\
\hline
\mathsf{f}_{11} \epsilon + r_3(\epsilon^3, \mu \epsilon^2)
  & - \im\, r_6(\mu \epsilon)
  & 1 + r_8(\epsilon^2, \mu \epsilon^2)
  & \im\, r_9(\mu \epsilon^2)
  & \\
- \im\, r_4(\mu \epsilon^3)
  & 0
  & - \im\, r_9(\mu \epsilon^2)
  & 0
  &  \\
\end{NiceArray}
\right)
\end{aligned}+\cO(\mu^2\e,\mu^3)
\end{equation}
where $\mathsf{e}_{11}$, $\mathsf{f}_{11}$ are defined respectively in \eqref{e11 f11}, and
\begin{equation} \label{zeta h}
\zeta_{\tth,\kappa} := \frac{1}{8}\ch(1+\kappa)^{\frac{1}{2}}\gamma^2_{\tth,\kappa}=\frac{1}{8}\ch(1+\kappa)^{\frac{1}{2}}\left(1+\tth(\ch^{-2}-\ch^2)-2\kappa(1+\kappa)^{-1}\right)^2.
\end{equation}
\end{lemma}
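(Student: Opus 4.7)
The plan is to compute each of the sixteen matrix entries $(\mathcal{B}_\epsilon f_k^\sigma(\mu,\epsilon), f_{k'}^{\sigma'}(\mu,\epsilon))$ by plugging in the expansions of the basis from Lemma \ref{expansion of the basis F} and the operator expansions of $a_\epsilon, p_\epsilon, g_\epsilon, \ttf$ from Lemma \ref{lem:pa.exp} (noting $\mathcal{B}_\epsilon := \mathcal{B}_{0,\epsilon}$, so that the $\mu$-dependence of $\mathsf{B}_\epsilon(\mu)$ comes only through the basis). Two structural reductions greatly cut the workload. First, by Lemma \ref{matrix representation mathsf L} the matrix $\mathsf{B}_\epsilon(\mu)$ is Hermitian with real entries in the slots where $i+j$ is even and purely imaginary entries elsewhere, so only ten entries need to be addressed, each with its prescribed parity. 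Second, $\mathcal{B}_\epsilon$ is a real operator and, by \eqref{48 f-0=01}, the basis $\mathcal{F}(0,\epsilon)$ at $\mu=0$ is real, so $\mathsf{B}_\epsilon(0)$ is real symmetric; hence every imaginary entry in \eqref{Be90} must carry at least one power of $\mu$, which immediately forces the $\mu\epsilon^2,\mu\epsilon^3,\mu\epsilon$ factors in the off-diagonal positions.

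Next I would exploit the kernel relations at the unperturbed point: $\mathcal{B}_0 f_1^\pm(0,0) = 0$ and $\mathcal{B}_0 f_0^-(0,0) = 0$, while $\mathcal{B}_0 f_0^+(0,0) = f_0^+(0,0)$. These identities -- immediate from the Fourier-multiplier form of $\mathcal{B}_0$ and from the kernel structure $\mathscr{L}_{0,0} f_1^\pm = \mathscr{L}_{0,0} f_0^- = 0$, $\mathscr{L}_{0,0} f_0^+ = -f_0^-$ -- combined with the parity reductions, force every entry pairing one of $\{f_1^\pm, f_0^-\}$ with itself or the others to start at order $\geq \epsilon$ (or $\geq \mu$ where applicable), and also explain the $(3,3)$ leading value $1$ coming from $\mathcal{B}_0 f_0^+ = f_0^+$.

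The computation of the explicit coefficients is then a routine but laborious substitution. Writing $\mathcal{B}_\epsilon = \mathcal{B}_0 + \epsilon \mathcal{B}_1 + \epsilon^2 \mathcal{B}_2 + O(\epsilon^3)$ as in the expansions of Lemma \ref{lem:pa.exp}, the $(1,1)$ entry at $\mu=0$ assembles as
\[
(\mathcal{B}_2 f_1^+(0,0), f_1^+(0,0)) + 2(\mathcal{B}_1 f_1^+(0,0), f_1^{+,(1)}) + (\mathcal{B}_0 f_1^{+,(1)}, f_1^{+,(1)}),
\]
where $f_1^{+,(1)} = (\alpha_{\tth,\kappa}\cos(2x), \beta_{\tth,\kappa}\sin(2x))^\top$ with the coefficients in \eqref{47 coefficients}, and each piece reduces to a finite Fourier integral summing to $\mathsf{e}_{11}\epsilon^2$. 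A shorter analogous sum involving $(\mathcal{B}_1 f_1^+(0,0), f_0^+(0,0))$ and $(\mathcal{B}_0 f_1^{+,(1)}, f_0^+(0,0))$ yields $\mathsf{f}_{11}\epsilon$ in the $(1,3)$ entry. For the $\zeta_{\tth,\kappa}\mu^2$ piece in $(1,1)$ and $(2,2)$ I would work at $\epsilon=0$, where the basis $f_1^\pm(\mu,0)$ is a Bloch-shifted pure Fourier mode and the pairing $(\mathcal{B}_0 f_1^\pm(\mu,0), f_1^\pm(\mu,0))$ reduces to the evaluation of the dispersion symbol $(1+\kappa k^2, k\tanh(\tth k))$ at the shifted wavenumber $k = 1+\mu$; expanding to second order in $\mu$ and recognising $\gamma_{\tth,\kappa}$ as the normalized derivative that enters the imaginary $(\im\mu/4)\gamma_{\tth,\kappa}$ correction in \eqref{43 f+1}--\eqref{44 f-1}, one obtains the claimed $\zeta_{\tth,\kappa}=\tfrac{1}{8}\ch(1+\kappa)^{1/2}\gamma_{\tth,\kappa}^2$.

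The main obstacle is the $\epsilon^2$ assembly of $\mathsf{e}_{11}$: it requires combining three separate contributions (from the quadratic Stokes shape $(\eta_2,\psi_2)$ entering $a_2, p_2, g_2$; from the interaction of $(a_1, p_1, g_1)$ with the first harmonic part of $f_1^{+,(1)}$; and from the kinetic part $(\mathcal{B}_0 f_1^{+,(1)}, f_1^{+,(1)})$ on the second harmonic) and checking, after algebraic simplification using $\ch^2 = \tanh(\tth)$ and the Stokes expansion identities of Theorem \ref{Thm: Stokes expansion}, that the sum collapses to the formula in \eqref{e11 f11}. Keeping careful track of the capillary correction $-\kappa \Sigma_{0,\epsilon}$ is the delicate new ingredient compared to the pure-gravity computation of \cite{BMV3}, since each coefficient $\mathsf{e}_{11}, \mathsf{f}_{11}, \zeta_{\tth,\kappa}$ now receives explicit $\kappa$-dependent contributions from $d_j, e_j, h_j$ in Lemma \ref{lem:pa.exp}. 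A secondary technical task is to verify that the off-diagonal remainders land in the precise classes $r_k(\ldots)$ advertised in \eqref{Be90}; this again reduces to the combination of reality of $\mathcal{B}_\epsilon$ (forcing a $\mu$) and the kernel relations $\mathcal{B}_0 f_1^\pm(0,0) = \mathcal{B}_0 f_0^-(0,0) = 0$ (forcing an $\epsilon$) in each relevant slot.
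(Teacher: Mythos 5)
Your structural reductions (Hermitian plus parity forcing alternating real/imaginary entries, and reality of $\mathsf{B}_\epsilon(0)$ forcing every imaginary slot to carry a factor $\mu$) are correct and match the paper's use of \eqref{B are alternatively real or imaginary} and \eqref{48 f-0=01}. The assembly of $\mathsf{e}_{11}$ at order $\epsilon^2$ via $(\mathcal{B}_2 f_1^+,f_1^+)+2(\mathcal{B}_1 f_1^+, f_{1_1}^+)+(\mathcal{B}_0 f_{1_1}^+,f_{1_1}^+)$ and the computation of $\zeta_{\tth,\kappa}$ from the $\mu$-derivative of the basis are also in line with what the paper does. But there is a genuine gap.

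You argue that the kernel relations $\mathcal{B}_0 f_1^{\pm}(0,0)=\mathcal{B}_0 f_0^-(0,0)=0$ force entries involving $\{f_1^\pm,f_0^-\}$ to ``start at order $\geq\epsilon$.'' That is not enough. The statement claims that the $(2,2)$ entry equals $\zeta_{\tth,\kappa}\mu^2+\mathcal{O}(\mu^2\epsilon,\mu^3)$, i.e. it vanishes \emph{identically at $\mu=0$ for all $\epsilon$}, with no $\epsilon$, $\epsilon^2,\ldots$ contributions at all; the same is true of the $(2,4)$ and $(4,4)$ entries modulo $\mathcal{O}(\mu^2\epsilon,\mu^3)$. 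Your argument only controls the leading order and leaves room for a nonzero $c\,\epsilon^2$ in the $(2,2)$ slot, which would ruin the lemma. The paper closes this gap with a non-perturbative structural argument: on $\mathcal{V}_{0,\epsilon}$ one has $\mathscr{L}_{0,\epsilon}^2=0$ (from the explicit generalized-kernel structure \eqref{237}), so the matrix $\mathsf{L}_\epsilon(0)=\mathsf{J}_4\mathsf{B}_\epsilon(0)$ is nilpotent, and squaring the explicit form \eqref{Le0} yields the dichotomy \eqref{E22=0 or E11=0=F11}: either $E_{22}(0,\epsilon)\equiv 0$, or $E_{11}(0,\epsilon)\equiv 0\equiv F_{11}(0,\epsilon)$. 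Since $\mathsf{e}_{11}\neq 0$ for $(\kappa,\tth)\notin\fR$ (Lemma \ref{e11 le 0}), the second alternative is excluded and $E_{22}(0,\epsilon)\equiv 0$ exactly. This in turn gives $\mathcal{B}_\epsilon f_1^-(0,\epsilon)=0$, which, together with $\mathcal{B}_\epsilon f_0^-(0,\epsilon)=0$, is what zeroes the second and fourth columns of the linear-in-$\mu$ matrix $X$ in \eqref{X+X}--\eqref{X} and hence fixes the $r_k(\mu\epsilon^j)$ classes in the off-diagonal slots. Your proposal never invokes this nilpotency/dichotomy argument and therefore cannot establish the exact zeros of \eqref{Be90}.

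A minor secondary caution: your $\zeta_{\tth,\kappa}$ argument treats $f_1^\pm(\mu,0)$ as a Bloch-shifted pure Fourier mode, but $f_1^\pm(\mu,0)=U_{\mu,0}f_1^\pm$ is the image under Kato's transformation operator, and one must extract its $\mu$-expansion from $\dot P_{0,0}$ as in \eqref{A16}; the paper does this through the decomposition $\partial_\mu^2\mathsf{B}_0(0)=Y+Y^*+2Z$ with $Y=0$ and the explicit $Z$ in \eqref{Z}. Your heuristic will land on the right $\zeta_{\tth,\kappa}$ but should be phrased through the actual basis vectors \eqref{dotfp1 dotfp0 dotfn1 dotfn0} rather than asserting the basis is a shifted exponential.
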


\begin{proof}
    We expand the matrix $\mathsf{B}_\epsilon(\mu)$ as

\begin{equation} \label{expansion of B_e in mu}
\mathsf{B}_\epsilon(\mu) = \mathsf{B}_\epsilon(0) + \mu(\partial_\mu \mathsf{B}_\epsilon)(0) + \frac{\mu^2}{2} (\partial_\mu^2 \mathsf{B}_0)(0) + \mathcal{O}(\mu^2 \epsilon, \mu^3).
\end{equation}

\textbf{The matrix $\mathsf{B}_\epsilon(0)$.}
The main result of this long paragraph is to prove that the matrix $\mathsf{B}_\epsilon(0)$ has the expansion \eqref{Be(0)}. We start recalling that the self-adjoint $\mathsf{B}_\e(0)$ has the form 
\begin{align} \label{B epsilon 0}
    \mathsf{B}_{\epsilon}(0) = 
\left(
\begin{array}{cccc}
\left( \mathcal{B}_{\epsilon} f_1^{+}(\epsilon),\ f_{1}^{+}(\epsilon) \right) & 
\left( \mathcal{B}_{\epsilon} f_1^{-}(\epsilon),\ f_{1}^{+}(\epsilon) \right) & 
\left( \mathcal{B}_{\epsilon} f_0^{+}(\epsilon),\ f_{1}^{+}(\epsilon) \right) & 
\left( \mathcal{B}_{\epsilon} f_0^{-}(\epsilon),\ f_{1}^{+}(\epsilon) \right) \\
\left( \mathcal{B}_{\epsilon} f_1^{+}(\epsilon),\ f_{1}^{-}(\epsilon) \right) & 
\left( \mathcal{B}_{\epsilon} f_1^{-}(\epsilon),\ f_{1}^{-}(\epsilon) \right) & 
\left( \mathcal{B}_{\epsilon} f_0^{+}(\epsilon),\ f_{1}^{-}(\epsilon) \right) & 
\left( \mathcal{B}_{\epsilon} f_0^{-}(\epsilon),\ f_{1}^{-}(\epsilon) \right) \\
\left( \mathcal{B}_{\epsilon} f_1^{+}(\epsilon),\ f_{0}^{+}(\epsilon) \right) & 
\left( \mathcal{B}_{\epsilon} f_1^{-}(\epsilon),\ f_{0}^{+}(\epsilon) \right) & 
\left( \mathcal{B}_{\epsilon} f_0^{+}(\epsilon),\ f_{0}^{+}(\epsilon) \right) & 
\left( \mathcal{B}_{\epsilon} f_0^{-}(\epsilon),\ f_{0}^{+}(\epsilon) \right) \\
\left( \mathcal{B}_{\epsilon} f_1^{+}(\epsilon),\ f_{0}^{-}(\epsilon) \right) & 
\left( \mathcal{B}_{\epsilon} f_1^{-}(\epsilon),\ f_{0}^{-}(\epsilon) \right) & 
\left( \mathcal{B}_{\epsilon} f_0^{+}(\epsilon),\ f_{0}^{-}(\epsilon) \right) & 
\left( \mathcal{B}_{\epsilon} f_0^{-}(\epsilon),\ f_{0}^{-}(\epsilon) \right)
\end{array}
\right),
\end{align}
where $f^{\sigma}_k(\e):=f^{\sigma}_k(0,\e)$, for $\sigma=\pm$, $k=0,1$. The matrix $\mathsf{B}_\epsilon(0)$ is real, because the operator $\mathcal{B}_\epsilon$ is real and the basis $\{f_k^{\pm}(0, \epsilon)\}_{k=0,1}$ is real.
On the other hand, by \eqref{B are alternatively real or imaginary}, its matrix elements $(\mathsf{B}_\epsilon(0))_{i,j}$ vanish for $i + j$ odd. In addition $f_0^{-}(0,\epsilon) = 
\begin{bmatrix}
0 \\ 1
\end{bmatrix}
$ by \eqref{48 f-0=01}, and, by \eqref{B epsilon}, we get $\mathcal{B}_\epsilon f_0^{-}(0,\epsilon) = 0$, for any $\epsilon$.
We deduce that the self-adjoint matrix $\mathsf{B}_\epsilon(0)$ in \eqref{B epsilon 0} has the form
\begin{equation} \label{B_e 0}
\mathsf{B}_\epsilon(0) =\left(
\begin{NiceArray}{cc|ccc}[code-for-first-col=\scriptstyle]
E_{11}(0,\epsilon)
  & 0
  & F_{11}(0,\epsilon)
  & 0
  &  \\
0
  & E_{22}(0,\epsilon)
  & 0
  & 0
  &  \\
\hline
F_{11}(0,\epsilon)
  & 0
  & G_{11}(0,\epsilon)
  & 0
  & \\
0
  & 0
  & 0
  & 0
  &  \\
\end{NiceArray}
\right),
\end{equation}
where \( E_{11}(0,\epsilon), E_{22}(0,\epsilon), G_{11}(0,\epsilon), F_{11}(0,\epsilon) \) are real.
We claim that \( E_{22}(0,\epsilon) \equiv 0 \) for any \( \epsilon \). As a first step, following \cite{BMV1}, we prove that

\begin{align} \label{E22=0 or E11=0=F11}
    \text{either } E_{22}(0,\epsilon) \equiv 0, \quad \text{or} \quad E_{11}(0,\epsilon) \equiv 0 \equiv F_{11}(0,\epsilon).
\end{align}
Indeed, by \eqref{237}, the operator \( \mathscr{L}_{0,\epsilon} \equiv \mathcal{L}_{0,\epsilon} \) possesses, for any sufficiently small \( \epsilon \neq 0 \), the eigenvalue 0 with a four dimensional generalized Kernel 
\[
\mathcal{W}_{\epsilon} := \text{span}\{ U_1, \tilde{U}_2, U_3, U_4 \},
\]
spanned by \( \epsilon \)-dependent vectors. By Lemma \ref{kato thm} it results that \( \mathcal{W}_\epsilon = \mathcal{V}_{0,\epsilon} = \text{Rg}(P_{0,\epsilon}) \) and by \eqref{237} we have \( \mathscr{L}_{0,\epsilon}^2 = 0 \) on \( \mathcal{V}_{0,\epsilon} \). Thus the matrix

\begin{equation} \label{Le0}
\mathsf{L}_\epsilon(0) := \mathsf{J}_4 \mathsf{B}_\epsilon(0) = 
\left(
\begin{NiceArray}{cc|ccc}[code-for-first-col=\scriptstyle]
0
  & E_{22}(0,\epsilon)
  & 0
  & 0
  &  \\
-E_{11}(0,\epsilon)
  & 0
  & -F_{11}(0,\epsilon)
  & 0
  &  \\
\hline
0
  & 0
  & 0
  & 0
  & \\
-F_{11}(0,\epsilon)
  & 0
  & -G_{11}(0,\epsilon)
  & 0
  &  \\
\end{NiceArray}
\right),
\end{equation}
which represents \( \mathscr{L}_{0,\epsilon} : \mathcal{V}_{0,\epsilon} \to \mathcal{V}_{0,\epsilon} \), satisfies \( \mathsf{L}_\epsilon^2(0) = 0 \), namely

\begin{equation}
\mathsf{L}_\epsilon^2(0) = - 
\left(
\begin{NiceArray}{cc|ccc}[code-for-first-col=\scriptstyle]
(E_{11}E_{22})(0,\epsilon)
  & 0
  & (E_{22}F_{11})(0,\e)
  & 0
  &  \\
0
  & (E_{11}E_{22})(0,\epsilon)
  & 0
  & 0
  &  \\
\hline
0
  & 0
  & 0
  & 0
  & \\
0
  & (E_{22}F_{11})(0,\epsilon)
  & 0
  & 0
  &  \\
\end{NiceArray}
\right)=0,
\end{equation}
which implies \eqref{E22=0 or E11=0=F11}. We now prove that the matrix \( \mathsf{B}_\epsilon(0) \) defined in \eqref{B_e 0} expands as

\begin{equation} \label{Be(0)}
\mathsf{B}_\epsilon(0) = 
\left(
\begin{NiceArray}{cc|ccc}[code-for-first-col=\scriptstyle]
\mathsf{e}_{11}\e^2+r(\e^3)
  & 0
  & \mathsf{f}_{11}\e+r(\e^3)
  & 0
  &  \\
0
  & 0
  & 0
  & 0
  &  \\
\hline
\mathsf{f}_{11}\e+r(\e^3)
  & 0
  & 1+r(\e^2)
  & 0
  & \\
0
  & 0
  & 0
  & 0
  &  \\
\end{NiceArray}
\right)
\end{equation}
where $\mathsf{e}_{11}$ and $\mathsf{f}_{11}$ are in \eqref{E110epsilon} and \eqref{F110e}. We expand the operator $\mathcal{B}_{\e}$ in \eqref{B epsilon} as 
\begin{equation} \label{Be B0 B1 B2}
\begin{aligned}
    &\mathcal{B}_{\e}=\mathcal{B}_{0}+\e \mathcal{B}_{1}+\e^2 \mathcal{B}_{2}+\cO(\e^3), \quad \mathcal{B}_{0}:=\begin{bmatrix}
1-\kappa\pa_{xx}&  -\chk\pa_x\\
        \chk\pa_x & ~~|D|\tanh(\tth|D|)
    \end{bmatrix},\\
    &\mathcal{B}_{1}:=\begin{bmatrix}
a_1(x)-\kappa\Sigma_1&  -p_1(x)\pa_x\\
       \pa_x\circ p_1(x) & ~~0
    \end{bmatrix}, \quad \mathcal{B}_{2}:=\begin{bmatrix}
a_2(x)-\kappa\Sigma_2&  -p_2(x)\pa_x\\
        \pa_x \circ p_2(x) & ~~ -\tf^{[2]}_2 \pa^2_x(1-\tanh^2(\tth|D|))
    \end{bmatrix},
\end{aligned}
\end{equation}
where the remainder term $\cO(\e^3)\in\mathcal{L}(Y,X)$, the functions $a_1$, $p_1$, $a_2$, $p_2$,  are given in \eqref{SN1}-\eqref{a[2]2}, the operators $\Sigma_1$, $\Sigma_2$ are given in \eqref{Sigma epsilon}-\eqref{h2 h02} and, in view of \eqref{expfe}, $\tf^{[2]}_2=\frac{1}{4}\ch^{-2}\left(\ch^4(1+\kappa)-(3+\kappa)\right)$.

Our goal here is to determine the expansion of $E_{11}(0,\e)=\mathsf{e}_{11}\e^2+r(\e^3)$. By \eqref{43 f+1} we rewrite the real function $f^+_1(0,\e)$ as
\begin{equation} \label{expansion fp1 0 epsilon}
    \begin{aligned}
        & f^+_1(0,\e)=f^+_1+\e f^+_{1_1}+\e^2 f^+_{1_2}+\cO(\e^3),\\
        & f^+_1=\vet{\left(\frac{\ch}{\sqrt{1+\kappa}}\right)^{\frac{1}{2}}\cos(x)}{\left(\frac{\ch}{\sqrt{1+\kappa}}\right)^{-\frac{1}{2}}\sin(x)}, \quad f^+_{1_1}:=\vet{\alpha_{\tth,\kappa} \cos(2x)}{\beta_{\tth,\kappa}\sin(2x)}, \quad f^+_{1_2}:=\vet{\mathrm{even}_0(x)}{\mathrm{odd}(x)},
    \end{aligned}
\end{equation}
where both $f^+_{1_2}$ and $\cO(\e^3)$ are vectors in $H^2(\mathbb{T},\mathbb{C})\times H^1(\mathbb{T},\mathbb{C})$. Since $\mathcal{B}_0 f^+_1=-\mathcal{J}\mathscr{L}_{0,0} f^+_1=0$, and both $\mathcal{B}_0$, $\mathcal{B}_1$ are self-adjoint real operators, it results
\begin{equation}
\begin{aligned}
E_{11}(0, \epsilon) &= \left( \mathcal{B}_\epsilon f_1^+(0,\epsilon), f_1^+(0,\epsilon) \right) \\
&= \epsilon \left( \mathcal{B}_1 f_1^+, f_1^+ \right) + \epsilon^2 \left[ \left( \mathcal{B}_2 f_1^+, f_1^+ \right) + 2 \left( \mathcal{B}_1 f_1^+, f^+_{1_1} \right) + \left( \mathcal{B}_0 f^+_{1_1}, f^+_{1_1} \right) \right] + \mathcal{O}(\epsilon^3).
\end{aligned}
\end{equation}
By \eqref{Be B0 B1 B2} one has
\begin{equation} \label{B1fp1 B2fp1 B0fp11}
\mathcal{B}_1 f_1^+ = 
\begin{bmatrix}
A^{[0]}_1 + A^{[2]}_1\cos(2x)) \\
B^{[2]}_1 \sin(2x)
\end{bmatrix},
\quad 
\mathcal{B}_2 f_1^+ = 
\begin{bmatrix}
A^{[1]}_2 \cos(x) + A^{[3]}_2 \cos(3x) \\
B^{[1]}_2 \sin(x) + B^{[3]}_2 \sin(3x)
\end{bmatrix}, \quad
\mathcal{B}_0 f_{1_1}^+ = 
\begin{bmatrix}
A^{[2]}_3 \cos(2x) \\
B^{[2]}_3 \sin(2x)
\end{bmatrix}, 
\end{equation}
with 
\begin{equation} \label{ABABAB}
\begin{aligned}
A^{[0]}_1 &:= -\frac{1}{2}\left(\ch^{\frac{5}{2}}-\ch^{-\frac{3}{2}}\right)\left(1+\kappa\right)^{\frac{3}{4}}, \quad A^{[2]}_1:= -\frac{(1+\kappa)\ch^4+5\kappa-1}{2\ch^{\frac{3}{2}}(1+\kappa)^{\frac{1}{4}}}, \quad B^{[2]}_1 := 2 \ch^{-\frac{1}{2}} (1+\kappa)^{\frac{1}{4}}, \\
A^{[1]}_2 &:=\ch^\frac{1}{2}(1+\kappa)^{-\frac{1}{4}}\left(a^{[0]}_2+\frac{1}{2}a^{[2]}_2\right)-\ch^{-\frac{1}{2}}(1+\kappa)^{\frac{1}{4}}\left(p^{[0]}_2+\frac{1}{2}p^{[2]}_2\right)\\
&+\kappa\ch^{\frac{1}{2}}(1+\kappa)^{-\frac{1}{4}}\left(d^{[0]}_2+\frac{1}{2}d^{[2]}_2+\frac{1}{2}e^{[2]}_2-h^{[0]}_2-\frac{1}{2}h^{[2]}_2\right), \\
A^{[3]}_2 &:=\frac{1}{2}\ch^\frac{1}{2}(1+\kappa)^{-\frac{1}{4}} a^{[2]}_2-\frac{1}{2}\ch^{-\frac{1}{2}}(1+\kappa)^{\frac{1}{4}} p^{[2]}_2+\frac{1}{2}\kappa\ch^{\frac{1}{2}}(1+\kappa)^{-\frac{1}{4}}\left(d^{[2]}_2-e^{[2]}_2-h^{[2]}_2\right),\\
B^{[1]}_2 &:=-\ch^{\frac{1}{2}}(1+\kappa)^{-\frac{1}{4}} p^{[0]}_2-\frac{1}{2}\ch^{\frac{1}{2}}(1+\kappa)^{-\frac{1}{4}} p^{[2]}_2+\mathtt{f}^{[2]}_2\ch^{-\frac{1}{2}}(1+\kappa)^{\frac{1}{4}}(1-\ch^4), \quad B^{[3]}_2:=-\frac{3}{2}\ch^{\frac{1}{2}}(1+\kappa)^{-\frac{1}{4}}p^{[2]}_2,\\
A^{[2]}_3 &:=\alpha_{\tth,\kappa}(1+4\kappa)-2\beta_{\tth,\kappa}\chk, \quad B^{[2]}_3:=-2\alpha_{\tth,\kappa}\chk+\frac{4\ch^2}{1+\ch^4}\beta_{\tth,\kappa}. 
\end{aligned}
\end{equation}
By \eqref{expansion fp1 0 epsilon}-\eqref{B1fp1 B2fp1 B0fp11}, we deduce
\begin{equation} \label{E110epsilon}
\begin{aligned}
E_{11}(0, \epsilon) &= \mathsf{e}_{11} \epsilon^2 + r(\epsilon^3), \\
\mathsf{e}_{11} &:= \frac{1}{2} \left( A^{[1]}_2 \ch^{\frac{1}{2}}(1+\kappa)^{-\frac{1}{4}} + B^{[1]}_2 \ch^{-\frac{1}{2}}(1+\kappa)^{\frac{1}{4}} + 2\alpha_{\tth,\kappa} A^{[2]}_1 + 2 \beta_{\tth,\kappa} B^{[2]}_1  + \alpha_{\tth,\kappa} A^{[2]}_3 + \beta_{\tth,\kappa} B^{[2]}_3 \right). 
\end{aligned}
\end{equation}
By \eqref{ABABAB}, \eqref{E110epsilon}, \eqref{47 coefficients}, \eqref{SN1}-\eqref{a[2]2}, and \eqref{Sigma epsilon}-\eqref{h2 h02}, we obtain  that $\mathsf{e}_{11}$ writes as in \eqref{e11 f11}. Since $\mathsf{e}_{11}$ is not zero as proved in Lemma \ref{e11 le 0},  the second alternative in \eqref{E22=0 or E11=0=F11} is ruled out, implying $E_{22}(0, \epsilon) \equiv 0$.

Next let us determine the expansion of $G_{11}(0,\e)$. By \eqref{45 f+0} we split the real-valued function $f_0^+(0, \epsilon)$ as
\begin{equation} \label{fp0 expansion}
\begin{aligned}
f_0^+(0, \epsilon) = f_0^+ + \epsilon f_{0_1}^+ + \epsilon^2 f_{0_2}^+ + \mathcal{O}(\epsilon^3), \quad 
f_0^+ = \begin{bmatrix} 1 \\ 0 \end{bmatrix},\\
f_{0_1}^+ := \delta_{\tth,\kappa}
\begin{bmatrix}
\left(\frac{\ch}{\sqrt{1+\kappa}}\right)^{\frac{1}{2}} \cos(x) \\
- \left(\frac{\ch}{\sqrt{1+\kappa}}\right)^{-\frac{1}{2}} \sin(x)
\end{bmatrix}, \quad
f_{0_2}^+ := 
\begin{bmatrix}
\mathrm{even}_0(x) \\
\mathrm{odd}(x)
\end{bmatrix}. 
\end{aligned}
\end{equation}
Since, by \eqref{eigenfunc of mathcall L00} and \eqref{Be B0 B1 B2}, $\mathcal{B}_0 f_0^+ = f_0^+$, using that $\mathcal{B}_0$, $\mathcal{B}_1$ are self-adjoint real operators, and $(f_0^+,f_0^+) = 1$, $(f_0^+, f_{0_1}^+)=0$, we have
\[
G_{11}(0, \epsilon) = (\mathcal{B}_\epsilon f_0^+(0, \epsilon), f_0^+(0, \epsilon)) = 1 + \epsilon(\mathcal{B}_1 f_0^+, f_{0}^+) + \epsilon(\mathcal{B}_0 f_{0_1}^+, f_{0}^+)+
r(\epsilon^2).
\]
By \eqref{Be B0 B1 B2} and \eqref{SN1}, \eqref{pino1fd}, \eqref{aino1fd}, and \eqref{Sigma epsilon}-\eqref{h1 h11} one has
\begin{equation} \label{B1fp0fp0 B0fp01fp0}
    \begin{aligned}
        \left(\mathcal{B}_1 f_0^+,f^+_0\right) =
\left(\begin{bmatrix}
\left(a_1^{[1]}-\kappa h^{[1]}_1\right) \cos(x) \\
- p_1^{[1]} \sin(x)
\end{bmatrix},\vet{1}{0}\right)=\vet{0}{0}, \quad (\mathcal{B}_0 f_{0_1}^+, f_{0}^+)=( f_{0_1}^+,\mathcal{B}_0f_{0}^+)=\vet{0}{0}.
    \end{aligned}
\end{equation}
By \eqref{fp0 expansion} and \eqref{B1fp0fp0 B0fp01fp0}, we deduce $G_{11}(0, \epsilon) = 1 + r(\epsilon^2)$.

Next we provide the expansion of  $F_{11}(0, \epsilon)$.  
By \eqref{Be B0 B1 B2}, \eqref{expansion fp1 0 epsilon}, \eqref{fp0 expansion}, using that $\mathcal{B}_0$, $\mathcal{B}_1$ are self-adjoint and real, and $\mathcal{B}_0 f_1^+ = 0$, $\mathcal{B}_0 f_0^+ = f_0^+$, we obtain
\begin{align*}
F_{11}(0, \epsilon) 
&= \epsilon \left[ \left( \mathcal{B}_1 f_1^+, f_0^+ \right) + \left( f_{1_1}^+, f_0^+ \right) \right] \\
&\quad + \epsilon^2 \left[ \left( \mathcal{B}_2 f_1^+, f_0^+ \right) + \left( \mathcal{B}_1 f_1^+, f_{0_1}^+ \right) + \left( \mathcal{B}_1 f_0^+, f_{1_1}^+ \right) \right. \\
&\qquad \left. + \left( f_{1_2}^+, f_0^+ \right) + \left( \mathcal{B}_0 f_{1_1}^+, f_{0_1}^+ \right) \right] + r(\epsilon^3).
\end{align*}
By \eqref{expansion fp1 0 epsilon}, \eqref{B1fp1 B2fp1 B0fp11}, \eqref{ABABAB}, \eqref{fp0 expansion}, \eqref{B1fp0fp0 B0fp01fp0}, all the only non-zero scalar product is the  first one, yielding
\begin{align} \label{F110e}
    F_{11}(0, \epsilon) = \mathsf{f}_{11} \epsilon + r(\epsilon^3), \quad \mathsf{f}_{11} := A^{[0]}_1 = -\frac{1}{2}\left(\ch^{\frac{5}{2}}-\ch^{-\frac{3}{2}}\right)\left(1+\kappa\right)^{\frac{3}{4}}.
\end{align}
The expansion \eqref{Be(0)} is proved.

The next step is to compute the terms linear in $\mu$ of $\mathsf{B}_\epsilon(\mu)$. We have 
\begin{align} \label{X+X}
    \partial_\mu \mathsf{B}_\epsilon(0) = X + X^* \quad \text{where} \quad 
X := \left( \left( \mathcal{B}_\epsilon f_k^\sigma(0,\epsilon), \, (\partial_\mu f_{k'}^{\sigma'})(0,\epsilon) \right) \right)_{k,k'=0,1;\, \sigma,\sigma'=\pm}. 
\end{align}
and now prove that 
\begin{equation} \label{X}
X = 
\left(
\begin{NiceArray}{cc|ccc}[code-for-first-col=\scriptstyle]
\cO(\e^3)
  & 0
  & \cO(\e^2)
  & 0
  &  \\
\cO(\e^2)
  & 0
  & \cO(\e)
  & 0
  &  \\
\hline
\cO(\e^3)
  & 0
  & \cO(\e^2)
  & 0
  & \\
\cO(\e^3)
  & 0
  & \cO(\e^2)
  & 0
  &  \\
\end{NiceArray}
\right)
\end{equation}
Indeed consider the matrix $\mathsf{L}_\epsilon(0)$ in \eqref{Le0}, where $E_{22}(0, \epsilon) = 0$, and recall that it represents the action of the operator $\mathscr{L}_{0,\epsilon} : \mathcal{V}_{0,\epsilon} \to \mathcal{V}_{0,\epsilon}$ in the basis $\{f_k^\sigma(0,\epsilon)\}$;  we deduce that
\[
\mathscr{L}_{0,\epsilon} f_1^-(0,\epsilon) = 0,\quad \mathscr{L}_{0,\epsilon} f_0^-(0,\epsilon) = 0.
\]
Thus also $\mathcal{B}_\epsilon f_1^-(0,\epsilon) =\mathcal{B}_\epsilon f_0^-(0,\epsilon) = 0$, and the second and fourth columns of the matrix $X$ in \eqref{X} are zero. Now we compute $\pa_\mu f_k^\sigma(0,\e)$.  
In view of \eqref{43 f+1}-\eqref{47 coefficients} and by denoting with a dot the derivative w.r.t.\ $\mu$, one has
\begin{equation} \label{dotfp1 dotfp0 dotfn1 dotfn0}
    \begin{aligned}
\dot{f}_1^+(0,\epsilon) &= \frac{\im}{4} \gamma_{\tth,\kappa} 
\begin{bmatrix}
\ch^{\frac{1}{2}}(1+\kappa)^{-\frac{1}{4}}\sin(x) \\
\ch^{-\frac{1}{2}}(1+\kappa)^{\frac{1}{4}} \cos(x)
\end{bmatrix}
+ \im\epsilon 
\begin{bmatrix}
\mathrm{odd}(x) \\
\mathrm{even}(x)
\end{bmatrix}
+ \mathcal{O}(\epsilon^2), \\
\dot{f}_0^+(0,\epsilon) &= \im\epsilon 
\begin{bmatrix}
\mathrm{odd}(x) \\
\mathrm{even}_0(x)
\end{bmatrix}
+ \mathcal{O}(\epsilon^2), \\
\dot{f}_1^-(0,\epsilon) &= \frac{\im}{4} \gamma_{\tth,\kappa} 
\begin{bmatrix}
\ch^{\frac{1}{2}}(1+\kappa)^{-\frac{1}{4}}\cos(x) \\
-\ch^{-\frac{1}{2}}(1+\kappa)^{\frac{1}{4}}\sin(x)
\end{bmatrix}
+ \im\epsilon 
\begin{bmatrix}
\mathrm{even}(x) \\
\mathrm{odd}(x)
\end{bmatrix}
+ \mathcal{O}(\epsilon^2), \\
\dot{f}_0^-(0,\epsilon) &= \im\epsilon 
\begin{bmatrix}
\mathrm{even}_0(x) \\
\mathrm{odd}(x)
\end{bmatrix}
+ \mathcal{O}(\epsilon^2). 
\end{aligned}
\end{equation}
In view of \eqref{pa_t eta psi}, \eqref{43 f+1}-\eqref{48 f-0=01}, \eqref{Le0}, \eqref{E110epsilon}, \eqref{F110e}, and since $\mathcal{B}_\epsilon f_k^\sigma(0, \epsilon) = -\mathcal{J} \mathscr{L}_\epsilon f_k^\sigma(0, \epsilon)$, we have
\begin{equation} \label{Befp1 Befp0}
   \begin{aligned}
    \mathcal{B}_\epsilon f_1^+(0, \epsilon) &= E_{11}(0,\epsilon) \, \mathcal{J} f_1^-(0,\epsilon) + F_{11}(0,\epsilon) \, \mathcal{J} f_0^- 
    = \epsilon 
    \begin{bmatrix}
        \mathsf{f}_{11} \\
        0
    \end{bmatrix}
    + \epsilon^2 \mathsf{e}_{11}
    \begin{bmatrix}
        \ch^{-\frac{1}{2}}(1+\kappa)^{\frac{1}{4}} \cos(x) \\
        \ch^{\frac{1}{2}}(1+\kappa)^{-\frac{1}{4}} \sin(x)
    \end{bmatrix}
    + \mathcal{O}(\epsilon^3),
    \\
    \mathcal{B}_\epsilon f_0^+(0, \epsilon) &= F_{11}(0,\epsilon) \, \mathcal{J} f_1^-(0,\epsilon) + G_{11}(0,\epsilon) \, \mathcal{J} f_0^- 
    = 
    \begin{bmatrix}
        1 \\
        0
    \end{bmatrix}
    + \epsilon \mathsf{f}_{11}
    \begin{bmatrix} 
        \ch^{-\frac{1}{2}}(1+\kappa)^{\frac{1}{4}} \cos(x) \\ 
        \ch^{\frac{1}{2}}(1+\kappa)^{-\frac{1}{4}} \sin(x)
    \end{bmatrix}
    + \mathcal{O}(\epsilon^2).
\end{aligned} 
\end{equation}
We deduce \eqref{X} by \eqref{dotfp1 dotfp0 dotfn1 dotfn0} and \eqref{Befp1 Befp0}.

Next we compute the terms quadratic in $\mu$. By denoting with a double dot the double derivative with respect to $\mu$, we have
\begin{align} \label{ddotB00}
\partial_\mu^2 \mathsf{B}_0(0) = \left( \mathcal{B}_0 f_k^\sigma, \ddot{f}_{k'}^{\sigma'}(0,0) \right)
+ \left( \ddot{f}_k^\sigma(0,0), \mathcal{B}_0 f_{k'}^{\sigma'} \right)
+ 2 \left( \mathcal{B}_0 \dot{f}_k^\sigma(0,0), \dot{f}_{k'}^{\sigma'}(0,0) \right)
= : Y + Y^* + 2Z. 
\end{align}
Arguing as in \cite[pag. 29]{BMV3}, one shows that $Y=0$ and 
\begin{align} \label{Z}
Z = \left( \mathcal{B}_0 \dot{f}_k^\sigma(0,0), \dot{f}_{k'}^{\sigma'}(0,0) \right)_{k,k'=0,1;\, \sigma,\sigma'=\pm}
= 
\left(
\begin{NiceArray}{cc|ccc}[code-for-first-col=\scriptstyle]
\zeta_{\tth,\kappa}
  & 0
  & 0
  & 0
  &  \\
0
  & \zeta_{\tth,\kappa}
  & 0
  & 0
  &  \\
\hline
0
  & 0
  & 0
  & 0
  & \\
0
  & 0
  & 0
  & 0
  &  \\
\end{NiceArray}
\right)
\end{align}
with 
\[
\zeta_{\tth,\kappa} := \left( \mathcal{B}_0 \dot{f}_1^+(0,0), \dot{f}_1^+(0,0) \right)
= \left( \mathcal{B}_0 \dot{f}_1^-(0,0), \dot{f}_1^-(0,0) \right)
= \frac{1}{8} \ch(1+\kappa)^{\frac{1}{2}} \gamma_{\tth,\kappa}^2.
\]
In conclusion, \eqref{expansion of B_e in mu}, \eqref{X+X}, \eqref{X}, \eqref{ddotB00}, the fact that $Y = 0$ and \eqref{Z} imply \eqref{Be90}, using also the self-adjointness of $\mathcal{B}_{\e}$ and \eqref{B are alternatively real or imaginary}. 
\end{proof}

\begin{lemma}
    [Expansion of $\mathsf{B}^{\flat}$] \label{Expansion of B flat} \textit{The self-adjoint and reversibility-preserving matrix $\mathsf{B}^{\flat}$ associated, as in \eqref{Bmuepsilon matrix representation}, to the self-adjoint and reversibility-preserving operator $\mathcal{B}^{\flat}$, defined in \eqref{B b}, with respect to the basis $\mathcal{F}$ of $\mathcal{V}_{\mu, \epsilon}$ in \eqref{F basis set and f}, admits the expansion}
\begin{equation} \label{mathsfBb}
\mathsf{B}^{\flat} = 
\left(
\begin{NiceArray}{cc|ccc}[code-for-first-col=\scriptstyle]
-\frac{\mu^2}{4}\mathbf{b}_{\tth,\kappa}
  & \im\,(\frac{\mu}{2}\hat{\mathbf{b}}_{\tth,\kappa}+r_2(\mu\e^2))
  & 0
  & 0
  &  \\
-\im\,(\frac{\mu}{2}\hat{\mathbf{b}}_{\tth,\kappa}+r_2(\mu\e^2))
  & -\frac{\mu^2}{4}\mathbf{b}_{\tth,\kappa}
  & \im\, r_6(\mu\e)
  & 0
  &  \\
\hline
0
  & -\im\, r_6(\mu\e)
  & 0
  & 0
  & \\
0
  & 0
  & 0
  & \mu \tanh(\tth \mu)
  &  \\
\end{NiceArray}
\right)
+ \mathcal{O}(\mu^2 \epsilon, \mu^3),
\end{equation}
where 
\begin{align} \label{bh440}
\mathbf{b}_{\tth,\kappa}& := \gamma_{\tth,\kappa}\ch(1+\kappa)^{\frac{1}{2}}+\tth\ch^{-1}(1+\kappa)^{\frac{1}{2}}(1-\ch^4)\left(\gamma_{\tth,\kappa}-2(1-\tth\ch^2)\right)\\
\label{bh hat}
\hat{\mathbf{b}}_{\tth,\kappa}&:=\ch^{-1}(1+\kappa)^{\frac{1}{2}} (\ch^2 + (1 - \ch^4)\tth). 
\end{align}
\end{lemma}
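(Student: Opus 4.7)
The plan is to compute each matrix entry $(\cB^\flat f_k^\sigma(\mu,\e), f_{k'}^{\sigma'}(\mu,\e))$ directly, substituting the basis expansion of Lemma \ref{expansion of the basis F} and a $\mu$-Taylor expansion of $\cB^\flat$. Since $\cB^\flat$ acts nontrivially only on the second component through the Fourier multiplier with symbol $\omega(k+\mu)-\omega(k)$, where $\omega(\xi):=\xi\tanh((\tth+\ttf)\xi)$ is \emph{even} in $\xi$ (hence $\omega'$ is odd and $\omega''$ is even), a direct Fourier computation yields
\begin{align*}
\cB^\flat_{22}\sin(x) &= -\im\,\omega'(1)\mu\cos(x)+\tfrac{\omega''(1)}{2}\mu^2\sin(x)+\cO(\mu^3),\\
\cB^\flat_{22}\cos(x) &= +\im\,\omega'(1)\mu\sin(x)+\tfrac{\omega''(1)}{2}\mu^2\cos(x)+\cO(\mu^3),\\
\cB^\flat_{22}(1) &= \mu\tanh(\tth\mu)+\cO(\mu^2\e^2),
\end{align*}
where $\omega'(1)=\ch^2+(1-\ch^4)\tth$ and $\omega''(1)=2\tth(1-\ch^4)(1-\tth\ch^2)$ at $\ttf=0$, the $\cO(\mu^2\e^2)$ arising from $\ttf=\cO(\e^2)$. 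Setting $A:=\ch^{-1/2}(1+\kappa)^{1/4}$, one immediately checks the identity $A^2\omega'(1)=\hat{\mathbf{b}}_{\tth,\kappa}$.

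The $(1,2)$ and $(2,1)$ entries carry the leading $\cO(\mu)$ contribution. For $(\cB^\flat f_1^-,f_1^+)$, the pairing of the leading basis vectors $g_0^{\pm}$ in \eqref{43 f+1}--\eqref{44 f-1} produces
\[
\tfrac{1}{2\pi}\int_0^{2\pi}\bigl(\im\,A\omega'(1)\mu\sin(x)\bigr)\,A\sin(x)\,dx \;=\; \im\,\tfrac{A^2\omega'(1)}{2}\mu \;=\; \im\,\tfrac{\hat{\mathbf{b}}_{\tth,\kappa}}{2}\mu,
\]
while the $\im\,r_2(\mu\e^2)$ correction comes from the $\cO(\e^2)$-dependence of $\ttf$ inside $\omega'$. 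The diagonal entries $(1,1)$ and $(2,2)$, being real, start at $\cO(\mu^2)$: the intrinsic pairing $(\cB^\flat g_0^{\pm}, g_0^{\pm})$ gives $A^2\omega''(1)\mu^2/4$ (the $\mu$-term drops by $\int\sin x\cos x\,dx=0$), while the two cross pairings with the $\im\mu\gamma_{\tth,\kappa}/4$-correction of the basis contribute, by self-adjointness of $\cB^\flat$, the same real quantity $-A^2\omega'(1)\gamma_{\tth,\kappa}\mu^2/8$ each, summing to $-A^2\omega'(1)\gamma_{\tth,\kappa}\mu^2/4$. Substituting $A^2=\ch^{-1}(1+\kappa)^{1/2}$ with the explicit forms of $\omega'(1)$ and $\omega''(1)$ reproduces exactly $-\mathbf{b}_{\tth,\kappa}\mu^2/4$ as in \eqref{bh440}.

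The $(4,4)$-entry follows at once from $\cB^\flat_{22}(1)=\mu\tanh(\tth\mu)+\cO(\mu^2\e^2)$ and $(f_0^-)_2=1+\cO(\mu\e)$, giving $\mu\tanh(\tth\mu)+\cO(\mu^2\e)$. The $(2,3)$-entry $(\cB^\flat f_0^+,f_1^-)$ begins at $\cO(\mu\e)$ because $(f_0^+)_2=\cO(\e)$ by \eqref{45 f+0}: applying $\cB^\flat_{22}$ to the $\e\delta_{\tth,\kappa}$-correction of $f_0^+$ and pairing with $(f_1^-)_2=A\cos(x)$ yields an imaginary $\cO(\mu\e)$ term, absorbed into $\im\,r_6(\mu\e)$. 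All remaining entries vanish at the displayed orders: orthogonality of the Fourier modes $\sin(kx),\cos(kx)$ for distinct $k$, combined with the parity structure \eqref{Parity structure} of the basis and the real/imaginary alternation \eqref{B are alternatively real or imaginary} enforced by reversibility of $\cB^\flat$, ensures that all surviving contributions fall into the remainder $\cO(\mu^2\e,\mu^3)$.

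The main obstacle is the bookkeeping of the two distinct $\cO(\mu^2)$ contributions that combine with opposite signs to produce $-\mathbf{b}_{\tth,\kappa}\mu^2/4$ on the diagonal: one originates from the $\omega''$-expansion of the Fourier symbol at $k=\pm 1$, the other from the coefficient $\gamma_{\tth,\kappa}$ in Kato's analytic continuation of the basis given by \eqref{47 coefficients}. Verifying algebraically that their sum equals \eqref{bh440} is straightforward once one identifies $A^2\omega'(1)=\hat{\mathbf{b}}_{\tth,\kappa}$ and rewrites the bracket $[\omega''(1)-\gamma_{\tth,\kappa}\omega'(1)]$ in closed form.
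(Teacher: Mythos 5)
Your proposal is correct and follows the same direct-computation strategy that the paper invokes by reference (``analogous to [BMV3, Lemma 4.5], using the expansions of Lemma~\ref{expansion of the basis F}''): expand the Fourier symbol of $\mathcal{B}^\flat_{22}$ to second order in $\mu$, substitute the Kato basis expansion, and pair. The key algebraic identities you invoke — $\omega'(1)=\ch^2+(1-\ch^4)\tth$, $\omega''(1)=2\tth(1-\ch^4)(1-\tth\ch^2)$, $A^2\omega'(1)=\hat{\mathbf{b}}_{\tth,\kappa}$, and $-A^2\bigl(\omega''(1)-\gamma_{\tth,\kappa}\,\omega'(1)\bigr)=\mathbf{b}_{\tth,\kappa}$ — all check out against \eqref{bh440}--\eqref{bh hat}, and the sign conventions in the cross pairings giving $-\tfrac{\mu^2}{8}A^2\gamma_{\tth,\kappa}\omega'(1)$ each are consistent.
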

\begin{proof}
    The proof is analogous to the one of \cite[Lemma 4.5]{BMV3}, using the expansions of Lemma \ref{expansion of the basis F}.
\end{proof}

Next, we consider $\mathcal{B}^{\sharp}$.
\begin{lemma} [Expansion of $\mathsf{B}^{\sharp}$] \label{Expansion of B sharp}
    \textit{The self-adjoint and reversibility-preserving matrix $\mathsf{B}^{\sharp}$ associated, as in \eqref{Bmuepsilon matrix representation}, to the self-adjoint and reversibility-preserving operators $\mathcal{B}^{\sharp}$, defined in \eqref{B sharp}, with respect to the basis $\mathcal{F}$ of $\mathcal{V}_{\mu,\epsilon}$ in \eqref{F basis set and f}, admits the expansion}
\begin{align} \label{mathsf B sharp}
\mathsf{B}^{\sharp} =
\left(
\begin{NiceArray}{cc|ccc}[code-for-first-col=\scriptstyle]
0
  & \im\, r_2(\mu\e^2)
  & 0
  & \im\,\mu\e\ch^{-\frac{1}{2}}(1+\kappa)^{\frac{1}{4}}+\im\, r_4(\mu\e^2)
  &  \\
-\im\, r_2(\mu\e^2)
  & 0
  & -\im\, r_6(\mu\e)
  & 0
  &  \\
\hline
0
  & \im\, r_6(\mu\e)
  & 0
  & -\im\, r_9(\mu\e^2)
  & \\
-\im\, \mu \e \ch^{-\frac{1}{2}}(1+\kappa)^{\frac{1}{4}}-i r_4(\mu\e^2)
  & 0
  & \im\, r_9(\mu\e^2)
  & 0
  &  \\
\end{NiceArray}
\right)
+ \mathcal{O}(\mu^2 \epsilon),
\end{align}
\end{lemma}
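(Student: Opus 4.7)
The operator $\mathcal{B}^{\sharp}$ in \eqref{B sharp} equals $-\im\mu\,p_\epsilon(x)\mathcal{J}$; it is linear in $\mu$ and $\cO(\epsilon)$ by \eqref{SN1}--\eqref{pino1fd}, so every matrix element
\[
(\mathsf{B}^{\sharp})_{ij} = (\mathcal{B}^{\sharp}f_{j}(\mu,\epsilon), f_{i}(\mu,\epsilon))
\]
is a priori $\cO(\mu\epsilon)$. The self-adjointness and reversibility-preserving character of $\mathcal{B}^{\sharp}$ yield, via Lemma \ref{matrix representation mathsf L}, the alternating real/imaginary pattern exhibited in \eqref{mathsf B sharp}; so it suffices to compute the ten independent entries. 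The plan is to insert the expansions \eqref{43 f+1}--\eqref{46 f-0}, together with $p_\epsilon(x)=\epsilon p_1^{[1]}\cos(x)+\epsilon^2 p_2(x)+\cO(\epsilon^3)$, and absorb contributions of order $\mu^2\epsilon$ or smaller into the final remainder.

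The only entry carrying an explicit leading coefficient is $(1,4)$. Using $f_0^{-}(\mu,\epsilon)=\vet{0}{1}+\cO(\mu\epsilon)$ and the leading form of $f_1^{+}(\mu,\epsilon)$, one has
\begin{align*}
(\mathcal{B}^{\sharp}f_0^{-}(\mu,\epsilon), f_1^{+}(\mu,\epsilon))
&=\Bigl(\vet{-\im\mu p_\epsilon(x)}{0}, f_1^{+}(\mu,\epsilon)\Bigr)+\cO(\mu^2\epsilon)\\
&=-\im\mu\epsilon\,p_1^{[1]}\,(\ch/\sqrt{1+\kappa})^{1/2}\cdot\tfrac{1}{2\pi}\int_{\mathbb{T}}\cos^2 x\,dx+\im r_4(\mu\epsilon^2)+\cO(\mu^2\epsilon),
\end{align*}
which, using $p_1^{[1]}=-2(1+\kappa)^{1/2}\ch^{-1}$ from \eqref{pino1fd}, evaluates to $\im\mu\epsilon\,\ch^{-1/2}(1+\kappa)^{1/4}$. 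The $\im r_4(\mu\epsilon^2)$ term absorbs the $\epsilon$-corrections to both the basis and to $p_\epsilon$; its precise coefficient is immaterial for the subsequent block-decoupling.

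All remaining entries follow by direct computation combined with parity arguments. Since $p_\epsilon(x)$ is even and the basis vectors at $(0,0)$ have the parities recorded in \eqref{Parity structure}, the integrands for the diagonal entries and for $(1,3)$, $(2,4)$ are products in which Fourier orthogonality (e.g.\ $\int\cos x\,\sin x\,dx=0$) kills the leading $\mu\epsilon$ and $\mu\epsilon^2$ contributions, so those entries sit in $\cO(\mu^2\epsilon)$. The surviving entries $(1,2)$, $(2,3)$, $(3,4)$ produce real analytic remainders $r_2(\mu\epsilon^2)$, $r_6(\mu\epsilon)$, $r_9(\mu\epsilon^2)$ whose leading coefficients need not be tracked; for instance $(3,4)=-\im\mu\langle p_\epsilon\rangle+\cO(\mu^2\epsilon)=-\im\mu\epsilon^2 p_2^{[0]}+\cO(\mu\epsilon^3,\mu^2\epsilon)$, since $p_1$ has zero mean by \eqref{pino1fd}.

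The main obstacle is purely bookkeeping: one must verify that the $\epsilon^2$ corrections coming from $p_2(x)$ in Lemma \ref{lem:pa.exp} and from the $\cO(\mu\epsilon,\epsilon^2)$ parts of $f_k^{\sigma}(\mu,\epsilon)$ in Lemma \ref{expansion of the basis F} contribute only at the orders allowed by \eqref{mathsf B sharp}. The argument is strictly parallel to that of Lemma \ref{Expansion of B flat}; the $\kappa$-dependence requires extra care in matching constants, but introduces no new conceptual difficulty.
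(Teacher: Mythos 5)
Your proposal is essentially correct and takes the natural direct approach: factor out the explicit $\mu$ from $\mathcal B^\sharp$, expand the basis and $p_\e$, and identify which entries vanish at order $\mu\e^k$. The paper itself only defers to \cite[Lemma 4.6]{BMV3}, so a direct computation is exactly what is called for, and your $(1,4)$ calculation reproduces the leading coefficient $\im\mu\e\,\ch^{-1/2}(1+\kappa)^{1/4}$.

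One structural comment on the vanishing entries. Your Fourier-orthogonality argument, as written, checks the leading $\mu\e$ and $\mu\e^2$ contributions, but the claim is that the diagonal entries and $(1,3),(2,4)$ are $\cO(\mu^2\e)$, i.e.\ vanish at order $\mu$ \emph{to all orders in $\e$}. To close this you should run the parity argument with the full $\e$-expansion of the basis at $\mu=0$, using the parity structure \eqref{48 f-0=01} (even$_0$/odd components) rather than the leading Fourier modes; since $p_\e$ is even, $\hat{\mathcal B}:=\begin{bsmallmatrix}0&-\im p_\e\\\im p_\e&0\end{bsmallmatrix}$ sends $(\mathrm{even}_0,\mathrm{odd})^\top$ to $\im(\mathrm{odd},\mathrm{even})^\top$ and the scalar products with same-$\sigma$ basis elements are integrals of odd functions. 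Alternatively, the cleaner device used for the analogous $\mathcal B^s_{12}$ in Lemma \ref{Expansion of B s} is worth noting: $\mu(\hat{\mathcal B}f_k^\sigma(0,\e),f_{k'}^\sigma(0,\e))$ is simultaneously real (by the alternation \eqref{B are alternatively real or imaginary}, which holds with the real, reversible basis at $\mu=0$) and purely imaginary ($\hat{\mathcal B}$ purely imaginary, basis real), hence zero, giving $\cO(\mu^2\e)$ at once without any Fourier bookkeeping.

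Two minor imprecisions: in your intermediate formula for $(3,4)$ the remainder $\cO(\mu^2\e)$ should be $\cO(\mu\e^2,\mu^2\e)$, since the $\e$-order correction of $f_0^+$ paired against $\mathcal B^\sharp f_0^-$ contributes a nonzero $\cO(\mu\e^2)$ term beyond $-\im\mu\langle p_\e\rangle$ (this does not affect the conclusion $-\im r_9(\mu\e^2)$). Also, the parity data you want to invoke is that of the basis at $(0,\e)$ (equation \eqref{48 f-0=01}), not at $(0,0)$; the reference to \eqref{Parity structure} is correct but it is the $\mu=0$ real parities that make the integrands odd at every order in $\e$.
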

\begin{proof}
    The proof is analogous to the one of \cite[Lemma 4.6]{BMV3}, using the expansions of Lemma \ref{expansion of the basis F}.
\end{proof}

Finally, we consider the new term  $\mathcal{B}^{s}$ that takes into the account the capillary term.
\begin{lemma} [Expansion of $\mathsf{B}^{s}$] \label{Expansion of B s}
    \textit{The self-adjoint and reversibility-preserving matrix $\mathsf{B}^{s}$ associated, as in \eqref{Bmuepsilon matrix representation}, to the self-adjoint and reversibility-preserving operators $\mathcal{B}^{s}$, defined in \eqref{B sharp}, with respect to the basis $\mathcal{F}$ of $\mathcal{V}_{\mu,\epsilon}$ in \eqref{F basis set and f}, admits the expansion}
\begin{equation}\label{mathsf B s}
\begin{aligned} 
\mathsf{B}^{s} &=
\left(
\begin{NiceArray}{cc|ccc}[code-for-first-col=\scriptstyle]
\frac{\mu^2}{2}\kappa(1+\kappa)^{-\frac{1}{2}}\ch(\gamma_{\tth,\kappa}+1)
  & \im\,\kappa\mu\ch(1+\kappa)^{-\frac{1}{2}}+\im \,r_2(\mu\e^2)
  & 0
  & \im \,r_4(\mu\e^2)
  &  \\
-\im\,\kappa\mu\ch(1+\kappa)^{-\frac{1}{2}}-\im \,r_2(\mu\e^2)
  & \frac{\mu^2}{2}\kappa(1+\kappa)^{-\frac{1}{2}}\ch(\gamma_{\tth,\kappa}+1)
  & \im \,r_6(\mu\e)
  & 0
  &  \\
\hline
0
  & -\im \,r_6(\mu\e)
  & \kappa\mu^2
  & \im \,r_9(\mu\e^2)
  & \\
-\im \,r_4(\mu\e^2)
  & 0
  & -\im \,r_9(\mu\e^2)
  & 0
  &  \\
\end{NiceArray}
\right)
\\&+ \mathcal{O}(\mu^2 \epsilon,\mu^3),
\end{aligned}
\end{equation}
\end{lemma}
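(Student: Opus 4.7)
The plan is to follow the same scheme as Lemmas \ref{Expansion of B flat} and \ref{Expansion of B sharp}, exploiting two structural facts about $\mathcal{B}^s$ defined in \eqref{B sharp}. First, the only nonzero block of $\mathcal{B}^s$ is the scalar operator $-\kappa(\mu\dot{\Sigma}_{0,\e}+\tfrac{\mu^2}{2}\ddot{\Sigma}_{0,\e})$ acting on the first component, so only the first components of the basis vectors $f_k^{\sigma}(\mu,\e)$ enter the scalar products $(\mathcal{B}^s f_k^{\sigma}(\mu,\e),f_{k'}^{\sigma'}(\mu,\e))$. Second, since $\Sigma_{\mu,\e}$ is self-adjoint and reversibility-preserving, $\mathsf{B}^s$ inherits both properties, so by \eqref{B are alternatively real or imaginary} its entries are alternately real or purely imaginary; this halves the number of independent computations.

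First, I would Taylor-expand $\dot{\Sigma}_{0,\e}$ and $\ddot{\Sigma}_{0,\e}$ in $\e$. At $\e=0$ one has $g_0\equiv 1$ and $\mathfrak{p}\equiv 0$, whence $\dot{\Sigma}_{0,0}=2\im\pa_x$ and $\ddot{\Sigma}_{0,0}=-2$; the $\e$-corrections follow from the expansions \eqref{SN1 g}--\eqref{exp:g2} of $g_\e$ and \eqref{expfe} of $\mathfrak{p}$. Combined with the basis expansion in Lemma \ref{expansion of the basis F}, this produces each matrix element to the required order in $(\mu,\e)$.

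The two leading nonzero contributions fix the principal terms in \eqref{mathsf B s}. For the purely imaginary $(1,2)$-entry, at $\e=0$ one computes, using the first component $\ch^{1/2}(1+\kappa)^{-1/4}\cos x$ of $f_1^{+}(0,0)$ and $-\ch^{1/2}(1+\kappa)^{-1/4}\sin x$ of $f_1^{-}(0,0)$, that $(\dot{\Sigma}_{0,0}[f_1^{-}(0,0)]_1,[f_1^{+}(0,0)]_1) = -\im \ch(1+\kappa)^{-1/2}$, so $-\kappa\mu$ times this yields the stated leading term $\im\kappa\mu\ch(1+\kappa)^{-1/2}$. For the real $(1,1)$-entry, the $\mathcal{O}(\mu)$ contribution vanishes by orthogonality of $\sin x$ and $\cos x$, and the $\mathcal{O}(\mu^2)$ contribution has two sources: (i) the cross-term between $\mu\dot{\Sigma}_{0,0}$ and the $\im\tfrac{\mu}{4}\gamma_{\tth,\kappa}$-correction of the first component of $f_1^{+}$ in \eqref{43 f+1}, which yields $\tfrac{1}{2}\kappa\mu^2\gamma_{\tth,\kappa}\ch(1+\kappa)^{-1/2}$, and (ii) the zeroth-order piece of $-\tfrac{\mu^2}{2}\kappa\ddot{\Sigma}_{0,0}=\kappa\mu^2$ applied to $\ch^{1/2}(1+\kappa)^{-1/4}\cos x$ and paired with itself, which gives $\tfrac{1}{2}\kappa\mu^2\ch(1+\kappa)^{-1/2}$. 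Adding these yields exactly $\tfrac{\mu^2}{2}\kappa(1+\kappa)^{-1/2}\ch(\gamma_{\tth,\kappa}+1)$, matching \eqref{mathsf B s}. The $(3,3)$-entry $\kappa\mu^2$ is immediate from $f_0^{+}(0,0)=\begin{bmatrix}1\\0\end{bmatrix}$ and $-\tfrac{\mu^2}{2}\kappa\ddot{\Sigma}_{0,0}\cdot 1 = \kappa\mu^2$, while the $(4,4)$-entry vanishes because $f_0^{-}(0,0)=\begin{bmatrix}0\\1\end{bmatrix}$ has zero first component.

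All remaining entries are remainders, whose sizes follow from the parity structure \eqref{Parity structure} of the basis together with the observation that $\mathcal{B}^s$ annihilates any vector whose first component vanishes. The main obstacle I anticipate is purely bookkeeping: tracking which $\e$-corrections of $\dot{\Sigma}_{0,\e}$, which, unlike $\ddot{\Sigma}_{0,\e}$, involves derivatives and hence mixes parities in a nontrivial way, produce nonvanishing integrals, in order to verify that no remainder stronger than $r_2(\mu\e^2)$, $r_4(\mu\e^2)$, $r_6(\mu\e)$, $r_9(\mu\e^2)$, together with the overall error $\mathcal{O}(\mu^2\e,\mu^3)$, is needed. No fundamentally new analytic difficulty arises relative to the computations already carried out for $\mathsf{B}^{\flat}$ and $\mathsf{B}^{\sharp}$.
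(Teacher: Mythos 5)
Your proposal is correct and follows essentially the same route as the paper: the paper also splits $\mathcal{B}^s = -\kappa\mu\mathcal{B}^s_1 - \kappa\mu^2\mathcal{B}^s_2$ with $\mathcal{B}^s_1 = \dot\Sigma_{0,\e}\Lambda_{\tilde\Pi}$, $\mathcal{B}^s_2 = \tfrac12\ddot\Sigma_{0,\e}\Lambda_{\tilde\Pi}$, expands $\mathcal{B}^s_1 = 2\im\pa_x\Lambda_{\tilde\Pi} + \e\cdot 3\im\ch^{-2}(-\pa_x\circ\cos x - \cos x\circ\pa_x)\Lambda_{\tilde\Pi} + \mathcal{O}(\e^2)$ and $\mathcal{B}^s_2 = -\Lambda_{\tilde\Pi} + \mathcal{O}(\e)$, and then evaluates the relevant pairings against the basis $\mathcal{F}$. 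Your identification of the three nontrivial leading computations — the purely imaginary $(1,2)$-entry, the two sources feeding the real $(1,1)$-entry (the $\mathcal{O}(\mu)$ piece of $\dot\Sigma_{0,0}$ crossed with the $\im\tfrac{\mu}{4}\gamma_{\tth,\kappa}$-correction of $f_1^+$, plus the zeroth-order piece of $\ddot\Sigma_{0,0}$), and the $(3,3)$-entry from $f_0^+ = \vet{1}{0}$ — are precisely the ones the paper carries out, and your use of the reality/imaginarity alternation \eqref{B are alternatively real or imaginary} together with $[f_0^-(0,0)]_1 = 0$ to dispatch the remaining entries is likewise the paper's argument.
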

\begin{proof}
    Recalling \eqref{expfe} and \eqref{SN1 g}, we write the operator $\mathcal{B}^{s}$ in \eqref{B sharp} as
    \begin{align}
        \mathcal{B}^s=-\kappa\mu\mathcal{B}^s_1- \kappa \mu^2\mathcal{B}^s_2,
    \end{align}
    where
    \begin{equation}\label{B s 1}
            \begin{aligned} 
        \mathcal{B}^s_1:=&\frac{\im}{1+\mathfrak{p}_x}\left(g_\e(x)\circ\pa_x+\pa_x\circ g_\e(x)\right)\frac{1}{1+\mathfrak{p}_x} \Lambda_{\tilde{\Pi}}\\
        =&\underbrace{2\im\begin{bmatrix}
 \pa_x & 0 \\
0 & 0
\end{bmatrix}}_{=:\mathcal{B}^s_{11}}+\underbrace{3\im\e\ch^{-2}\begin{bmatrix}
-\pa_x\circ \cos(x)-\cos(x)\circ\pa_x & 0 \\
0 & 0
\end{bmatrix}}_{=:\mathcal{B}^s_{12}}+\cO(\e^2),
    \end{aligned}
    \end{equation}
and    
    \begin{align} \label{B s 2}
        \mathcal{B}^s_2:=-\frac{g_{\e}(x)}{(1+\mathfrak{p}_x)^2} \Lambda_{\tilde{\Pi}}=-\Lambda_{\tilde{\Pi}}+\cO(\e), \quad \Lambda_{\tilde{\Pi}}:=\begin{bmatrix}
\mathrm{Id} & 0 \\
0 & 0
\end{bmatrix},
    \end{align}
with $\cO(\e),\, \cO(\e^2)\in\mathcal{L}(Y, X)$.

Using  \eqref{43 f+1}-\eqref{46 f-0}, we derive the following expansion:
\begin{equation} \label{Lambda f pm 10}
\begin{aligned}
  \mathcal{B}^s_{11} f^+_1(\mu,\e)&= 2\im \vet{-\ch^{\frac{1}{2}}(1+\kappa)^{-\frac{1}{4}}\sin(x)}{0}
- \frac{\mu}{2} \gamma_{\tth,\kappa}
\vet{\ch^{\frac{1}{2}}(1+\kappa)^{-\frac{1}{4}}\cos(x)}{0}
-4\im  \alpha_{\tth,\kappa}\epsilon
\begin{bmatrix}
 \sin(2x) \\
0
\end{bmatrix} \\
& \quad
+\cO(\e^2)\vet{\mathit{odd}(x)}{0}+ \mathcal{O}(\mu^2, \mu \epsilon),\\
 \mathcal{B}^s_{11}f^-_1(\mu,\e)&=2\im \vet{-\ch^{\frac{1}{2}}(1+\kappa)^{-\frac{1}{4}}\cos(x)}{0}
+ \frac{\mu}{2} \gamma_{\tth,\kappa}
\vet{\ch^{\frac{1}{2}}(1+\kappa)^{-\frac{1}{4}}\sin(x)}{0}
-4\im  \alpha_{\tth,\kappa}\epsilon
\begin{bmatrix}
 \cos(2x) \\
0
\end{bmatrix} \\
& \quad
+\cO(\e^2)\vet{\mathit{even}(x)}{0}+ \mathcal{O}(\mu^2, \mu \epsilon),\\
 \mathcal{B}^s_{11} f^+_0(\mu,\e)&=-2\im \e\delta_{\tth,\kappa}\vet{\ch^{\frac{1}{2}}(1+\kappa)^{-\frac{1}{4}}\sin(x)}{0}+\cO(\e^2)\vet{\mathit{odd}(x)}{0}+\cO(\mu\e), \quad 
  \mathcal{B}^s_{11} f^-_0(\mu,\e)=\cO(\mu\e).
\end{aligned}
\end{equation}
Using \eqref{Lambda f pm 10} and  the self-adjoint and reversibility-preserving properties of $\mathcal{B}^s_{11}$, the matrix
$-\kappa\mu\mathsf{B}^{s}_{11}$ representing the action of 
the operator $-\kappa\mu\mathcal{B}^s_{11}$ on the basis 
$\mathcal{F}$ of $\mathcal{V}_{\mu,\epsilon}$ in \eqref{F basis set and f}
admits the expansion
\begin{equation}\label{mathsf B s11} 
\begin{aligned} 
-\kappa\mu\mathsf{B}^{s}_{11} &=
\left(
\begin{NiceArray}{cc|ccc}[code-for-first-col=\scriptstyle]
\frac{\mu^2}{2}\kappa(1+\kappa)^{-\frac{1}{2}}\gamma_{\tth,\kappa} \ch
  & \im\,\kappa\mu\ch(1+\kappa)^{-\frac{1}{2}}+\im \,r(\mu\e^2)
  & 0
  & 0
  &  \\
-\im\,\kappa\mu\ch(1+\kappa)^{-\frac{1}{2}}+\im \,r(\mu\e^2)
  & \frac{\mu^2}{2}\kappa(1+\kappa)^{-\frac{1}{2}}\gamma_{\tth,\kappa} \ch
  & \im\, r(\mu\e)
  & 0
  &  \\
\hline
0
  & \im\, r(\mu\e)
  & 0
  & 0
  & \\
0
  & 0
  & 0
  & 0
  &  \\
\end{NiceArray}
\right) + \mathcal{O}(\mu^2 \epsilon,\mu^3).
\end{aligned}
\end{equation}
Consider now  $-\kappa\mu\mathcal{B}^{s}_{12}$, we have 
\begin{equation}
\left( -\kappa\mu\mathcal{B}^{s}_{12} f_k^\sigma (\mu, \epsilon),\, f_{k'}^{\sigma'}(\mu, \epsilon) \right)
= \left( -\kappa\mu\mathcal{B}^{s}_{12} f_k^\sigma (0, \epsilon),\, f_{k'}^{\sigma'}(0, \epsilon) \right)
+ \mathcal{O}(\mu^2 \epsilon).
\end{equation}

The matrix entries $\left( -\kappa\mu\mathcal{B}^{s}_{12} f_k^\sigma (0, \epsilon),\, f_{k'}^{\sigma}(0, \epsilon) \right),\, k, k' = 0, 1,\, \sigma = \pm$ are zero, because they are simultaneously real by \eqref{B are alternatively real or imaginary}, and purely imaginary, being the operator $-\kappa\mu\mathcal{B}^{s}_{12}$ purely imaginary and the basis $\{ f_k^{\pm}(0, \epsilon) \}_{k=0,1}$ real. Hence the matrix $-\kappa\mu\mathsf{B}^{s}_{12}$
 representing the action of 
the operator $-\kappa\mu\mathcal{B}^s_{11}$ on the basis 
$\mathcal{F}$ of $\mathcal{V}_{\mu,\epsilon}$
has the form
\begin{equation}
-\kappa\mu\mathsf{B}^{s}_{12} =
\left(
\begin{NiceArray}{cc|ccc}[code-for-first-col=\scriptstyle]
0 & \mathrm{i} a^{s}_{12} & 0 & \mathrm{i} b^{s}_{12} \\
-\mathrm{i} a^{s}_{12} & 0 & -\mathrm{i} c^{s}_{12} & 0 \\
\hline
0 & \mathrm{i} c^{s}_{12} & 0 & \mathrm{i} d^{s}_{12} \\
-\mathrm{i} b^{s}_{12} & 0 & -\mathrm{i} d^{s}_{12} & 0
\end{NiceArray}
\right)
+ \mathcal{O}(\mu^2 \epsilon),
\end{equation}
where
\begin{equation}
\begin{aligned}
& \left( -\kappa\mu\mathcal{B}^{s}_{12} f_1^{-}(0, \epsilon),\, f_1^{+}(0, \epsilon) \right) := \mathrm{i} a^{s}_{12}, \quad \left( -\kappa\mu\mathcal{B}^{s}_{12} f_1^{-}(0, \epsilon),\, f_0^{+}(0, \epsilon) \right) := \mathrm{i} c^{s}_{12}, \\
& \left( -\kappa\mu\mathcal{B}^{s}_{12} f_0^{-}(0, \epsilon),\, f_1^{+}(0, \epsilon) \right) := \mathrm{i} b^{s}_{12}, \quad \left( -\kappa\mu\mathcal{B}^{s}_{12} f_0^{-}(0, \epsilon),\, f_0^{+}(0, \epsilon) \right) := \mathrm{i} d^{s}_{12},
\end{aligned}
\end{equation}
and $a^{s}_{12}, b^{s}_{12}, c^{s}_{12}, d^{s}_{12}$ are real numbers. As $-\kappa\mu\mathcal{B}^{s}_{12} = \mathcal{O}(\mu \epsilon)$ in $\mathcal{L}(Y, X)$, we deduce that $c^{s}_{12} = r(\mu \epsilon)$. Let us compute the expansion of $a^{s}_{12}$, $b^{s}_{12}$ and $d^{s}_{12}$. In view of \eqref{43 f+1}-\eqref{46 f-0}, $f_1^{\pm}(0, \epsilon) = f_1^{\pm} + \mathcal{O}(\epsilon)$, $f_0^{+}(0, \epsilon) = f_0^{+} + \mathcal{O}(\epsilon)$, $f_0^{-}(0, \epsilon) = \begin{bmatrix} 0 \\ 1 \end{bmatrix}$, where $f_k^\sigma$ are in \eqref{eigenfunc of mathcall L00 2}. By \eqref{B s 1} we have

\[ 
-\kappa\mu\mathcal{B}^{s}_{12} f_1^{-} =
\begin{bmatrix}
-\frac{3}{2}\im\kappa\mu\e\ch^{-\frac{3}{2}}(1+\kappa)^{-\frac{1}{4}}(1+3\cos(2x)) \\
0
\end{bmatrix},
\quad
-\kappa\mu\mathcal{B}^{s}_{12} f_0^{-} =
\begin{bmatrix}
0 \\
0
\end{bmatrix},
\]
and then
\begin{align*} 
a^{s}_{12} &=  \left( -\kappa\mu\mathcal{B}_{12}^{s} f_1^{-},\, f_1^{+} \right) + r(\mu \epsilon^2) = r(\mu \epsilon^2), \quad 
b^{s}_{12} =  \left( -\kappa\mu\mathcal{B}_{12}^{s} f_0^{-},\, f_1^{+} \right) + r(\mu \epsilon^2) =  r(\mu \epsilon^2), \\
d^{s}_{12} &=  \left( -\kappa\mu\mathcal{B}_{12}^{s} f_0^{-},\, f_0^{+} \right) + r(\mu \epsilon^2) = r(\mu \epsilon^2).
\end{align*}
\color{black}
Next we consider the matrix
$-\kappa\mu^2\mathsf{B}^{s}_2$ representing the action of the operator
$ -\kappa\mu^2\mathcal{B}^{s}_2$ on the basis 
$\mathcal{F}$.
Recalling \eqref{B s 2}, we have $-\kappa\mu^2\mathcal{B}^{s}_2 = \kappa \mu^2\Lambda_{\tilde{\Pi}} + \cO(\mu^2,\e) $, hence one has 
\begin{equation}
\left( -\kappa\mu^2\mathcal{B}^{s}_2 f_k^\sigma (\mu, \epsilon),\, f_{k'}^{-\sigma'}(\mu, \epsilon) \right)
= \left( \kappa\mu^2\Lambda_{\tilde{\Pi}} f_k^\sigma (0, \epsilon),\, f_{k'}^{\sigma'}(0, \epsilon) \right)
+ \mathcal{O}(\mu^2\e,\mu^3).
\end{equation}
The matrix entries $\left( -\kappa\mu^2\mathcal{B}^{s}_2 f_k^\sigma (0, \epsilon),\, f_{k'}^{-\sigma}(0, \epsilon) \right),\, k, k' = 0, 1,\, \sigma = \pm$ are zero, because they are simultaneously purely imaginary by \eqref{B are alternatively real or imaginary}, and real, being the operator $\mathcal{B}^{s}_2$ real and the basis $\{ f_k^{\pm}(0, \epsilon) \}_{k=0,1}$ real. Hence $-\kappa\mu^2\mathcal{B}^{s}_2$, with respect to the basis $\mathcal{F}$ of $\mathcal{V}_{\mu,\epsilon}$ in \eqref{F basis set and f}, admits the expansion 
\begin{equation}
-\kappa\mu^2\mathsf{B}^{s}_2 =
\left(
\begin{NiceArray}{cc|ccc}[code-for-first-col=\scriptstyle]
a^s_2 & 0 & b^s_2 & 0 \\
0 & c^s_2 & 0 & d^s_2 \\
\hline
b^s_2 & 0 & e^s_2 & 0 \\
0 & d^s_2 & 0 & f^s_2
\end{NiceArray}
\right)
+ \mathcal{O}(\mu^2 \epsilon,\mu^3),
\end{equation}
where
\begin{align*}
a^{s}_2 &= \left( \kappa\mu^2\Lambda_{\tilde{\Pi}} f_1^+ (0, \epsilon),\, f_{1}^{+}(0, \epsilon) \right) =\left( \kappa\mu^2\Lambda_{\tilde{\Pi}} f_1^+ ,\, f_{1}^{+} \right)
+r(\mu^2\e)=\frac{\mu^2}{2}\kappa(1+\kappa)^{-\frac{1}{2}}\ch+r(\mu^2\e), \\
b^{s}_2 &:=\left( \kappa\mu^2\Lambda_{\tilde{\Pi}} f_1^+ (0, \epsilon),\, f_{0}^{+}(0, \epsilon) \right) = \left( \kappa\mu^2\Lambda_{\tilde{\Pi}} f_1^+ ,\, f_{0}^{+} \right)+r(\mu^2\e)=r(\mu^2\e),\\
c^{s}_2 &:= \left( \kappa\mu^2\Lambda_{\tilde{\Pi}} f_1^- (0, \epsilon),\, f_{1}^{-}(0, \epsilon) \right) = \left( \kappa\mu^2\Lambda_{\tilde{\Pi}} f_1^- ,\, f_{1}^{-} \right)+r(\mu^2\e)=\frac{\mu^2}{2}\kappa(1+\kappa)^{-\frac{1}{2}}\ch+r(\mu^2\e), \\
d^{s}_2 &
:=  \left( \kappa\mu^2\Lambda_{\tilde{\Pi}} f_1^- (0, \epsilon),\, f_{0}^{-}(0, \epsilon) \right)
= \left( \kappa\mu^2\Lambda_{\tilde{\Pi}} f_1^- ,\, f_{0}^{-} \right)+r(\mu^2\e)=r(\mu^2\e),\\
e^{s}_2 &:=\left( \kappa\mu^2\Lambda_{\tilde{\Pi}} f_0^+ (0, \epsilon),\, f_{0}^{+}(0, \epsilon) \right) =\left( \kappa\mu^2\Lambda_{\tilde{\Pi}} f_0^+,\, f_{0}^{+} \right)+r(\mu^2\e)=\kappa\mu^2+r(\mu^2\e),\\
f^{s}_2 &:=\left( \kappa\mu^2\Lambda_{\tilde{\Pi}} f_0^- (0, \epsilon),\, f_{0}^{-}(0, \epsilon) \right) =\left( \kappa\mu^2\Lambda_{\tilde{\Pi}} f_0^- ,\, f_{0}^{-} \right)+r(\mu^2\e)=r(\mu^2\e).
\end{align*}
This proves \eqref{mathsf B s}. 
\end{proof}
Lemma \ref{Expansion of B epsilon}-Lemma \ref{Expansion of B s} imply \eqref{B mu epsilon} where the matrix $E$ has the form \eqref{E} and
\[
\mathsf{e}_{22} := 2(\mathbf{b}_{\tth,\kappa} - 4\zeta_{\tth,\kappa}-2\kappa(1+\kappa)^{-\frac{1}{2}}\ch(1+\gamma_{\tth,\kappa})) 
\]
with $\mathbf{b}_{\tth,\kappa}$ in \eqref{bh440} and $\zeta_{\tth,\kappa}$ in \eqref{zeta h}, $ \gamma_{\tth,\kappa}$ in \eqref{47 coefficients}. The term $\mathsf{e}_{22}$ has the expansion in \eqref{e11 f11}. 

Moreover,
\begin{equation} \label{F 2nd}
\begin{aligned} 
F &:= F(\mu,\e)=
\begin{pmatrix}
\mathsf{f}_{11} \epsilon + r_3(\epsilon^3, \mu \epsilon^2, \mu^2 \epsilon,\mu^3) &
\im \,\mu \epsilon \ch^{-\frac{1}{2}}(1+\kappa)^{\frac{1}{4}} + \im\, r_4(\mu \epsilon^2, \mu^2 \epsilon,\mu^3) \\
\im\, r_6(\mu \epsilon,\mu^3) &
r_7(\mu^2 \epsilon,\mu^3)
\end{pmatrix}, 
\end{aligned}
\end{equation}

\begin{equation} \label{G 2nd}
\begin{aligned} 
G &:= G(\mu,\e)=
\begin{pmatrix}
1 +\kappa\mu^2+ r_8(\epsilon^2, \mu^2 \epsilon,\mu^3) &
- \im r_9(\mu \epsilon^2, \mu^2 \epsilon,\mu^3) \\
\im r_9(\mu \epsilon^2, \mu^2 \epsilon,\mu^3) &
\mu \tanh(h \mu) + r_{10}(\mu^2 \epsilon,\mu^3)
\end{pmatrix} . 
\end{aligned}
\end{equation}
In order to deduce the expansion \eqref{F}-\eqref{G} of the matrices $F$, $G$ we exploit also the following lemma:
\begin{lemma} \label{Fmu0 eq 0 Gmu0}
At \( \epsilon = 0 \) the matrices are \( F(\mu, 0) = 0 \) and 
\(
G(\mu, 0) = \begin{pmatrix} 1+\kappa\mu^2 & 0 \\ 0 & \mu \tanh(\tth\mu) \end{pmatrix}.
\)
\end{lemma}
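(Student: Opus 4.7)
The overall plan is to exploit that, at $\epsilon = 0$, the operator $\mathcal{L}_{\mu,0}$ has constant (in $x$) coefficients, and is therefore a Fourier multiplier: each pure-mode subspace
\[
W_k := \big\{\, \vec c\, e^{\im k x} : \vec c \in \mathbb{C}^2 \big\}, \qquad k \in \mathbb{Z},
\]
is invariant under $\mathcal{L}_{\mu,0}$, as one reads off directly from \eqref{mathcal L mu 0}. Consequently, the Cauchy integral \eqref{Projection P mu e} defining $P_{\mu,0}$ preserves every $W_k$, and then so does the Kato transformation $U_{\mu,0}$ in \eqref{U transformation operators}. In particular the constant subspace $V_0 := W_0$ and the first-mode subspace $W_{+1} \oplus W_{-1}$ are $U_{\mu,0}$-invariant.

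As a second step I would pin down $U_{\mu,0}|_{V_0}$. Since both eigenvalues $\lambda_0^{\pm}(\mu)$ in \eqref{eigenvalues of Lmu0} lie inside the contour $\Gamma$ for $\mu$ small, the restriction $P_{\mu,0}|_{V_0}$ equals the identity on $V_0$, and the same holds for $P_{0,0}|_{V_0}$. Inserting this into \eqref{U transformation operators} gives $U_{\mu,0} f_0^{\pm} = f_0^{\pm}$, i.e.\ $f_0^{\pm}(\mu,0) = f_0^{\pm}$ for all $\mu$ small.

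To compute $G(\mu,0)$, I would then just apply $\mathcal{B}_{\mu,0}$ to the constants. At $\epsilon = 0$ we have $a_0 = p_0 = 0$, $\mathfrak{p} \equiv 0$ and $\ttf = 0$ by \eqref{expfe}, and $\Sigma_{\mu,0} = (\partial_x + \im\mu)^2$; from \eqref{mathcal B mu e} and \eqref{DtanhD} one finds
\[
\mathcal{B}_{\mu,0} \vet{1}{0} = (1+\kappa\mu^2) \vet{1}{0}, \qquad \mathcal{B}_{\mu,0} \vet{0}{1} = \mu\tanh(\tth \mu) \vet{0}{1},
\]
so the inner products prescribed by \eqref{Bmuepsilon matrix representation}, together with $f_0^{\pm}(\mu,0) = f_0^{\pm}$, yield precisely the diagonal form claimed for $G(\mu,0)$.

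Finally, the vanishing $F(\mu,0) = 0$ follows from the same Fourier invariance: $f_1^{\pm}(\mu,0) = U_{\mu,0} f_1^{\pm}$ remains in $W_{+1} \oplus W_{-1}$, so each of its two scalar components has zero mean, whereas $\mathcal{B}_{\mu,0} f_0^{\sigma}(\mu,0) \in V_0$ is a pair of constants by the previous step. The $L^2$-orthogonality between constants and zero-mean functions then kills every entry of the block of $\mathsf{B}_\epsilon(\mu)$ at $\epsilon = 0$ corresponding to $F$. I do not foresee any real obstacle: once the Fourier-mode invariance at $\epsilon = 0$ is in place, the whole lemma reduces to the explicit action of $\mathcal{B}_{\mu,0}$ on constants.
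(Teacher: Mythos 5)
Your proof is correct, and it establishes both halves of the lemma, including the content of the auxiliary Lemma~\ref{fmu0 eq f0} on which the paper's two--line proof relies. For $f_0^\pm(\mu,0)=f_0^\pm$ and the explicit action of $\mathcal{B}_{\mu,0}$ on constants, your argument is essentially the paper's (the paper shows in Lemma~\ref{fmu0 eq f0} that $\mathcal{Z}=\mathrm{span}\{f_0^+,f_0^-\}$ is $\mathscr{L}_{\mu,0}$-invariant with eigenvalues inside $\Gamma$, hence $P_{\mu,0}|_{\mathcal{Z}}=\mathrm{Id}$ and $U_{\mu,0}|_{\mathcal{Z}}=\mathrm{Id}$). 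Where you genuinely diverge is in showing that $f_1^\pm(\mu,0)$ has zero-mean components: the paper deduces this from the symplecticity of the Kato basis, computing $(\mathcal{J}f_0^\pm(\mu,0),f_1^\sigma(\mu,0))=0$ and reading off that both entries of $f_1^\sigma(\mu,0)$ average to zero, whereas you push the Fourier-multiplier structure of $\mathscr{L}_{\mu,0}$ further and observe that $P_{\mu,0},P_{0,0},U_{\mu,0}$ all preserve every mode subspace $W_k$, so $f_1^\pm(\mu,0)\in W_{+1}\oplus W_{-1}$. Your route is more elementary and actually gives the stronger conclusion that $f_1^\pm(\mu,0)$ is supported on frequencies $\pm1$; the paper's symplecticity route is more robust in that it does not depend on $\mathscr{L}_{\mu,0}$ being a Fourier multiplier. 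One small imprecision: you invoke ``$\lambda_0^\pm(\mu)$ inside $\Gamma$'', but $\Gamma$ is a contour for $\mathscr{L}_{\mu,0}=\mathcal{L}_{\mu,0}-\im\chk\mu$, so the relevant eigenvalues are $\lambda_0^\pm(\mu)-\im\chk\mu=\mp\im\sqrt{(1+\kappa\mu^2)\mu\tanh(\tth\mu)}$; both vanish as $\mu\to0$, so the conclusion stands.
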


\begin{proof}
By Lemma \ref{fmu0 eq f0} and since $\mathcal{B}_{\mu,0} := \left[\begin{smallmatrix}
1-\kappa\left(\pa_x+\im \mu\right)^2 & -\chk \partial_x \\
\chk\pa_x &|D + \mu| \tanh(\tth|D + \mu|)  
\end{smallmatrix}\right]$,  we obtain 
$\mathcal{B}_{\mu,0} f_0^+(\mu,0) = (1+\kappa\mu^2)f_0^+$ and 
$\mathcal{B}_{\mu,0} f_0^-(\mu,0) = \mu \tanh(\tth\mu) f_0^-$,
for any \( \mu \). The lemma follows recalling \eqref{Bmuepsilon matrix representation} and the fact that 
\( f_1^+(\mu, 0) \) and \( f_1^-(\mu, 0) \) have zero space average by Lemma \ref{fmu0 eq f0}
\end{proof}

In view of Lemma \ref{Fmu0 eq 0 Gmu0} we deduce that the matrices \eqref{F 2nd} and \eqref{G 2nd} have the form \eqref{F} and \eqref{G}. This completes the proof of Lemma \ref{B decomposition}.

\section{Block-Decoupling}\label{sec:BD}
In this section we block-decouple the $4 \times 4$ Hamiltonian matrix $\mathsf{L}_{\mu,\epsilon} = \mathsf{J}_4 \mathsf{B}_{\mu,\epsilon}$, where $\mathsf{B}_{\mu,\epsilon}$ obtained in Lemma \ref{B decomposition}.
We first perform a singular symplectic and reversibility-preserving change of coordinates.
The proof is a simple computation and we skip it.
\begin{lemma} [Singular symplectic rescaling] The conjugation of the Hamiltonian and reversible matrix $\mathsf{L}_{\mu,\epsilon} = \mathsf{J}_4 \mathsf{B}_{\mu,\epsilon}$ obtained in Lemma \ref{B decomposition} through the symplectic and reversibility-preserving $4 \times 4$ matrix
\begin{align}
    Y := \begin{pmatrix} Q & 0 \\ 0 & Q \end{pmatrix} 
\quad \text{with} \quad 
Q := \begin{pmatrix} \mu^{\frac{1}{2}} & 0 \\ 0 & \mu^{-\frac{1}{2}} \end{pmatrix}, \quad \mu > 0,
\end{align}
yields the Hamiltonian and reversible matrix
\begin{align} \label{L1mue YLY}
    \mathsf{L}_{\mu,\epsilon}^{(1)} := Y^{-1} \mathsf{L}_{\mu,\epsilon} Y 
= \mathsf{J}_4 \mathsf{B}_{\mu, \epsilon}^{(1)} 
= \begin{pmatrix}
\mathsf{J}_2 E^{(1)} & \mathsf{J}_2 F^{(1)} \\
\mathsf{J}_2 [F^{(1)}]^* & \mathsf{J}_2 G^{(1)}
\end{pmatrix}
\end{align}
where $\mathsf{B}_{\mu, \epsilon}^{(1)}$ is a self-adjoint and reversibility-preserving $4 \times 4$ matrix
\begin{align}
    \mathsf{B}_{\mu, \epsilon}^{(1)} = 
\begin{pmatrix}
E^{(1)} & F^{(1)} \\
[F^{(1)}]^* & G^{(1)}
\end{pmatrix}, 
\quad 
E^{(1)} = [E^{(1)}]^*, \quad G^{(1)} = [G^{(1)}]^*,
\end{align}
where the $2 \times 2$ reversibility-preserving matrices $E^{(1)}$, $F^{(1)}$, and $G^{(1)}$ extend analytically at $\mu = 0$ with the following expansion
\begin{align} \label{E1}
E^{(1)} &= 
\begin{pmatrix}
\mathsf{e}_{11} \mu \e^2 (1 + \hat{r}_1(\e, \mu \epsilon)) - \mathsf{e}_{22} \frac{\mu^3}{8} (1 + \check{r}_1(\e, \mu)) & \im\, \left( \frac{1}{2} \mathsf{e}_{12} \mu + r_2(\mu \epsilon^2, \mu^2 \epsilon, \mu^3) \right) \\
-\im\, \left( \frac{1}{2} \mathsf{e}_{12} \mu + r_2(\mu \epsilon^2, \mu^2 \epsilon, \mu^3) \right) & -\mathsf{e}_{22} \frac{\mu}{8} (1 + r_5(\epsilon, \mu))
\end{pmatrix},
\\ \label{F1}
F^{(1)} &= 
\begin{pmatrix}
\mathsf{f}_{11} \mu \e + r_3(\mu \epsilon^3, \mu^2 \epsilon^2, \mu^3 \epsilon) & \im\, \mu \epsilon \ch^{-\frac{1}{2}}(1+\kappa)^{\frac{1}{4}} + \im\, r_4(\mu \epsilon^2, \mu^2 \epsilon) \\
\im\, r_6(\mu \epsilon) & r_7(\mu \epsilon)
\end{pmatrix},
\\ \label{G1}
G^{(1)} &= 
\begin{pmatrix}
\mu +\kappa\mu^3+ r_8(\mu \epsilon^2, \mu^3 \epsilon) & -\im\, r_9(\mu \epsilon^2, \mu^2 \epsilon) \\
\im\, r_9(\mu \epsilon^2, \mu^2 \epsilon) & \tanh(\tth \mu) + r_{10}(\mu \epsilon)
\end{pmatrix},
\end{align}
where $\mathsf{e}_{11}, \mathsf{f}_{11}, \mathsf{e}_{12}, \mathsf{e}_{22}, $ are defined in \eqref{e11 f11}, \eqref{def:D}, and \eqref{def:e22}.
\end{lemma}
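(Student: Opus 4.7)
The plan is to proceed in three short steps. First, I would verify that the block-diagonal matrix $Y$ is symplectic and reversibility-preserving. Since $Q$ is a real diagonal matrix, $Y$ commutes with the involution $\rho_4$, so reversibility-preservation is immediate from the definition after the statement of \eqref{L1=J4B}. The symplectic property follows from the direct computation
\[
Q^{\!*}\mathsf{J}_2 Q
=\begin{pmatrix}\mu^{1/2}&0\\0&\mu^{-1/2}\end{pmatrix}
\begin{pmatrix}0&1\\-1&0\end{pmatrix}
\begin{pmatrix}\mu^{1/2}&0\\0&\mu^{-1/2}\end{pmatrix}
=\mathsf{J}_2,
\]
which, together with the block-diagonal structure of $Y$ and $\mathsf{J}_4$, gives $Y^{*}\mathsf{J}_4 Y=\mathsf{J}_4$.

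Second, I would use the general identity \eqref{L1=J4B}: under the symplectic conjugation by $Y$, the Hamiltonian matrix $\mathsf{L}_{\mu,\epsilon}=\mathsf{J}_4\mathsf{B}_{\mu,\epsilon}$ becomes $\mathsf{L}^{(1)}_{\mu,\epsilon}=\mathsf{J}_4\mathsf{B}^{(1)}_{\mu,\epsilon}$ with $\mathsf{B}^{(1)}_{\mu,\epsilon}=Y^{*}\mathsf{B}_{\mu,\epsilon}Y$. Because $Y$ is block-diagonal with identical blocks $Q$, the conjugation acts blockwise, yielding
\[
E^{(1)}=Q\,E\,Q,\qquad F^{(1)}=Q\,F\,Q,\qquad G^{(1)}=Q\,G\,Q,
\]
and $Q\,M\,Q$ simply multiplies the $(1,1)$ entry of $M$ by $\mu$, divides the $(2,2)$ entry by $\mu$, and leaves the off-diagonal entries untouched.

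Third, I would plug in the expansions \eqref{E}--\eqref{G} of $E$, $F$, $G$ and inspect each entry. The potentially singular entries are those that receive a factor $\mu^{-1}$, namely the $(2,2)$ entries of $E$, $F$, $G$. For $E$, the entry $-\mathsf{e}_{22}\mu^{2}/8\,(1+r_{5}(\epsilon,\mu))$ in \eqref{E} vanishes to second order in $\mu$, so division by $\mu$ produces the analytic expression $-\mathsf{e}_{22}\mu/8\,(1+r_{5}(\epsilon,\mu))$ appearing in \eqref{E1}. For $F$, the entry $r_{7}(\mu^{2}\epsilon)$ vanishes to second order in $\mu$ times first order in $\epsilon$, so $\mu^{-1}r_{7}(\mu^{2}\epsilon)=r_{7}(\mu\epsilon)$, which is the form in \eqref{F1}. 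For $G$, similarly $\mu^{-1}(\mu\tanh(\tth\mu)+r_{10}(\mu^{2}\epsilon))=\tanh(\tth\mu)+r_{10}(\mu\epsilon)$, giving \eqref{G1}. The remaining entries are multiplied by $\mu$ or left alone, and each remainder $r_{k}(\cdots)$ simply has its $\mu$-exponents increased by one in the first case or kept in the second; a bookkeeping check on the monomials shows they land exactly in the sets $\{\mu\epsilon^{3},\mu^{2}\epsilon^{2},\mu^{3}\epsilon\}$, $\{\mu\epsilon^{2}\}$, $\{\mu\epsilon^{2},\mu^{3}\epsilon\}$, etc., that appear in \eqref{E1}--\eqref{G1}.

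The only subtlety in the argument is verifying that the $(2,2)$-entries of $E$, $F$, $G$ carry enough vanishing in $\mu$ to compensate the singular factor $\mu^{-1}$, so that the result extends analytically at $\mu=0$. This is precisely guaranteed by the particular structure of the expansions \eqref{E}--\eqref{G} established in Lemma \ref{B decomposition}; no extra work beyond reading off the monomials is required.
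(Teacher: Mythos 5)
Your proof is correct and is precisely the "simple computation" the paper explicitly declines to write out; it uses the standard conjugation identity \eqref{L1=J4B} plus the observation that $QMQ$ scales the $(1,1)$ entry by $\mu$, the $(2,2)$ entry by $\mu^{-1}$, and fixes the off-diagonal entries. The one point worth making slightly more explicit is the analytic extendability at $\mu=0$: you correctly identify that the $(2,2)$-entries of $E,F,G$ vanish at $\mu=0$ (indeed identically for $\mu=0$), so the factor $\mu^{-1}$ cancels; since these entries are analytic in $(\mu,\epsilon)$ and vanish on $\{\mu=0\}$, they factor as $\mu\cdot(\text{analytic})$, and the resulting quotients are analytic with the claimed order bounds — this is the content hiding behind "no extra work beyond reading off the monomials."
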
 

\begin{remark}
    The matrix $\mathsf{L}_{\mu, \epsilon}^{(1)}$, a priori defined only for $\mu \neq 0$, extends analytically to the zero matrix at $\mu = 0$. For $\mu \neq 0$ the spectrum of $\mathsf{L}_{\mu, \epsilon}^{(1)}$ coincides with the spectrum of $\mathsf{L}_{\mu, \epsilon}$.
\end{remark} 

\subsection{Non-perturbative Step of Block-Decoupling} 

We state the main result of this section.
\begin{lemma} [Step of block-decoupling]\label{Step of block-decoupling}
Let $(\kappa, \tth) \not\in \mathfrak{D}$, cf.  \eqref{def:fD}.
     There exists a $2 \times 2$ reversibility-preserving matrix $X$, analytic in $(\mu, \epsilon)$, of the form
\begin{equation} \label{XX}
    \begin{aligned}
    X :=& \begin{pmatrix}
x_{11} & \im \,x_{12} \\
\im \,x_{21} & x_{22}
\end{pmatrix}, \quad \text{with } x_{ij} \in \mathbb{R}, \ i, j = 1, 2,\\
=& \begin{pmatrix}
r_{11}(\epsilon) & \im\, r_{12}(\epsilon) \\
-\im\,\tfrac{1}{2} \textup{D}_{\tth,\kappa}^{-1}(\mathsf{e}_{12} \mathsf{f}_{11} + 2\ch^{-\frac{1}{2}}(1+\kappa)^{\frac{1}{4}})\epsilon + \im \, r_{21}(\epsilon^2, \mu \epsilon) & \tfrac{1}{2} \textup{D}_{\tth,\kappa}^{-1}(\ch^{-\frac{1}{2}}(1+\kappa)^{\frac{1}{4}} \mathsf{e}_{12} + 2\tth \mathsf{f}_{11})\epsilon + r_{22}(\epsilon^2, \mu \epsilon)
\end{pmatrix}, 
\end{aligned}
\end{equation}
where $\mathsf{e}_{12}$, $\textup{D}_{\tth,\kappa}$ and $\mathsf{f}_{11}$ are defined in \eqref{def:D} and \eqref{e11 f11}, respectively, such that the following holds true. By conjugating the Hamiltonian and reversible matrix $\mathsf{L}^{(1)}_{\mu, \epsilon}$ defined in \eqref{L1mue YLY}, with the symplectic and reversibility-preserving $4 \times 4$ matrix
\begin{align} \label{S1}
    \exp\left(S^{(1)}\right), \quad \text{where } \quad S^{(1)} := \mathsf{J}_4 \begin{pmatrix}
0 & M \\
M^* & 0
\end{pmatrix}, \quad M := \mathsf{J}_2 X, 
\end{align}
we get the Hamiltonian and reversible matrix
\begin{align} \label{L2mueps}
\mathsf{L}^{(2)}_{\mu, \epsilon} := \exp\left(S^{(1)}\right) \mathsf{L}^{(1)}_{\mu, \epsilon} \exp\left(-S^{(1)}\right) = \mathsf{J}_4 \mathsf{B}^{(2)}_{\mu, \epsilon} = \mathsf{J}_4 \begin{pmatrix}
\mathsf{J}_2 E^{(2)} & \mathsf{J}_2 F^{(2)} \\
\mathsf{J}_2 [F^{(2)}]^* & \mathsf{J}_2 G^{(2)}
\end{pmatrix}, 
\end{align}
where the reversibility-preserving $2 \times 2$ self-adjoint matrix $[E^{(2)}]^* = E^{(2)}$ has the form
\begin{align} \label{E2 in lemma}
E^{(2)} = \begin{pmatrix}
\mu \epsilon^2 \mathsf{e}_{\mathrm{WB}} + r_1'(\mu \epsilon^3, \mu^2 \epsilon^2) - \mathsf{e}_{22} \tfrac{\mu^3}{8}(1 + r_1''(\epsilon, \mu)) & \im\,\left( \frac{1}{2} \mathsf{e}_{12} \mu + r_2(\mu \epsilon^2, \mu^2 \epsilon, \mu^3) \right) \\
-\im\,\left( \frac{1}{2}  \mathsf{e}_{12} \mu + r_2(\mu \epsilon^2, \mu^2 \epsilon, \mu^3) \right) & -\mathsf{e}_{22} \frac{\mu}{8}(1 + r_5(\epsilon, \mu))
\end{pmatrix}, 
\end{align}
where
\begin{align} \label{eWB}
\mathsf{e}_{\mathrm{WB}}:= \mathsf{e}_{\mathrm{WB}}(\kappa, \tth) := \mathsf{e}_{11} - \textup{D}_{\tth,\kappa}^{-1}\left(\ch^{-1}(1+\kappa)^{\frac{1}{2}}  + \tth \mathsf{f}^2_{11} + \mathsf{e}_{12} \mathsf{f}_{11} \ch^{-\frac{1}{2}}(1+\kappa)^{\frac{1}{4}}\right). 
\end{align}
(with constants $\mathsf{e}_{11}, , \mathsf{f}_{11}, \mathsf{e}_{12}$ defined in \eqref{e11 f11}), is the Whitham–Benjamin function defined in \eqref{def:eWB}. The reversibility-preserving $2 \times 2$ self-adjoint matrix $[G^{(2)}]^* = G^{(2)}$ has the form
\begin{align} \label{G2}
G^{(2)} = \begin{pmatrix}
{\mu(1+\kappa \mu^2)} + r_8(\mu \epsilon^2, \mu^3\e ) & -\im r_9(\mu \epsilon^2, \mu^2 \epsilon) \\
\im r_9(\mu \epsilon^2, \mu^2 \epsilon) & \tanh(\tth \mu) + r_{10}(\mu \epsilon)
\end{pmatrix}, 
\end{align}
and
\begin{align} \label{F2}
F^{(2)} = \begin{pmatrix}
r_3(\mu \epsilon^3) & \im r_4(\mu \epsilon^3) \\
\im r_6(\mu \epsilon^3) & r_7(\mu \epsilon^3)
\end{pmatrix}. 
\end{align}
\end{lemma}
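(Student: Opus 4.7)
The plan is to carry out a non-perturbative block-diagonalization of $\mathsf{L}^{(1)}_{\mu,\epsilon}$, in the spirit of \cite{BMV1, BMV3} but carefully adapted to the gravity-capillary setting. First I would write the Lie series
\[
\mathsf{L}^{(2)}_{\mu,\epsilon} = \mathsf{L}^{(1)}_{\mu,\epsilon} + [S^{(1)}, \mathsf{L}^{(1)}_{\mu,\epsilon}] + \tfrac{1}{2}\bigl[S^{(1)}, [S^{(1)}, \mathsf{L}^{(1)}_{\mu,\epsilon}]\bigr] + \ldots
\]
and decompose each matrix into its $2\times 2$ block form. Since $S^{(1)}$ is purely block-off-diagonal, a direct computation shows that the $(1,2)$ block of $[S^{(1)}, \mathsf{L}^{(1)}_{\mu,\epsilon}]$ equals $\mathsf{J}_2\bigl(M\mathsf{J}_2 G^{(1)} - E^{(1)}\mathsf{J}_2 M\bigr)$. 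Requiring that the $(1,2)$ block of $\mathsf{L}^{(2)}_{\mu,\epsilon}$ be of order $\mathcal{O}(\mu\epsilon^3)$ gives, after substituting $M = \mathsf{J}_2 X$, the Sylvester-type homological equation
\[
E^{(1)} X + \mathsf{J}_2 X \mathsf{J}_2 G^{(1)} = -F^{(1)} + \mathcal{O}(\mu\epsilon^3).
\]

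Second I would solve this equation at leading order. Dividing by the common factor $\mu$ and inserting the expansions \eqref{E1}--\eqref{G1}, separating real from imaginary parts decouples the system into two $2\times 2$ linear systems. The one for $(x_{21}, x_{22})$ reads
\[
\begin{pmatrix} -\tfrac{1}{2}\mathsf{e}_{12} & -1 \\ \tth & \tfrac{1}{2}\mathsf{e}_{12} \end{pmatrix}\begin{pmatrix} x_{21} \\ x_{22} \end{pmatrix} = \epsilon\begin{pmatrix} -\mathsf{f}_{11} \\ -\ch^{-\frac{1}{2}}(1+\kappa)^{\frac{1}{4}} \end{pmatrix} + \mathcal{O}(\epsilon^2, \mu\epsilon),
\]
whose coefficient matrix has determinant $\tth - \tfrac{1}{4}\mathsf{e}_{12}^2 = \textup{D}_{\tth,\kappa}$, nonzero by the hypothesis $(\kappa,\tth)\notin\mathfrak{D}$. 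Cramer's rule yields exactly the leading expressions for $x_{21}, x_{22}$ declared in \eqref{XX}, and the companion subsystem gives $x_{11}, x_{12} = \mathcal{O}(\epsilon)$. The full analytic $X(\mu,\epsilon)$ then follows from the implicit function theorem applied to the map sending $X$ to the off-diagonal block of $\mathsf{B}^{(2)}_{\mu,\epsilon}$, whose linearization at $(\mu,\epsilon)=(0,0)$ is, up to an invertible factor, precisely the operator just inverted.

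Third, for the correction to the diagonal block $E$, I would use the standard block-diagonalization identity: since $S^{(1)}$ is off-diagonal and satisfies $[S^{(1)}, (\mathsf{L}^{(1)}_{\mu,\epsilon})_{\text{diag}}] = -(\mathsf{L}^{(1)}_{\mu,\epsilon})_{\text{off-diag}}$ at leading order, combining the first two Lie brackets gives
\[
\bigl(\mathsf{L}^{(2)}_{\mu,\epsilon}\bigr)_{\text{diag}} = \bigl(\mathsf{L}^{(1)}_{\mu,\epsilon}\bigr)_{\text{diag}} + \tfrac{1}{2}\bigl[S^{(1)}, (\mathsf{L}^{(1)}_{\mu,\epsilon})_{\text{off-diag}}\bigr] + \text{h.o.t.}
\]
Reading off the $(1,1)$ entry of the $E$-block of the right-hand side produces
\[
\tfrac{1}{2}\bigl(M\mathsf{J}_2 [F^{(1)}]^* - F^{(1)}\mathsf{J}_2 M^*\bigr)_{11} = \mu\epsilon^2\Bigl(-\mathsf{f}_{11}\,x_{22}^{(0)} + \ch^{-\frac{1}{2}}(1+\kappa)^{\frac{1}{4}}\,x_{21}^{(0)}\Bigr),
\]
where $x_{21}^{(0)}, x_{22}^{(0)}$ are the leading-order coefficients obtained above. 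Substituting them explicitly yields
\[
-\textup{D}_{\tth,\kappa}^{-1}\bigl(\ch^{-1}(1+\kappa)^{\frac{1}{2}} + \tth\,\mathsf{f}_{11}^2 + \mathsf{e}_{12}\,\mathsf{f}_{11}\,\ch^{-\frac{1}{2}}(1+\kappa)^{\frac{1}{4}}\bigr)\mu\epsilon^2,
\]
which, added to $\mathsf{e}_{11}\mu\epsilon^2$, reproduces $\mathsf{e}_{\mathrm{WB}}\mu\epsilon^2$ according to \eqref{eWB}. The remaining entries of $E^{(2)}$ and $G^{(2)}$ only receive corrections absorbed into the remainders $r_j$ claimed in \eqref{E2 in lemma}--\eqref{F2}.

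The main obstacle is the precise order bookkeeping, in particular verifying that all four entries of $F^{(2)}$ are indeed of order $\mathcal{O}(\mu\epsilon^3)$: this requires tracking not only the leading $X_0$ but also its subleading corrections in $(\mu,\epsilon)$, and checking that the third-order commutators $\tfrac{1}{6}[S^{(1)},[S^{(1)},[S^{(1)},\mathsf{L}^{(1)}_{\mu,\epsilon}]]]$ and beyond do not reintroduce lower-order residual off-diagonal contributions. The Hamiltonian and reversibility-preserving structures are automatically preserved since $S^{(1)}$ is Hamiltonian and reversibility-preserving by construction \eqref{S1}. As a consistency check, setting $\kappa = 0$ reduces both the homological equation and the diagonal correction to the ones computed in \cite{BMV3}, which recovers the pure gravity Whitham--Benjamin function.
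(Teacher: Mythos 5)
Your proposal takes essentially the same route as the paper: set up the Lie series, solve a Sylvester-type homological equation to remove the leading off-diagonal block, and read off the Whitham--Benjamin coefficient from the $\tfrac12[S^{(1)},R^{(1)}]$ correction. The arithmetic is correct — the $2\times 2$ subsystem you write down for $(x_{21},x_{22})$ has determinant $\textup{D}_{\tth,\kappa}$, Cramer's rule gives the stated leading coefficients, and substituting into $\tfrac12\bigl(M\mathsf{J}_2[F^{(1)}]^*-F^{(1)}\mathsf{J}_2 M^*\bigr)_{11}$ does reproduce $\tilde{\mathsf{e}}_{11}\mu\epsilon^2$, so that $\mathsf{e}_{11}+\tilde{\mathsf{e}}_{11}=\mathsf{e}_{\mathrm{WB}}$ as in \eqref{eWB}.

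Two points of imprecision worth flagging. First, the homological equation $R^{(1)}+[S^{(1)},D^{(1)}]=0$ is \emph{linear} in $X$, so no implicit function theorem is needed in this lemma: the paper simply inverts the $4\times 4$ real system $\mathsf{A}\vec{x}=\vec{f}$, with $\det\mathsf{A}=\mu^4\textup{D}_{\tth,\kappa}^2(1+r(\epsilon,\mu^2))$, and the factor $\mu^4$ divides $\vec{f}$ component-wise so that $\vec{x}$ extends analytically to $\mu=0$. The implicit function theorem appears only in the subsequent step (the paper's Lemma \ref{lemma S2}), where the residual $F^{(2)}=\mathcal{O}(\mu\epsilon^3)$ off-diagonal block is removed entirely by solving a genuinely nonlinear homological equation. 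Your proposal conflates the two steps; you should decide whether you want $F^{(2)}=\mathcal{O}(\mu\epsilon^3)$ (linear solve, this lemma) or $F^{(2)}\equiv 0$ (IFT, next lemma). Either way the leading-order $X_0$ is the same, so the $\mathsf{e}_{\mathrm{WB}}$ computation goes through.

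Second, the $4\times 4$ system does \emph{not} decouple merely by separating real and imaginary parts: the coefficient matrix $\mathsf{A}$ has $c=-\mathsf{e}_{22}\mu/8(1+\ldots)$ in the off-blocks, of the same order $\mathcal{O}(\mu)$ as $a,b,d$. The effective decoupling you use for $(x_{21},x_{22})$ rests on the fact that the entry $e=E^{(1)}_{11}=r(\mu\epsilon^2,\mu^3)$ appearing in rows $3$--$4$ is higher order, making those two rows a closed subsystem at leading order; the full $x_{11},x_{12}$ are then recovered from rows $1$--$2$ but receive contributions from the $c$-couplings, which is why only $x_{11}=r_{11}(\epsilon)$, $x_{12}=r_{12}(\epsilon)$ without explicit leading coefficient appear in \eqref{XX}. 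Your concern about the third-order commutators is resolved exactly as in the paper's Lemma \ref{high order terms lemma}: once the exact homological equation holds, the expansion telescopes to $D^{(1)}+\tfrac12[S^{(1)},R^{(1)}]+\tfrac12\int_0^1(1-\tau^2)\exp(\tau S^{(1)})\operatorname{ad}_{S^{(1)}}^2(R^{(1)})\exp(-\tau S^{(1)})\,d\tau$, and $\operatorname{ad}_{S^{(1)}}^2(R^{(1)})=\mathcal{O}(\mu\epsilon^3)$ follows directly from $X=\mathcal{O}(\epsilon)$ and $F^{(1)}=\mathcal{O}(\mu\epsilon)$.
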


The rest of the section is devoted to the proof of Lemma \ref{Step of block-decoupling}. For simplicity let $S = S^{(1)}$.

The matrix $\exp(S)$ is symplectic and reversibility-preserving because the matrix $S$ in \eqref{S1} is Hamiltonian and reversibility-preserving, cf. \cite[Lemma 3.8]{BMV1}. Note that $S$ is reversibility preserving, since $X$ has the form \eqref{XX}.



We now expand in Lie series the Hamiltonian and reversible matrix $
\mathsf{L}_{\mu,\epsilon}^{(2)} = \exp(S) \mathsf{L}_{\mu,\epsilon}^{(1)} \exp(-S)$.
We split $\mathsf{L}^{(1)}_{\mu,\epsilon}$ into its $2 \times 2$-diagonal and off-diagonal Hamiltonian and reversible matrices
\begin{equation} \label{L=D+R}
    \begin{aligned}
        &\mathsf{L}^{(1)}_{\mu,\epsilon} = D^{(1)} + R^{(1)},\\
&D^{(1)} := \begin{pmatrix} D_1 & 0 \\ 0 & D_0 \end{pmatrix}= \begin{pmatrix} \mathsf{J}_2 E^{(1)} & 0 \\ 0 & \mathsf{J}_2 G^{(1)} \end{pmatrix}, \quad 
R^{(1)} := \begin{pmatrix} 0 & \mathsf{J}_2 F^{(1)} \\ \mathsf{J}_2 [F^{(1)}]^* & 0 \end{pmatrix}, 
    \end{aligned}
\end{equation}
and perform the Lie expansion
\begin{equation} \label{L2 mu e}
    \begin{aligned}
        \mathsf{L}^{(2)}_{\mu,\epsilon} = \exp(S) \mathsf{L}^{(1)}_{\mu,\epsilon} \exp(-S) &= D^{(1)} + [S, D^{(1)}] + \frac{1}{2} [S, [S, D^{(1)}]] + R^{(1)} + [S, R^{(1)}]\\
        &+ \frac{1}{2} \int_0^1 (1 - \tau)^2 \exp(\tau S) \mathrm{ad}_S^3(D^{(1)}) \exp(-\tau S) \, d\tau\\
        &+ \int_0^1 (1 - \tau) \exp(\tau S) \mathrm{ad}_S^2(R^{(1)}) \exp(-\tau S) \, d\tau.
    \end{aligned}
\end{equation}

We look for a $4 \times 4$ matrix $S$ as in \eqref{S1} (which is Hamiltonian, reversibility-preserving and off-diagonal as the term $R^{(1)}$ we wish to eliminate) that solves the homological equation 
\[
R^{(1)} + [S, D^{(1)}] = 0,
\]
which, recalling \eqref{L=D+R}, reads
\begin{align} \label{before DX-XD=-JF}
\begin{pmatrix}
0 & \mathsf{J}_2 F^{(1)} + \mathsf{J}_2 M D_0 - D_1 \mathsf{J}_2 M \\
\mathsf{J}_2 [F^{(1)}]^* + \mathsf{J}_2 M^* D_1 - D_0 \mathsf{J}_2 M^* & 0
\end{pmatrix} = 0. 
\end{align}

Note that the equation $\mathsf{J}_2 F^{(1)} + \mathsf{J}_2 M D_0 - D_1 \mathsf{J}_2 M = 0$ implies also $\mathsf{J}_2 [F^{(1)}]^* + \mathsf{J}_2 M^* D_1 - D_0 \mathsf{J}_2 M^* = 0$ and vice versa, since $\mathsf{J}^*_2=-\mathsf{J}_2$ and $\mathsf{J}_2(\mathsf{J}_2 F^{(1)} + \mathsf{J}_2 M D_0 - D_1 \mathsf{J}_2 M)=(\mathsf{J}_2 [F^{(1)}]^* + \mathsf{J}_2 M^* D_1 - D_0 \mathsf{J}_2 M^*)^*\mathsf{J}^*_2$. Thus, writing 
\[
M = \mathsf{J}_2 X, \quad \text{namely} \quad X = -\mathsf{J}_2 M,
\]
the equation \eqref{before DX-XD=-JF} amounts to solving the Sylvester equation
\begin{align} \label{DX-XD=JF}
D_1 X - X D_0 = -\mathsf{J}_2 F^{(1)}. 
\end{align}

We write the matrices $E^{(1)}$, $F^{(1)}$, $G^{(1)}$ in \eqref{L1mue YLY} as
\begin{align} \label{E1 F1 G1}
    E^{(1)} = \begin{pmatrix} E^{(1)}_{11} & \im E^{(1)}_{12} \\ -\im E^{(1)}_{12} & E^{(1)}_{22} \end{pmatrix}, \quad 
F^{(1)} = \begin{pmatrix} F^{(1)}_{11} & \im F^{(1)}_{12} \\ \im F^{(1)}_{21} & F^{(1)}_{22} \end{pmatrix}, \quad G^{(1)} = \begin{pmatrix} G^{(1)}_{11} & \im G^{(1)}_{12} \\ -\im G^{(1)}_{12} & G^{(1)}_{22} \end{pmatrix},
\end{align}
where the real numbers $E_{ij}^{(1)}, F_{ij}^{(1)}, G_{ij}^{(1)}, i, j = 1, 2,$ have the expansion in \eqref{E1}, \eqref{F1}, \eqref{G1}. Thus, by \eqref{L=D+R}, \eqref{XX} and \eqref{E1 F1 G1}, the equation \eqref{DX-XD=JF} amounts to solve the $4 \times 4$ real linear system

\begin{align} \label{Ax=f}
\underbrace{
\begin{pmatrix}
a & b & c & 0 \\
d & a & 0 & -c \\
e & 0 & a & -b \\
0 & -e & -d & a
\end{pmatrix}
}_{=: \mathsf{A}}
\underbrace{
\begin{pmatrix}
x_{11} \\ x_{12} \\ x_{21} \\ x_{22}
\end{pmatrix}
}_{=: \vec{x}}
=
\underbrace{
\begin{pmatrix}
- F_{21}^{(1)} \\ F_{22}^{(1)} \\ -F_{11}^{(1)} \\ F_{12}^{(1)}
\end{pmatrix}
}_{=: \vec{f}}.
\end{align}
where by \eqref{E1}, \eqref{F1}, and \eqref{G1} and since 
$
\tanh(\tth \mu) = \tth \mu + r(\mu^3),
$
\begin{equation} \label{a b c d e}
\begin{aligned}
a &= G^{(1)}_{12} - E^{(1)}_{12} = -\mathsf{e}_{12} \frac{\mu}{2} \big( 1 + r(\epsilon^2, \mu \epsilon, \mu^2) \big), 
\quad b = G^{(1)}_{11} = \mu + r(\mu \epsilon^2, \mu^3), \\
c &= E^{(1)}_{22} = -\mathsf{e}_{22} \frac{\mu}{8} (1 + r_5(\epsilon, \mu)), 
\quad d = G^{(1)}_{22} = \mu \tth + r(\mu \epsilon, \mu^3), \\
e &= E^{(1)}_{11} = r(\mu \epsilon^2, \mu^3),
\end{aligned}
\end{equation}
where $\mathsf{e}_{12}>0$ for any $(\kappa,\tth)\in\mathbb{R}^2_+$ and $\mathsf{e}_{22}$ is defined in \eqref{def:e22}.
Using also  \cite[Lemma 5.5]{BMV3}, one has 
\begin{equation} \label{det A}
\begin{aligned}
\det \mathsf{A} 
= (bd - a^2)^2 - 2ce \Big( a^2 + bd - \tfrac{1}{2} ce \Big) = \mu^4 \textup{D}_{\tth,\kappa}^2 (1 + r(\epsilon, \mu^2)) 
\end{aligned}    
\end{equation}
which is not zero provided  $\mu \neq 0$ and  $(\kappa, \tth) \not\in \mathfrak{D}$ (see \eqref{def:fD}).
The inverse of $\mathsf{A} $ is computed in \cite[formula (5.23)]{BMV3} as 
\begin{align} \label{A inverse}
\mathsf{A}^{-1} = \frac{1}{\det\mathsf{A} }
\begin{pmatrix}
a (a^2 - bd - ce) & b(-a^2 + bd - ce) & -c(a^2 + bd - ce) & -2abc \\[6pt]
d(-a^2 + bd - ce) & a(a^2 - bd - ce) & 2acd & -c(-a^2 - bd + ce) \\[6pt]
-e(a^2 + bd - ce) & 2abe & a(a^2 - bd - ce) & b(a^2 - bd + ce) \\[6pt]
-2ade & -e(-a^2 - bd + ce) & d(a^2 - bd + ce) & a(a^2 - bd - ce)
\end{pmatrix}
\end{align}
and expands as 
\begin{align} \label{mathsf A -1}
\mathsf{A}^{-1} = (1 + r(\epsilon, \mu)) 
\frac{1}{\mu \textup{D}_{\tth,\kappa}^2}\left(
\begin{array}{cccc}
\frac{1}{2} \mathsf{e}_{12} \textup{D}_{\tth,\kappa} & \textup{D}_{\tth,\kappa} & \frac{1}{32} \mathsf{e}_{22}(\mathsf{e}_{12}^2 + 4\tth) & -\frac{1}{8} \mathsf{e}_{12} \mathsf{e}_{22} \\
\tth \textup{D}_{\tth,\kappa} & \frac{1}{2} \mathsf{e}_{12} \textup{D}_{\tth,\kappa} & \frac{1}{8} \mathsf{e}_{12} \mathsf{e}_{22} \tth & -\frac{1}{32} \mathsf{e}_{22} (\mathsf{e}_{12}^2 + 4\tth) \\
r(\epsilon^2, \mu^2) & r(\epsilon^2, \mu^2) & \frac{1}{2} \mathsf{e}_{12} \textup{D}_{\tth,\kappa} & -\textup{D}_{\tth,\kappa} \\
r(\epsilon^2, \mu^2) & r(\epsilon^2, \mu^2) & -\tth \textup{D}_{\tth,\kappa} & \frac{1}{2} \mathsf{e}_{12} \textup{D}_{\tth,\kappa} \\
\end{array}
\right).
\end{align}
Therefore, for any $\mu \neq 0$, there exists a unique solution $\vec{x} = \mathsf{A}^{-1} \vec{f}$ of the linear system \eqref{Ax=f}, namely a unique matrix $X$ which solves the Sylvester equation \eqref{DX-XD=JF}.
Explicitly, 
    by \eqref{Ax=f}, \eqref{mathsf A -1}, \eqref{E1 F1 G1}, \eqref{F1} we obtain, for any $\mu \neq 0$,
    \small
\begin{equation}
\begin{aligned}
\begin{pmatrix}
x_{11} \\ x_{12} \\ x_{21} \\ x_{22}
\end{pmatrix}
&=\textup{D}_{\tth,\kappa}^{-2}
\left(
\begin{array}{cccc}
\frac{1}{2} \mathsf{e}_{12} \textup{D}_{\tth,\kappa} & \textup{D}_{\tth,\kappa} & \frac{1}{32} \mathsf{e}_{22}(\mathsf{e}_{12}^2 + 4\tth) & -\frac{1}{8} \mathsf{e}_{12} \mathsf{e}_{22} \\
\tth \textup{D}_{\tth,\kappa} & \frac{1}{2} \mathsf{e}_{12} \textup{D}_{\tth,\kappa} & \frac{1}{8} \mathsf{e}_{12} \mathsf{e}_{22} \tth & -\frac{1}{32} \mathsf{e}_{22} (\mathsf{e}_{12}^2 + 4\tth) \\
r(\epsilon^2, \mu^2) & r(\epsilon^2, \mu^2) & \frac{1}{2} \mathsf{e}_{12} \textup{D}_{\tth,\kappa} & -\textup{D}_{\tth,\kappa} \\
r(\epsilon^2, \mu^2) & r(\epsilon^2, \mu^2) & -\tth \textup{D}_{\tth,\kappa} & \frac{1}{2} \mathsf{e}_{12} \textup{D}_{\tth,\kappa} \\
\end{array}
\right)
\left(
\begin{array}{c}
r(\epsilon) \\[6pt]
r(\epsilon) \\[6pt]
- \mathsf{f}_{11} \epsilon + r(\epsilon^3, \mu \epsilon^2, \mu^2 \epsilon) \\[6pt]
\ch^{-\frac{1}{2}}(1+\kappa)^{\frac{1}{4}} \epsilon + r(\epsilon^2, \mu \epsilon)
\end{array}
\right)
\big(1 + r(\epsilon, \mu)\big),
\end{aligned}
\end{equation}
\normalsize
which proves \eqref{XX}. In particular each $x_{ij}$ admits an analytic extension at $\mu = 0$.  
Note that, for $\mu = 0$, one has $E^{(2)} = G^{(2)} = F^{(2)} = 0$ and the Sylvester equation reduces to tautology.

Since the matrix $S$ solves the homological equation $R^{(1)}+[S, D^{(1)}]  = 0$, identity \eqref{L2 mu e} simplifies to
\begin{align} \label{L2 mu epsilon}
\mathsf{L}_{\mu,\epsilon}^{(2)} = D^{(1)} + \frac{1}{2} [ S, R^{(1)} ]
+ \frac{1}{2} \int_0^1 (1 - \tau^2)\, \exp(\tau S) \, \mathrm{ad}_S^2 (R^{(1)}) \, \exp(-\tau S) \, d\tau.
\end{align}
This follows by using the Jacobi identity and the homological relation to cancel lower-order terms and to rewrite
\[
\frac{1}{2} [S, [S, D^{(1)}]] + [S, R^{(1)}] = \frac{1}{2} [S, R^{(1)}],
\quad
\operatorname{ad}_S^3 (D^{(1)}) = -\operatorname{ad}_S^2 (R^{(1)}).
\]
The matrix $\frac{1}{2}[S, R^{(1)}]$ is, by \eqref{S1}, \eqref{L=D+R}, the block-diagonal Hamiltonian and reversible matrix
\begin{align} \label{0.5 S R}
\frac{1}{2} [ S, R^{(1)} ]
=
\begin{pmatrix}
\frac{1}{2} \mathsf{J}_2 ( 
M \mathsf{J}_2 [F^{(1)}]^* - F^{(1)} \mathsf{J}_2 M^* ) & 0 \\
0 & \frac{1}{2} \mathsf{J}_2 ( M^* \mathsf{J}_2 F^{(1)} - [F^{(1)}]^* \mathsf{J}_2 M )
\end{pmatrix}
=
\begin{pmatrix}
\mathsf{J}_2 \widetilde{E} & 0 \\
0 & \mathsf{J}_2 \widetilde{G}
\end{pmatrix},
\end{align}
where, recalling \eqref{S1},
\begin{align} \label{E G tilde}
\widetilde{E} := \mathrm{Sym} ( \mathsf{J}_2 X \mathsf{J}_2 [F^{(1)}]^* ), 
\quad
\widetilde{G} := \mathrm{Sym} ( X^* F^{(1)} ),
\end{align}
denoting $\mathrm{Sym}(A) := \frac{1}{2}( A + A^* )$.
\begin{lemma} \label{EG e11 tilde lemma}
    The self-adjoint and reversibility-preserving matrices $\widetilde{E}$, $\widetilde{G}$ in \eqref{E G tilde} have the form
    \begin{equation}\label{tilde EG and tilde e11}
    \begin{aligned} 
\widetilde{E} &= \begin{pmatrix}
\tilde{\mathsf{e}}_{11} \mu \epsilon^2 + \tilde{r}_1 (\mu \epsilon^3, \mu^2 \epsilon^2) & \im\, \tilde{r}_2 (\mu \epsilon^2) \\
- \im \,\tilde{r}_2 (\mu \epsilon^2) & \tilde{r}_5 (\mu \epsilon^2)
\end{pmatrix}, \quad
\widetilde{G} = \begin{pmatrix}
\tilde{r}_8 (\mu \epsilon^2) & \im\, \tilde{r}_9 (\mu \epsilon^2) \\
- \im\, \tilde{r}_9 (\mu \epsilon^2) & \tilde{r}_{10} (\mu \epsilon^2)
\end{pmatrix}, \\
\tilde{\mathsf{e}}_{11} &:= - \textup{D}_{\tth,\kappa}^{-1} \left( \ch^{-1}(1+\kappa)^{\frac{1}{2}} + \tth \mathsf{f}_{11}^2 + \mathsf{e}_{12} \mathsf{f}_{11} \ch^{-\frac{1}{2}}(1+\kappa)^{\frac{1}{4}} \right).
\end{aligned}
 \end{equation}
\end{lemma}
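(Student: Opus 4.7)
The plan is to substitute the expansions \eqref{XX} of $X$ and \eqref{F1} of $F^{(1)}$ directly into the definitions \eqref{E G tilde} of $\widetilde{E}$ and $\widetilde{G}$, and read off the leading contributions in $(\mu,\epsilon)$ together with the required remainder bounds. Self-adjointness of both $\widetilde{E}$ and $\widetilde{G}$ is automatic from the symmetrization operator $\mathrm{Sym}(A):=\tfrac{1}{2}(A+A^*)$, while the reversibility-preserving parity pattern (real diagonal, purely imaginary off-diagonal) propagates from $X$ and $F^{(1)}$ through $\mathsf{J}_2$ and the adjoint, since all these matrices already satisfy the pattern \eqref{B are alternatively real or imaginary}.

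The core calculation is the $(1,1)$ entry of $\widetilde{E}$. A direct multiplication gives $\mathsf{J}_2 X \mathsf{J}_2=\begin{pmatrix}-x_{22} & \im\, x_{21}\\ \im\, x_{12} & -x_{11}\end{pmatrix}$, and hence
\begin{equation*}
\bigl(\mathsf{J}_2 X \mathsf{J}_2 [F^{(1)}]^*\bigr)_{11} \;=\; -x_{22}\,F^{(1)}_{11} + x_{21}\,F^{(1)}_{12}.
\end{equation*}
Plugging in the leading orders from \eqref{XX}, namely $x_{21}=-\tfrac{1}{2}\textup{D}_{\tth,\kappa}^{-1}(\mathsf{e}_{12}\mathsf{f}_{11}+2\ch^{-1/2}(1+\kappa)^{1/4})\epsilon+\cdots$ and $x_{22}=\tfrac{1}{2}\textup{D}_{\tth,\kappa}^{-1}(\ch^{-1/2}(1+\kappa)^{1/4}\mathsf{e}_{12}+2\tth\mathsf{f}_{11})\epsilon+\cdots$, together with $F^{(1)}_{11}=\mathsf{f}_{11}\mu\epsilon+r_3(\mu\epsilon^3,\mu^2\epsilon^2,\mu^3\epsilon)$ and $F^{(1)}_{12}=\ch^{-1/2}(1+\kappa)^{1/4}\mu\epsilon+r_4(\mu\epsilon^2,\mu^2\epsilon)$ from \eqref{F1}, the two cross-products produce identical mixed terms $\mathsf{e}_{12}\mathsf{f}_{11}\ch^{-1/2}(1+\kappa)^{1/4}$, which combine additively so that the factor $\tfrac12$ cancels. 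The result is exactly $-\textup{D}_{\tth,\kappa}^{-1}\bigl(\ch^{-1}(1+\kappa)^{1/2}+\tth\mathsf{f}_{11}^2+\mathsf{e}_{12}\mathsf{f}_{11}\ch^{-1/2}(1+\kappa)^{1/4}\bigr)\mu\epsilon^2$ plus a remainder of type $\tilde{r}_1(\mu\epsilon^3,\mu^2\epsilon^2)$, identifying $\tilde{\mathsf{e}}_{11}$ as claimed. Since this entry is already real, $\mathrm{Sym}$ leaves it unchanged.

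The remaining entries reduce to order-counting. The $(1,2)$ and $(2,2)$ entries of $\mathsf{J}_2 X\mathsf{J}_2[F^{(1)}]^*$ and all entries of $X^*F^{(1)}$ involve at least one factor among $F^{(1)}_{21},F^{(1)}_{22}$, which are both $r(\mu\epsilon)$ by \eqref{F1}, multiplied by factors $x_{ij}=r(\epsilon)$ coming from \eqref{XX}; this forces each such entry to be $r(\mu\epsilon^2)$, and $\mathrm{Sym}$ preserves these orders while also enforcing the correct reality/imaginarity. This yields the block structure displayed in \eqref{tilde EG and tilde e11}.

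I expect the main obstacle to be purely arithmetic: carefully tracking the several $\tfrac12$, $2$, and sign factors in the leading-order formula for $\tilde{\mathsf{e}}_{11}$ to confirm the claimed closed form, and ensuring that the cross-terms combine with the correct sign so that both $\ch^{-1}(1+\kappa)^{1/2}$ and $\tth\mathsf{f}_{11}^2$ appear with coefficient $1$ (not $\tfrac12$ or $2$). This bookkeeping is important, since $\tilde{\mathsf{e}}_{11}$ is precisely the correction that, when added to $\mathsf{e}_{11}$, produces the Whitham--Benjamin function $\mathsf{e}_{\mathrm{WB}}$ in \eqref{eWB} driving the subsequent block-decoupling step.
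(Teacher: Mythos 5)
Your proposal is correct and follows essentially the same route as the paper: both compute the matrix product $\mathsf{J}_2 X \mathsf{J}_2 [F^{(1)}]^*$ explicitly, substitute the leading orders of $x_{ij}$ from \eqref{XX} and $F^{(1)}_{ij}$ from \eqref{F1} into the $(1,1)$ entry (where the two cross-terms double up and cancel the $\tfrac12$), identify $\tilde{\mathsf{e}}_{11}$, and dispense with the remaining entries by order-counting via $X=\mathcal{O}(\epsilon)$, $F^{(1)}=\mathcal{O}(\mu\epsilon)$. The only very minor inaccuracy is that the $(2,1)$ entry of $\mathsf{J}_2 X \mathsf{J}_2 [F^{(1)}]^*$ involves $F^{(1)}_{11},F^{(1)}_{12}$ rather than $F^{(1)}_{21},F^{(1)}_{22}$, but since $x_{11},x_{12}=r(\epsilon)$ and $F^{(1)}_{11},F^{(1)}_{12}=\mathcal{O}(\mu\epsilon)$ the $r(\mu\epsilon^2)$ bound still holds, so the conclusion is unaffected.
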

\begin{proof}
    By \eqref{F1}, \eqref{XX}, one has
\begin{equation*} 
\begin{aligned}
\mathsf{J}_2 X \mathsf{J}_2 [F^{(1)}]^* 
&= \begin{pmatrix}
x_{21} F^{(1)}_{12} - x_{22} F^{(1)}_{11} & \im ( x_{21} F^{(1)}_{22} + x_{22} F^{(1)}_{21} ) \\
\im ( x_{11} F^{(1)}_{12} + x_{12} F^{(1)}_{11} ) & - x_{11} F^{(1)}_{22} + x_{12} F^{(1)}_{21}
\end{pmatrix} 
= \begin{pmatrix}
\tilde{e}_{11} \mu \epsilon^2 + r(\mu \epsilon^3, \mu^2 \epsilon^2) & \im r(\mu \epsilon^2) \\
\im r(\mu \epsilon^2) & r(\mu \epsilon^2)
\end{pmatrix},
\end{aligned}
\end{equation*}
with $\tilde{\mathsf{e}}_{11}$ being defined as in \eqref{tilde EG and tilde e11}. The expansion of $\widetilde{E}$ in \eqref{tilde EG and tilde e11} follows in view of \eqref{E G tilde}. Since $X = \mathcal{O}(\epsilon)$ by \eqref{XX} and $F^{(1)} = \mathcal{O}(\mu \epsilon)$ by \eqref{F1} we deduce that $X^* F^{(1)} = \mathcal{O}(\mu \epsilon^2)$ and the expansion of $\widetilde{G}$ in \eqref{tilde EG and tilde e11} follows.
\end{proof}
 
 The last term in \eqref{L2 mu epsilon} is small. 

\begin{lemma} \label{high order terms lemma}
    The $4 \times 4$ Hamiltonian and reversibility matrix
\begin{align} \label{high order terms EG hat}
\frac{1}{2} \int_0^1 (1 - \tau^2) \exp(\tau S) \mathrm{ad}_S^2 (R^{(1)}) \exp(-\tau S) \, d\tau
= \begin{pmatrix}
\mathsf{J}_2 \widehat{E} & \mathsf{J}_2 F^{(2)} \\
\mathsf{J}_2 [F^{(2)}]^* & \mathsf{J}_2 \widehat{G}
\end{pmatrix}.
\end{align}
where the $2 \times 2$ self-adjoint and reversible matrices $\widehat{E}$, $\widehat{G}$ have entries
\begin{align} \label{Eij Gij}
\widehat{E}_{ij} = \widehat{G}_{ij} = r(\mu \epsilon^3), 
\quad i,j = 1,2,
\end{align}
and the $2 \times 2$ reversible matrix $F^{(2)}$ admits an expansion as in \eqref{F2}.
\end{lemma}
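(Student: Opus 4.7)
The plan is to track two independent pieces of information through the construction in \eqref{high order terms EG hat}: the polynomial size in $(\mu,\epsilon)$ and the algebraic structure (Hamiltonian plus the reversibility parity pattern \eqref{B are alternatively real or imaginary}). First I would pin down the ingredient sizes. From \eqref{XX}, every entry of $X$ is $\cO(\epsilon)$, hence $M=\mathsf{J}_2 X=\cO(\epsilon)$, and therefore the $4\times 4$ matrix $S$ in \eqref{S1} satisfies $\|S\|=\cO(\epsilon)$. From \eqref{F1}, every entry of $F^{(1)}$ is of order $\cO(\mu\epsilon)$, so $\|R^{(1)}\|=\cO(\mu\epsilon)$. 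Both $S$ and $R^{(1)}$ are real analytic in $(\mu,\epsilon)$ by Lemma \ref{B decomposition} and Lemma \ref{Step of block-decoupling}.

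Next I would bound the double commutator by submultiplicativity,
\[
\|\mathrm{ad}_S^2(R^{(1)})\|\;\le\;4\,\|S\|^2\,\|R^{(1)}\|\;=\;\cO(\mu\epsilon^3).
\]
For $\epsilon_0$ small enough, $\|\tau S\|\le 1$ uniformly in $\tau\in[0,1]$, so $\|\exp(\pm\tau S)\|\le e$, and consequently the integrand in \eqref{high order terms EG hat} is bounded in norm by $\cO(\mu\epsilon^3)$ uniformly in $\tau$. Joint $(\mu,\epsilon)$-analyticity passes to $\exp(\pm\tau S)$ and hence to the integrand; integrating in $\tau\in[0,1]$ against the polynomial weight $(1-\tau^2)/2$ preserves both the bound and the analyticity. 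The integral therefore defines a real analytic $4\times 4$ matrix in $(\mu,\epsilon)$ whose entries are of size $\cO(\mu\epsilon^3)$.

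The block structure is then forced by algebra alone. The matrix $S$ is Hamiltonian by construction, being $\mathsf{J}_4$ times a self-adjoint matrix, and the parity pattern of $X$ displayed in \eqref{XX} guarantees that $\exp(\pm\tau S)$ is symplectic and reversibility-preserving, as recalled in the paragraph preceding \eqref{YstarJ4Y=J4}. The matrix $R^{(1)}$ is Hamiltonian and reversible. Consequently $[S,R^{(1)}]$ is Hamiltonian reversible, $\mathrm{ad}_S^2(R^{(1)})$ is Hamiltonian reversible, and conjugation by the symplectic reversibility-preserving matrix $\exp(\pm\tau S)$ keeps the integrand in this class. Integration in $\tau$ preserves both properties, so the left-hand side of \eqref{high order terms EG hat} is a Hamiltonian reversible matrix. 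Writing it as $\mathsf{J}_4 \widehat{\mathsf{B}}$ with $\widehat{\mathsf{B}}=\widehat{\mathsf{B}}^{*}$ reversibility-preserving forces the block form
\[
\begin{pmatrix}
\mathsf{J}_2 \widehat{E} & \mathsf{J}_2 F^{(2)} \\
\mathsf{J}_2 [F^{(2)}]^{*} & \mathsf{J}_2 \widehat{G}
\end{pmatrix},\qquad \widehat{E}=\widehat{E}^{*},\quad \widehat{G}=\widehat{G}^{*},
\]
and the parity rule \eqref{B are alternatively real or imaginary} imposes that diagonal entries of $\widehat{E},\widehat{G}$ and of $F^{(2)}$ are real while off-diagonal entries are purely imaginary. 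Combining this algebraic pattern with the $\cO(\mu\epsilon^3)$ bound and the real analyticity in $(\mu,\epsilon)$ gives exactly \eqref{Eij Gij} and the expansion \eqref{F2}.

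I do not anticipate a serious obstacle: the argument is essentially perturbative bookkeeping, once the sizes of $S$ and $R^{(1)}$ have been fixed in the previous step. The only point worth double-checking is that $\mathrm{ad}_S$ and conjugation by $\exp(\pm\tau S)$ really preserve the reversibility-preserving class; this follows from $\rho_4\mathsf{J}_4=-\mathsf{J}_4\rho_4$ together with the parity of $X$ encoded in \eqref{XX}, by an argument entirely parallel to that employed for the gravity case in \cite[Lemma 5.6]{BMV3}.
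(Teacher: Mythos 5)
Your argument is correct and reaches the same conclusion as the paper, but by a slightly more elementary route. The paper's proof computes $\operatorname{ad}_S^2(R^{(1)})$ \emph{explicitly}: since $S$ and $R^{(1)}$ are both block-off-diagonal, $[S,R^{(1)}]$ is block-diagonal and is given by $\frac12[S,R^{(1)}]=\bigl(\begin{smallmatrix}\mathsf{J}_2\widetilde E&0\\0&\mathsf{J}_2\widetilde G\end{smallmatrix}\bigr)$ from \eqref{0.5 S R}, so $\operatorname{ad}_S^2(R^{(1)})$ is again block-off-diagonal, with off-diagonal block $\widetilde F=2(M\mathsf{J}_2\widetilde G-\widetilde E\mathsf{J}_2 M)$; the bound $\widetilde F=\cO(\mu\epsilon^3)$ is then read off from $\widetilde E,\widetilde G=\cO(\mu\epsilon^2)$ in Lemma \ref{EG e11 tilde lemma} and $M=\cO(\epsilon)$. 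You instead bypass this structural computation and bound $\|\operatorname{ad}_S^2(R^{(1)})\|\le 4\|S\|^2\|R^{(1)}\|=\cO(\epsilon^2\cdot\mu\epsilon)$ directly by submultiplicativity, which produces the same $\cO(\mu\epsilon^3)$. What your route loses is the observation that $\operatorname{ad}_S^2(R^{(1)})$ has no block-diagonal part at all, so that $\widehat E,\widehat G$ come exclusively from the subleading $\cO(\mu,\epsilon)$ correction of the $\exp(\pm\tau S)$-conjugation and are in fact $\cO(\mu\epsilon^4,\mu^2\epsilon^3)$; what it gains is independence from the detailed form of Lemma \ref{EG e11 tilde lemma}, needing only the crude sizes of $S$ and $R^{(1)}$. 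Since the lemma only asserts $r(\mu\epsilon^3)$ for all blocks, both arguments establish the claim. The algebraic part of your proof (Hamiltonian and reversibility-preserving structure surviving $\operatorname{ad}_S$, conjugation by $\exp(\pm\tau S)$, and integration in $\tau$) matches the paper's, and is correct.
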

\begin{proof}
  Each $\exp(\tau S) \operatorname{ad}_S^2 (R^{(1)}) \exp(-\tau S)$ is Hamiltonian and reversibility-preserving, and formula \eqref{high order terms EG hat} holds. In order to estimate its entries we first compute $\operatorname{ad}_S^2 (R^{(1)})$. Using the form of $S$ in \eqref{S1} and $[S, R^{(1)}]$ in \eqref{0.5 S R} one gets
$
\operatorname{ad}_S^2 (R^{(1)}) = 
\begin{pmatrix}
0 & J_2 \widetilde{F} \\
J_2 \widetilde{F}^* & 0
\end{pmatrix}$ where  
$\widetilde{F} := 2 \left( M J_2 \widetilde{G} - \widetilde{E} J_2 M \right)$ 
and $\widetilde{E}$, $\widetilde{G}$ are defined in \eqref{E G tilde}. Since $\widetilde{E}, \widetilde{G} = \mathcal{O}(\mu \epsilon^2)$ by \eqref{tilde EG and tilde e11}, and $M = \mathsf{J}_2 X = \mathcal{O}(\epsilon)$ by \eqref{XX}, we deduce that $\widetilde{F} = \cO(\mu \epsilon^3)$. Then, for any $\tau \in [0,1]$, the matrix $\exp(\tau S) \operatorname{ad}_S^2 (R^{(1)}) \exp(-\tau S) = \operatorname{ad}_S^2 (R^{(1)})(1 + \mathcal{O}(\mu, \epsilon))$. In particular the matrix $F^{(2)}$ in \eqref{high order terms EG hat} has the same expansion of $\widetilde{F}$, namely $F^{(2)} = \mathcal{O}(\mu \epsilon^3)$, and the matrices $\widehat{E}$, $\widehat{G}$ have entries as in \eqref{Eij Gij}.
\end{proof}

\begin{proof} [Proof of Lemma \ref{Step of block-decoupling}]
    It follows by \eqref{L2 mu epsilon}-\eqref{0.5 S R}, \eqref{L=D+R} and Lemmata \ref{EG e11 tilde lemma} and \ref{high order terms lemma}.  
The matrix $E^{(2)} := E^{(1)} + \widetilde{E} + \widehat{E}$ has the expansion in \eqref{E2 in lemma},  
with $\mathsf{e}_{\mathrm{WB}} = \mathsf{e}_{11} + \tilde{\mathsf{e}}_{11}$ as in \eqref{eWB}.  
Similarly, $G^{(2)} := G^{(1)} + \widetilde{G} + \widehat{G}$ has the expansion in \eqref{G2}. 
\end{proof}

\subsection{Complete Block-Decoupling and Proof of the Main Result}

We now block-diagonalize the $4 \times 4$ Hamiltonian and reversible matrix $\mathsf{L}^{(2)}_{\mu,\epsilon}$ in \eqref{L2mueps}. First we split it into its $2 \times 2$-diagonal and off-diagonal Hamiltonian and reversible matrices
\begin{equation} \label{L2mueps2}
\mathsf{L}^{(2)}_{\mu,\epsilon} = D^{(2)} + R^{(2)}, \qquad 
D^{(2)} := 
\begin{pmatrix}
\mathsf{J}_2 E^{(2)} & 0 \\
0 & \mathsf{J}_2 G^{(2)}
\end{pmatrix}, \qquad
R^{(2)} := 
\begin{pmatrix}
0 & \mathsf{J}_2 F^{(2)} \\
\mathsf{J}_2 [F^{(2)}]^* & 0
\end{pmatrix}.
\end{equation}

\begin{lemma} \label{lemma S2}
There exist a $4 \times 4$ reversibility-preserving Hamiltonian matrix $S^{(2)} := S^{(2)}(\mu,\epsilon)$ of the form \eqref{S1}, analytic in $(\mu, \epsilon)$, of size $\mathcal{O}(\epsilon^3)$, and a $4 \times 4$ block-diagonal reversible Hamiltonian matrix $P := P(\mu, \epsilon)$, analytic in $(\mu, \epsilon)$, of size $\mathcal{O}(\mu\epsilon^6)$ such that
\begin{equation} \label{D2+P}
\exp(S^{(2)})(D^{(2)} + R^{(2)}) \exp(-S^{(2)}) = D^{(2)} + P.
\end{equation}
\end{lemma}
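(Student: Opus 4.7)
The plan is to mimic, but at a nonlinear (full) level, the one-step block-decoupling carried out in Lemma~\ref{Step of block-decoupling}, using an implicit-function / contraction-mapping argument that exploits the smallness $R^{(2)} = \mathcal{O}(\mu\epsilon^{3})$ coming from \eqref{F2}. First I would parametrize $S^{(2)}$ as in \eqref{S1} by a $2\times 2$ matrix $X^{(2)}$ with the same reversibility-preserving pattern as in \eqref{XX} (so that $S^{(2)}$ is automatically Hamiltonian and reversibility-preserving). A Lie-series expansion
\[
e^{S^{(2)}}(D^{(2)}+R^{(2)})e^{-S^{(2)}} = D^{(2)} + [S^{(2)},D^{(2)}] + R^{(2)} + \mathcal{N}(S^{(2)}),
\]
where $\mathcal{N}(S^{(2)}) := \tfrac12[S^{(2)},[S^{(2)},D^{(2)}]] + [S^{(2)},R^{(2)}]+\ldots$, shows that the off-diagonal block vanishes iff $X^{(2)}$ satisfies the nonlinear equation
\[
\mathsf{A}\,\vec{x}^{(2)} = \vec{f}^{(2)} + \vec{\mathcal{N}}(\vec{x}^{(2)}),
\]
where $\mathsf{A}$ is the very same Sylvester operator of \eqref{Ax=f}, $\vec{f}^{(2)}$ is built from the entries of $F^{(2)}$, and $\vec{\mathcal{N}}$ collects the off-diagonal contributions from $\mathcal{N}$.

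Next I would invert $\mathsf{A}$: by \eqref{det A} and \eqref{mathsf A -1}, for $(\kappa,\tth)\notin\mathfrak{D}$ and $\mu\neq 0$ the operator $\mathsf{A}$ is invertible with $\|\mathsf{A}^{-1}\|=\mathcal{O}(\mu^{-1})$. Since $\vec{f}^{(2)}=\mathcal{O}(\mu\epsilon^{3})$, the linear guess $\mathsf{A}^{-1}\vec{f}^{(2)}$ is already of size $\mathcal{O}(\epsilon^{3})$, and the cancellation between the $\mu^{-1}$ of $\mathsf{A}^{-1}$ and the factor $\mu$ present in $R^{(2)}$ (and in every commutator with $R^{(2)}$) is the key point that makes the iteration uniform as $\mu\to 0$. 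I would then apply a contraction-mapping argument in the closed ball $\{\|X^{(2)}\|\le C\epsilon^{3}\}$ of reversibility-preserving matrices, to the map $X^{(2)}\mapsto \mathsf{A}^{-1}(\vec{f}^{(2)}+\vec{\mathcal{N}}(\vec{x}^{(2)}))$; contraction holds because $\vec{\mathcal{N}}$ is at least quadratic in $X^{(2)}$, so each iterate gains an extra $\epsilon^{3}$ after the $\mu^{-1}$ loss. Analyticity of $S^{(2)}$ in $(\mu,\epsilon)$ then follows from the analytic implicit function theorem, and the reversibility pattern is preserved throughout because $\mathsf{A}$, $\vec{f}^{(2)}$ and $\vec{\mathcal{N}}$ respect it.

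Once $S^{(2)}=\mathcal{O}(\epsilon^{3})$ is constructed, the block-diagonal residual reads, by the same homological-equation manipulation leading to \eqref{L2 mu epsilon},
\[
P = \tfrac{1}{2}[S^{(2)},R^{(2)}] + \tfrac{1}{2}\!\int_{0}^{1}\!(1-\tau^{2})\,e^{\tau S^{(2)}}\mathrm{ad}_{S^{(2)}}^{2}(R^{(2)})\,e^{-\tau S^{(2)}}\,d\tau.
\]
The first commutator is block-diagonal (product of two off-diagonal block matrices), of size $\|S^{(2)}\|\,\|R^{(2)}\|=\mathcal{O}(\epsilon^{3})\cdot\mathcal{O}(\mu\epsilon^{3})=\mathcal{O}(\mu\epsilon^{6})$, and the integral remainder is of even smaller size; conjugation by the symplectic, reversibility-preserving matrix $\exp(S^{(2)})$ preserves the Hamiltonian and reversible structure, so $P$ is automatically Hamiltonian, reversible, and block-diagonal, yielding \eqref{D2+P}.

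The main obstacle is the singular factor $\mu^{-1}$ in $\mathsf{A}^{-1}$, which a priori would break a naive implicit-function step as $\mu\to 0$. This is precisely neutralized by the structural fact---inherited from Lemma~\ref{Step of block-decoupling}---that every entry of $F^{(2)}$ and of the commutators $[S^{(2)},R^{(2)}],[S^{(2)},[S^{(2)},R^{(2)}]],\ldots$ carries at least one explicit factor of $\mu$. Once this compensation is recorded, the rest is routine: one just checks that the Neumann/contraction iterates remain in a ball of radius $C\epsilon^{3}$ uniformly in $\mu\in(0,\mu_{0})$ and that the limit inherits the parity, Hamiltonian, and reversibility structure.
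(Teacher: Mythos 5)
Your proposal mirrors the paper's strategy closely: parametrize $S^{(2)}$ by a $2\times2$ off-diagonal block so that it is automatically Hamiltonian and reversibility preserving; project the conjugated matrix onto diagonal and off-diagonal blocks; observe that the vanishing of the off-diagonal block is a \emph{nonlinear} homological equation governed by the very same Sylvester matrix $\mathsf{A}$ of \eqref{Ax=f}; invert $\mathsf{A}$ at the cost of a factor $\mu^{-1}$; and close the argument by the implicit function theorem (equivalently, a contraction), using the structural fact that $R^{(2)}$ and every commutator carry an explicit factor $\mu$ that neutralizes the $\mu^{-1}$ loss. The estimates $S^{(2)}=\mathcal{O}(\epsilon^{3})$, $P=\mathcal{O}(\mu\epsilon^{6})$ and the preservation of the Hamiltonian, reversible, block-diagonal structure then follow as in the paper.

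The one defect is your closed-form expression for $P$. You carry over \eqref{L2 mu epsilon} verbatim, writing $P=\tfrac12[S^{(2)},R^{(2)}]+\tfrac12\int_0^1(1-\tau^2)\,e^{\tau S^{(2)}}\mathrm{ad}_{S^{(2)}}^2(R^{(2)})\,e^{-\tau S^{(2)}}\,d\tau$. But the derivation of \eqref{L2 mu epsilon} used the fact that the \emph{linear} homological equation $R^{(1)}+[S,D^{(1)}]=0$ holds exactly; it is precisely this relation that is used to simplify $\tfrac12[S,[S,D^{(1)}]]+[S,R^{(1)}]$ to $\tfrac12[S,R^{(1)}]$ and to replace $\mathrm{ad}_S^3(D^{(1)})$ by $-\mathrm{ad}_S^2(R^{(1)})$. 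In the present lemma the homological equation is the nonlinear one $R^{(2)}+[S^{(2)},D^{(2)}]+\mathcal{R}(S^{(2)})=0$ of \eqref{SD=-R-R}, so those cancellations fail by terms involving $\mathcal{R}(S^{(2)})$. The correct expression, obtained by simply projecting the second-order Lie expansion onto the diagonal blocks without any cancellation, is $P=[S^{(2)},R^{(2)}]+\Pi_D\int_0^1(1-\tau)\,e^{\tau S^{(2)}}\mathrm{ad}_{S^{(2)}}^2(D^{(2)}+R^{(2)})\,e^{-\tau S^{(2)}}\,d\tau$; note in particular the presence of $\mathrm{ad}_{S^{(2)}}^2(D^{(2)})$, which contributes at the same order $\mathcal{O}(\mu\epsilon^{6})$ as $[S^{(2)},R^{(2)}]$ and is entirely missing from your formula. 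The final estimate $P=\mathcal{O}(\mu\epsilon^{6})$ nevertheless holds, so the conclusion of the lemma is not affected, but the claim that the formula follows ``by the same homological-equation manipulation leading to \eqref{L2 mu epsilon}'' is not valid and the intermediate expression should be corrected.
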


\begin{proof}
We set for brevity $S = S^{(2)}$. The equation \eqref{D2+P} is equivalent to the system
\begin{equation} \label{Pi_D Pi_varnothing}
\begin{aligned}
& \Pi_D \left( e^S (D^{(2)} + R^{(2)}) e^{-S} \right) - D^{(2)} = P  \ , \qquad 
\Pi_\varnothing \left( e^S (D^{(2)} + R^{(2)}) e^{-S} \right) = 0 ,
\end{aligned}
\end{equation}
where $\Pi_D$ is the projector onto the block-diagonal matrices and $\Pi_\varnothing$ onto the block-off-diagonal ones. The second equation in \eqref{Pi_D Pi_varnothing} is equivalent, by a Lie expansion, and since $[S, R^{(2)}]$ is block-diagonal, to
\begin{equation} \label{R2+SD2+R}
R^{(2)} + [S, D^{(2)}] + \underbrace{\Pi_\varnothing \int_0^1 (1 - \tau) e^{\tau S} \mathrm{ad}_S^2 (D^{(2)} + R^{(2)}) e^{-\tau S} d\tau}_{=:\mathcal{R}(S)} = 0.
\end{equation}
The ``nonlinear homological equation'' \eqref{R2+SD2+R},
\begin{equation} \label{SD=-R-R}
[S, D^{(2)}] = -R^{(2)} - \mathcal{R}(S),
\end{equation}
is equivalent to solve the $4 \times 4$ real linear system
\begin{equation} \label{Ax=f f=munu+mug}
\mathsf{A} \vec{x} = \vec{f}(\mu, \epsilon, \vec{x}), \qquad \vec{f}(\mu, \epsilon, \vec{x}) = \mu \vec{\nu}(\mu, \epsilon) + \mu \vec{g}(\mu, \epsilon, \vec{x})
\end{equation}
associated, as in \eqref{Ax=f}, to \eqref{SD=-R-R}. The vector $\mu \vec{\nu}(\mu, \epsilon)$ is associated with $-R^{(2)}$ where $R^{(2)}$ is in \eqref{L2mueps2}. The vector $\mu \vec{g}(\mu, \epsilon, \vec{x})$ is associated with the matrix $-\mathcal{R}(S)$, which is a Hamiltonian and reversible block-off-diagonal matrix (i.e. of the form \eqref{L=D+R}).
The factor $\mu$ is present in $D^{(2)}$ and $R^{(2)}$, see \eqref{E2 in lemma}, \eqref{G2}, \eqref{F2} and the analytic function $\vec{g}(\mu, \epsilon, \vec{x})$ is quadratic in $\vec{x}$ (for the presence of $\mathrm{ad}_S^2$ in $\mathcal{R}(S)$). In view of \eqref{F2} one has
\begin{equation} \label{munu}
\mu \vec{\nu}(\mu, \epsilon) := (-F^{(2)}_{21}, F^{(2)}_{22}, -F^{(2)}_{11}, F^{(2)}_{12})^\top, \qquad F^{(2)}_{ij} = r(\mu \epsilon^3).
\end{equation}
System \eqref{Ax=f f=munu+mug} is equivalent to $\vec{x} = \mathsf{A}^{-1} \vec{f}(\mu, \epsilon, \vec{x})$ and, writing $\mathsf{A}^{-1} = \frac{1}{\mu} \mathcal{B}(\mu, \epsilon)$ (cf. \eqref{mathsf A -1}), to
\[
\vec{x} = \mathcal{B}(\mu, \epsilon) \vec{\nu}(\mu, \epsilon) + \mathcal{B}(\mu, \epsilon) \vec{g}(\mu, \epsilon, \vec{x}).
\]
By the implicit function theorem this equation admits a unique small solution $\vec{x} = \vec{x}(\mu, \epsilon)$, analytic in $(\mu, \epsilon)$, with size $\mathcal{O}(\epsilon^3)$ as $\vec{\nu}$ in \eqref{munu}. Then the first equation of \eqref{Pi_D Pi_varnothing} gives $P = [S, R^{(2)}] + \Pi_D \int_0^1 (1-\tau) e^{\tau S} \text{ad}_S^2 (D^{(2)} + R^{(2)}) e^{-\tau S} d\tau$, and its estimate follows from those of $S$ and $R^{(2)}$ (see \eqref{F2}). 
\end{proof}

\begin{proof} [{Proof of Theorems \ref{Complete BF thm} and \ref{thm:main}.}]
By Lemma \ref{lemma S2} and recalling \eqref{mathcal L= ichmu+mathscr L} the operator $\mathcal{L}_{\mu, \epsilon} : \mathcal{V}_{\mu, \epsilon} \to \mathcal{V}_{\mu, \epsilon}$ is represented by the $4 \times 4$ Hamiltonian and reversible matrix
\[
\im \chk \mu + \exp(S^{(2)}) \mathsf{L}^{(2)}_{\mu, \epsilon} \exp(-S^{(2)}) 
= \im \chk \mu + 
\begin{pmatrix}
\mathsf{J}_2 E^{(3)} & 0 \\
0 & \mathsf{J}_2 G^{(3)}
\end{pmatrix}
=: 
\begin{pmatrix}
\mathsf{U} & 0 \\
0 & \mathsf{S}
\end{pmatrix},
\]
where the matrices $E^{(3)}$ and $G^{(3)}$ expand as in \eqref{E2 in lemma}, \eqref{G2}. Consequently the matrices $\mathsf{U}$ and $\mathsf{S}$ expand as in \eqref{U S}. Theorem \ref{Complete BF thm} is proved. Theorem \ref{thm:main} is a straightforward corollary. The function $\und{\mu}(\epsilon)$ in \eqref{und mu} is defined as the implicit solution of the function $\Delta_{\text{BF}}(\kappa,\tth; \mu, \epsilon)$ in \eqref{BFDF} for $\epsilon$ small enough, depending on $(\kappa,\tth)$. 
\end{proof}

\appendix
\section{Expansion of the Stokes waves in finite depth}\label{sec:App2}

In this Appendix we provide the expansions  \eqref{exp:Sto}-\eqref{expcoef}, \eqref{expfe}, 
\eqref{pino1fd}-\eqref{aino2fd}.
\\[1mm]
\noindent
{\bf Proof  of \eqref{exp:Sto}-\eqref{expcoef}.}
Writing
 \begin{equation}\label{etapsic}
\begin{aligned}
 & \eta_\e(x) = \e \eta_1(x) + \e^2 \eta_2(x) + \cO(\e^3) \, , \\
 &  \psi_\e(x) = \e \psi_1(x) + \e^2 \psi_2(x) + \cO(\e^3) \, ,  
  \end{aligned}
\qquad \quad c_\e = \chk + \e c_1 + \e^2 c_2+ \cO(\e^3) \, ,  
\end{equation}
where  $\eta_i$ is $even(x)$ and $\psi_i$ is $odd(x)$ for $i=1,2$, 
we solve order by order in $ \e $ the equations \eqref{travelingWWstokes},  
that we rewrite  as
\begin{equation}
\label{Sts}
\begin{cases}
-c \, \psi_x +  \eta   + \dfrac{\psi_x^2}{2} - 
\dfrac{\eta_x^2}{2(1+\eta_x^2)} ( c  -  \psi_x )^2-\kappa\left(\frac{\eta_x}{(1+\eta_x^2)^{1/2}}\right)_x  = 0 \\
c \, \eta_x+G(\eta)\psi = 0 	\, ,
\end{cases}
\end{equation}
having substituted $G(\eta)\psi $ with $-c \, \eta_x $  in the first equation.
 We expand the Dirichlet-Neumann operator 
$ G(\eta)=  G_0+ G_1(\eta) + G_2(\eta) + \cO(\eta^3)  $
where, according to \cite[formula (2.14)]{CS}, 
\begin{equation} \label{expDiriNeu}
\begin{aligned}
 G_0 & := D\tanh(\tth D) = |D| \tanh(\tth |D|) \,, \\
 G_1(\eta) & := D \big( \eta -  \tanh(\tth D)\eta \tanh(\tth D) \big)D = -\pa_x \eta \pa_x - |D| \tanh(\tth|D|)\eta|D| \tanh(\tth|D|), \\
 G_2(\eta) & := -\frac12 D \Big( 
 D{\eta}^2 \tanh(\tth D) +\tanh(\tth D){\eta}^2 D - 2\tanh(\tth D)\eta D\tanh(\tth  D)\eta\tanh(\tth D) \Big)D \, .
\end{aligned}
\end{equation}
{\bf First order in $ \e $.}  Substituting the expansions in \eqref{etapsic} into \eqref{Sts}, we get the linear system 
\begin{equation}\label{cB0}
 \left\{\begin{matrix} -\chk (\psi_1)_x + \eta_1 -\kappa (\eta_1)_{xx}= 0 \\
 \chk (\eta_1)_x + G_0\psi_1 =0 \, ,  \end{matrix}\right.
 \quad  \text{i.e.} \, \vet{\eta_1}{\psi_1} \in \text{Ker }\cB_0  \text{ with } \cB_0 := \begin{bmatrix} 1-\kappa\pa_{xx} & -\chk\pa_x \\ \chk\pa_x & G_0  \end{bmatrix},  
\end{equation}
where $\eta_1$ is $even(x)$ and $\psi_1$ is $odd(x)$.
\begin{lemma}\label{lem:B0}
If $(\kappa, \tth) \in \R^2_+ \setminus \fR$,  the kernel of 
 the linear operator $\cB_0$ in \eqref{cB0} is one dimensional and given by
\begin{equation}\label{chk}
\text{Ker }\cB_0= \text{span}\,\Big\{\vet{\cos(x)}{\ch^{-1}(1+\kappa)^{\frac{1}{2}}\sin(x)} \Big\}.
\end{equation}
\end{lemma}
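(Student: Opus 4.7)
\smallskip
\noindent
\textbf{Proof proposal.} The strategy is to diagonalize $\cB_0$ via Fourier series, exploiting that all its coefficients are constant. Writing $\eta_1=\sum_{n\in\Z}a_n e^{\im nx}$, $\psi_1=\sum_{n\in\Z}b_n e^{\im nx}$, the equation $\cB_0\vet{\eta_1}{\psi_1}=0$ decouples into the family of $2\times 2$ linear systems
\[
\cB_0(n)\vet{a_n}{b_n}=0,\qquad \cB_0(n):=\begin{bmatrix} 1+\kappa n^2 & -\im\chk n \\ \im\chk n & |n|\tanh(\tth|n|)\end{bmatrix},\quad n\in\Z.
\]
The parity assumptions (even $\eta_1$, odd $\psi_1$) translate into $a_{-n}=a_n\in\R$ and $b_{-n}=-b_n\in\im\R$, and in particular $b_0=0$, while $\cB_0(0)=\diag(1,0)$ forces $a_0=0$. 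Hence the zero mode contributes nothing to the kernel.

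For $n\neq 0$ the kernel on the $n$-th mode is non-trivial if and only if
\[
\det\cB_0(n)=(1+\kappa n^2)|n|\tanh(\tth|n|)-\chk^2 n^2=0.
\]
For $|n|=1$, using $\chk^2=(1+\kappa)\tanh(\tth)$, the determinant vanishes identically. For $|n|\geq 2$, a direct algebraic rearrangement of $(1+\kappa n^2)\tanh(n\tth)=n(1+\kappa)\tanh(\tth)$ yields
\[
\kappa=\frac{\tanh(n\tth)-n\tanh(\tth)}{n\bigl(\tanh(\tth)-n\tanh(n\tth)\bigr)},
\]
which is precisely the defining equation of $\fR_{|n|}$ in \eqref{def:fR}. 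So when $(\kappa,\tth)\notin\fR$ the matrix $\cB_0(n)$ is invertible for all $|n|\geq 2$, and the kernel of $\cB_0$ on even-odd pairs is supported solely on $n=\pm 1$.

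It remains to compute the $n=1$ eigenvector. Plugging $\eta_1=\cos(x)$, $\psi_1=\alpha\sin(x)$ into \eqref{cB0} and matching coefficients, one finds $\alpha=(1+\kappa)/\chk=\chk/\tanh(\tth)$ (the two expressions coincide since $\chk^2=(1+\kappa)\tanh(\tth)$), and this common value equals $\ch^{-1}(1+\kappa)^{1/2}$ because $\tanh(\tth)=\ch^2$. This gives the generator in \eqref{chk}, and by the mode analysis above it spans $\ker\cB_0$.

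The only non-trivial point, beyond routine bookkeeping, is recognizing that the solvability condition $\det\cB_0(n)=0$ for $|n|\geq 2$ is exactly the exclusion of $\fR$; this is the resonance observation already highlighted after Theorem \ref{Thm: Stokes expansion} via the injectivity of the function \eqref{speedn}, and the algebraic identity matches the formulation \eqref{def:fR} verbatim.
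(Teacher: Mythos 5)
\smallskip
Your proof is correct and follows essentially the same route as the paper: Fourier-diagonalize the constant-coefficient operator $\cB_0$ into a family of $2\times 2$ blocks, observe that the determinant of the block vanishes identically at the first mode, and show that vanishing at a higher mode $k\geq 2$ is exactly the defining equation of $\fR_k$. The only cosmetic difference is that you work with complex exponentials (and hence have to track the parity constraints $a_{-n}=a_n$, $b_{-n}=-b_n$ and the reality structure), whereas the paper works directly in the real basis $\{\cos(kx),\sin(kx)\}$ adapted to the even/odd symmetry, which dispenses with the zero mode immediately since $\psi_1$ is odd; both arguments are equivalent and equally rigorous. You also make explicit the algebraic rearrangement of $\det\cB_0(n)=0$ into the formula for $\fR_{|n|}$, which the paper leaves implicit — a welcome addition — and for this to be a genuine equivalence (not just a one-way implication) it is worth noting that the denominator $n\tanh(n\tth)-\tanh(\tth)$ is strictly positive for $n\geq 2$, $\tth>0$, so the rearrangement is legitimate.
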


\begin{proof} The action of $\cB_0$ on each subspace spanned by $\footnotesize{\,\Big\{\vet{\cos(kx)}{0}, \vet{0}{\sin(kx)}\Big\}} $, $k\in \N$, is represented by the $2\times 2$ matrix $\footnotesize{ \begin{bmatrix} 1+\kappa k^2 & -\chk k \\ -\chk k & k\tanh(\tth k)  \end{bmatrix}}$. Its determinant is given by
 $$ (1+\kappa k^2)k \tanh(\tth k) - \chk^2 k^2\stackrel{\eqref{exp:Sto}}{=} k^2 \Big(\frac{(1+\kappa k^2)\tanh(\tth k)}{k} -(1+\kappa)\tanh(\tth) \Big)$$
 and, provided $(\kappa, \tth) \in \R^2_+ \setminus \fR$, 
 vanishes  if and only if $k=1$.
Indeed, for $k$ large enough, the determinant goes asymptotically to $+\infty$ so it is uniformly bounded away from zero,  whereas if for some $k \in \N$ it vanishes, it implies that $(\kappa, \tth) \in \fR_k \subset \fR$, absurd.
  For $k=1$ we obtain the kernel of $\cB_0$ given in \eqref{chk}. For $k=0$ it has no kernel since $\psi_1(x)$ is odd.
\end{proof}
We set
$
\eta_1(x) := \cos(x)$, $\psi_1(x) := \ch^{-1}(1+\kappa)^{\frac{1}{2}} \sin(x) 
$
in agreement with 
\eqref{exp:Sto}. 
\\[1mm]
{\bf Second order in $ \e $.} 
By \eqref{Sts}, and since 
$ \chk^2 (\eta_1)_x^2 = (G_0\psi_1)^2  $, we get  the linear system
\begin{equation}\label{syslin2}
 \cB_0 \vet{\eta_2}{\psi_2} = \vet{c_1(\psi_1)_x-\frac12 (\psi_1)_x^2 + \frac12 (G_0\psi_1)^2 }{-c_1(\eta_1)_x - G_1(\eta_1)\psi_1} \, , 
\end{equation}
where  $\cB_0$ is the self-adjoint operator in \eqref{cB0}. 
System \eqref{syslin2} admits a solution if and only if its right-hand term is orthogonal to the Kernel  of $\cB_0$ in \eqref{chk}, namely
\begin{equation}\label{orth1}
 \Big(\vet{c_1(\psi_1)_x-\frac12 (\psi_1)_x^2 + \frac12 (G_0\psi_1)^2 }{-c_1(\eta_1)_x - G_1(\eta_1)\psi_1}\;,\;\vet{\cos(x)}{\ch^{-1}(1+\kappa)^{\frac{1}{2}}\sin(x)}\Big)=0 \, . 
\end{equation}
In view of the first order expansion \eqref{exp:Sto}, \eqref{expDiriNeu} and the identity
 $
  \tanh(2\tth)  = 
  \displaystyle{\frac{2\ch^2}{1+\ch^4}}
 $,
  it results $
 [G_0\psi_1](x)= \ch (1+\kappa)^{\frac{1}{2}}\sin(x)$, $\big[G_1(\eta_1)\psi_1\big](x) 
 =\frac{(1-\ch^4)(1+\kappa)^{\frac{1}{2}}}{\ch(1+\ch^4)}\sin(2x)$
so that \eqref{orth1} implies
 $c_1=0$, in agrement with \eqref{exp:Sto}. Equation
 \eqref{syslin2} reduces to 
\begin{align}\label{sisto2}
\begin{bmatrix} 1-\kappa\pa_{xx} & -\chk\pa_x \\ \chk\pa_x & G_0  \end{bmatrix}
\vet{\eta_2}{\psi_2} 
  = \vet{\frac{1+\kappa}{4}\left(\ch^2-\ch^{-2}\right)-\frac{1+\kappa}{4}\left(\ch^2+\ch^{-2}\right)\cos(2x) }{ -\frac{(1-\ch^4)(1+\kappa)^{\frac{1}{2}}}{\ch(1+\ch^4)}\sin(2x)}.
\end{align}
Setting $ \eta_2 = \eta_2^{[0]} +  \eta_2^{[2]} \cos(2x) $ and  $ \psi_2 =
 \psi_2^{[2]}  \sin (2x) $, system 
\eqref{sisto2} amounts to 
\begin{align*}
\left\{ \begin{matrix} \eta_2^{[0]} +\big( (1+4\kappa)\eta_2^{[2]} -2\chk  \psi_2^{[2]} \big) \cos(2x)  =  \frac{1+\kappa}{4}\left(\ch^2-\ch^{-2}\right)-\frac{1+\kappa}{4}\left(\ch^2+\ch^{-2}\right)\cos(2x)   \\ (-2\chk \eta_2^{[2]}   + 2  \psi_2^{[2]} \tanh(2\tth))\sin(2x)  =   -\frac{(1-\ch^4)(1+\kappa)^{\frac{1}{2}}}{\ch(1+\ch^4)}\sin(2x) \, , \end{matrix}\right.
\end{align*}
which leads to the expansions of $ \eta_2^{[0]} $, $ \eta_2^{[2]} $, $ \psi_2^{[2]} $ 
given in
\eqref{exp:Sto}-\eqref{expcoef}.

\noindent
{\bf Third order in $ \e $.} 
It remains to determine 
$ c_2 $ in \eqref{expc2}.
 We get the linear system 
\begin{equation}\label{syslin3}
 \cB_0 \vet{\eta_3}{\psi_3} = \vet{c_2(\psi_1)_x 
 - (\psi_1)_x (\psi_2)_x - (\eta_1)_x^2 (\psi_1)_x \chk + 
 (\eta_1)_x (\eta_2)_x \chk^2-\frac{3}{2}\kappa (\eta_1)^2_x (\eta_1)_{xx}
 }{-c_2(\eta_1)_x - G_1(\eta_1)\psi_2- G_1(\eta_2)\psi_1 - G_2(\eta_1)\psi_1} \, . 
\end{equation}
System \eqref{syslin3} has 
a solution if and only if the right hand side is orthogonal to the Kernel of
$ \cB_0 $ given in \eqref{chk}. This condition determines uniquely $ c_2 $.
Denoting  $\Pi_1$ the $L^2$-orthogonal projector on span$\, \{\cos(x),\sin(x)\} $, it results 
\begin{align*} 
& c_2 (\psi_1)_x = c_2 \ch^{-1}(1+\kappa)^{\frac{1}{2}} \cos(x)\, , \quad c_2 (\eta_1)_x = -c_2 \sin(x) \, , \quad \Pi_1[ (\psi_1)_x (\psi_2)_x] = 
\psi_2^{[2]} \ch^{-1} (1+\kappa)^{\frac{1}{2}} \cos(x)\, ,\\ 
& \Pi_1 [\chk (\eta_1)_x^2 (\psi_1)_x ] = \tfrac14 (1+\kappa)\cos(x)\, , \quad \Pi_1[\chk^2 (\eta_1)_x (\eta_2)_x] = \eta_2^{[2]}\ch^2(1+\kappa) \cos(x) \, ,\\
& \Pi_1[\frac{3}{2}\kappa(\eta_1)^2_x(\eta_1)_{xx}] = \Pi_1 [-\frac{3}{8}\kappa (-\cos(3x)+\cos(x))]=-\frac{3}{8}\kappa \cos(x) \, ,
\end{align*}
and, in view of \eqref{expDiriNeu}, and \eqref{exp:Sto}, \eqref{expcoef}, 
\begin{align*}
 \Pi_1[ G_1(\eta_1)\psi_2]  &= \psi_2^{[2]}\frac{1-\ch^4}{1+\ch^4} \sin(x) \, ,  \quad 
  \Pi_1[G_2(\eta_1)\psi_1] 
 =   -\frac{\ch(1+\kappa)^{\frac{1}{2}}}{4}\frac{1-3\ch^4}{1+\ch^4} \sin(x) \, , \\
 \Pi_1[G_1(\eta_2)\psi_1] 
&= \ch^{-1}(1+\kappa)^{\frac{1}{2}} 
 \Big( \eta_2^{[0]}(1-\ch^4) +   \frac12 \eta_2^{[2]} (1+\ch^4) \Big)\sin(x) \, . 
\end{align*}
Imposing the orthogonality condition gives $c_2$ as in   \eqref{expc2}.

\noindent{\bf Proof of \eqref{expfe}.} 
We expand  the function $\mathfrak{p}(x)  = \e\mathfrak{p}_1(x) + \e^2 \mathfrak{p}_2(x) + \cO(\e^3)  $ defined by the fixed point equation \eqref{def:ttf}.   We first 
note that  the constant $\mathtt{f}_\e=\cO(\e^2)$  because
 $\eta_1(x) = \cos(x)$ has zero average. 
Then 
$ \mathfrak{p}(x)
 = \frac{\mathfrak H}{\tanh(\tth |D|)} \big[\e\eta_1 +\e^2\big(\eta_2 + 
 (\eta_1)_x \mathfrak{p}_1 \big)+\cO(\e^3)\big] $, 
and, using that $\mathfrak H \cos(kx) = \sin(kx)$, for any  $ k \in \N$, we get 
\begin{align}
 \mathfrak{p}_1(x) 
 & = \frac{\mathfrak H}{\tanh(\tth |D|)}\cos(x)  = \ch^{-2}\sin(x) \, , \label{pfra1} \\
 \mathfrak{p}_2(x) &= \frac{\mathfrak H}{\tanh(\tth |D|)}((\eta_1)_x
 \mathfrak{p}_1 +\eta_2 ) 
 = \frac{\left({\ch}^4+1\right)\,\left({\ch}^4\,\kappa -3\,\kappa +{\ch}^4+3\right)}{8\,{\ch}^4\,\left(\ch^4\,(1+\kappa) -3\,\kappa \right)}\sin(2x) \, . \label{pfra2}
\end{align}
Finally
\begin{align*}
 \tf_\e & = \frac{\e^2}{2\pi} \int_\T \big( \eta_2 + (\eta_1)_x\mathfrak{p}_1 \big) \de x + \cO(\e^3) 
 = \e^2\big(\eta_2^{[0]} -\tfrac1{2} \ch^{-2} \big)+\cO(\e^3 ) \stackrel{\eqref{expcoef}}{=} \e^2 \tf^{[2]}_2+\cO(\e^3 ) \, .
\end{align*}
The expansion \eqref{expfe} is proved.
\\[1mm]
{\bf Proof of Lemma \ref{lem:pa.exp}.} 
In view of \eqref{exp:Sto}-\eqref{expcoef}, the expansions of the functions $B$, $V$  in \eqref{espV} and \eqref{espB} are
\begin{align}
 B&= : \e B_1(x) + \e^2 B_2(x) + \cO(\e^3) 
\notag \\
&= \e\ch (1+\kappa)^{\frac{1}{2}} \sin(x) + \e^2 \Big[\frac{(1+\kappa)^{\frac{1}{2}}\,\left(6\,\kappa -2\,\ch^4\,\kappa -2\,\ch^4+3\right)}{2\,\ch\,\left(\ch^4\,(1+\kappa) -3\,\kappa \right)}\Big]\sin(2x) + \cO(\e^3) \label{espB1}  
  \end{align}
  and
  \begin{align}
 V&= : \e V_1(x) + \e^2 V_2(x)  + \cO(\e^3) \notag\\
 &= \e \ch^{-1} (1+\kappa)^{\frac{1}{2}}\cos(x) + \e^2 \Big[\frac{\ch\,(1+\kappa)^{\frac{1}{2}}}{2}  +\Big(\frac{(1+\kappa )^{\frac{1}{2}}\,\left(9\,\kappa -\ch^8\,(1+\kappa)+ 3\right)}{4\ch^3\,\left(\ch^4\,(1+\kappa) -3\,\kappa\right)}\Big)\cos(2x)\Big]+\cO(\e^3)
  \, .\label{espV1} 
\end{align}
In  view of  \eqref{def:pa}, denoting derivatives w.r.t $x$ with  a prime and suppressing dependence on $x$ when trivial, we have
\begin{align}
\chk+p_\e(x)  &= (\chk+\e^2 c_2 - V(x) - V'(x)\mathfrak{p}(x)+\cO(\e^3)) (1-\mathfrak{p}'(x)+(\mathfrak{p}'(x))^2+\cO(\e^3))\notag \\
&= \chk + \e \underbrace{(- V_1 -\chk \mathfrak{p}_1')}_{=: p_1}+\e^2 \underbrace{\big( c_2 + V_1\mathfrak{p}_1' - V_2 - V_1'\mathfrak{p}_1 -\chk \mathfrak{p}_2' +\chk (\mathfrak{p}_1')^2 \big)}_{=:p_2} + \, \cO(\e^3) \, .\label{pino12imp}
\end{align} 
Similarly by \eqref{def:pa}
\begin{align}
 1+a_\e(&x) : = \frac{1}{1+\mathfrak{p}_x(x)} - (\chk+p_\e(x))B_x(x+\mathfrak{p}(x)) \notag \\
   = &  1+\e\underbrace{\big(-\mathfrak{p}_1'- \chk B_1'\big)}_{=:a_1} +\e^2\underbrace{\big((\mathfrak{p}_1')^2-\mathfrak{p}_2'- \chk B_2'-\chk B_1''\mathfrak{p}_1(x)+ B_1' V_1 + \chk  B_1'\mathfrak{p}_1' \big)}_{=:a_2}+\cO(\e^3)\, . \label{aino12imp}
 \end{align}
By \eqref{espV1}, \eqref{pfra1}, \eqref{exp:Sto}, \eqref{pfra2}, \eqref{espB1} we 
deduce that the functions $p_1 $, $p_2 $, $a_1 $, $a_2 $ in \eqref{pino12imp} and \eqref{aino12imp} have an expansion as in \eqref{pino1fd}-\eqref{aino2fd}.

In view of \eqref{exp:Sto}-\eqref{expcoef}, the expansion of the function $l(x)$ in \eqref{esp l} is 
\begin{align}
 l(x)&= 1 + \e^2 l_2(x)  + \cO(\e^3)= 1+ \e^2  \Big[-\frac{3}{4}+\frac{3}{4}\cos(2x)\Big]+\cO(\e^3)
 \label{espl1} \, . 
\end{align}
In  view of  \eqref{def:Sigma g}, denoting derivatives w.r.t $x$ with  a prime and suppressing dependence on $x$ when trivial, we have
\begin{align}
g_\e(x)  &= (1-\e\mathfrak{p}_1'+\e^2(-\mathfrak{p}_2'+(\mathfrak{p}_1')^2)+\cO(\e^3))(1+\e^2 l_2+\cO(\e^3))\notag \\
&= 1 + \e \underbrace{(-\mathfrak{p}_1')}_{=: g_1}+\e^2 \underbrace{\big(-\mathfrak{p}_2'+(\mathfrak{p}_1')^2+l_2)}_{=:g_2} + \, \cO(\e^3) \, .\label{Sigmano12imp}
\end{align} 
By \eqref{expfe} and \eqref{espl1}, the expansions \eqref{exp:g1}--\eqref{exp:g2} follow.

Finally the operator $\Sigma_\e$ in \eqref{def:Sigma g} expands as
$\Sigma_\e = \pa_{xx} + \e \Sigma_1 + \e^2 \Sigma_2 + \cO(\e^3)$
where
\begin{align}\label{Sigma 1 expansion}
    \Sigma_1:=& (g_{1}-2\mathfrak{p}_1')\,\pa_{xx}+(g_1'-2\,\mathfrak{p}_1'')\pa_{x}-\mathfrak{p}_1''',\\ \notag
    \Sigma_2:=&(g_2-2\mathfrak{p}_2'+3(\mathfrak{p}_1')^2-2g_1\mathfrak{p}_1') \pa_{xx}+(g_2'-2\mathfrak{p}_2''-2g_1\mathfrak{p}_1''-2g_1'\mathfrak{p}_1'+6\mathfrak{p}_1''\mathfrak{p}_1')\pa_{x}\\ \label{Sigma 2}
    &+2(\mathfrak{p}_1'')^2-\mathfrak{p}_2'''-\mathfrak{p}_1''g_1'+3\mathfrak{p}_1'''\mathfrak{p}_1'-g_1\mathfrak{p}_1'''.
\end{align}
Inserting the expansions of $g_1,g_2$ in \eqref{exp:g1}, \eqref{exp:g2} and those of $\mathfrak{p}_1$, $\mathfrak{p}_2$ in \eqref{pfra1}, \eqref{pfra2} proves the claimed expansions 
in \eqref{Sigma 1} with coefficients in \eqref{d1x}--\eqref{h2 h02}.
\qed

\section{Expansion of the Kato Basis}\label{secA1}

In this appendix we prove Lemma \ref{expansion of the basis F}. We provide the expansion of the basis $f_k^{\pm}(\mu,\epsilon) = U_{\mu,\epsilon} f_k^{\pm}$, $k = 0,1$, in \eqref{F basis set and f}, where $f_k^{\pm}$ defined in \eqref{eigenfunc of mathcall L00 2} belong to the subspace $\mathcal{V}_{0,0} := \operatorname{Rg}(P_{0,0})$. We first Taylor-expand the transformation operators $U_{\mu,\epsilon}$ defined in \eqref{U transformation operators}. We denote $\partial_{\epsilon}$ with a prime and $\partial_{\mu}$ with a dot.
The next lemma follows as \cite[Lemma A.1]{BMV1}
\begin{lemma} \label{U mu epsilon P00}
    The first jets of $U_{\mu,\epsilon} P_{0,0}$ are
\begin{align} \label{A1}
    &U_{0,0} P_{0,0} = P_{0,0}, \quad\quad U'_{0,0} P_{0,0} = P'_{0,0} P_{0,0}, \quad\quad \dot{U}_{0,0} P_{0,0} = \dot{P}_{0,0} P_{0,0},\\ \label{A2}
    &\dot{U}'_{0,0} P_{0,0} = \left( \dot{P}'_{0,0} - \frac{1}{2} P_{0,0} \dot{P}'_{0,0} \right) P_{0,0}, 
\end{align}
where
\begin{align} \label{A3}
    P'_{0,0} &= \frac{1}{2\pi \im} \oint_{\Gamma} (\mathscr{L}_{0,0} - \lambda)^{-1} \mathscr{L}'_{0,0} (\mathscr{L}_{0,0} - \lambda)^{-1} d\lambda,  \\ \label{A4}
    \dot{P}_{0,0} &= \frac{1}{2\pi \im} \oint_{\Gamma} (\mathscr{L}_{0,0} - \lambda)^{-1} \dot{\mathscr{L}}_{0,0} (\mathscr{L}_{0,0} - \lambda)^{-1} d\lambda, 
\end{align}
and
\begin{align} \label{A5a}
    \dot{P}'_{0,0} &= -\frac{1}{2\pi \im} \oint_{\Gamma} (\mathscr{L}_{0,0} - \lambda)^{-1} \dot{\mathscr{L}}_{0,0} (\mathscr{L}_{0,0} - \lambda)^{-1} \mathscr{L}'_{0,0} (\mathscr{L}_{0,0} - \lambda)^{-1} d\lambda \\ \label{A5b}
    &\quad -\frac{1}{2\pi \im} \oint_{\Gamma} (\mathscr{L}_{0,0} - \lambda)^{-1} \mathscr{L}'_{0,0} (\mathscr{L}_{0,0} - \lambda)^{-1} \dot{\mathscr{L}}_{0,0} (\mathscr{L}_{0,0} - \lambda)^{-1} d\lambda \\ \label{A5c}
    &\quad + \frac{1}{2\pi \im} \oint_{\Gamma} (\mathscr{L}_{0,0} - \lambda)^{-1} \dot{\mathscr{L}}'_{0,0} (\mathscr{L}_{0,0} - \lambda)^{-1} d\lambda.
\end{align}
The operators $\mathscr{L}'_{0,0}$ and $\dot{\mathscr{L}}_{0,0}$ are
\begin{align} \label{A6}
    \mathscr{L}'_{0,0}=\begin{bmatrix}
        \partial_x\circ p_1(x)  &0\\
        -a_1(x)+\kappa \Sigma_1 &p_1(x)\circ \partial_x
    \end{bmatrix},~~\dot{\mathscr{L}}_{0,0}=\begin{bmatrix}
        0 &\mathrm{sgn}(D)m(D)\\
        2\im \kappa \partial_{x} &0
    \end{bmatrix},
\end{align}
where $\mathrm{sgn}(D)$ is defined in \eqref{D+mu}, \eqref{sgn D} and $m(D)$ is the real, even operator
\begin{align} \label{mD}
    m(D):=\tanh(\tth|D|)+\tth|D|(1-\tanh^2(\tth|D|))
\end{align}
and $a_1(x)$, $p_1(x)$, $\Sigma_1$ are given in Lemma \ref{lem:pa.exp}.

The operator $\dot{\mathscr{L}}'_{0,0}$ is
\begin{align} \label{A8}
    \dot{\mathscr{L}}'_{0,0}=\begin{bmatrix}
        \im p_1(x) & 0\\
        \im \kappa\left(2 d_1(x) \pa_x + e_1(x)\right)        & \im p_1(x)
    \end{bmatrix}
\end{align}
with $d_1(x), e_1(x)$ in \eqref{d1x}, \eqref{d2x}. 
\end{lemma}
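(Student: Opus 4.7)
Lemma \ref{U mu epsilon P00} has two logically independent parts: the algebraic identities \eqref{A1}--\eqref{A2} expressing the jets of $U_{\mu,\epsilon}P_{0,0}$ at $(0,0)$ in terms of the jets of $P_{\mu,\epsilon}$, and the explicit integral/differential formulas \eqref{A3}--\eqref{A8} for those jets.

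For the algebraic identities, I would factor $U_{\mu,\epsilon}=W_{\mu,\epsilon}^{-1/2}V_{\mu,\epsilon}$, where $W_{\mu,\epsilon}:=\operatorname{Id}-(P_{\mu,\epsilon}-P_{0,0})^{2}$ and $V_{\mu,\epsilon}:=P_{\mu,\epsilon}P_{0,0}+(\operatorname{Id}-P_{\mu,\epsilon})(\operatorname{Id}-P_{0,0})$, and Taylor-expand at $(0,0)$. Using $P_{0,0}^{2}=P_{0,0}$ one verifies $V_{0,0}=W_{0,0}=\operatorname{Id}$; moreover $\partial_{\mu}W|_{(0,0)}=\partial_{\epsilon}W|_{(0,0)}=0$, since $W_{\mu,\epsilon}-\operatorname{Id}$ is already of second order in $(P_{\mu,\epsilon}-P_{0,0})$. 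This reduces the first-order jets of $U_{\mu,\epsilon}$ to those of $V_{\mu,\epsilon}$, giving $U'_{0,0}P_{0,0}=(P'_{0,0}P_{0,0}-P'_{0,0}(\operatorname{Id}-P_{0,0}))P_{0,0}=P'_{0,0}P_{0,0}$, and similarly for the $\mu$-derivative. For the mixed derivative, I would compute $\partial_{\mu}\partial_{\epsilon}W|_{(0,0)}=-(P'_{0,0}\dot{P}_{0,0}+\dot{P}_{0,0}P'_{0,0})$ and $\partial_{\mu}\partial_{\epsilon}V|_{(0,0)}=\dot{P}'_{0,0}(2P_{0,0}-\operatorname{Id})$, obtaining
\[
\dot{U}'_{0,0}P_{0,0}=\bigl[\tfrac{1}{2}(P'_{0,0}\dot{P}_{0,0}+\dot{P}_{0,0}P'_{0,0})+\dot{P}'_{0,0}\bigr]P_{0,0}.
\]
The first summand then reduces to $-\tfrac{1}{2}P_{0,0}\dot{P}'_{0,0}P_{0,0}$ via the identity obtained by double differentiation of $P_{\mu,\epsilon}^{2}=P_{\mu,\epsilon}$, namely $\dot{P}'_{0,0}P_{0,0}+\dot{P}_{0,0}P'_{0,0}+P'_{0,0}\dot{P}_{0,0}+P_{0,0}\dot{P}'_{0,0}=\dot{P}'_{0,0}$, multiplied on the right by $P_{0,0}$; this yields \eqref{A2}.

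For the integral representations \eqref{A3}--\eqref{A5c}, I would differentiate under the integral sign in the spectral projector definition \eqref{Projection P mu e}, using the standard resolvent identity
$\partial(\mathscr{L}_{\mu,\epsilon}-\lambda)^{-1}=-(\mathscr{L}_{\mu,\epsilon}-\lambda)^{-1}(\partial\mathscr{L}_{\mu,\epsilon})(\mathscr{L}_{\mu,\epsilon}-\lambda)^{-1}$. The analyticity of $(\mu,\epsilon)\mapsto\mathscr{L}_{\mu,\epsilon}\in\mathcal{L}(Y,X)$ and the fact that $\Gamma$ remains in the resolvent set by Lemma \ref{kato thm} justify the differentiation. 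Applying this once gives \eqref{A3} and \eqref{A4}; applying it once more to \eqref{A3} and using the Leibniz rule on the three factors $(\mathscr{L}_{0,0}-\lambda)^{-1},\ \mathscr{L}'_{0,0},\ (\mathscr{L}_{0,0}-\lambda)^{-1}$ produces the three terms \eqref{A5a}--\eqref{A5c}.

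It remains to justify the explicit formulas \eqref{A6} and \eqref{A8}. Since $\mathscr{L}_{\mu,\epsilon}=\mathcal{L}_{\mu,\epsilon}-\im\chk\mu\operatorname{Id}$ by \eqref{mathcal L= ichmu+mathscr L}, the constant shift kills its own $\mu$-derivative and does not contribute to $\dot{\mathscr{L}}_{0,0}$. Entry-by-entry differentiation of \eqref{mathcal L mu e} at $(0,0)$, using $\partial_{\epsilon}\tf_{\epsilon}|_{0}=0$ from \eqref{expfe} (so the gravity-capillary dispersion symbol $|D+\mu|\tanh((\tth+\tf_\e)|D+\mu|)$ contributes only at the purely $\mu$-level) and the identity \eqref{D+mu} for the smoothness of $|D+\mu|$ on nonzero modes, yields \eqref{A6}. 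For the mixed derivative \eqref{A8}, the only nontrivial computation is the $(2,1)$-entry, which requires differentiating the capillary operator \eqref{Sigma}. Writing $\Sigma_{\mu,\epsilon}=A(\mathfrak{p}_{x})B_{\mu}(g_{\epsilon})A(\mathfrak{p}_{x})$ with $A(y)=(1+y)^{-1}$ and $B_{\mu}(g)=(\partial_{x}+\im\mu)g(\partial_{x}+\im\mu)$, one computes $\partial_{\mu}\Sigma_{\mu,\epsilon}|_{\mu=0}=\im A(\mathfrak{p}_{x})(g_{\epsilon}\partial_{x}+\partial_{x}\circ g_{\epsilon})A(\mathfrak{p}_{x})$, and then applies Leibniz in $\epsilon$ at $\epsilon=0$. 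Substituting the first-order expansions $g_{1}(x)=-\ch^{-2}\cos x$ from \eqref{exp:g1} and $\mathfrak{p}_{1}(x)=\ch^{-2}\sin x$ from \eqref{pfra1} and collecting terms gives $\partial_{\mu}\partial_{\epsilon}\Sigma_{\mu,\epsilon}|_{(0,0)}=\im(2d_{1}\partial_{x}+e_{1})$ with $d_{1},e_{1}$ as in \eqref{d1x}--\eqref{d2x}, matching \eqref{A8}. The main obstacle is this final bookkeeping: handling three nested factors of which the outermost two depend on $\epsilon$ only via $\mathfrak{p}_{x}$ (producing the crucial $-2\mathfrak{p}_{1,x}$ shifts in both the coefficient of $\partial_{x}$ and in $e_{1}$), while the middle one carries the $\mu$-dependence and mixes with $g_{\epsilon}$.
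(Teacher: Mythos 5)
Your proposal is correct, and it follows essentially the same route the paper takes: the paper defers the proof of Lemma~\ref{U mu epsilon P00} to \cite[Lemma A.1]{BMV1}, and your argument is precisely the standard one used there — factoring $U_{\mu,\epsilon}=W_{\mu,\epsilon}^{-1/2}V_{\mu,\epsilon}$, exploiting that $W_{\mu,\epsilon}-\operatorname{Id}$ vanishes to second order, differentiating the projector relation $P^{2}=P$ to simplify the mixed-jet term, and differentiating the contour integral in \eqref{Projection P mu e} via the resolvent identity. Your computation of the new gravity-capillary contributions is also correct: the $\epsilon$-independence at first order of $\ttf$ kills the $(1,2)$ entry of $\mathscr{L}'_{0,0}$; the chain rule on $|D+\mu|\tanh(\tth|D+\mu|)$ combined with \eqref{D+mu} (and the vanishing of $\Pi_0$ on $m(D)$) gives $\mathrm{sgn}(D)m(D)$; and for the $(2,1)$ entry of $\dot{\mathscr{L}}'_{0,0}$, the Leibniz expansion of $\im A_\epsilon(g_\epsilon\partial_x+\partial_x\circ g_\epsilon)A_\epsilon$ at $\epsilon=0$ indeed produces $\im\bigl(2(g_1-2\mathfrak{p}_1')\partial_x+(g_1'-2\mathfrak{p}_1'')\bigr)=\im(2d_1\partial_x+e_1)$, matching the coefficients in \eqref{Sigma 1 expansion}.
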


By the Lemma \ref{U mu epsilon P00}, we have the Taylor expansion
\begin{equation} \label{the expandsion of f sigma mu}
    \begin{aligned}
    f_k^\sigma (\mu, \epsilon) &= f_k^\sigma + \epsilon P'_{0,0} f_k^\sigma + \mu \dot{P}_{0,0} f_k^\sigma  + \mu \epsilon \big( \dot{P}'_{0,0} - \frac{1}{2} P_{0,0} \dot{P}'_{0,0} \big) f_k^\sigma + \mathcal{O}(\mu^2, \epsilon^2).
\end{aligned}
\end{equation}
In order to compute the vectors $P'_{0,0} f_k^\sigma$ and $\dot{P}_{0,0} f_k^\sigma$ using \eqref{A3} and \eqref{A4}, it is useful to know the action of $(\mathscr{L}_{0,0} - \lambda)^{-1}$ on the vectors
\begin{equation} \label{A10}
    \begin{aligned}
        f^+_k&:=\vet{(\ch/\sqrt{1+\kappa})^{1/2}\cos(kx)}{(\ch/\sqrt{1+\kappa})^{-1/2}\sin(kx)},~~f^-_k:=\vet{-(\ch/\sqrt{1+\kappa})^{1/2}\sin(kx)}{(\ch/\sqrt{1+\kappa})^{-1/2}\cos(kx)},\\
        f^+_{-k}&:=\vet{(\ch/\sqrt{1+\kappa})^{1/2}\cos(kx)}{-(\ch/\sqrt{1+\kappa})^{-1/2}\sin(kx)},~~f^-_{-k}:=\vet{(\ch/\sqrt{1+\kappa})^{1/2}\sin(kx)}{(\ch/\sqrt{1+\kappa})^{-1/2}\cos(kx)},~~k\in\mathbb{N}.
    \end{aligned}
\end{equation}

\begin{lemma}
    The space $Y=H^2(\mathbb{T},\mathbb{C})\times H^1(\mathbb{T},\mathbb{C})$ decomposes as $Y = \mathcal{V}_{0,0} \oplus \mathcal{U} \oplus \mathcal{W}_{Y}$, with
\begin{equation*}
    \mathcal{W}_{Y} = \overline{\bigoplus_{k=2}^{\infty} \mathcal{W}_k}^{Y}
\end{equation*}
where the subspaces $\mathcal{V}_{0,0}, \mathcal{U}$, and $\mathcal{W}_k$, defined below, are invariant under $\mathscr{L}_{0,0}$ and the following properties hold:

\begin{itemize}
\item[(i)] $\mathcal{V}_{0,0} = \text{span}\{f_1^+, f_1^-, f_0^+, f_0^-\}$ is the generalized kernel of $\mathscr{L}_{0,0}$. For any $\lambda \neq 0$ the operator $\mathscr{L}_{0,0} - \lambda : \mathcal{V}_{0,0} \to \mathcal{V}_{0,0}$ is invertible and
    \begin{equation} \label{A11-0}
        (\mathscr{L}_{0,0} - \lambda)^{-1} f_1^+ = -\frac{1}{\lambda} f_1^+, \quad (\mathscr{L}_{0,0} - \lambda)^{-1} f_1^- = -\frac{1}{\lambda} f_1^-,
    \end{equation}
    \begin{equation} \label{A11}
        (\mathscr{L}_{0,0} - \lambda)^{-1} f_0^- = -\frac{1}{\lambda} f_0^-,
    \end{equation}
    \begin{equation} \label{A12}
        (\mathscr{L}_{0,0} - \lambda)^{-1} f_0^+ = -\frac{1}{\lambda} f_0^+ + \frac{1}{\lambda^2} f_0^- .
    \end{equation}

    \item[(ii)] $\mathcal{U} := \text{span}\{ f_{-1}^+, f_{-1}^- \}$. For any $\lambda \neq \pm \im \,2\chk$ the operator $\mathscr{L}_{0,0} - \lambda : \mathcal{U} \to \mathcal{U}$ is invertible and
    \begin{equation} \label{A13}
    \begin{aligned}
        (\mathscr{L}_{0,0} - \lambda)^{-1} f_{-1}^+ &= \frac{1}{\lambda^2 + 4\chk^2} \left(-\lambda f_{-1}^+ + 2\chk f_{-1}^-\right),\\
        (\mathscr{L}_{0,0} - \lambda)^{-1} f_{-1}^- &= \frac{1}{\lambda^2 + 4\chk^2} \left(-2\chk f_{-1}^+ - \lambda f_{-1}^-\right).
    \end{aligned}
    \end{equation}

    \item[(iii)] Each subspace $\mathcal{W}_k := \text{span}\{ f_k^+, f_k^-, f_{-k}^+, f_{-k}^- \}$ is invariant under $\mathscr{L}_{0,0}$. Let
    \begin{equation*}
        \mathcal{W}_{L^2} = \overline{\bigoplus_{k=2}^{\infty} \mathcal{W}_k}^{ L^2}.
    \end{equation*}
    For any $|\lambda| < \delta(\tth)$ small enough, the operator $\mathscr{L}_{0,0} - \lambda : \mathcal{W}_{Y} \to \mathcal{W}_{L^2}$ is invertible and for any $f \in \mathcal{W}_{L^2}$,
    \begin{equation} \label{A14}
        (\mathscr{L}_{0,0} - \lambda)^{-1} f = \left( \chk^2 \partial^2_{x} + |D| \tanh(\tth|D|)(1-\kappa \pa^2_{x}) \right)^{-1} \begin{bmatrix} \chk \partial_x & -|D| \tanh(\tth|D|) \\ 1-\kappa \pa^2_{x} & \chk \partial_x \end{bmatrix} f
        + \lambda \varphi_f(\lambda, x),
    \end{equation}
    for some analytic function $\lambda \mapsto \varphi_f(\lambda, \cdot) \in Y=H^2(\mathbb{T},\mathbb{C})\times H^1(\mathbb{T},\mathbb{C})$.
\end{itemize}
\end{lemma}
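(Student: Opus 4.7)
My plan is to exploit that $\mathscr{L}_{0,0} = \mathcal{L}_{0,0}$ in \eqref{mathcal L mu 0} is a constant-coefficient matrix Fourier multiplier: each Fourier frequency shell is automatically invariant, so the decomposition $Y = \mathcal{V}_{0,0} \oplus \mathcal{U} \oplus \mathcal{W}_Y$ reduces to a Fourier decomposition of $Y$ into $\mathscr{L}_{0,0}$-invariant real subspaces, and the resolvent can be computed one mode at a time. Invariance of each real four-dimensional $\mathcal{W}_k = \mathrm{span}\{f^\pm_{\pm k}\}$ follows from the reality and the even/odd parity-preserving structure of $\mathcal{L}_{0,0}$, combined with the fact that the symbol of $\partial_x$ and of $|D|\tanh(\tth|D|)$ swap $\cos(kx) \leftrightarrow \sin(kx)$.

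For part (i), I would proceed by direct algebraic verification using the normalization in \eqref{eigenfunc of mathcall L00 2} together with the identities $\chk = (1+\kappa)^{1/2}\ch$ and $\tanh(\tth) = \ch^2$: one checks $\mathcal{L}_{0,0} f_1^{\pm} = 0$, $\mathcal{L}_{0,0} f_0^- = 0$, and $\mathcal{L}_{0,0} f_0^+ = - f_0^-$, which encodes the nilpotent Jordan structure of the generalized kernel already recorded in \eqref{eigenfunc of mathcall L00}--\eqref{eigenfunc f+0}. The resolvent formulas \eqref{A11-0}--\eqref{A12} then follow by solving $(\mathcal{L}_{0,0}-\lambda) v = f_k^\sigma$ algebraically on this four-dimensional invariant subspace. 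For part (ii), a direct calculation analogous to part (i), carried out on $\{f_{-1}^+, f_{-1}^-\}$ (where the opposite sign in the second component of $f^+_{-1}$ makes the two operator-terms add rather than cancel), yields $\mathcal{L}_{0,0} f_{-1}^+ = -2\chk f_{-1}^-$ and $\mathcal{L}_{0,0} f_{-1}^- = 2\chk f_{-1}^+$, i.e. the matrix representation $\bigl(\begin{smallmatrix} 0 & 2\chk \\ -2\chk & 0 \end{smallmatrix}\bigr)$ with eigenvalues $\pm 2\im\chk$; inverting $(\mathcal{L}_{0,0}-\lambda)|_{\mathcal{U}}$ as a $2\times 2$ scalar matrix reproduces exactly \eqref{A13}.

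For part (iii), the leading term in \eqref{A14} is the adjugate-over-determinant formula applied to the $2\times 2$ symbol of $\mathcal{L}_{0,0}$: the symbolic determinant is $\chk^2\partial_x^2 + |D|\tanh(\tth|D|)(1-\kappa\partial_x^2)$ and the adjugate is the matrix displayed in \eqref{A14}, so their quotient is precisely $\mathcal{L}_{0,0}^{-1}$ on $\mathcal{W}_{L^2}$. The remainder $\lambda\varphi_f(\lambda,\cdot)$ is produced by the Neumann-type identity
\[
(\mathcal{L}_{0,0}-\lambda)^{-1} = \mathcal{L}_{0,0}^{-1} + \lambda\,\mathcal{L}_{0,0}^{-1}(\mathcal{L}_{0,0}-\lambda)^{-1},
\]
which is analytic in $\lambda$ for $|\lambda|<\delta$ with values in $\mathcal{L}(\mathcal{W}_{L^2},\mathcal{W}_Y)$.

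The main difficulty is proving uniform-in-mode invertibility of $\mathcal{L}_{0,0}$ on $\mathcal{W}_Y$ in the correct operator norm. By \eqref{eigenvalues of Lmu0}, the eigenvalues of $\mathcal{L}_{0,0}$ on each $\mathcal{W}_k$, $k\ge 2$, are $\pm\im\sqrt{(1+\kappa k^2)\,k\tanh(\tth k)}$, each of multiplicity two, and they are bounded below by a constant $\delta(\tth,\kappa)>0$ uniformly in $k\ge 2$, so pointwise invertibility with spectral gap is immediate. The subtle point is that the inverse must be bounded from $\mathcal{W}_{L^2}$ into $\mathcal{W}_Y = \mathcal{W} \cap (H^2\times H^1)$, not merely into $L^2\times L^2$. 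Here the capillary term helps decisively compared with the pure-gravity case of \cite{BMV3}: the symbolic determinant grows like $k^3$ at high frequency instead of like $k^2$, while the entries of the adjugate are Fourier multipliers of order at most $1$, so $\mathcal{L}_{0,0}^{-1}$ gains two derivatives on the first row and at least $3/2$ on the second. A mode-by-mode quantitative bound based on this symbolic calculation, combined with the resolvent identity above, concludes the proof of part (iii) and hence of the lemma.
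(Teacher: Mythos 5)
Your proof proposal follows essentially the same route as the paper's own proof: algebraic verification on the invariant finite-dimensional subspaces for parts~(i) and~(ii), and for part~(iii) the explicit adjugate-over-determinant inverse of the constant-coefficient $2\times 2$ multiplier on $\mathcal{W}$ followed by a Neumann/resolvent-identity expansion in $\lambda$. Your observation that the capillary term raises the symbolic determinant to order $k^3$, so that $\mathcal{L}_{0,0}^{-1}$ maps $L^2\times L^2$ into $H^2\times H^1$, is exactly why the phase space $Y$ here is $H^2\times H^1$ rather than the $H^1\times H^1$ used in the pure-gravity paper \cite{BMV3}, and is the one place where the capillary structure genuinely enters.

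One slip worth flagging: citing \eqref{eigenvalues of Lmu0} at $\mu=0$, the eigenvalues on $\mathcal{W}_k$ are $\pm\im\chk k \pm \im\sqrt{(1+\kappa k^2)\,k\tanh(\tth k)}$ (all four sign choices), not the two eigenvalues $\pm\im\sqrt{(1+\kappa k^2)\,k\tanh(\tth k)}$ of multiplicity two that you wrote, and this is exactly what the paper's proof records. The omission matters conceptually: with your simplified formula the spectral gap on $\mathcal{W}$ is trivially large (the eigenvalues grow like $k^{3/2}$), whereas the genuine eigenvalues include the differences $\chk k - \sqrt{(1+\kappa k^2)k\tanh(\tth k)}$, which vanish precisely when $(\kappa,\tth)\in\fR_k$ and could a priori be arbitrarily small. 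The lower bound $\delta(\tth,\kappa)>0$ you claim is still correct — the differences are nonzero for $k\ge 2$ by the non-resonance hypothesis $(\kappa,\tth)\notin\fR$ and grow like $k^{3/2}$ as $k\to\infty$ thanks to the capillary term — but your argument as written hides the role of the non-resonance assumption, which is actually the load-bearing condition behind the invertibility of $\mathcal{L}_{0,0}|_{\mathcal{W}}$.
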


\begin{proof}
    By inspection the spaces $\mathcal{V}_{0,0}, \mathcal{U}$ and $\mathcal{W}_k$ are invariant under $\mathscr{L}_{0,0}$ and, by Fourier series, they decompose $Y=H^2(\mathbb{T},\mathbb{C})\times H^1(\mathbb{T},\mathbb{C})$. Formulas \eqref{A11}-\eqref{A12} follow using that $f_1^+, f_1^-, f_0^-$ are in the kernel of $\mathscr{L}_{0,0}$, and $\mathscr{L}_{0,0} f_0^+ = -f_0^-$. Formula \eqref{A13} follows using that $\mathscr{L}_{0,0} f_{-1}^+ = -2\chk f_{-1}^-$ and $\mathscr{L}_{0,0} f_{-1}^- = 2\chk f_{-1}^+$. Let us prove item $(iii)$. Let $\mathcal{W} := \mathcal{W}_{Y}$. The operator $(\mathscr{L}_{0,0} - \lambda \mathrm{Id})|_{\mathcal{W}}$ is invertible for any 
$$\lambda \notin \{ \pm i k\chk \pm i\sqrt{|k|\tanh(\tth| k|)\left(1+\kappa k^2\right)}, k \geq 2, k \in \mathbb{N} \}$$
and 
$$
(\mathscr{L}_{0,0}|_{\mathcal{W}})^{-1} = \left( \chk^2 \partial_{xx} + |D| \tanh(\tth|D|)(1-\kappa \pa_{xx}) \right)^{-1} \begin{bmatrix} \chk \partial_x & -|D| \tanh(\tth|D|) \\ 1-\kappa \pa_{xx} & \chk \partial_x \end{bmatrix} \Big|_{\mathcal{W}}.
$$
By Neumann series, for any $\lambda$ such that 
$$|\lambda| \| (\mathscr{L}_{0,0}|_{\mathcal{W}})^{-1} \|_{\mathcal{L}(\mathcal{W}, Y)} < 1$$
we have 
$$
(\mathscr{L}_{0,0}|_{\mathcal{W}} - \lambda)^{-1} = (\mathscr{L}_{0,0}|_{\mathcal{W}})^{-1} (\text{Id} - \lambda (\mathscr{L}_{0,0}|_{\mathcal{W}})^{-1})^{-1} 
= (\mathscr{L}_{0,0}|_{\mathcal{W}})^{-1} \sum_{k \geq 0} ((\mathscr{L}_{0,0}|_{\mathcal{W}})^{-1} \lambda)^k.
$$
Formula \eqref{A14} follows with 
$$\varphi_f(\lambda, x) := (\mathscr{L}_{0,0}|_{\mathcal{W}})^{-1} \sum_{k \geq 1} \lambda^{k-1} [(\mathscr{L}_{0,0}|_{\mathcal{W}})^{-1}]^k f.$$ 
\end{proof}

To prove Lemma \ref{expansion of the basis F}, we shall also use the following formulas obtained by \eqref{A6}, \eqref{mD}, and \eqref{eigenfunc of mathcall L00 2}:

\begin{equation} \label{A15}
    \begin{aligned}
        &\mathscr{L}'_{0,0} f^+_1=\vet{2\left(\ch/\sqrt{1+\kappa}\right)^{-\frac{1}{2}}\sin(2x)}{b^{[1]}_{\tth,\kappa}+b^{[2]}_{\tth,\kappa}\cos(2x)},\\
        &\mathscr{L}'_{0,0} f^-_1=\vet{2\left(\ch/\sqrt{1+\kappa}\right)^{-\frac{1}{2}}\cos(2x)}{-b^{[2]}_{\tth,\kappa}\sin(2x)},\\
        &\mathscr{L}'_{0,0} f^+_0=\vet{2\ch^{-1}(1+\kappa)^{\frac{1}{2}}\sin(x)}{\left(\ch^2+\ch^{-2}\right)(1+\kappa)\cos(x)}, \quad \mathscr{L}'_{0,0}f^-_0=\vet{0}{0},\\
        &\dot{\mathscr{L}}_{0,0} f^+_1=\vet{-\im b^{[3]}_{\tth,\kappa}\cos(x)}{-2\im\kappa(\ch/\sqrt{1+\kappa})^{\frac{1}{2}}\sin(x)}, \quad \dot{\mathscr{L}}_{0,0} f^-_1=\vet{\im b^{[3]}_{\tth,\kappa} \sin(x)}{-2\im\kappa(\ch/\sqrt{1+\kappa})^{\frac{1}{2}}\cos(x)},\\
        &\dot{\mathscr{L}}_{0,0} f^+_0=\vet{0}{0}, \quad \dot{\mathscr{L}}_{0,0}f^-_0=\vet{0}{0},
    \end{aligned}
\end{equation}
where
\begin{equation}  \label{b1 b2 b3}
\begin{aligned}
b^{[1]}_{\tth,\kappa}&:=\frac{1}{2}(1+\kappa)^{\frac{3}{4}}\ch^{\frac{5}{2}}(1-\ch^{-4}), \ \ 
    b^{[2]}_{\tth,\kappa}:=\frac{(1+\kappa)\ch^4+5\kappa-1}{2\ch^{\frac{3}{2}}(1+\kappa)^{\frac{1}{4}}}, \ \ 
b^{[3]}_{\tth,\kappa}:=\left(\frac{\ch}{\sqrt{1+\kappa}}\right)^{-\frac{1}{2}}\left(\ch^2+\tth(1-\ch^4)\right).
\end{aligned}
\end{equation}
We finally calculate $P'_{0,0} f^\sigma_k$ and $\dot{P}_{0,0} f^\sigma_k$.

\begin{lemma} One has
    \begin{equation} \label{A16}
        \begin{aligned}
    P'_{0,0}f^+_1&=\vet{\alpha_{\tth,\kappa}\cos(2x)}{\beta_{\tth,\kappa}\sin(2x)},\quad P'_{0,0}f^-_1=\vet{-\alpha_{\tth,\kappa}\sin(2x)}{\beta_{\tth,\kappa}\cos(2x)}, \quad 
            P'_{0,0}f^+_0= \delta_{\tth,\kappa}f^+_{-1}, \\
            P'_{0,0}f^-_0& =\vet{0}{0}, \quad \dot{P}_{0,0}f^+_0=\vet{0}{0}, \quad \dot{P}_{0,0}f^-_0=\vet{0}{0}, \quad 
            \dot{P}_{0,0}f^+_1=\im\frac{\gamma_{\tth, \kappa}}{4}f^-_{-1}, \quad  \dot{P}_{0,0}f^-_1=\im\frac{\gamma_{\tth, \kappa}}{4}f^+_{-1},
        \end{aligned}
    \end{equation}
where $\alpha_{\tth,\kappa}$,  $\beta_{\tth,\kappa}$, ${\gamma_{\tth, \kappa}}$ and $\delta_{\tth, \kappa}$ are defined in \eqref{47 coefficients}.
\end{lemma}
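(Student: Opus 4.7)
The plan is to compute each vector $P'_{0,0} f_k^\sigma$ and $\dot{P}_{0,0} f_k^\sigma$ directly from the contour integral representations \eqref{A3}--\eqref{A4}, combined with the explicit action formulas \eqref{A15} for $\mathscr{L}'_{0,0}$ and $\dot{\mathscr{L}}_{0,0}$ and the resolvent action \eqref{A11-0}--\eqref{A14} on the invariant decomposition $Y=\mathcal{V}_{0,0}\oplus\mathcal{U}\oplus\mathcal{W}_Y$. Since each integrand is a meromorphic function of $\lambda$ inside the contour $\Gamma$ (the only singularity being $\lambda=0$ for the $\mathcal{V}_{0,0}$ and $\mathcal{W}_Y$ components, while the $\mathcal{U}$ component is analytic in $\lambda$ near $0$), the contour integrals reduce to computing residues at $\lambda=0$.

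Concretely, for the computation of $P'_{0,0}f_k^\sigma$ I first apply $\mathscr{L}'_{0,0}$ to $f_k^\sigma$ using \eqref{A15}, which produces vectors supported on Fourier modes $k'\in\{0,1,2\}$. Each such vector splits as a sum according to the decomposition $Y=\mathcal{V}_{0,0}\oplus\mathcal{U}\oplus\mathcal{W}_Y$, and I apply the corresponding resolvent formulas \eqref{A11-0}--\eqref{A14}. A sandwiched resolvent of the form $(\mathscr{L}_{0,0}-\lambda)^{-1}\mathscr{L}'_{0,0}(\mathscr{L}_{0,0}-\lambda)^{-1}$ evaluated at an element of $\mathcal{V}_{0,0}$ produces terms of order $\lambda^{-1}$, $\lambda^{-2}$ and analytic corrections; only the $\lambda^{-1}$ terms contribute to the residue at $\lambda=0$. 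A key observation is that the image under $\mathscr{L}'_{0,0}$ of the four kernel vectors $f_1^\pm,f_0^\pm$ lies in the complement $\mathcal{W}_2\oplus\mathcal{U}\oplus\mathcal{W}_1$ of $\mathcal{V}_{0,0}$ (modulo the known Jordan chain structure $\mathscr{L}_{0,0}f_0^+=-f_0^-$), so after the first resolvent one obtains an analytic-in-$\lambda$ vector; the residue then comes from the second resolvent acting on $f_k^\sigma\in \mathcal{V}_{0,0}$. Explicitly, $\mathscr{L}'_{0,0}f_1^+$ projects onto $\mathcal{W}_2$ and the outer resolvent $(\mathscr{L}_{0,0})^{-1}$ on $\mathcal{W}_2$ (the operator in square brackets in \eqref{A14} divided by $\chk^2\pa_{xx}+|D|\tanh(\tth|D|)(1-\kappa\pa_{xx})$) produces exactly the combination $\alpha_{\tth,\kappa}\cos(2x)$, $\beta_{\tth,\kappa}\sin(2x)$; this is the calculation that yields the constants in \eqref{47 coefficients}. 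For $P'_{0,0}f_0^+$, the corresponding image falls into $\mathcal{U}$, and the inversion of $\mathscr{L}_{0,0}|_{\mathcal{U}}$ at $\lambda=0$ using \eqref{A13} yields a multiple of $f_{-1}^+$, producing $\delta_{\tth,\kappa}$. The fact that $P'_{0,0}f_0^-=0$ and $\dot{P}_{0,0}f_0^\pm=0$ follows immediately from $\mathscr{L}'_{0,0}f_0^-=0$ and $\dot{\mathscr{L}}_{0,0}f_0^\pm=0$ in \eqref{A15}.

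The computation of $\dot{P}_{0,0}f_1^\pm$ is entirely analogous: the image $\dot{\mathscr{L}}_{0,0}f_1^\pm$ from \eqref{A15} is purely in the $\mathcal{U}$-component (Fourier mode $1$ with the parity opposite to the Stokes modes $f_1^\pm$), so applying $(\mathscr{L}_{0,0}|_\mathcal{U})^{-1}$ via \eqref{A13} at $\lambda=0$ gives a multiple of $f_{-1}^\mp$. Matching with the formulas \eqref{A10} for $f_{\pm 1}^\pm$ and combining with the coefficient $b_{\tth,\kappa}^{[3]}$ of \eqref{b1 b2 b3} yields the factor $\tfrac{i}{4}\gamma_{\tth,\kappa}$ with $\gamma_{\tth,\kappa}$ as in \eqref{47 coefficients}; in particular the contribution from the capillary term $2\im\kappa\pa_x$ in $\dot{\mathscr{L}}_{0,0}$ is responsible for the correction $-2\kappa(1+\kappa)^{-1}$ in the definition of $\gamma_{\tth,\kappa}$, compared to the pure-gravity case of \cite{BMV3}.

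The main technical obstacle is bookkeeping: one must track precisely which Fourier components of each intermediate vector belong to $\mathcal{V}_{0,0}$, $\mathcal{U}$ or $\mathcal{W}_2$, apply the correct resolvent formula in each space, and carefully extract the residue at $\lambda=0$. The main \emph{new} difficulty compared to the pure-gravity case is the presence of the extra capillary contributions, namely $\kappa\Sigma_1$ in $\mathscr{L}'_{0,0}$ and $2\im\kappa\pa_x$ in $\dot{\mathscr{L}}_{0,0}$: these alter the Fourier profiles of $\mathscr{L}'_{0,0}f_k^\sigma$ and $\dot{\mathscr{L}}_{0,0}f_k^\sigma$, modify the Stokes amplitudes entering the second resolvent, and, when inverted against the denominator $\chk^2\pa_{xx}+|D|\tanh(\tth|D|)(1-\kappa\pa_{xx})$ appearing in \eqref{A14}, give rise to the particular form of the denominators in $\alpha_{\tth,\kappa}$ and $\beta_{\tth,\kappa}$; algebraic simplification to arrive at the clean closed forms in \eqref{47 coefficients} is where most of the work lies. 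Once all residues are computed and simplified, the formulas \eqref{A16} follow.
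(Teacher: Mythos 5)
Your proposal follows essentially the same route as the paper: use the contour-integral formulas for $P'_{0,0}$ and $\dot P_{0,0}$ together with the explicit actions of $\mathscr{L}'_{0,0}$, $\dot{\mathscr{L}}_{0,0}$ in \eqref{A15}, decompose along $Y=\mathcal{V}_{0,0}\oplus\mathcal{U}\oplus\mathcal{W}_Y$, invert the resolvent block by block via \eqref{A11-0}--\eqref{A14}, and extract the residue at $\lambda=0$. The role you attribute to the new capillary contributions (in particular $2\im\kappa\partial_x$ producing the $-2\kappa(1+\kappa)^{-1}$ correction in $\gamma_{\tth,\kappa}$) matches the computations in the paper.

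One small inaccuracy is worth flagging, though it does not derail the argument. You assert that $\mathscr{L}'_{0,0}f_k^\sigma$ lies entirely in the complement of $\mathcal{V}_{0,0}$, and that $\dot{\mathscr{L}}_{0,0}f_1^\pm$ is purely in $\mathcal{U}$. In fact $\mathscr{L}'_{0,0}f_1^+$ contains a nontrivial $b^{[1]}_{\tth,\kappa}f_0^-$ piece in $\mathcal{V}_{0,0}$, and $\dot{\mathscr{L}}_{0,0}f_1^\pm$ is a combination of $f_{\pm 1}^\pm$ and $f_{\mp 1}^\pm$, hence has a $\mathcal{V}_{0,0}$ component as well; these pieces generate $\lambda^{-2}$ terms with vanishing residue, which your earlier general observation about the pole structure would absorb, but the blanket "lies in the complement" statement as written is false. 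Also the space $\mathcal{W}_1$ you mention is not part of the paper's decomposition (the Fourier mode $k=1$ is split between $\mathcal{V}_{0,0}$ and $\mathcal{U}$, and $\mathcal{W}_k$ is defined only for $k\geq 2$). Tightening these points to "the residue at $\lambda=0$ is determined by the $\mathcal{U}$- and $\mathcal{W}_2$-components of $\mathscr{L}'_{0,0}f_k^\sigma$ and $\dot{\mathscr{L}}_{0,0}f_k^\sigma$, since the $\mathcal{V}_{0,0}$-components only produce order-two poles" would make the proposal precise and in full alignment with the paper's proof.
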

\begin{proof}
    We first calculate $P'_{0,0} f^+_1$. By \eqref{A3}, \eqref{A11-0}, \eqref{A11}, and \eqref{A15} we deduce
    \begin{align*}
        P'_{0,0} f^+_1=-\frac{1}{2\pi \im}\oint_{\Gamma} \frac{1}{\lambda} \left(\mathscr{L}_{0,0}-\lambda\right)^{-1} \vet{2\left(\frac{\ch}{\sqrt{1+\kappa}}\right)^{-\frac{1}{2}}\sin(2x)}{b^{[1]}_{\tth,\kappa}+b^{[2]}_{\tth,\kappa}\cos(2x)} d\lambda.
    \end{align*}
We note that 
\begin{equation*}
    \vet{2\left(\frac{\ch}{\sqrt{1+\kappa}}\right)^{-\frac{1}{2}}\sin(2x)}{b^{[1]}_{\tth,\kappa}+b^{[2]}_{\tth,\kappa}\cos(2x)}=b^{[1]}_{\tth,\kappa}f^-_0+\mathcal{W} , \qquad b^{[1]}_{\tth,\kappa} \mbox{ in } \eqref{b1 b2 b3} \ . 
\end{equation*}
Therefore by \eqref{A11} and \eqref{A14} there is an analytic function $\lambda \mapsto \varphi(\lambda,\cdot)\in H(\mathbb{T})$ so that, by exploiting the identity $\tanh(2\tth)=\frac{2\ch^2}{1+\ch^4}$ in applying \eqref{A14}, we obtain
\begin{align*}
    P'_{0,0} f^+_1=-\frac{1}{2\pi \im} \oint_\Gamma \frac{1}{\lambda}\left(-\frac{b^{[1]}_{\tth,\kappa}}{\lambda}f^-_0+\vet{-\alpha_{\tth,\kappa}\cos(2x)}{-\beta_{\tth,\kappa}\sin(2x)}+\lambda\varphi(\lambda)\right) d\lambda.
\end{align*}
Thus, by means of the redidue theorem we obtain the first identity in \eqref{A16}. Similarly, one may calculate $P'_{0,0} f^-_1$. By \eqref{A3}, \eqref{A11}, and \eqref{A15}, one has $P'_{0,0}f^-_0=0$. Next, we calculate $P'_{0,0}f^+_0$. By \eqref{A3}, \eqref{A11}, \eqref{A12}, and \eqref{A15} we get 
\[
P'_{0,0} f^+_0 = -\frac{1}{2\pi \im} \oint_{\Gamma} \frac{1}{\lambda} (\mathscr{L}_{0,0} - \lambda)^{-1} \vet{2\ch^{-1}(1+\kappa)^{\frac{1}{2}}\sin(x)}{\left(\ch^2+\ch^{-2}\right)(1+\kappa)\cos(x)} d\lambda.
\]

Next, we decompose
\begin{equation}
    \begin{aligned}
       \vet{2\ch^{-1}(1+\kappa)^{\frac{1}{2}}\sin(x)}{\left(\ch^2+\ch^{-2}\right)(1+\kappa)\cos(x)}&= \underbrace{\frac{1}{2}(1+\kappa)^{\frac{3}{4}}\ch^{\frac{1}{2}}\left(\ch^2+3\ch^{-2}\right)}_{=: \alpha} f^-_{-1} + \underbrace{\frac{1}{2}(1+\kappa)^{\frac{3}{4}}\ch^{\frac{1}{2}}\left(\ch^2-\ch^{-2}\right)}_{=: \beta} f^-_1. 
    \end{aligned}
\end{equation}
By \eqref{A15} and \eqref{A13} we get
\[
P'_{0,0} f^+_0 = -\frac{1}{2\pi \im} \oint_{\Gamma} 
\left( -\frac{2\alpha \chk}{\lambda (\lambda^2 + 4\chk^2)} f^+_{-1} 
- \frac{\alpha}{\lambda^2 + 4\chk^2} f^-_{-1} 
- \frac{\beta}{\lambda^2} f^-_{1} \right) d\lambda.
\]
Applying the residue theorem, we obtain
\[
P'_{0,0} f^+_0 = \frac{\alpha}{2 \chk} f^+_{-1}.
\]
Thus, we obtain the third identity in \eqref{A16}. Now, we calculate $\dot{P}_{0,0} f^+_1$. First, we have 
\begin{align*}
    \dot{P}_{0,0} f^+_1=\frac{1}{2\pi }\oint_\Gamma \frac{1}{\lambda}\left(\mathscr{L}_{0,0}-\lambda\right)^{-1} \vet{b^{[3]}_{\tth,\kappa} \cos(x)}{2\kappa\left(\frac{\ch}{\sqrt{1+\kappa}}\right)^{\frac{1}{2}}\sin(x)} d\lambda,
\end{align*}
where $b^{[3]}_{\tth,\kappa}$ is in \eqref{A15}, and then, writing
\begin{align*}
    &\vet{\cos(x)}{0}=\frac{1}{2}\left(\frac{\ch}{\sqrt{1+\kappa}}\right)^{-\frac{1}{2}}\left(f^+_1+f^+_{-1}\right), \\
    &\vet{0}{\sin(x)}=\frac{1}{2}\left(\frac{\ch}{\sqrt{1+\kappa}}\right)^{\frac{1}{2}}\left(f^+_1-f^+_{-1}\right),
\end{align*}
and using \eqref{A13}, we conclude using again the residue theorem 
\begin{align*}
    \dot{P}_{0,0}f^+_1=\im\left(\frac{1}{4}b^{[3]}_{\tth,\kappa}\ch^{-\frac{3}{2}}(1+\kappa)^{-\frac{1}{4}}-\frac{1}{2}\kappa(1+\kappa)^{-1}\right)  f^-_{-1}.
\end{align*}
Similarly, we have
\begin{align*}
    \dot{P}_{0,0}f^-_1=\im\left(\frac{1}{4}b^{[3]}_{\tth,\kappa}\ch^{-\frac{3}{2}}(1+\kappa)^{-\frac{1}{4}}-\frac{1}{2}\kappa(1+\kappa)^{-1}\right) f^+_{-1}.
\end{align*}
Finally, in view of \eqref{A15}, we have
\begin{equation*}
    \begin{aligned}
        \dot{P}_{0,0}f^+_0=&\frac{1}{2\pi \im}\oint_\Gamma \left(\mathscr{L}_{0,0}-\lambda\right)^{-1}\dot{\mathscr{L}}_{0,0}\left(\frac{1}{\lambda^2}f^-_0-\frac{1}{\lambda}f^+_0\right) d\lambda=0,\\
        \dot{P}_{0,0}f^-_0=&\frac{1}{2\pi \im}\oint_\Gamma \left(\mathscr{L}_{0,0}-\lambda\right)^{-1}\dot{\mathscr{L}}_{0,0}\left(\frac{-1}{\lambda}f^-_0\right) d\lambda=0.
    \end{aligned}
\end{equation*}
In conclusion, all the formulas in \eqref{A16} are proven.
\end{proof}

So far we have obtained the linear terms of the expansions \eqref{43 f+1}, \eqref{44 f-1}, \eqref{45 f+0}, and \eqref{46 f-0}. We now provide further information about the expansion of the basis at $\mu = 0$. 

\begin{lemma} 
     The basis $\{ f_k^{\sigma}(0,\e), \quad k = 0,1, \quad \sigma = \pm \}$ is real. For any $\epsilon$ it results $f_0^-(0,\epsilon) \equiv f_0^-$. The property \eqref{48 f-0=01} holds.
\end{lemma}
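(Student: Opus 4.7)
The statement decomposes into three naturally ordered claims, each of which is essentially algebraic given the machinery already developed.

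First, I would deduce the realness of $\{f_k^\sigma(0,\epsilon)\}$ directly from Lemma~\ref{properties of U and P}(iii), which states $\overline{U_{0,\epsilon}} = U_{0,\epsilon}$: since the seed vectors in \eqref{eigenfunc of mathcall L00 2} are manifestly real, so are their images $U_{0,\epsilon} f_k^\sigma$ under the real operator $U_{0,\epsilon}$.

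Second, for the identity $f_0^-(0,\epsilon) \equiv f_0^- = \vet{0}{1}$, the plan is to show that $f_0^-$ itself lies in $\mathcal{V}_{0,\epsilon}$ and then invoke the explicit Kato formula \eqref{U transformation operators}. Inspection of $\mathcal{B}_{0,\epsilon}$ in \eqref{B epsilon} shows that the second column acts on the constant function $1$ as $-(\chk+p_\epsilon)\partial_x\,1 = 0$ and $|D|\tanh((\tth+\ttf)|D|)\,1 = 0$, so $\mathcal{B}_{0,\epsilon} f_0^- = 0$ and hence $\mathscr{L}_{0,\epsilon} f_0^- = 0$. Consequently $P_{0,\epsilon} f_0^- = f_0^- = P_{0,0} f_0^-$ and $(P_{0,\epsilon} - P_{0,0}) f_0^- = 0$, so that in \eqref{U transformation operators} both the square-root factor and the bracketed combination reduce to the identity on $f_0^-$, giving $U_{0,\epsilon} f_0^- = f_0^-$.

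Third, the parity structure \eqref{48 f-0=01} follows from combining the realness just established with the reversibility relation $\bar\rho f_k^\sigma(0,\epsilon) = \sigma f_k^\sigma(0,\epsilon)$ of Lemma~\ref{F is symplectic and reversible}. On real vectors $\bar\rho$ coincides with the involution $\rho$ of \eqref{rho involution}, which forces $f_k^+(0,\epsilon)$ to have even first component and odd second component, and $f_k^-(0,\epsilon)$ the reverse. The refined zero-mean conditions (the subscript $_0$ in $\mathrm{even}_0$) will be read off from the symplectic relations \eqref{basis is symplectic}: since $f_0^- = \vet{0}{1}$, the pairing $(\mathcal{J} v, f_0^-)$ collapses to $-\langle v_1\rangle$, so the orthogonality $(\mathcal{J} f_1^\pm(0,\epsilon), f_0^-) = 0$ gives zero mean for the first component of $f_1^\pm(0,\epsilon)$, and the normalization $(\mathcal{J} f_0^+(0,\epsilon), f_0^-) = -1$ pins the constant mode of $(f_0^+(0,\epsilon))_1$ to be exactly $1$, producing the decomposition $\vet{1}{0} + \vet{\mathrm{even}_0(x)}{\mathrm{odd}(x)}$.

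No step presents a substantive analytical difficulty: the whole proof is an algebraic reduction to the Kato construction together with the symplectic and reversibility structure already recorded in Lemmata~\ref{properties of U and P}--\ref{F is symplectic and reversible}. The only point demanding attention — what I would call the main bookkeeping step rather than a genuine obstacle — is making sure the constant-mode direction is entirely carried by $f_0^+(0,\epsilon)$ and does not leak into $f_1^\pm(0,\epsilon)$ or into the perturbative correction of $f_0^+(0,\epsilon)$; this is precisely what the symplectic pairings with $f_0^-$ enforce.
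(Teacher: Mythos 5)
Your proposal is correct and follows essentially the same route as the paper: realness via Lemma~\ref{properties of U and P}(iii), the invariance $U_{0,\epsilon} f_0^- = f_0^-$ obtained from $\mathscr{L}_{0,\epsilon} f_0^- = 0$ and the Kato formula \eqref{U transformation operators}, and the zero-mean refinements from pairing against $\mathcal{J} f_0^- = \vet{1}{0}$ via symplecticity. The only cosmetic differences are that the paper cites \eqref{237} rather than inspecting $\mathcal{B}_{0,\epsilon}$ directly to get $\mathscr{L}_{0,\epsilon} f_0^- = 0$, and writes out the residue-theorem step for $P_{0,\epsilon} f_0^- = f_0^-$ explicitly (the symplectic pairing with $f_1^-(0,\epsilon)$ is also unnecessary since its first component is already odd, but including it is harmless).
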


\begin{proof}
    The reality of the basis $f_k^\sigma(0,\epsilon)$ is a consequence of Lemma \ref{properties of U and P}-(iii). Since, recalling \eqref{mathscr L mu e} and \eqref{mathcal B mu e}, $\mathscr{L}_{0,\epsilon} f_0^- = 0$ for any $\epsilon$ (cf. \eqref{237}), we deduce $(\mathscr{L}_{0,\epsilon} - \lambda)^{-1} f_0^- = -\frac{1}{\lambda} f_0^-$ and then, using also the residue theorem,
\[
P_{0,\epsilon} f_0^- = -\frac{1}{2\pi \im} \oint_\Gamma (\mathscr{L}_{0,\epsilon} - \lambda)^{-1} f_0^- \, d\lambda = f_0^-.
\]
In particular $P_{0,\epsilon} f_0^- = P_{0,0} f_0^-$, for any $\epsilon$ and we get, by \eqref{U transformation operators}, $f_0^-(0,\epsilon) = U_{0,\epsilon} f_0^- = f_0^-$, for any $\epsilon$.

Let us prove property \eqref{48 f-0=01}. In view of \eqref{Parity structure} and since the basis is real, we know that
\[
f_k^+(0,\epsilon) = \begin{bmatrix}\text{even}(x)\\ \text{odd}(x)\end{bmatrix}, \quad f_k^-(0,\epsilon) = \begin{bmatrix}\text{odd}(x)\\ \text{even}(x)\end{bmatrix},
\]
for any $k = 0, 1$. By Lemma \ref{F is symplectic and reversible} the basis $\{f_k^\sigma(0,\epsilon)\}$ is symplectic (cf. \eqref{basis is symplectic} and, since
\[
\mathcal{J} f_0^-(0,\epsilon) = \mathcal{J} f_0^- = \begin{bmatrix}1\\0\end{bmatrix},
\]
for any $\epsilon$, we get
\[
0 = (\mathcal{J} f_0^-(0,\epsilon),\, f_1^+(0,\epsilon)) = \left(\begin{bmatrix}1\\0\end{bmatrix}, f_1^+(0,\epsilon)\right), \qquad 
1 = (\mathcal{J} f_0^-(0,\epsilon),\, f_0^+(0,\epsilon)) = \left(\begin{bmatrix}1\\0\end{bmatrix}, f_0^+(0,\epsilon)\right).
\]
Thus the first component of both $f_1^+(0,\epsilon)$ and $f_0^+(0,\epsilon) - \begin{bmatrix}1\\0\end{bmatrix}$ has zero average, proving \eqref{48 f-0=01}.
\end{proof}

\begin{lemma} \label{fmu0 eq f0} 
For any small $\mu$, we have $f_0^+(\mu,0) \equiv f_0^+$ and $f_0^-(\mu,0) \equiv f_0^-$. Moreover, the vectors $f_1^+(\mu,0)$ and $f_1^-(\mu,0)$ have both components with zero space average.
\end{lemma}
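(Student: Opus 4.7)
The plan is to exploit the fact that at $\epsilon = 0$ the operator $\mathscr{L}_{\mu,0}$ is a Fourier multiplier matrix on $L^2(\mathbb{T},\mathbb{C}^2)$, so every object built from it preserves the Fourier-mode decomposition. I would first observe that each subspace $\mathcal{Z}_k := \mathrm{span}\{e^{\im k x}\}\otimes \mathbb{C}^2$, $k\in\mathbb{Z}$, is invariant under $\mathscr{L}_{\mu,0}$, and hence under the resolvent $(\mathscr{L}_{\mu,0}-\lambda)^{-1}$, under the spectral projector $P_{\mu,0}$ defined in \eqref{Projection P mu e}, and under $P_{0,0}$. From the explicit formula \eqref{U transformation operators} the transformation $U_{\mu,0}$ therefore commutes with the orthogonal projection $\Pi_k$ onto $\mathcal{Z}_k$ for every $k$.

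For the first claim, I would restrict $\mathscr{L}_{\mu,0}$ to the constant subspace $\mathcal{Z}_0$, on which it acts as the $2\times 2$ matrix
\begin{equation*}
M_0(\mu) := \begin{pmatrix} 0 & \mu\tanh(\tth\mu) \\ -(1+\kappa\mu^2) & 0 \end{pmatrix},
\end{equation*}
obtained from \eqref{mathcal B mu e} at $\epsilon = 0$ after subtracting the scalar $\im\chk\mu$. Its eigenvalues $\pm\im\sqrt{(1+\kappa\mu^2)\mu\tanh(\tth\mu)}$ are of size $\cO(|\mu|)$, hence both lie inside the contour $\Gamma$ for $|\mu|\le \mu_0$ small enough; consequently $P_{\mu,0}|_{\mathcal{Z}_0} = \mathrm{Id}_{\mathcal{Z}_0}$, and the same holds for $P_{0,0}|_{\mathcal{Z}_0}$ since $f_0^\pm$ lie in the generalized kernel of $\mathscr{L}_{0,0}$ and $\mathcal{Z}_0 = \mathrm{span}\{f_0^+,f_0^-\}$. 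Therefore $(P_{\mu,0}-P_{0,0})|_{\mathcal{Z}_0} = 0$, the square root in \eqref{U transformation operators} collapses to the identity on $\mathcal{Z}_0$, and $U_{\mu,0}|_{\mathcal{Z}_0} = \mathrm{Id}_{\mathcal{Z}_0}$. Applied to $f_0^\pm \in \mathcal{Z}_0$, this yields $f_0^\pm(\mu,0) = f_0^\pm$.

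For the second claim, I would note that the vectors $f_1^\pm$ in \eqref{eigenfunc of mathcall L00 2} belong to $\mathcal{Z}_1 \oplus \mathcal{Z}_{-1}$, since their entries contain only $\cos(x)$ and $\sin(x)$. Because $U_{\mu,0}$ commutes with $\Pi_0$, the image $U_{\mu,0} f_1^\pm$ is orthogonal to $\mathcal{Z}_0$, so both components of $f_1^\pm(\mu,0)$ have zero space average. I do not expect any real obstacle here: the whole argument reduces to the block-diagonal structure of Fourier multipliers at $\epsilon = 0$, combined with the elementary spectral analysis of the constant-mode block $M_0(\mu)$.
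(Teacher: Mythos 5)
Your proposal is correct. For the first claim, your argument is essentially the paper's: both identify the constant-mode subspace $\mathcal{Z}_0 = \mathrm{span}\{f_0^+,f_0^-\}$ as invariant under $\mathscr{L}_{\mu,0}$, compute the two eigenvalues $\pm\im\sqrt{(1+\kappa\mu^2)\mu\tanh(\tth\mu)}$ there, note they lie inside $\Gamma$ for $|\mu|$ small, and conclude $P_{\mu,0}f_0^\pm = f_0^\pm$ and thence $U_{\mu,0}f_0^\pm = f_0^\pm$ from \eqref{U transformation operators}.

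For the second claim you take a genuinely different route. The paper appeals to the symplecticity of the Kato basis (Lemma \ref{F is symplectic and reversible}): having established $f_0^\pm(\mu,0)=f_0^\pm$, the vanishing of $(\mathcal{J}f_0^\mp(\mu,0),\,f_1^\sigma(\mu,0))$ in \eqref{basis is symplectic} forces both components of $f_1^\sigma(\mu,0)$ to have zero mean. You instead observe that at $\epsilon=0$ the operator $\mathscr{L}_{\mu,0}$ is a Fourier multiplier, so the resolvent, the projectors $P_{\mu,0}$, $P_{0,0}$, and hence $U_{\mu,0}$ (via the convergent series for the inverse square root in \eqref{U transformation operators}) all commute with the mode projector $\Pi_0$; since $\Pi_0 f_1^\pm = 0$, it follows that $\Pi_0 f_1^\pm(\mu,0)=0$. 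Your argument is more self-contained (it does not invoke the symplectic identity and, in fact, re-derives the first claim as a byproduct of the same Fourier-block structure), but it is specific to $\epsilon=0$; the paper's symplectic argument is structurally robust and mirrors the identities used elsewhere to extract the shape of the matrix $\mathsf{B}_{\mu,\epsilon}$ for general $\epsilon$. Both are valid proofs.
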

\begin{proof}
The operator $\mathscr{L}_{\mu,0} = 
\begin{bmatrix}
\chk\partial_x & |D+\mu|\tanh(\tth|D+\mu|) \\
-1+\kappa(\pa_{xx}+2\im \mu\pa_x-\mu^2) & \chk\partial_x
\end{bmatrix}$ 
leaves invariant the subspace $\mathcal{Z} := \text{span}\{ f_0^+, f_0^- \}$ since 
$\mathscr{L}_{\mu,0} f_0^+ = -(1+\kappa\mu^2)f_0^-$ and $\mathscr{L}_{\mu,0} f_0^- = \mu\tanh(\tth\mu) f_0^+$. 
The operator $\mathscr{L}_{\mu,0}|_{\mathcal{Z}}$ has the two eigenvalues $\pm \im \sqrt{(1+\kappa\mu^2)\mu\tanh(\tth\mu)}$, 
which, for small $\mu$, lie inside the loop $\Gamma$ around $0$ in \eqref{Projection P mu e}. Then, by \eqref{spectrum separated by Gamma}, 
we have $\mathcal{Z} \subseteq \mathcal{V}_{\mu,0} = \text{Rg}(P_{\mu,0})$ and 

\[
P_{\mu,0} f_0^\pm = f_0^\pm, \quad 
f_0^\pm (\mu,0) = U_{\mu,0} f_0^\pm = f_0^\pm, 
\quad \text{for any $\mu$ small.}
\]

The basis $\{ f_k^\sigma (\mu,0) \}$ is symplectic (cf. \eqref{basis is symplectic}). Then, since 
$\mathcal{J} f_0^+ = \begin{bmatrix} 0 \\ -1 \end{bmatrix}$ 
and $\mathcal{J} f_0^- = \begin{bmatrix} 1 \\ 0 \end{bmatrix}$, we have

\[
\begin{aligned}
0 &= \big( \mathcal{J} f_0^+ (\mu,0), f_1^\sigma (\mu,0) \big) 
= \Big( \begin{bmatrix} 0 \\ -1 \end{bmatrix}, f_1^\sigma (\mu,0) \Big), \quad 
0 = \big( \mathcal{J} f_0^- (\mu,0), f_1^\sigma (\mu,0) \big) 
= \Big( \begin{bmatrix} 1 \\ 0 \end{bmatrix}, f_1^\sigma (\mu,0) \Big),
\end{aligned}
\]
namely both the components of $f_1^\pm (\mu,0)$ have zero average.
\end{proof}

We finally consider the $\mu \epsilon$ term in the expansion \eqref{the expandsion of f sigma mu}.

\begin{lemma} 
The derivatives $ (\partial_\mu \partial_\epsilon f_k^{\sigma}(0,0) = \left( \dot{P}_{0,0}^{\prime} - \frac{1}{2} P_{0,0} \dot{P}_{0,0}^{\prime} \right) f_k^{\sigma}$ satisfy
\begin{equation} \label{A17}
\begin{aligned}
(\partial_\mu \partial_\epsilon f_1^+(0,0)) &= \im \begin{bmatrix} \mathit{odd}(x) \\ \mathit{even}(x) \end{bmatrix}, & (\partial_\mu \partial_\epsilon f_1^-(0,0)) &= \im \begin{bmatrix} \mathit{even}(x) \\ \mathit{odd}(x) \end{bmatrix}, \\
(\partial_\mu \partial_\epsilon f_0^+(0,0)) &= \im \begin{bmatrix} \mathit{odd}(x) \\ \mathit{even}_0(x) \end{bmatrix}, & (\partial_\mu \partial_\epsilon f_0^-(0,0)) &= \im \begin{bmatrix} \mathit{even}_0(x) \\ \mathit{odd}(x) \end{bmatrix}.
\end{aligned}
\end{equation}
\end{lemma}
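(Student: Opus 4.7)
The starting point is the Taylor expansion \eqref{the expandsion of f sigma mu}, which reduces the claim to understanding the operator $Q := \dot{P}_{0,0}^{\prime} - \tfrac12 P_{0,0}\dot{P}_{0,0}^{\prime}$ applied to the real reversible basis vectors $f_k^{\sigma}$ in \eqref{eigenfunc of mathcall L00 2}. My plan is to extract the parity/imaginary structure in \eqref{A17} from two structural properties of $Q$: it is \emph{purely imaginary} as an operator, and it is \emph{reversibility-preserving} (commutes with $\bar{\rho}$). The zero-average refinements for $f_0^{\pm}$ will then be obtained from the symplectic orthogonality of the basis.

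The first property rests on the identity $\overline{\mathscr{L}_{\mu,\epsilon}} = \mathscr{L}_{-\mu,\epsilon}$ for real $\mu$, which follows because $\mathscr{L}_{\mu,\epsilon}=e^{-\im\mu x}\mathcal{L}_{\epsilon}e^{\im\mu x}$ with $\mathcal{L}_\e$ real. Passing to the resolvent and integrating over $\Gamma$ gives $\overline{P_{\mu,\epsilon}}=P_{-\mu,\epsilon}$; differentiating in $\mu$ at $\mu=0$ yields $\overline{\dot{P}_{0,\epsilon}}=-\dot{P}_{0,\epsilon}$, and a further $\epsilon$-derivative delivers $\overline{\dot{P}_{0,0}^{\prime}}=-\dot{P}_{0,0}^{\prime}$. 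Combined with the reality of $P_{0,0}$ from Lemma \ref{properties of U and P}-(iii), this shows $Q$ is purely imaginary, so $Qf_k^{\sigma}$ is purely imaginary (alternatively, one can read this off directly from \eqref{A5a}--\eqref{A5c}, each integral containing exactly one of the purely imaginary operators $\dot{\mathscr{L}}_{0,0}$ or $\dot{\mathscr{L}}_{0,0}^{\prime}$ in \eqref{A6}, \eqref{A8}). The second property comes from Lemma \ref{properties of U and P}-(i): differentiating $\bar{\rho}P_{\mu,\epsilon}=P_{\mu,\epsilon}\bar{\rho}$ in $\mu$ and then $\epsilon$ gives $\bar{\rho}\dot{P}_{0,0}^{\prime}=\dot{P}_{0,0}^{\prime}\bar{\rho}$, and the same holds for $P_{0,0}$.

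Combining the two, write $Qf_k^{\sigma}=\im\, g$ with $g=(g_1,g_2)^{\top}$ real; the reversibility-preservation forces $\bar{\rho}(\im\, g)=\sigma\,\im\, g$, and the explicit form of $\bar{\rho}$ in \eqref{reversible} translates this into the parity rules $(g_1,g_2)=(\mathrm{odd},\mathrm{even})$ when $\sigma=+$, and $(g_1,g_2)=(\mathrm{even},\mathrm{odd})$ when $\sigma=-$. This already yields the first two formulas of \eqref{A17} verbatim and gives the claimed structure of $Qf_0^{\pm}$ modulo the zero-average refinement.

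For the refinement $\mathrm{even}\leadsto\mathrm{even}_0$ in the two $f_0^{\pm}$ identities, I will differentiate the symplectic identities $(\mathcal{J}f_0^{-}(\mu,\epsilon),f_0^{+}(\mu,\epsilon))=1$ and $(\mathcal{J}f_0^{-}(\mu,\epsilon),f_0^{-}(\mu,\epsilon))=0$ with $\partial_\mu\partial_\epsilon$ at $(0,0)$, exploiting that by Lemma \ref{fmu0 eq f0} and \eqref{A16} the first-order derivatives $\dot{P}_{0,0}f_0^{\pm}=0$ and $P_{0,0}^{\prime}f_0^{-}=0$ vanish. All mixed first-derivative cross-terms then drop, and the surviving identities read, with $\mathcal{J}f_0^{-}=\begin{bmatrix}1\\0\end{bmatrix}$ and $\mathcal{J}f_0^{+}=\begin{bmatrix}0\\-1\end{bmatrix}$, as a pairing of the mixed derivatives against the constant functions; this precisely expresses the averages of the relevant components of $Qf_0^{\pm}$ and forces them to vanish. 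The same type of identity applied with $f_1^{\pm}(\mu,\epsilon)$ does not produce such a constraint, which is consistent with no $\mathrm{even}_0$ refinement in the $f_1^{\pm}$ lines of \eqref{A17}. The main obstacle I anticipate is bookkeeping in this last step: one has to be careful that no contribution from $\partial_\mu\partial_\epsilon$ of the $f_1^\sigma$-pairings sneaks in; here the properties $f_0^{\pm}(\mu,0)=f_0^{\pm}$ and $P_{0,0}^{\prime}f_0^{-}=0$ do the whole job, cleanly isolating the desired average conditions.
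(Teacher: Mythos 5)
Your overall strategy is a genuine alternative to the paper's for the zero-average refinement, so let me first compare, then flag a concrete slip.

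For the purely-imaginary and parity structure you reproduce essentially the paper's logic (the paper reads off the imaginary nature of $\dot P'_{0,0}$ from \eqref{A5a}--\eqref{A5c} and invokes \eqref{Parity structure}; your conjugation identity $\overline{P_{\mu,\e}}=P_{-\mu,\e}$ is a clean reformulation of the same facts). Where you genuinely diverge is the $\mathrm{even}\leadsto\mathrm{even}_0$ refinement. The paper proves this by \emph{explicit} inspection: it computes $\eqref{A5a}f_0^\pm$, $\eqref{A5b}f_0^\pm$, $\eqref{A5c}f_0^\pm$, observes that the relevant input vectors already have zero average and that the resolvent, $\mathscr{L}_{0,0}$, and $P_{0,0}$ are Fourier multipliers (or at least preserve zero average), so the output has zero average. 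Your proposal replaces this with an abstract symplectic argument — differentiate symplectic normalization identities and use the vanishing of the first-order derivatives from \eqref{A16}. Conceptually this is attractive because it avoids re-examining the Kato integrals.

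However, there is a concrete error in the choice of identities. You propose differentiating $(\mathcal{J}f_0^{-}(\mu,\e),f_0^{+}(\mu,\e))=1$ and $(\mathcal{J}f_0^{-}(\mu,\e),f_0^{-}(\mu,\e))=0$. The second one works: since $\partial_\mu f_0^-(0,0)=\partial_\e f_0^-(0,0)=0$, the surviving terms read $0=(\mathcal{J}\partial_{\mu}\partial_{\e}f_0^-,f_0^-)+(\mathcal{J}f_0^-,\partial_{\mu}\partial_{\e}f_0^-)$, and writing $\partial_{\mu}\partial_{\e}f_0^-(0,0)=\im\begin{bmatrix}e\\o\end{bmatrix}$ with $e$ even, $o$ odd, both pairings pick out $\int e\,dx$, giving $\int e\,dx=0$. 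But the first identity is tautological: after dropping the cross-terms, you are left with $(\mathcal{J}\partial_{\mu}\partial_{\e}f_0^{-},f_0^{+})+(\mathcal{J}f_0^{-},\partial_{\mu}\partial_{\e}f_0^{+})$. The second pairing is $\bigl(\begin{bmatrix}1\\0\end{bmatrix},\partial_{\mu}\partial_{\e}f_0^{+}\bigr)$, which selects the \emph{first} component of $\partial_{\mu}\partial_{\e}f_0^{+}$ — but by the parity structure this component is $\im\cdot\mathrm{odd}(x)$, so its average is automatically zero. Similarly the first pairing selects the $\im\cdot\mathrm{odd}$ component of $\partial_{\mu}\partial_{\e}f_0^-$. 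Hence the identity reduces to $0=0$ and gives no control on the average of the \emph{second} component of $\partial_{\mu}\partial_{\e}f_0^+(0,0)$, which is exactly the one you need to show is $\mathrm{even}_0$. The correct identity is $(\mathcal{J}f_0^{+}(\mu,\e),f_0^{+}(\mu,\e))=0$: since $\partial_\mu f_0^+(0,0)=0$ kills the cross-terms, it yields $(\mathcal{J}\partial_{\mu}\partial_{\e}f_0^{+},f_0^{+})+(\mathcal{J}f_0^{+},\partial_{\mu}\partial_{\e}f_0^{+})=0$, and both terms select the average of the second component of $\partial_{\mu}\partial_{\e}f_0^+(0,0)$, forcing it to vanish. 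So your plan works once you replace $(\mathcal{J}f_0^-,f_0^+)=1$ with $(\mathcal{J}f_0^+,f_0^+)=0$, paired with $(\mathcal{J}f_0^-,f_0^-)=0$.
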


 \begin{proof}
 We prove that $ \dot{P}_{0,0}^{\prime} = \eqref{A5a} + \eqref{A5b} + \eqref{A5c}$ is purely imaginary. This follows since the operators in \eqref{A5a}, \eqref{A5b}, and \eqref{A5c} are purely imaginary because $\dot{\mathscr{L}}_{0,0}$ is purely imaginary, $\mathscr{L}_{0,0}^{\prime}$ in \eqref{A6} is real, and $\dot{\mathscr{L}}_{0,0}^{\prime}$ in \eqref{A8} is purely imaginary (as argued in Lemma \ref{properties of U and P}-(iii) of \cite{BMV1}). Then, when applied to the real vectors $f_k^{\sigma}$, for $k = 0,1$ and $\sigma = \pm$, they produce purely imaginary vectors.

The property \eqref{Parity structure} implies that $ (\partial_\mu \partial_\epsilon f_k^{\sigma}(0,0)) $ has the claimed parity structure in \eqref{A17}. We shall now prove that $ (\partial_\mu \partial_\epsilon f_0^+(0,0)) $ has zero average. We have, by \eqref{A12} and \eqref{A15},
\begin{equation}
\eqref{A5a} f_0^+ := \frac{1}{2\pi i} \oint_{\Gamma} \frac{1}{\lambda}(\mathscr{L}_{0,0} - \lambda)^{-1} \dot{\mathscr{L}}_{0,0}(\mathscr{L}_{0,0} - \lambda)^{-1} \vet{2\ch^{-1}(1+\kappa)^{\frac{1}{2}}\sin(x)}{\left(\ch^2+\ch^{-2}\right)(1+\kappa)\cos(x)} d \lambda.
\end{equation}

Since the operators $(\mathscr{L}_{0,0} - \lambda)^{-1}$ and $\mathscr{L}_{0,0}$ are both Fourier multipliers, they preserve the absence of average in the vectors. Thus, \eqref{A5a} $f_0^+$ has zero average.
Next, \eqref{A5b} $f_0^+ = 0$ since $\dot{\mathscr{L}}_{0,0} f_0^{\pm} = 0$, cf. \eqref{A15}. Finally, by \eqref{A12} and \eqref{A8}, where $p_1(x) = p_1^{[1]} \cos(x)$ and $e_1(x)=e^{[1]}_1\sin(x)$,
\begin{equation}
\eqref{A5c} f_0^+ := \frac{1}{2 \pi \im} \oint_{\Gamma} (\mathscr{L}_{0,0} - \lambda)^{-1} \left(\frac{-1}{\lambda}\vet{\im p_1(x)}{\im \kappa e_1(x)} +\frac{1}{\lambda^2}\vet{0}{\im p_1(x)}\right) d \lambda
\end{equation}
is a vector with zero average. We conclude that $\dot{P}_{0,0}^{\prime} f_0^+$ is an imaginary vector with zero average, as well as $(\partial_\mu \partial_\epsilon f_0^+(0,0))$ since $P_{0,0}$ sends zero average functions into zero average functions. Finally, by \eqref{Parity structure}, $(\partial_\mu \partial_\epsilon f_0^+(0,0))$ has the claimed structure in \eqref{A17}.

We finally consider $(\partial_\mu \partial_\epsilon f_0^-(0,0))$. By \eqref{A11} and $\mathscr{L}_{0,0}^{\prime} f_0^- = 0$ (cf. \eqref{A15}), it results
\begin{equation}
\eqref{A5a} f_0^- = \frac{1}{2\pi i} \oint_{\Gamma} \frac{1}{\lambda}(\mathscr{L}_{0,0} - \lambda)^{-1} \dot{\mathscr{L}}_{0,0} (\mathscr{L}_{0,0} - \lambda)^{-1} \mathscr{L}_{0,0}^{\prime} f_0^- d \lambda = 0.
\end{equation}
Next, by \eqref{A11} and $\dot{\mathscr{L}}_{0,0} f_0^- = 0$, we get \eqref{A5b} $f_0^- = 0$. Finally, by \eqref{A11} and \eqref{A8},
\begin{equation}
\eqref{A5c} f_0^- = -\frac{1}{2\pi \im} \oint_{\Gamma} (\mathscr{L}_{0,0} - \lambda)^{-1} \frac{1}{\lambda} \begin{bmatrix} 0 \\ \im p_1(x)\end{bmatrix} d \lambda
\end{equation} has zero average since $(\mathscr{L}_{0,0} - \lambda)^{-1}$ is a Fourier multiplier (and thus preserves average absence).
 \end{proof}

This completes the proof of Lemma \ref{expansion of the basis F}.

\section{$\textup{D}_{\tth,\kappa}<0$ under Bond number condition }\label{sec:D.sign}
In this appendix we prove a sign condition on the function $\textup{D}_{\tth,\kappa}$ under the Bond number condition. 
\begin{lemma} \label{Bond number Dhk<0}
Let $(\kappa, \tth)$ satisfy the Bond number condition $ \frac{\kappa}{\tth^2}>\frac{1}{3}$, then the function $\textup{D}_{\tth,\kappa}$ in \eqref{def:D} satisfies $ \textup{D}_{\tth,\kappa}<0$ and $\lim\limits_{\tth\rightarrow 0^+} \textup{D}_{\tth,\kappa}=0$.
\end{lemma}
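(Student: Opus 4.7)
The plan is to reduce the sign statement to a single-variable inequality along the Bond boundary $\kappa=\tth^2/3$, after establishing strict monotonicity of $\textup{D}_{\tth,\kappa}$ in $\kappa$. Using the identity $(1-\ch^4)/\ch^2 = \coth\tth - \tanh\tth = 2/\sinh(2\tth)$, I first rewrite
\begin{equation*}
\frac{\mathsf{e}_{12}}{2} = \sqrt{(1+\kappa)\tanh\tth}\,\left[\tfrac{1}{2} + \tfrac{\tth}{\sinh(2\tth)} + \tfrac{\kappa}{1+\kappa}\right].
\end{equation*}
The limit $\lim_{\tth\to 0^+} \textup{D}_{\tth,\kappa}=0$ is then immediate: the bracket tends to $1+\kappa/(1+\kappa)$ while the prefactor $\sqrt{\tanh\tth}\to 0$, so $\mathsf{e}_{12}^2/4\to 0$ and $\textup{D}_{\tth,\kappa}=\tth-\mathsf{e}_{12}^2/4\to 0$. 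For monotonicity, both $(1+\kappa)^{1/2}$ and the bracket (through the strictly increasing term $\kappa/(1+\kappa)$) are increasing in $\kappa\geq 0$, so $\mathsf{e}_{12}>0$ is strictly increasing in $\kappa$, and hence $\partial_\kappa \textup{D}_{\tth,\kappa}=-\tfrac12 \mathsf{e}_{12}\,\partial_\kappa \mathsf{e}_{12}<0$. It thus suffices to prove $\textup{D}_{\tth, \tth^2/3}<0$ for every $\tth>0$.

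On the Bond curve, $(1+\kappa)=(3+\tth^2)/3$ and $\kappa/(1+\kappa)=\tth^2/(3+\tth^2)$, and multiplying the desired inequality by $3\coth\tth>0$ turns it into
\begin{equation*}
H(\tth) := (3+\tth^2)\,B(\tth)^2 - 3\tth\coth\tth > 0, \qquad B(\tth) := \frac{3(1+\tth^2)}{2(3+\tth^2)} + \frac{\tth}{\sinh(2\tth)}.
\end{equation*}
A Taylor expansion yields $H(\tth)=\tth^4/3+O(\tth^6)$ for small $\tth$ (equivalently $\textup{D}_{\tth,\tth^2/3}=-\tth^5/9+O(\tth^7)$), while as $\tth\to+\infty$ one has $B(\tth)\to 3/2$ and $\tth\coth\tth\sim\tth$, giving $H(\tth)\sim 9\tth^2/4\to+\infty$.

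The main obstacle is proving $H>0$ uniformly on $(0,\infty)$: the endpoint asymptotics alone do not rule out an intermediate sign change. I would clear denominators by multiplying $H(\tth)$ by $4(3+\tth^2)\sinh^2(2\tth)>0$ and apply $\sinh(2\tth)=2\sinh\tth\cosh\tth$ together with $\cosh^2\tth-\sinh^2\tth=1$ to reduce the inequality to the non-negativity of an analytic function $N(\tth)$ built from $\tth,\sinh\tth,\cosh\tth$. The strategy is then to rewrite $N(\tth)$ as a manifestly non-negative combination of the sign-definite quantities $\sinh\tth-\tth>0$, $\cosh\tth-1-\tth^2/2>0$, and analogous higher-order residuals, each expressible as a convergent positive power series in $\tth^2$. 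As a fallback, one may certify $H>0$ by partitioning $(0,\infty)$ into a near-zero interval controlled by an explicit remainder bound on the sixth-order Taylor polynomial of $H$, a large-$\tth$ tail in which $B(\tth)-3/2=O(\mathrm{e}^{-2\tth})$ controls the deviation from the asymptote, and a compact intermediate interval dispatched by uniform continuity and a direct numerical bound. The identification $\mathsf{e}_{12}/2 = c_g|_{k=1}$ and $\sqrt{\tth}=c_g|_{k=0}$ provides the physical interpretation: the Bond number $1/3$ is exactly the threshold at which $c_g(k)$ ceases to decrease at $k=0$, which heuristically explains why the curve $\textup{D}=0$ (Line 4 in Figure~\ref{fig:DR}) sits just below the Bond curve $\kappa=\tth^2/3$.
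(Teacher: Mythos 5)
Your reduction to the Bond boundary via monotonicity of $\mathsf{e}_{12}$ in $\kappa$, the limit $\textup{D}_{\tth,\kappa}\to 0$ as $\tth\to 0^+$, and the algebraic reformulation of $\textup{D}_{\tth,\tth^2/3}<0$ as $H(\tth):=(3+\tth^2)B(\tth)^2-3\tth\coth\tth>0$ are all correct, and this is essentially the same reduction the paper performs: it factors $\textup{D}_{\tth,\kappa}=(\sqrt{\tth}+\tfrac12\mathsf{e}_{12})(\sqrt{\tth}-\tfrac12\mathsf{e}_{12})$, uses that $\mathsf{e}_{12}$ increases in $\kappa$ to fix $\kappa=\tth^2/3$, and clears denominators to a function $q(\tth)$ that is a positive multiple of your $H(\tth)$. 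The genuine gap is that you never establish the one-variable positivity $H(\tth)>0$ on $(0,\infty)$. You identify this as the main obstacle, but both of your proposed routes stop short of a proof: the decomposition of the cleared-denominator expression $N(\tth)$ as a manifestly non-negative combination of residuals like $\sinh\tth-\tth$ and $\cosh\tth-1-\tth^2/2$ is stated as an aim but not exhibited (and such a non-negative-coefficient decomposition is not guaranteed to exist), while the fallback ``partition plus direct numerical bound on a compact interval'' is not rigorous as written, since uniform continuity alone gives no quantitative certificate and you would need interval arithmetic or an explicit Lipschitz/remainder bound.

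The paper closes this step mechanically. After writing $\textup{D}_{\tth,\tth^2/3}=-q(\tth)/\bigl[24(\cosh(2\tth)+1)^2\tanh(\tth)(\tth^2+3)\bigr]$ for an explicit $q$, it checks that $q^{(j)}(0)=0$ for $j\le 5$, $q^{(6)}(0)=23040>0$, $q^{(7)}(0)=0$, $q^{(8)}(0)=1806336>0$, and that $q^{(9)}(\tth)$ is a sum of manifestly positive terms in $\tth$, $\sinh$, $\cosh$ for $\tth>0$; iterating the mean value theorem then forces $q(\tth)>0$ for all $\tth>0$. If you want to complete your version, the same derivative-accounting argument applied to the numerator obtained by clearing denominators in $H$ is the most direct route, and it is more robust than hoping for a clean sum-of-positives identity. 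The rest of your writeup (the identity $(1-\ch^4)/\ch^2=2/\sinh(2\tth)$, the small-$\tth$ expansion $H(\tth)=\tth^4/3+\mathcal{O}(\tth^6)$, the large-$\tth$ asymptote, and the physical interpretation in terms of the group velocity) is accurate and a nice addition, but it does not substitute for the missing uniform positivity proof.
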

\begin{proof}
Recalling \eqref{e11 f11}, we write $\textup{D}_{\tth,\kappa}=(\sqrt{\tth}+\frac{1}{2}\mathsf{e}_{12})(\sqrt{\tth}-\frac{1}{2}\mathsf{e}_{12})=f_1(\tth,\kappa)f_2(\tth,\kappa)$ whose first factor is positive for $\tth>0$ and $\frac{\kappa}{\tth^2}>\frac{1}{3}$. We claim that the second factor is negative. We define 
\begin{equation*}
    \begin{aligned}
    &f_1(\kappa,\tth):=\sqrt{\tth}+\frac{1}{2}\mathsf{e}_{12}(\kappa,\tth)=\sqrt{\tth}+\frac{1}{2}(\ch^{-1}(1+\kappa)^{\frac{1}{2}} (\ch^2 + (1 - \ch^4)\tth)+\kappa(1+\kappa)^{-\frac{1}{2}}\ch,\\
    &f_2(\kappa,\tth):=\sqrt{\tth}-\frac{1}{2}\mathsf{e}_{12}(\kappa,\tth)=\sqrt{\tth}-\frac{1}{2}(\ch^{-1}(1+\kappa)^{\frac{1}{2}} (\ch^2 + (1 - \ch^4)\tth)-\kappa(1+\kappa)^{-\frac{1}{2}}\ch.
\end{aligned}
\end{equation*}
One may observe that $f_1(\kappa,\tth)>0$ for $\tth>0$ and $\kappa>\frac{\tth^2}{3}$. Also, since the functions $(1+\kappa)^{\frac{1}{2}}$ and $\kappa(1+\kappa)^{-\frac{1}{2}}$ are increasing for any $\kappa>0$, we only need to verify that $f_2(\frac{\tth^2}{3},\tth)<0$ for any $\tth>0$. 

Let us consider $f_0(\tth):=f_1(\frac{\tth^2}{3},\tth)f_2(\frac{\tth^2}{3},\tth)$. A straightforward computation reveals that
\begin{equation} \label{f<0}
    \begin{aligned}
        f_0(\tth)=-\frac{q(\tth)}{24(\cosh(2\tth)+1)^2\tanh(\tth)(\tth^2+3)},
    \end{aligned}
\end{equation}
where
\begin{equation} \label{qqq}
    \begin{aligned}
q(\tth)=&9\cosh(4\tth)-36\tth\sinh(4\tth)+18\tth^2\cosh(4\tth)+9\tth^4\cosh(4\tth)\\
&+72\tth^3\sinh(2\tth)-12\tth^3\sinh(4\tth)+24\tth^5\sinh(2\tth)+54\tth^2+39\tth^4+8\tth^6-9.
\end{aligned}
\end{equation}
Our goal here is to show $q(\tth)>0$ for $\tth>0$. An explicit computation reveals that
\begin{equation*}
\begin{aligned}
    &q(0)=\frac{d q}{d \tth}(0)=\frac{d^2 q}{d \tth^2}(0)=\frac{d^3 q}{d \tth^3}(0)=\frac{d^4 q}{d \tth^4}(0)=\frac{d^5 q}{d \tth^5}(0)=0, \\
    &\frac{d^6 q}{d \tth^6}(0)=23040, \quad \frac{d^7 q}{d \tth^7}(0)=0, \quad \frac{d^8 q}{d \tth^8}(0)=1806336,
\end{aligned}
\end{equation*}
and
\begin{equation*}
    \begin{aligned}
\frac{d^9 q}{d \tth^9}(\tth)=&8128512\sinh(2\tth)+5455872\sinh(4\tth)+2248704\tth^3\cosh(2\tth)+18087936\tth^3\cosh(4\tth)\\
&+12288\tth^5\cosh(2\tth)+8239104\tth^2\sinh(2\tth)+47185920\tth^2\sinh(4\tth)+276480\tth^4\sinh(2\tth)\\
&+2359296\tth^4\sinh(4\tth)+13602816\tth\cosh(2\tth)+43646976\tth\cosh(4\tth)>0,
\end{aligned}
\end{equation*}
for $\tth>0$, whence it follows from the mean value theorem that
\begin{equation} \label{q>0}
    \begin{aligned}
        q(\tth)&=\tth_5\tth_4\tth_3\tth_2\tth_1\tth \frac{d^6 q}{d \tth^6}(\tth_6)=\tth_5\tth_4\tth_3\tth_2\tth_1\tth \left(\tth_6\frac{d^7 q}{d \tth^7}(\tth_7)+23040\right)\\
        &=\tth_5\tth_4\tth_3\tth_2\tth_1\tth \left(\tth_7\tth_6\frac{d^8 q}{d \tth^8}(\tth_8)+23040\right)\\
        &=\tth_5\tth_4\tth_3\tth_2\tth_1\tth \left(\tth_8\tth_7\tth_6\frac{d^9 q}{d \tth^9}(\tth_9)+ 1806336\tth_7\tth_6 +23040\right)\\
        &= \tth_8\tth_7\tth_6\tth_5\tth_4\tth_3\tth_2\tth_1\tth\frac{d^9 q}{d \tth^9}(\tth_9)+ 1806336\tth_7\tth_6\tth_5\tth_4\tth_3\tth_2\tth_1\tth +23040 \tth_5\tth_4\tth_3\tth_2\tth_1\tth>0,
    \end{aligned}
\end{equation}
for some $\tth_j$, $j=1,\ldots,9$, such that $0<\tth_9<\ldots<\tth_1<\tth$.

Therefore, combining equations \eqref{q>0}, \eqref{f<0}, and \eqref{qqq}, we conclude that $\textup{D}_{\tth,\kappa}=\tth-\frac{1}{4}\mathsf{e}_{12}^2<0$. Finally, for any fixed $\kappa>0$, one checks easily that $\lim\limits_{\tth\rightarrow 0^+} \textup{D}_{\tth,\kappa}=0$.
\end{proof}

\footnotesize

\vspace{1em}
\noindent{\bf Acknowledgments:}
T.-Y. Hsiao and A. Maspero are  supported by   the European Union  ERC CONSOLIDATOR GRANT 2023 GUnDHam, Project Number: 101124921. A. Maspero is also supported by 
PRIN 2022 (2022HSSYPN).
 Views and opinions expressed are however those of the authors only and do not necessarily reflect those of the European Union or the European Research Council. Neither the European Union nor the granting authority can be held responsible for them.


\begin{thebibliography}{99}

\bibitem{AbSe}
M. Ablowitz and H.  Segur.
{\it On the evolution of packets of water waves}, J. Fluid Mech. 92(4),
691--715, 1979. 

 	


\bibitem{BBHM} P. Baldi, M. Berti, E. Haus, and R. Montalto.  \emph{Time quasi-periodic gravity water waves in finite depth}, Inventiones Math. Volume 214 Number 2, pp. 739-911, 2018.				

	
		
			\bibitem{BF}
			T.  Benjamin  and J. Feir.
 		\emph{The disintegration of wave trains on deep water}, Part 1. Theory. J. Fluid Mech. 27(3): 417-430, 1967.


\bibitem{BCMV}
M. Berti, L. Corsi, A. Maspero, P. Ventura. \emph{Infinitely many isolas of modulational instability of Stokes waves.} arXiv, 2024.



	
 				
	
 	



\bibitem{BMV1}  M. Berti,  A. Maspero  and P.  Ventura.
\emph{Full description of Benjamin-Feir instability of Stokes waves in deep water}, 
{Inventiones Math.}, 230, 651--711, 2022.



\bibitem{BMV3}  M. Berti,  A. Maspero  and P.  Ventura.
{\it Benjamin-Feir instability of Stokes waves in finite depth}, Arch. Rational Mech. Anal.
247, 91, 2023.

\bibitem{BMV_ed} 
M. Berti,  A. Maspero  and P.  Ventura.
\emph{Stokes waves at the critical depth are modulationally unstable},  Comm. Math. Phys., 405:56,  2024.


\bibitem{BMV4} 
M. Berti,  A. Maspero  and P.  Ventura.
\emph{First isola of modulational instability of Stokes waves in deep water}, 
EMS Surv. Math. Sci. 12, 79–122, European Mathematical Society DOI 10.4171/EMSS/91, 2025. 





\bibitem{BBHZ}
B. Barker, J. Bronski, V. Hur, Z. Yang.
	\emph{Asymptotic stability of sharp fronts: Analysis and rigorous computation.} J. Differential Equations, 444: 113550, 2025.
			
 		\bibitem{BrM}
T.	Bridges   and A. Mielke. 
	\emph{ A proof of the Benjamin-Feir instability.} Arch. Rational Mech. Anal. 133(2): 145--198, 1995.

\bibitem{BHJ}
J. Bronski,  V. Hur  and M.  Johnson.
\emph{ Modulational Instability in Equations of KdV Type.} In: Tobisch E. (eds) New Approaches to Nonlinear Waves. Lecture Notes in Physics, vol 908. Springer,  2016.

\bibitem{BHR}
J. Bronski,  V. Hur  and R.  Marangell.
\emph{ Floquet theory and stability analysis for Hamiltonian PDEs.} Nonlinearity. 37, 125010, 2024



 		\bibitem{BJ} 
	J. Bronski  and M. Johnson.
		\emph{The modulational instability for a generalized Korteweg-de Vries equation.} Arch. Ration. Mech. Anal. 197(2): 357--400, 2010.



\bibitem{BDS}
E. Byrnes, B. Deconinck and A. Semenova
\emph{Stokes Waves in Water of Finite Depth.} arXiv, 2025.



\bibitem{ChenSu}
G. Chen and Q. Su.
\emph{Nonlinear modulational instabililty of the Stokes waves in 2d full water waves.}
Commun. Math. Phys, doi.org/10.10402, 1345–1452, 2023.
		
%
		
\bibitem{CS}
W. Craig and C. Sulem. 
\emph{Numerical simulation of gravity waves.}
J. Comput. Phys., 108(1): 73--83, 1993.





\bibitem{CD}
R. Creedon, B. Deconinck.
{\it A High-Order Asymptotic Analysis of the Benjamin-Feir Instability Spectrum in Arbitrary Depth}, J. Fluid Mech. 956, A29, 2023. 






\bibitem{CDT}
R. Creedon, B. Deconinck., O. Trichtchenko.
{\it High-Frequency Instabilities of Stokes Waves}, 
J. Fluid Mech. 937, A24, 2022. 


\bibitem{CNS}
R. P. Creedon, H. Q. Nguyen, and W. A. Strauss. 
\newblock{\it  Proof of the transverse instability of Stokes waves}, 
\newblock
Annals of PDE, 11:4, 2025.

\bibitem{CNS2}
R. P. Creedon, H. Q. Nguyen, and W. A. Strauss. 
\newblock{\it
Transverse Instability of Stokes Waves at Finite Depth}, 
\newblock
ArXiv, 	arXiv:2408.07169, 2024.



 

\bibitem{DT}
B. Deconinck, 
O. Trichtchenko.
\emph{Stability of periodic gravity waves in the presence of surface tension.}
European Journal of Mechanics - B/Fluids, Volume 46, 97-108, 2014.

\bibitem{DjRe} 
V. D. Djordjevic and L. G. Redekopp.
\emph{On two-dimensional packets of capillary-gravity
waves}, Journal of Fluid Mechanics 79(4): 703–71, 1977.





		\bibitem{GH} 
		T. Gallay  and M. Haragus.
		\emph{ Stability of small periodic waves for the nonlinear Schr\"odinger equation.}
		 J. Differential Equations, 234: 544--581, 2007.


		\bibitem{HK}
		M. Haragus  and T.  Kapitula.
		\emph{On the spectra of periodic waves for infinite-dimensional Hamiltonian systems}. Phys. D, 237: 2649--2671, 2008.

\bibitem{HTW}
M. Haragus, T. Truong and E. Wahlén.
\emph{Transverse Dynamics of Two-Dimensional Traveling Periodic Gravity–Capillary Water Waves}. Water Waves 5, 65–99, 2023.

\bibitem{ebb}
S. Haziot, V. Hur, W. Strauss, J. F. Toland, E. Wahlén, S. Walsh,  M. Wheeler.
\emph{Traveling water waves — the ebb and flow of two centuries}, 
Journal: Quart. Appl. Math. 80, 317-401, 2022.

  
\bibitem{HJ} 
V. Hur and M. Johnson. 
\emph{Modulational instability in the Whitham equation for water waves.} Stud. Appl. Math. 134(1): 120--143, 2015.
	

	
\bibitem{HP} 
V. Hur and A. Pandey. 
\emph{Modulational instability in nonlinear nonlocal equations of regularized long wave type.} Phys. D, 325: 98--112, 2016.

\bibitem{HY}
V. Hur and  Z. Yang.
\emph{Unstable Stokes waves}. Arch. Rational Mech. Anal. 247, 62, 2023.

\bibitem{HY2}
V. Hur and  Z. Yang.
\emph{Unstable capillary-gravity waves}. 	arXiv:2311.01368, 2023.

\bibitem{JRSY}
 Z. Jiao, L.M. Rodrogues, C. Sun, Z. Yang.
 \emph{Small-amplitude finite-depth Stokes waves are transversally unstable}, 
Comm. Math. Phys., 406, pages 255, 2025.


	
 
 

 \bibitem{LD}
 D. Lannes.
 		\newblock {\it Well-posedness of the water-waves equations,} 
 		\newblock { J. Amer. Math. Soc.} 18 , pp. 605-654, 2025.
\bibitem{LD book}
D. Lannes.
{\it The water waves problem. Mathematical analysis and asymptotics. Mathematical Surveys and
 Monographs 188. American Mathematical Society, Providence, RI, 2013. xx+321 pp.}

 \bibitem{LC}
T. 		Levi-Civita.
 		\newblock {\it D\'etermination rigoureuse des ondes permanentes d' ampleur finie,} 
 		\newblock { Math. Ann.} 93 , pp. 264-314, 1925.
 		

\bibitem{Kato1966}
T. Kato.
{\it Perturbationtheoryforlinearoperators.DieGrundlehrendermathematischen
 Wissenschaften, Band 132 Springer-Verlag New York, Inc., New York, 1966}
    
    \bibitem{Li}
M. J. Lighthill.
{\it Contribution to the theory of waves in nonlinear dispersive systems},
IMA Journal of Applied Mathematics, 1, 3, 269-306, 1965.

		



\bibitem{MR}
A. Maspero, A. M. Radakovich.
{\it Full description of Benjamin-Feir instability for generalized Korteweg-de Vries equations}, arXiv:2404.06172, 2024.

\bibitem{Mc1}
J. W. McLean. 
{\it Instabilities of finite-amplitude water waves}, J. Fluid Mech. 114, 315-330, 1982.

\bibitem{Mc2}
J. W. McLean. 
{\it Instabilities of finite-amplitude gravity waves on water of infinite depth}, J. Fluid Mech. 114,
331-341, 1982.


 		\bibitem{Nek}
A. 		Nekrasov.  {\it On steady waves}.  Izv. Ivanovo-Voznesenk. Politekhn. 3, 
 		1921.
 	
 		
		\bibitem{NS} 
	H. Nguyen and W. Strauss.
		\emph{Proof of modulational instability of Stokes waves in deep water}. Comm.  Pure Appl.  Math., 
		 Volume 76, Issue 5, 899--1136, 2023.

\bibitem{RS78}
M. Reed, B. Simon
{\it Methods of modern mathematical physics. IV. Analysis of operators, Academic Press [Harcourt Brace Jovanovich Publishers], New York,
1978. }

\bibitem{reeder}
J. Reeder, M. Shinbrot. {\em Analytic Branches of Resonant Waves.}, ARMA 77, 289--309,  1981.

		
			
 		\bibitem{stokes}
G. 		Stokes. {\it On the theory of oscillatory waves}.
 		Trans. Cambridge Phil. Soc. 8: 	441--455, 1847.
 		
 		
 		\bibitem{Struik}
 	D. 	Struik. 
 	 {\it D\'etermination rigoureuse des ondes irrotationelles p\'eriodiques dans un canal \'a profondeur
 			finie}.  Math. Ann. 95: 595--634, 1926.


   



\bibitem{SW}
C. Sun, E. Wahlén
{\it On the spectral stability of the periodic capillary-gravity waves}.
arXiv:2509.17534, 2025.

		

\bibitem{Wilton}
J.R. Wilton. {\em  On ripples.} 
The London, Edinburgh, and Dublin Philosophical Magazine and Journal of Science (6), 29(173), 688–700, 1915.
            
            
	



  
\bibitem{Zak68}
V. Zakharov. {\it Stability of periodic waves of finite amplitude on the surface of a deep fluid}.
J. Appl. Mech. Tech. Phys. 9, 190-194, 1968.

    



 \bibitem{Zak1} 
 		V.  Zakharov.
 		{\it Stability of periodic waves of finite amplitude on the surface of deep fluid}. Zhurnal
	Prikladnoi Mekhaniki i Teckhnicheskoi Fiziki 9(2): 86--94, 1969.








 	 		
 	\end{thebibliography}
\end{document}